\theoremstyle{plain}
\newtheorem{Lemma}{Lemma}
\newtheorem{Thm}[Lemma]{Theorem}
\newtheorem{Prop}[Lemma]{Proposition}
\newtheorem{Cor}[Lemma]{Corollary}
\theoremstyle{definition}
\newtheorem{Defn}[Lemma]{Definition}
\theoremstyle{remark}
\newtheorem{Remark}[Lemma]{Remark}
\newtheorem*{Remark*}{Remark}
\newtheorem{Example}[Lemma]{Example}
\theoremstyle{definition}
\numberwithin{Lemma}{section}
\numberwithin{Subpoint}{Lemma}
\numberwithin{equation}{section}
\newcommand{\AAA}{\mathcal{A}}
\newcommand{\BBB}{\mathcal{B}}
\newcommand{\III}{\mathcal{I}}
\newcommand{\LLL}{\mathcal{L}}
\newcommand{\MMM}{\mathcal{M}}
\newcommand{\NNN}{\mathcal{N}}
\newcommand{\OOO}{\mathcal{O}}
\newcommand{\RRR}{\mathcal{R}}
\newcommand{\TTT}{\mathcal{T}}
\newcommand{\Fm}{\mathfrak{m}}
\newcommand{\FX}{\mathfrak{X}}
\newcommand{\FF}{{\mathbb{F}}}
\newcommand{\ZZ}{{\mathbb{Z}}}
\DeclareMathOperator{\BT}{BT}
\DeclareMathOperator{\Dcat}{D}
\DeclareMathOperator{\Dfun}{\mathbf D} %\DD}
\DeclareMathOperator{\DDcat}{DD}
\DeclareMathOperator{\DDfun}{\mathbf {DD}} %\DD\DD}
\DeclareMathOperator{\DFcat}{DF}
\DeclareMathOperator{\DFfun}{\mathbf{DF}} %\DD\FF}
\DeclareMathOperator{\CRIS}{CRIS}
\DeclareMathOperator{\Disp}{Disp}
\DeclareMathOperator{\Filone}{Fil^1}
\DeclareMathOperator{\Hom}{Hom}
\DeclareMathOperator{\HPD}{HPD}
\DeclareMathOperator{\Idem}{Idem}
\DeclareMathOperator{\Ker}{Ker}
\DeclareMathOperator{\LF}{LF}
\DeclareMathOperator{\Lie}{Lie}
\DeclareMathOperator{\Spec}{Spec}
\DeclareMathOperator{\Rad}{Rad}
\DeclareMathOperator{\Spf}{Spf}
\DeclareMathOperator{\Win}{Win}
\DeclareMathOperator{\Zar}{Zar}
\newcommand{\hatotimes}{\mathbin{\hat\otimes}}
\DeclareMathOperator{\cris}{cris}
\DeclareMathOperator{\id}{id}
\DeclareMathOperator{\nil}{nil}
\DeclareMathOperator{\per}{per}
\DeclareMathOperator{\red}{red}
\newcommand{\nameoftop}{{\operatorname{pr}}}
\renewcommand{\u}{\underline}
\newcommand{\chang}[1]{{#1}}
\begin{document}

\title{Divided Dieudonn\'e crystals}
\author{Eike Lau}
%\date{\today}
%\address{Fakult\"{a}t f\"{u}r Mathematik,
%Universit\"{a}t Bielefeld, D-33501 Bielefeld}
%\email{lau@math.uni-bielefeld.de}
%\subjclass[2020]{14L05, 14F30}
%\keywords{$p$-divisible groups, Dieudonn\'e crystals, displays}

\begingroup
\renewcommand{\thefootnote}{}
\footnotetext{2020 Mathematics Subject Classification. 14L05, 14F30}
\footnotetext{Key words. $p$-divisible groups, Dieudonn\'e crystals, displays}
\endgroup

\maketitle

\begin{abstract}
We define a category of divided Dieudonn\'e crystals which classifies $p$-divisible groups over schemes in characteristic $p$ with certain finiteness conditions, including all $F$-finite noetherian schemes. For formally smooth schemes or locally complete intersections this generalizes and extends known results on the classical crystalline Dieudonn\'e functor.
\end{abstract}

\setcounter{tocdepth}{1}
\tableofcontents

\section{Introduction}

\begingroup

\chang
{Let $p$ be a prime and let $X$ be an $\FF_p$-scheme. We}
consider the crystalline Dieudonn\'e functor 
from $p$-divisible groups to Dieudonn\'e crystals,
\[
\Dfun_X:\BT(X)\to\Dcat(X),
\]
which is outlined in \cite{Grothendieck:Nice, Grothendieck:Montreal}
and constructed in \cite{Mazur-Messing, BBM}.
The Hodge filtration of a $p$-divisible group 
gives a refinement of $\Dfun_X$ to a functor
\[
\DFfun_X:\BT(X)\to\DFcat(X)
\]
from $p$-divisible groups
to Dieudonn\'e crystals with an admissible filtration as defined
in \cite{Grothendieck:Montreal}, called filtered Dieudonn\'e crystals
in the following.

The functors $\Dfun_X$ and $\DFfun_X$ can be expected to have good properties 
only if $X$ is PD torsion free in the sense that
the universal PD envelopes of affine open subschemes of $X$ are torsion free.
We will define a category of divided Dieudonn\'e crystals 
which compensates the effect of PD torsion.
Here `divided' refers to the fact that 
the Frobenius operator of the crystal is divided by $p$ on the filtration module.
It is clear that the image of the filtration under the Frobenius 
is divisible by $p$, but if torsion occurs, 
the division is not unique and thus
carries additional information.

In more detail,
following \cite{Fontaine-Messing} let $\OOO_X^{\cris}$ be the presheaf of rings
on the category of $X$-schemes defined as the direct image
of the crystalline structure sheaf.
We use a topology $\nameoftop$ 
which is generated by the Zariski topology 
and by infinite successions of extractions of $p$-th roots
(Definition \ref{Def:top}).
Then $\OOO_X^{\cris}$ is a $\nameoftop$-sheaf, which carries an endomorphism $\sigma$ induced by the Frobenius of $X$ and a PD ideal $\III_X^{\cris}\subseteq\OOO_X^{\cris}$
with quotient $\OOO_X$.
The definition of divided Dieudonn\'e crystals 
is based on the following fact.

\begin{Lemma}
\label{Le:Int-sigma1}
(Lemma \ref{Le:sigma1-O})
There is a natural $\sigma$-linear map 
\[
\sigma_1:\III_X^{\cris}\to\OOO_X^{\cris}
\]
with $p\sigma_1=\sigma$.
\end{Lemma}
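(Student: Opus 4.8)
The plan is to produce $\sigma_1$ as a morphism of presheaves by defining it on sections and checking compatibility, reducing everything to an explicit computation in local divided power presentations of $\OOO_X^{\cris}$. Locally on an affine $X$-scheme the sections of $\OOO_X^{\cris}$ are computed by a divided power envelope, which I write as a ring $B$ with PD ideal $\Fa$ (the local incarnation of $\III_X^{\cris}$), divided powers $x\mapsto x^{[n]}$, quotient $B/\Fa=\OOO_X$, and Frobenius $\sigma$. The first point is the divisibility $\sigma(\Fa)\subseteq pB$, resting on two inputs: that $\Fa$ is a PD ideal, so $x^p=p!\,x^{[p]}=p\cdot(p-1)!\,x^{[p]}\in pB$ for every $x\in\Fa$; and that $\sigma$ lifts the absolute Frobenius, so $\sigma(x)\equiv x^p\pmod{pB}$. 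Together these give $\sigma(x)\in pB$. Concretely, for a PD-polynomial presentation on generators $x_i$ with $\sigma(x_i)=x_i^p$, the PD-morphism property of $\sigma$ yields $\sigma(x_i^{[n]})=(x_i^p)^{[n]}=\tfrac{(pn)!}{n!}\,x_i^{[pn]}$, and since $v_p\!\big(\tfrac{(pn)!}{n!}\big)=n\ge 1$ this is manifestly divisible by $p$; as $\Fa$ is generated as an ideal by the $x_i^{[n]}$ with $n\ge1$, divisibility propagates to all of $\Fa$.

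Granting divisibility, I would define $\sigma_1:=\tfrac1p\sigma$ on $\Fa$, the content being that the division admits a canonical choice. On generators the candidate is forced: set $\sigma_1(x_i^{[n]}):=\tfrac{(pn)!}{p\,n!}\,x_i^{[pn]}$, whose coefficient lies in $\ZZ_{(p)}$ and hence defines an honest element of $B$ irrespective of torsion, and extend $\sigma$-linearly by $\sigma_1(a\xi)=\sigma(a)\sigma_1(\xi)$ for $a\in B$. By construction $p\sigma_1=\sigma$ on $\Fa$, and $\sigma$-linearity is built in; what remains is that the assignment is well defined and independent of the presentation, which is also what will furnish the naturality of $\sigma_1$ as a transformation of presheaves.

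The main obstacle is exactly this well-definedness in the presence of PD torsion, which is the phenomenon the paper is built around: division by $p$ is unambiguous only modulo the $p$-torsion of $B$, so an a priori relation $\sum_j a_j\xi_j=0$ in $\Fa$ only forces $p\sum_j\sigma(a_j)\sigma_1(\xi_j)=0$, that is $\sum_j\sigma(a_j)\sigma_1(\xi_j)\in B[p]$, rather than $0$. To control this I would pass to the universal situation, choosing for each relevant $B$ a compatible PD- and $\sigma$-morphism $P\to B$ from a PD-polynomial algebra $P$ over $\ZZ_p$, which is $p$-torsion free; on $P$ the division $\tfrac1p\sigma$ is literally unique and the explicit formulas above are genuine identities, so the candidate $\sigma_1$ is unambiguous there. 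The genuinely hard step, which I expect to require the fine structure of the crystalline Frobenius rather than any formal splitting of multiplication by $p$, is to show that the torsion ambiguity $\sum_j\sigma(a_j)\sigma_1(\xi_j)\in B[p]$ actually vanishes for the relations that appear on a possibly non-flat envelope $B$; establishing this is what pins the division down canonically and yields a well-defined functorial $\sigma_1$, and it is the point I would spend the bulk of the argument on.
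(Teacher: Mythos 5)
The central step of your argument is missing, and it is precisely the step that carries the entire mathematical content of the lemma. You correctly isolate the difficulty (well-definedness of the division by $p$ in the presence of PD torsion), and you correctly guess the remedy (mapping in relations from torsion-free universal situations), but you then declare this ``the point I would spend the bulk of the argument on'' and stop. What your proposal actually establishes is only the easy part: the divisibility $\sigma(\Fa)\subseteq pB$ and the candidate formula $\sigma_1(x^{[n]})=\tfrac{(pn)!}{p\,n!}\,x^{[pn]}$, which agrees with the paper's \eqref{Eq:sigma1-an}. The paper fills the gap in two ways. For the sheaf statement itself (Lemma \ref{Le:sigma1-O}) it reduces to affine $U=\Spec R$ with $R/p$ semiperfect --- such $U$ form a basis of the $\nameoftop$-topology --- where $\OOO_X^{\cris}(U)=A_{\cris}(R)$, and it quotes \cite[Lemma 4.1.8]{Scholze-Weinstein} for existence and uniqueness of $\sigma_1$ on $A_{\cris}$. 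The self-contained argument you are reaching for is the one carried out in the paper's Lemma \ref{Le:sigma1-D}: present the (uncompleted) PD envelope as $D=P/N$, where $P$ is an honest polynomial algebra on variables $T_{a,m}$ recording all divided powers; define the candidate $\tilde\sigma_1$ on the ideal $J\subseteq P$ by explicit formulas; observe that $p\tilde\sigma_1=\tilde\sigma$ vanishes on $N$, so the obstruction $\tilde\sigma_1(N)$ is $p$-torsion; and then kill it by showing that each PD-axiom generator of $N$ is the image of the corresponding relation under a map of triples $(A'\to R',\sigma)\to(A\to R,\sigma)$, where $A'=\ZZ_p[(T_{ij}),(Y_{ij})]$ has the finitely many relevant elements \emph{and all their $\tau$-iterates} as free variables, so that $D'$ is torsion free and the obstruction vanishes identically there; functoriality of the candidate map transports this to $D$. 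None of this --- the presentation, the explicit generators of $N$, the universal triples with $\tau$-iterates, the functoriality of the candidate --- appears in your proposal, so the claimed map is never shown to exist.

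There is also a flaw in your starting premise. For an arbitrary affine $X$-scheme $U$, the sections $\OOO_X^{\cris}(U)=\Gamma((U/\Sigma)_{\CRIS},\OOO_{U/\Sigma})$ are \emph{not} a divided power envelope: they form the subring of ``horizontal'' elements (an equalizer of two maps between PD envelopes), in which your divided-power generators need not even live --- for $U=\Spec\FF_p[t]$ one gets just $\ZZ_p$. The sections are a completed PD envelope exactly on affines with semiperfect reduction (Lemma \ref{Le:D-Acris} gives $A_{\cris}(R)$), and since these form a basis of the $\nameoftop$-topology it suffices to define the map of sheaves there; this is the reduction the paper performs, and your plan must make it explicit rather than assume every affine works. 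Two smaller repairable points: when $p\OOO_X=0$ the ideal $\III_X^{\cris}$ is generated by $p$ \emph{together with} the divided powers, so your list of generators must include $p$, with $\sigma_1(p)=1$; and your formula presupposes $\sigma(x_i)=x_i^p$ on generators, which is correct for Teichm\"uller lifts in $A_{\cris}$ but fails for a general Frobenius lift $\sigma(a)=a^p+p\tau(a)$, where one needs the corrected formula $\sigma_1(a)=(p-1)!\,a^{[p]}+\tau(a)$ as in Lemma \ref{Le:sigma1-D}.
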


This means that the sheaf $\OOO_X^{\cris}$ 
carries a frame structure in the sense of \cite{Lau:Frames},
and we define divided Dieudonn\'e crystals as windows over this frame
in the following sense.

\begin{Defn}
\label{De:DDX-Intro}
A divided Dieudonn\'e crystal over $X$ is a collection 
\[
\u\MMM=(\MMM,\MMM_1,\Phi,\Phi_1)
\] 
where $\MMM$ is a finite locally free $\OOO_X^{\cris}$-module
with respect to the $\nameoftop$-topology,
$\MMM_1\subseteq \MMM$ is a submodule with $\III_X^{\cris}\MMM\subseteq\MMM_1$
such that $\MMM/\MMM_1$ is a finite locally free $\OOO_X$-module,
$\Phi:\MMM\to\MMM$ and $\Phi_1:\MMM_1\to\MMM$ 
are $\sigma$-linear maps with $\Phi_1(ax)=\sigma_1(a)\Phi(x)$ 
for local sections $a\in\III_X^{\cris}$ and $x\in\MMM$,
such that $\Phi(\MMM)+\Phi_1(\MMM_1)$ generates $\MMM$.
\end{Defn}

The category of divided Dieudonn\'e crystals will be denoted by $\DDcat(X)$.
There is a forgetful functor $\DDcat(X)\to\DFcat(X)$
from divided Dieudonn\'e crystals to filtered Dieudonn\'e crystals,
which is an equivalence if $X$ is PD torsion free;
see Proposition \ref{Pr:tf-DD-DF}.

The scheme $X$ is called $F$-finite 
if the Frobenius morphism $\phi:X\to X$ is finite, 
and $X$ \chang{will be} called $F$-nilpotent if the kernel of $\phi:\OOO_X\to\OOO_X$
is locally a nilpotent ideal. 
Every locally noetherian $\FF_p$-scheme is $F$-nilpotent.

\begin{Thm}
\label{Th:Int-FF}
For \chang{every} $\FF_p$-scheme $X$ there is a functor
\[
\DDfun_X:\BT(X)\to\DDcat(X)
\]
from $p$-divisible groups to divided Dieudonn\'e crystals.
The functor $\DDfun_X$ is an equivalence of categories
if $X$ is $F$-finite and $F$-nilpotent.
\end{Thm}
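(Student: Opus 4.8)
The plan is to construct $\DDfun_X$ by enriching the filtered crystalline Dieudonn\'e functor $\DFfun_X$ with a canonical divided Frobenius, and then to prove the equivalence by transporting the known classification of $p$-divisible groups by windows along a comparison of frames, using $F$-finiteness and $F$-nilpotence to place ourselves in the range where the window-theoretic classification of \cite{Lau:Frames} applies.

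For the construction, I would start from the crystalline Dieudonn\'e crystal $\Dfun_X(G)$ of \cite{BBM}, whose evaluation over $\OOO_X^{\cris}$ is a finite locally free module $\MMM$ equipped with a Frobenius $\Phi$, a Verschiebung $V$ with $\Phi V=V\Phi=p$, and the Hodge filtration $\MMM_1$; together $(\MMM,\MMM_1,\Phi)$ is $\DFfun_X(G)$. The relation $\Phi V=p$ forces $\Phi(\MMM_1)\subseteq p\MMM$, so a divided Frobenius $\Phi_1$ with $p\Phi_1=\Phi|_{\MMM_1}$ and $\Phi_1(ax)=\sigma_1(a)\Phi(x)$ exists whenever $\MMM$ is $p$-torsion free. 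The key point is to produce a \emph{canonical} $\Phi_1$ even in the presence of PD torsion: I would evaluate the crystal $\Dfun_X(G)$ on local PD-torsion-free thickenings of $X$ (which exist affine-locally, writing the coordinate ring as a quotient of a formally smooth $\FF_p$-algebra), where $\Phi_1=p^{-1}\Phi|_{\MMM_1}$ is uniquely determined, and then descend along the transition isomorphisms of the crystal. The uniqueness of the division upstairs together with the functoriality of the crystal guarantees that the resulting $\Phi_1$ on $X$ is independent of all choices, yielding $\DDfun_X(G)=(\MMM,\MMM_1,\Phi,\Phi_1)\in\DDcat(X)$; by construction it lifts $\DFfun_X(G)$ under the forgetful functor $\DDcat(X)\to\DFcat(X)$ of Proposition~\ref{Pr:tf-DD-DF}. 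This construction requires no finiteness hypothesis, so it produces $\DDfun_X$ for every $X$ over $\FF_p$.

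For the equivalence when $X$ is $F$-finite and $F$-nilpotent, I would first reduce to the affine case $X=\Spec R$, since both sides satisfy descent for the $\nameoftop$-topology. The decisive step is to reinterpret $\DDcat(X)$, which by Definition~\ref{De:DDX-Intro} is the category of windows over the crystalline frame $(\OOO_X^{\cris},\III_X^{\cris},\sigma,\sigma_1)$ of Lemma~\ref{Le:Int-sigma1}, and to compare this frame with a Witt-type display frame whose window category is already known to classify $p$-divisible groups. Concretely, I would construct a morphism from a $\sigma$-adic display frame to the crystalline frame and invoke the window-comparison criterion of \cite{Lau:Frames}: $F$-finiteness supplies the coherence and finiteness needed for $\OOO_X^{\cris}$ and its Frobenius pushforwards to form a good frame, while $F$-nilpotence ensures that $\sigma$ (equivalently the Frobenius $\Frob$) is topologically nilpotent on the relevant ideals, which is exactly the nilpotence hypothesis under which windows and $p$-divisible groups correspond. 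Composing the resulting equivalence of window categories with the display-theoretic classification of $\BT(R)$ then identifies $\BT(X)\simeq\DDcat(X)$, and one checks this identification is induced by $\DDfun_X$ by comparing both with $\DFfun_X$ over the torsion-free thickenings used in the construction.

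The main obstacle, I expect, is the frame comparison in the second step: proving that the morphism from the display frame to the crystalline frame induces an equivalence on windows in the generality of an arbitrary $F$-finite, $F$-nilpotent $R$. This should require a crystalline deformation argument---combining Grothendieck--Messing deformation theory for $p$-divisible groups with the deformation theory of windows along nilpotent frame extensions from \cite{Lau:Frames}---to reduce the comparison to a base where it is known (a perfect or formally smooth situation) and to propagate it along the Frobenius tower furnished by $F$-nilpotence. Verifying that $F$-nilpotence yields the precise topological nilpotence of $\sigma_1$ needed to run this induction, uniformly over the crystalline site, is the technical heart of the argument; by contrast, full faithfulness should follow more directly, either from the torsion-free comparison of Proposition~\ref{Pr:tf-DD-DF} together with a limit argument along Frobenius, or from the rigidity of windows once essential surjectivity is in hand.
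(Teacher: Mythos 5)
Your construction of $\DDfun_X$ has a genuine gap. You propose to obtain the divided Frobenius $\Phi_1$ by evaluating $\Dfun_X(G)$ on ``local PD-torsion-free thickenings of $X$'', claimed to exist affine-locally by writing the coordinate ring as a quotient of a formally smooth $\FF_p$-algebra, and then descending the unique division by functoriality of the crystal. This fails twice over. First, such thickenings need not exist: by Proposition \ref{Pr:PD-env-tf}, torsion in the completed PD envelope $D_\gamma(A\to R)^\wedge$ does not depend on the chosen presentation $A\to R$, so it is an intrinsic property of $R$ (Definition \ref{Def:PD-tor}) that no choice of formally smooth presentation can remove; and rings with PD torsion are exactly the case the theorem must handle, since for PD torsion free $X$ Proposition \ref{Pr:tf-DD-DF} identifies $\DDcat(X)$ with $\DFcat(X)$ and nothing new would be needed. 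Second, even where torsion-free thickenings exist, the descent step cannot be run by functoriality: the sections of $\OOO_X^{\cris}$ on a basis of the $\nameoftop$-topology are the rings $A_{\cris}(R')$ for semiperfect $R'$, and $A_{\cris}(R')$ is the \emph{initial} $p$-adic PD thickening, so the comparison maps go \emph{from} $A_{\cris}(R')$ \emph{to} any torsion-free thickening. A division of $\Phi$ by $p$ downstream neither lifts to, nor is unique over, the ring upstream when that ring has torsion (one can alter $\Phi_1$ by any map killed by $p$, and the alteration dies downstream). This is precisely why the paper does not divide $\Phi$ at all: it reduces by $\nameoftop$-descent to the semiperfect case (Proposition \ref{Pr:dd-win-semiperf}) and invokes the functor $\Phi_R^{\cris}$ of \cite[Th.~6.3]{Lau:Semiperfect}, whose construction of $\Phi_1$ is a nontrivial theorem rather than a formal division.

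For the equivalence, your skeleton (affine reduction by $\nameoftop$-descent, a frame comparison whose nilpotence input comes from $F$-nilpotence, and a Grothendieck--Messing deformation to a known base case) does match the paper's proof of Theorem \ref{Th:Ffin-Fnil} via Theorem \ref{Th:semiperfect-Fnil}, Propositions \ref{Pr:kappa-equiv} and \ref{Pr:BT-R1-R}, Corollary \ref{Co:BT-Rflat-R}, and Gabber's theorem over $R^\flat$. But two points are off. The ``display-theoretic classification of $\BT(R)$'' you want to compose with does not exist as a black box: over non-perfect bases, displays classify only formal ($F$-nilpotent) $p$-divisible groups, so invoking such a classification for general $F$-finite, $F$-nilpotent $R$ is essentially assuming what is to be proved; the paper instead deforms down the tower $R^\flat/J^{p^n}$ from the perfect ring $R^\flat$, where Gabber's theorem applies, using Grothendieck--Messing at each finite stage and a limit argument. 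And $F$-finiteness does not supply ``coherence for $\OOO_X^{\cris}$''; its actual role is to ensure that $R$ is generated over $\phi(R)$ by finitely many elements $a_1,\ldots,a_r$, so that the semiperfect cover $R'=R[(a_i^{1/p^\infty})]$ is still $F$-nilpotent by Lemma \ref{Le:Fnil-remains} --- a property that can fail when infinitely many $p$-power roots are adjoined, and without which Theorem \ref{Th:semiperfect-Fnil} cannot be applied to the cover.
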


See Proposition \ref{Pr:DD-functor} and Theorem \ref{Th:Ffin-Fnil}.

\begin{Remark}
The usual description of crystals by modules with a connection
extends to a description of divided Dieudonn\'e crystals by
windows with a connection; 
see Section \ref{Se:Explicit-DD} for details,
in particular Definition \ref{Def:win-nabla} and Corollary \ref{Co:BT-Win}.
\end{Remark}

\begin{Remark}
Definition \ref{De:DDX-Intro} can be extended to schemes $X$ where $p$ is locally nilpotent,
and again there is a functor $\DDfun_X$ as above.
If $p\ge 3$ and the reduction
$X_{\FF_p}$ is $F$-finite and $F$-nilpotent,
then the functor $\DDfun_X$ is an equivalence 
by an obvious extension of Theorem \ref{Th:Int-FF}.
\end{Remark}

\subsection*{Outline of the proof}

By $\nameoftop$-descent, 
Lemma \ref{Le:Int-sigma1} and Theorem \ref{Th:Int-FF}
are reduced to the case where $X=\Spec R$ is affine and semiperfect
in the sense that the Frobenius endomorphism of $R$ is surjective.
Then $\OOO_X^{\cris}(X)=A_{\cris}(R)$ 
is the universal \chang{$p$-complete} PD thickening of $R$,
and Lemma \ref{Le:Int-sigma1} follows from 
the corresponding statement for $A_{\cris}$, 
which is proved in \cite{Scholze-Weinstein}.
Moreover, divided Dieudonn\'e crystals over $X$ are equivalent to 
windows over the frame $\u A{}_{\cris}(R)$.
Hence the results of \cite{Lau:Semiperfect} give the functor $\DDfun_X$ 
and prove the equivalence if $R$ is iso-balanced;
this is a technical condition that implies $F$-nilpotence.
The proof of the equivalence when $R$ is $F$-nilpotent 
uses similar methods, based on a deformation to the perfect case
(Section \ref{Se:semiperfect}).

\subsection*{Implications for the classical functors}

We assume again that $X$ is 
\chang{an $\FF_p$-scheme} and
refer to the main text for variants when $p$ is locally nilpotent. By a simple analysis of the forgetful functor $\DDcat(X)\to\Dcat(X)$, Theorem \ref{Th:Int-FF} implies the following.

\begin{Cor}
\label{Co:Int-isog}
(Corollary \ref{Co:Isogeny})
\chang{If $X$ is $F$-finite and $F$-nilpotent,
then the functor\/ $\Dfun_X$ is fully faithful up to isogeny.
More precisely, for $p$-divisible groups $G$, $H$ over $X$
the homomorphism}
\[
\Hom(G,H)\to\Hom(\Dfun_X(G),\Dfun_X(H))
\]
is injective with cokernel annihilated by $p$.
\end{Cor}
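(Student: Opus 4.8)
The plan is to deduce the statement from the equivalence $\DDfun_X$ of Theorem~\ref{Th:Int-FF} by comparing morphisms of divided Dieudonné crystals with morphisms of bare Dieudonné crystals. Write $\u\MMM=\DDfun_X(G)$ and $\u\NNN=\DDfun_X(H)$, with underlying $\OOO_X^\cris$-modules $\MMM$ and $\NNN$; by construction $\Dfun_X(G)$ and $\Dfun_X(H)$ are obtained from $\u\MMM$ and $\u\NNN$ by the forgetful functors $\DDcat(X)\to\DFcat(X)\to\Dcat(X)$. Hence the homomorphism in question factors as
\[
\Hom(G,H)\xrightarrow{\ \sim\ }\Hom(\DDfun_X G,\DDfun_X H)\longrightarrow\Hom(\Dfun_X G,\Dfun_X H),
\]
where the first map is bijective because $X$ is $F$-finite and $F$-nilpotent, and the second is induced by the forgetful functor. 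Since that functor is faithful---a morphism of divided Dieudonné crystals is an $\OOO_X^\cris$-linear map of underlying modules subject to extra conditions---the second map, and hence the whole composite, is injective.

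For the cokernel I would show that $pg$ lifts to a morphism $\u\MMM\to\u\NNN$ for every $g\in\Hom(\Dfun_X G,\Dfun_X H)$, which exhibits the cokernel as annihilated by $p$. Three conditions must be checked for $pg$. It is $\OOO_X^\cris$-linear; and since $X$ lies over $\FF_p$ we have $p\in\III_X^\cris$, so $pg(\MMM)\subseteq p\NNN\subseteq\III_X^\cris\NNN\subseteq\NNN_1$, whence $pg$ preserves the filtration. Moreover $g$ commutes with the Frobenius of the Dieudonné crystal, which is the operator $\Phi$ of the window, so $pg$ commutes with $\Phi$ as well (the compatibility of $g$ with the Verschiebung will not be needed). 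Both of these are immediate.

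The hard part will be the compatibility of $pg$ with the divided Frobenius $\Phi_1$, and this is where the factor $p$ is genuinely needed. The key structural input is that for the divided Dieudonné crystal of a $p$-divisible group the operator $\Phi_1$ is an honest division of $\Phi$ by $p$ along the filtration, i.e.\ $p\,\Phi_1=\Phi$ on $\MMM_1$ and on $\NNN_1$; this expresses the fact that the image of the Hodge filtration under the Frobenius is divisible by $p$. Granting this, together with the normalisation $\sigma_1(p)=1$ coming from $p\sigma_1=\sigma$ in Lemma~\ref{Le:Int-sigma1}, one computes for $y\in\MMM_1$ that
\[
(pg)(\Phi_1(y))=g(p\,\Phi_1(y))=g(\Phi(y))=\Phi(g(y))=\sigma_1(p)\,\Phi(g(y))=\Phi_1\big(p\,g(y)\big)=\Phi_1\big((pg)(y)\big),
\]
so that $pg$ commutes with $\Phi_1$. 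Thus $pg$ is a morphism of divided Dieudonné crystals and the cokernel is killed by $p$. I expect this last step to be the only subtle one: it rests entirely on the precise relation between the divided Frobenius $\Phi_1$ and the Frobenius $\Phi$ along $\MMM_1$, and it is exactly this discrepancy that a single factor of $p$ repairs.
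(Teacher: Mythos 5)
Your proposal is correct and is essentially the paper's own proof: the paper likewise factors the map through the equivalence $\DDfun_X$ of Theorem \ref{Th:Ffin-Fnil} and then invokes Lemma \ref{Le:win-phi-mod}, whose proof is exactly your computation ($pg$ lands in the filtration because $pg(\MMM)\subseteq p\NNN\subseteq\NNN_1$, and commutes with $\Phi_1$ by $p\Phi_1=\Phi$ together with $\sigma_1(p)=1$). The only step you treat as implicit---that a morphism of Dieudonn\'e crystals commuting with $F$ is the same thing as a $\Phi$-compatible map of the underlying $\OOO_X^{\cris}$-modules of the divided Dieudonn\'e crystals---is spelled out in the paper via Lemma \ref{Le:u**} and the isomorphism $w\colon\sigma^*\MMM\cong\phi^*\MMM$ of Remark \ref{Rk:phi-sigma}, and is a canonical translation rather than a gap.
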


\chang{If $X$ is PD torsion free, the functors $\DDfun(X)$ and $\DFfun(X)$ are interchangeable, and Theorem \ref{Th:Int-FF} can be restated as follows.}

\begin{Cor}
\label{Co:Int-FFT}
(Theorem \ref{Th:eq-Ffin-Fnil-tf})
If $X$ is $F$-finite, $F$-nilpotent, and PD torsion free,
then the functor $\DFfun_X$ is an equivalence.
\end{Cor}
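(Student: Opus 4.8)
The plan is to realize $\DFfun_X$ as a composite of two equivalences through the intermediate category $\DDcat(X)$. First I would record the basic compatibility between the divided and filtered constructions: the filtered Dieudonn\'e crystal functor should be recovered from the divided one by forgetting the extra Frobenius datum, so that
\[
\DFfun_X \;\cong\; \bigl(\DDcat(X)\to\DFcat(X)\bigr)\circ\DDfun_X,
\]
where the first arrow is the forgetful functor sending $(\MMM,\MMM_1,\Phi,\Phi_1)$ to the underlying filtered crystal $(\MMM,\MMM_1,\Phi)$.

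Granting this factorization, the statement follows formally. Since $X$ is a characteristic $p$ scheme that is $F$-finite and $F$-nilpotent, Theorem \ref{Th:Int-FF} shows that $\DDfun_X$ is an equivalence of categories. Since $X$ is PD torsion free, Proposition \ref{Pr:tf-DD-DF} shows that the forgetful functor $\DDcat(X)\to\DFcat(X)$ is an equivalence. A composite of equivalences is an equivalence, whence $\DFfun_X$ is an equivalence.

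The one step that requires genuine verification, and which I expect to be the main obstacle, is the displayed factorization itself: one must check that the \emph{already defined} functor $\DFfun_X$ coincides with the composite, rather than merely landing in the same target. Concretely this means verifying that the crystal, the Frobenius $\Phi$, and the Hodge filtration $\MMM_1$ produced by $\DDfun_X$ agree with the crystal and filtration attached to $G$ by the crystalline Dieudonn\'e functor together with its Hodge filtration. The guiding point is that the additional datum $\Phi_1$ of $\DDfun_X(G)$ is, by Lemma \ref{Le:Int-sigma1}, a division of $\sigma$ by $p$ along $\sigma_1$; once $X$ is PD torsion free, multiplication by $p$ on $\OOO_X^{\cris}$ is injective, so $\Phi_1$ is uniquely determined by $\Phi$ and nothing is lost in forgetting it. I would establish the compatibility by $\nameoftop$-descent, reducing to the affine semiperfect case $X=\Spec R$ with $\OOO_X^{\cris}(X)=A_{\cris}(R)$, where both sides are pinned down by the window description over the frame $\u A{}_{\cris}(R)$.
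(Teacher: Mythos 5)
Your proposal is correct, but it follows a genuinely different route from the one the paper uses to prove this statement. The paper's proof of Theorem \ref{Th:eq-Ffin-Fnil-tf} never mentions divided Dieudonn\'e crystals: it reduces to $X=\Spec R$ affine, takes generators $a_1,\ldots,a_r$ of $R$ over $\phi(R)$ (this is where $F$-finiteness enters), forms the semiperfect $p$-root extension $R'=R[(a_i^{1/p^\infty})]$ together with $R''$ and $R'''$, and shows that for each $T\in\{R',R'',R'''\}$ the functor $\DFfun_{\Spec T}$ is an equivalence --- combining Theorem \ref{Th:semiperfect-Fnil} ($\Phi_T^{\cris}$ is an equivalence since $T$ is semiperfect and $F$-nilpotent by Lemma \ref{Le:Fnil-remains}) with the fact that $A_{\cris}(T)$ is torsion free (Corollary \ref{Co:DAR-lci-tf} and Lemma \ref{Le:D-Acris}), so that $\Win(\u A{}_{\cris}(T))\cong\DFcat(\Spec T)$ by Remark \ref{Rk:Acris-DF}; one then concludes by fppf descent of $p$-divisible groups and $\nameoftop$-descent of filtered Dieudonn\'e crystals (Lemma \ref{Le:descent-crystals}). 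Your factorization $\DFfun_X\cong\rho_X\circ\DDfun_X$ with both factors equivalences is instead exactly the alternative route the paper itself records: the factorization is Proposition \ref{Pr:DD-functor}, the equivalence of $\DDfun_X$ is Theorem \ref{Th:Ffin-Fnil}, the equivalence of $\rho_X$ under PD torsion freeness is Proposition \ref{Pr:tf-DD-DF}, and the Remark following Theorem \ref{Th:Ffin-Fnil} states precisely this deduction. There is no circularity in your route, since the proof of Theorem \ref{Th:Ffin-Fnil} borrows only the covering $R'$, $R''$, $R'''$ from the proof of Theorem \ref{Th:eq-Ffin-Fnil-tf}, not its statement. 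What each approach buys: the paper's direct route keeps the theorem independent of the divided-Dieudonn\'e machinery of Sections \ref{Se:divided}--\ref{Se:functor} (it appears earlier in the logical order), at the cost of invoking torsion freeness separately on each member of the covering; your route is more modular once that machinery exists, pushing all the torsion-freeness bookkeeping into the single global statement Proposition \ref{Pr:tf-DD-DF}. Both ultimately rest on the same semiperfect input, Theorem \ref{Th:semiperfect-Fnil}, via the same $p$-root covering.

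Two small imprecisions to flag. First, $\rho_X$ is not literally the forgetful functor $(\MMM,\MMM_1,\Phi,\Phi_1)\mapsto(\MMM,\MMM_1,\Phi)$: an object of $\DFcat(X)$ is a Dieudonn\'e crystal $(\MMM,F,V)$ with an admissible filtration, and in the construction of $\rho_X$ the Verschiebung $V$ is produced \emph{from} $\Phi_1$ (via $V(\Phi_1(x))=1\otimes x$), and admissibility of the image filtration must be checked; this is carried out in the proof of Proposition \ref{Pr:tf-DD-DF}. Second, your heuristic that PD torsion freeness makes $\Phi_1$ uniquely determined by $\Phi$ is the right reason for $\rho_X$ being an equivalence, but the precise statement needed is Proposition \ref{Pr:tfD-tfO}, that $\OOO_X^{\cris}(U)$ is torsion free for \emph{every} $\nameoftop$-morphism $U\to X$, not just on $X$ itself. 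Your proposed verification of the factorization by $\nameoftop$-descent to the semiperfect case is exactly how Proposition \ref{Pr:DD-functor} proceeds, where the compatibility is part of the construction of $\Phi_R^{\cris}$ over $\u A{}_{\cris}(R)$.
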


An l.c.i.\ scheme is a locally noetherian scheme whose
complete local rings are complete intersection rings.
\chang{Every} $F$-finite locally noetherian scheme is excellent,
and \chang{every} excellent l.c.i.\ scheme is PD torsion free.
\chang{Hence the following result is a particular case of Corollary \ref{Co:Int-FFT}.}

\begin{Cor}
\label{Co:Int-lci}
(Corollary \ref{Co:eq-lci})
If $X$ is an $F$-finite l.c.i.\ scheme, 
then the functor $\DFfun_X$ is an equivalence.
\end{Cor}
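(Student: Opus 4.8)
The plan is to obtain this as a direct specialization of Corollary~\ref{Co:Int-FFT}, whose hypotheses ask that $X$ be $F$-finite, $F$-nilpotent, and PD torsion free. Once all three are verified for an $F$-finite l.c.i.\ scheme, the equivalence of $\DFfun_X$ follows with no further argument. The first hypothesis holds by assumption. The second is essentially free: by the definition given just above, an l.c.i.\ scheme is in particular locally noetherian, and the Introduction has already recorded that every locally noetherian $\FF_p$-scheme is $F$-nilpotent. Thus the entire weight of the proof rests on the third condition, PD torsion freeness.

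For PD torsion freeness I would pass through the two intermediate facts flagged in the paragraph preceding the statement. First, an $F$-finite locally noetherian scheme is excellent; this I would quote from Kunz's theorem relating $F$-finiteness to excellence, so that $X$ becomes an \emph{excellent} l.c.i.\ scheme. Second, and this is the substantive point, an excellent l.c.i.\ scheme is PD torsion free. Since torsion freeness of the universal PD envelope of an affine open $U=\Spec R$ is a local question that can be detected after passing to completions, I would reduce to the complete local rings of $R$; here excellence is exactly what guarantees that these completions remain complete intersection rings, so that the l.c.i.\ hypothesis is not lost in the reduction and one is genuinely working with a complete intersection presentation.

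The main obstacle is precisely this last step: showing that the PD envelope attached to a complete intersection presentation carries no $p$-torsion. Concretely, after completing one writes $R=P/J$ with $P$ a formally smooth $\ZZ_p$-algebra and $J$ generated, over the divided powers already present on $(p)$, by a regular sequence lifting a regular system of generators of the complete intersection ideal. The strategy is then to invoke Berthelot's structure theorem for the divided power envelope of a regular immersion: for such an ideal the PD envelope is a divided power polynomial algebra over $P$ on the locally free conormal module, hence flat, and its $p$-adic completion is flat over $\ZZ_p$ and in particular $p$-torsion free. The delicate points I expect to spend effort on are coordinating the divided power structure on the ideal $(p)$ with that on the complete intersection generators, and verifying that the local freeness of the conormal module of the l.c.i.\ really does produce a regular sequence locally, so that Berthelot's flatness result applies as stated. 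Once torsion freeness is secured, $X$ satisfies all three hypotheses, and Corollary~\ref{Co:Int-FFT} yields that $\DFfun_X$ is an equivalence.
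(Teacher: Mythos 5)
Your top-level reduction is exactly the paper's: $F$-finiteness is the hypothesis, local noetherianness gives $F$-nilpotence, Kunz's theorem gives excellence, excellent l.c.i.\ gives PD torsion freeness, and then Corollary \ref{Co:Int-FFT} (Theorem \ref{Th:eq-Ffin-Fnil-tf}) applies. The paper disposes of the PD-torsion-freeness step by citation: it is Proposition \ref{Pr:PD-tor-lci}, whose proof is a reference to de Jong--Messing. Had you done the same, your proof would be complete and identical to the paper's.

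The gap lies in your in-line sketch of that step. First, the role you assign to excellence is misplaced: by the paper's definition, an l.c.i.\ scheme is a locally noetherian scheme whose \emph{complete} local rings are complete intersections, so ``the completions remain complete intersection rings'' holds by definition and costs nothing. What excellence actually buys is that the completion map $R\to\hat R$ is a regular homomorphism, and that is needed precisely for the step you declare routine, namely that torsion freeness ``can be detected after passing to completions.'' This is the genuinely hard part: universal PD envelopes do not commute with localization or completion, so transferring torsion-freeness of $D(\hat R)$ back to $D(R)$ requires an argument --- in de Jong--Messing it is done by writing the regular map $R\to\hat R$ as a filtered colimit of smooth $R$-algebras (N\'eron--Popescu desingularization) and using flatness of PD envelopes along smooth, or more generally l.c.i., maps (compare the paper's Lemma \ref{Le:DAR-lci} and Corollary \ref{Co:DAR-lci-tf}). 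By contrast, the part you single out as the ``main obstacle'' --- Berthelot's flatness of the PD envelope of a regular immersion, giving torsion-freeness for a complete intersection presentation of $\hat R$ --- is the comparatively standard ingredient; it is the same mechanism the paper invokes via [BBM, Lemma 2.3.3] in the proof of Lemma \ref{Le:DAR-lci}. So the proposal is correct if you simply quote the fact that an excellent l.c.i.\ scheme is PD torsion free (Proposition \ref{Pr:PD-tor-lci}), but the proof you sketch for that fact omits its essential ingredient.
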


\chang{This applies in particular if $X$ is $F$-finite and regular,
or equivalently if $X$ is locally noetherian and locally has a finite $p$-basis.

More generally, if $X$ locally has a $p$-basis, then $X$ is PD torsion free, moreover the forgetful functor $\DFcat(X)\to\Dcat(X)$ is an equivalence, and hence the three functors $\DDfun_X$, $\DFfun_X$, and $\Dfun_X$ are interchangeable. In this case, the $F$-finiteness condition of Corollary \ref{Co:Int-FFT} can be omitted as follows.}

\begin{Thm}
\label{Th:Int-pbasis}
(Theorem \ref{Th:eq-p-basis})
If $X$ locally has a $p$-basis, then the 
functor $\Dfun_X$ is an equivalence.
\end{Thm}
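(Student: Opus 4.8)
The plan is to identify the unfiltered functor $\Dfun_X$ with the divided functor $\DDfun_X$ and then to establish the latter by $\nameoftop$-descent to a perfect cover. Since the assertion is local on $X$, I may assume $X=\Spec R$ with $R$ admitting a $p$-basis $(t_i)_{i\in I}$. Such an $R$ is formally smooth over $\FF_p$, hence reduced and PD torsion free, so Proposition \ref{Pr:tf-DD-DF} identifies $\DDcat(X)$ with $\DFcat(X)$. I claim moreover that over such $X$ the forgetful functor $\DDcat(X)\to\Dcat(X)$ is an equivalence, i.e.\ the bare Dieudonn\'e crystal already determines the divided one: the Frobenius and Verschiebung of a Dieudonn\'e crystal determine the window submodule $\MMM_1$ (as the image of the Verschiebung), PD torsion-freeness makes the divided Frobenius $\Phi_1$, characterized by $p\Phi_1=\Phi$ on $\MMM_1$, unique, and the Frobenius lift supplied by the $p$-basis shows by a Mazur-type rigidity that the resulting filtration is the unique admissible one. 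Granting this, $\Dfun_X$ is identified with $\DDfun_X$, and it suffices to prove that $\DDfun_X$ is an equivalence.

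To prove that $\DDfun_X$ is an equivalence I would descend along a perfect cover. Adjoining all $p$-power roots $t_i^{1/p^n}$ of the basis elements defines a $\nameoftop$-cover $R\to R_\infty$; since $(t_i)$ is a $p$-basis and $R$ is reduced, the Frobenius of $R_\infty$ is bijective, so $R_\infty$ is perfect. The cover is integral and purely inseparable, so each term of the \v{C}ech nerve $R_\infty\otimes_R\cdots\otimes_R R_\infty$ is semiperfect with nilpotent Frobenius kernel, hence $F$-nilpotent, with reduction $R_\infty$. Over every such term $\DDfun$ is an equivalence: over the perfect term $R_\infty$ by classical Dieudonn\'e theory, and over the semiperfect terms by the window theory over $\u A{}_{\cris}$ that underlies Theorem \ref{Th:Int-FF}. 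Since $\BT$ and $\DDcat$ both satisfy $\nameoftop$-descent and $\DDfun$ is compatible with it, these equivalences glue to the equivalence of $\DDfun_X$, and hence of $\Dfun_X$ by the first paragraph.

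The main obstacle is twofold. First, the perfect cover $R_\infty$ and the semiperfect layers of its nerve are in general no longer $F$-finite, so one cannot invoke Theorem \ref{Th:Int-FF} directly; one needs the semiperfect equivalence for $\DDfun$ in a form that dispenses with the finiteness hypothesis, which is exactly where the window theory over $A_{\cris}$ is indispensable, and where it matters that these layers carry genuine PD torsion so that $\Dcat$ is too coarse there and only the divided category descends correctly. Second, and more conceptually, upgrading from $\DFfun$ to the unfiltered $\Dfun$ rests on the Mazur-type rigidity that over a $p$-basis the admissible filtration is unique and reconstructible from the Frobenius; this is the step that genuinely uses the $p$-basis rather than mere PD torsion-freeness, and is what distinguishes this statement from Corollary \ref{Co:Int-lci}.
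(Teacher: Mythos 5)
Your reduction steps (localize, identify $\Dcat$ with the filtered/divided category over a scheme with a $p$-basis, pass to the perfect hull $R_\infty=R^{\per}$ and descend) follow the paper's route, but the pivotal claim in your second paragraph is false: the terms of the \v{C}ech nerve are \emph{not} $F$-nilpotent when the $p$-basis is infinite, and that is exactly the case this theorem adds beyond Corollary \ref{Co:Int-lci}. Concretely, by Lemma \ref{Le:R''-RI} one has
\[
R''=R_\infty\otimes_R R_\infty\;\cong\;R_\infty[\{Y_i\}_{i\in I}]^{\per}/(\{Y_i\}_{i\in I}),
\]
and the kernel of Frobenius of $R''$ is the ideal generated by the classes of the $Y_i^{1/p}$. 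This ideal is nil but not nilpotent for infinite $I$: for pairwise distinct indices $i_1,\dots,i_n$ the product $(Y_{i_1}\cdots Y_{i_n})^{1/p}$ has all exponents $1/p<1$, hence does not lie in $(\{Y_i\}_{i\in I})$ and is nonzero in $R''$, for every $n$. ("Purely inseparable integral cover" gives you a nil Frobenius kernel, not a nilpotent one.) Consequently the semiperfect equivalence underlying Theorem \ref{Th:Int-FF}, namely Theorem \ref{Th:semiperfect-Fnil}, which requires $F$-nilpotence, does not apply to $R''$ or $R'''$, and your descent argument has nothing to descend. If instead the $p$-basis is finite, your argument does go through, but then $X$ is $F$-finite and the statement is already contained in Theorem \ref{Th:Int-FF}/Corollary \ref{Co:Int-lci}; the whole content of Theorem \ref{Th:Int-pbasis} is the infinite case.

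The paper closes precisely this gap with a separate argument for ``infinite complete intersections'' (Theorem \ref{Th:semiperfect-RI}, Corollary \ref{Co:semiperfect-RI}): it constructs an explicit $p$-adic, $\ZZ_p$-flat lift $A=W(S)/K$ of $R''$ (Lemma \ref{Le:lift-RI}), proves that $A_{\cris}(R'')$ is torsion free (Lemma \ref{Le:Acris-RI}), shows that the comparison map $\varkappa:\u A{}_{\cris}(R'')\to\u A$ induces an equivalence of window categories (Lemma \ref{Le:varkappa-RI}), and then invokes \cite[Th.~5.7]{Lau:Semiperfect}, whose hypothesis is supplied by the \emph{pointwise} nilpotent divided powers on $\Ker(\phi_{R''})$ of Lemma \ref{Le:PD-tI} --- pointwise nilpotence being exactly the weakening of $F$-nilpotence that survives when $I$ is infinite. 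A secondary misconception in your closing paragraph: these nerve terms do \emph{not} ``carry genuine PD torsion''; by Lemma \ref{Le:Acris-RI} they are PD torsion free, which is why the paper can and does descend the filtered functor $\DFfun$ directly (via Remark \ref{Rk:Acris-DF} and Lemma \ref{Le:descent-crystals}) and only at the very end passes from $\DFcat$ to $\Dcat$ by Lemma \ref{Le:DF-D-eq}; divided Dieudonn\'e crystals are not needed for this particular theorem.
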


This applies for example when $X$ is smooth over a field.
\chang{The proof of Theorem \ref{Th:Int-pbasis}
is again a reduction to the semiperfect case by
$\nameoftop$-descent, using that the relevant semiperfect rings
have an explicit description that allows to apply
a variant of the methods of \cite{Lau:Semiperfect}.}

\begin{Remark}
\chang{Corollary \ref{Co:Int-FFT} can be proved directly by
$\nameoftop$-descent from the semiperfect case,
without reference to divided Dieudonn\'e crystals.
This route will be followed in the main text in order to reach the results for the classical functors as directly as possible. This causes a slight duplication of effort, notably the proofs of Theorems \ref{Th:eq-Ffin-Fnil-tf} and \ref{Th:Ffin-Fnil} are essentially parallel.}
\end{Remark}

\begin{Remark}
\chang{The intersection of Theorems \ref{Th:Int-FF} and \ref{Th:Int-pbasis} consists of $\FF_p$-schemes which locally admit a finite $p$-basis. It is not clear if the finiteness hypotheses in Theorem \ref{Th:Int-FF} are necessary.}
\end{Remark}

\begin{Remark}
\chang{The forgetful functor $\DFcat(X)\to\Dcat(X)$ is fully faithful if $X$ is an l.c.i.\ scheme and an equivalence if $X$ locally has a $p$-basis.}
Hence the conclusions of  
Theorem \ref{Th:Int-pbasis} and Corollary \ref{Co:Int-lci}
mean that the functor $\Dfun_X$ is fully faithful 
with essential image $\DFcat(X)$,
which confirms \cite[p.~108, Probl\`eme 1]{Grothendieck:Montreal}
in these cases.
\end{Remark}

\begin{Remark}
A weaker version of Corollary \ref{Co:Int-lci} was announced in
\cite{Lau:OWR2015}, 
based on a relative version of the theory of Dieudonn\'e displays
of \cite{Zink:DDisp, Lau:Relations}.
\end{Remark}

\subsection*{Previously known properties of the Dieudonn\'e functors}

Corollaries \ref{Co:Int-isog}, \ref{Co:Int-FFT}, \ref{Co:Int-lci}
and Theorem \ref{Th:Int-pbasis}
extend known results on the functors $\Dfun_X$ and $\DFfun_X$,
which we try to collect here.

\begin{enumerate}
\item
If $X$ is perfect 
(the case of an empty $p$-basis),
the functor $\Dfun_X$ and its analogue
for finite group schemes are an equivalence
by a result of Gabber, proved in \cite{Berthelot:Parfait} 
for perfect valuation rings;
see also \cite{Lau:Smoothness}.
\item
If $X$ is normal and locally irreducible 
\chang{and locally has a $p$-basis,}
then $\Dfun_X$ and its analogue
for finite group schemes are fully faithful
by \cite{Berthelot-Messing}.
\item
If $\FX$ is a formally smooth formal scheme in characteristic $p$ 
such that $\FX_{\red}$ is of finite type over a field with a finite
$p$-basis, 
the functor $\Dfun_\FX$ is an equivalence by \cite{Jong:Crystalline}.
This can be deduced from the case of $F$-finite regular schemes,
which is part of Theorem \ref{Th:Int-pbasis}; see Remark \ref{Rk:dJ-case}.
\item
If $X$ is of finite type over a field with a finite $p$-basis,
the functor $\Dfun_X$ is fully faithful up to isogeny by \cite{Jong:Crystalline}.
\item
If $X$ is a locally noetherian excellent l.c.i.\ scheme,
then $\Dfun_X$ is fully faithful by \cite{Jong-Messing}.
Since $F$-finite and noetherian implies excellent,
this covers the full faithfulness part of Corollary \ref{Co:Int-lci}.
\item
If $X$ is the spectrum of a perfect ring modulo a finite regular sequence,
the functor $\Dfun_X$ is fully faithful by \cite{Scholze-Weinstein}.
This is also a consequence of Corollary \ref{Co:Int-FFT}.
\item 
If $\OOO_K$ is a complete discrete valuation ring of characteristic zero with perfect residue field of characteristic $p\ge 3$,
Corollary \ref{Co:Int-lci} for the scheme $X=\Spec\OOO_K/p$
is equivalent to Breuil's classification of $p$-divisible groups over $\OOO_K$ by filtered modules in \cite{Breuil:Groupes}.
Breuil's result was extended to a relative setting (formally smooth over $\OOO_K$) in \cite{Kim:Relative} and to complete regular local rings of higher dimension in \cite{Cais-Lau};
this corresponds to additional cases of Corollaries \ref{Co:Int-FFT} and \ref{Co:Int-lci}.
\end{enumerate}

\subsection*{\chang{Relation with recent work}}

\chang{Shortly after this article appeared online, a prismatic Dieudonn\'e theory was developed in \cite{Anschuetz-LeBras}, whose main result is an equivalence between $p$-divisible groups and prismatic Dieudonn\'e crystals over quasi-syntomic ($p$-complete) rings.} 
For $\FF_p$-algebras, prismatic and crystalline Dieudonn\'e modules coincide, moreover quasi-syntomic $\FF_p$-algebras are PD torsion free. So \cite{Anschuetz-LeBras} and Theorem \ref{Th:Int-FF} above extend crystalline Dieudonn\'e theory in different directions. The results of \cite{Anschuetz-LeBras} also yield Corollary \ref{Co:Int-FFT} and Theorem \ref{Th:Int-pbasis}.

\chang{More recently, a classification of $p$-divisible groups and their truncated analogues over general $p$-nilpotent schemes in terms of vector bundles on the syntomification was proved in \cite{Gardner-Madapusi}, building on the theory of prismatization of Bhatt--Lurie and Drinfeld.}

\bigskip
\noindent
This article is organized as follows. 
Sections \ref{Se:Fin-Frob}--\ref{Se:frames}
contain generalities on $F$-finite schemes, 
the crystalline Dieudonn\'e functor, 
schemes with torsion free PD envelopes,
and the notion of frames, including a sheaf version.
After the results of \cite{Lau:Semiperfect} are extended
to more general classes of semiperfect rings 
in section \ref{Se:semiperfect},
Corollary \ref{Co:Int-FFT} and Theorem \ref{Th:Int-pbasis}
are deduced using the $\nameoftop$-topology 
in section \ref{Se:descent}.
The notion of divided Dieudonn\'e crystals is introduced 
in section \ref{Se:divided}, and Theorem \ref{Th:Int-FF} is proved in section \ref{Se:functor}.
Section \ref{Se:Explicit-DD} gives an explicit description of divided Dieudonn\'e crystals in terms of windows over PD envelopes with a connection.
Finally, the relation between
divided Dieudonn\'e crystals and the display associated to
a $p$-divisible group as in \cite{Zink:Display, Lau:Smoothness}
is briefly discussed in section \ref{Se:displays}.

\subsection*{Acknowledgements}

\chang{The author is grateful to the anonymous referee for helpful comments. 
This work was 
funded by the Deutsche Forschungsgemeinschaft (DFG, German Research
Foundation) -- Project-ID 491392403 -- TRR 358.
}

\endgroup

\section{Finiteness conditions on the Frobenius map}
\label{Se:Fin-Frob}

An $\FF_p$-algebra $R$ is called $F$-finite if 
the Frobenius endomorphism $\phi_R:R\to R$ is finite.
An $F$-finite noetherian $\FF_p$-algebra is excellent by 
\cite[Th.~2.5]{Kunz:Noetherian}.
\chang{Following \cite{Berthelot-Messing}, a $p$-basis of an $\FF_p$-algebra $R$ is a family of elements $(x_i)_{i\in I}$ of $R$ such that the homomorphism 
\begin{equation}
\label{Eq:p-basis}
R[\{T_i\}_{i\in I}]/(\{T_i^p-x_i\}_{i\in I})\to R
\end{equation}
that extends $\phi_R$ by $T_i\mapsto x_i$ is bijective.}
For reference we recall:

\begin{Lemma}
\label{Le:F-finite-reg}
For a noetherian $F$-finite\/ $\FF_p$-algebra $R$ the following are equivalent:
\begin{enumerate}
\item
\label{Le:F-finite-reg-pbasis}
\chang{$R$ locally has a finite $p$-basis,}
\item
\label{Le:F-finite-reg-smooth}
$R$ is formally smooth over $\FF_p$,
\item
\label{Le:F-finite-reg-top-smooth}
$R$ is formally smooth over $\FF_p$ with respect to the $I$-adic topology
for an ideal $I\subseteq\Rad(A)$,
where $\Rad(R)$ is the Jacobson radical,
\item
\label{Le:F-finite-reg-regular}
$R$ is regular.
\end{enumerate}
\end{Lemma}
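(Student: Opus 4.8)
The plan is to prove the cycle of implications \eqref{Le:F-finite-reg-pbasis}$\Rightarrow$\eqref{Le:F-finite-reg-smooth}$\Rightarrow$\eqref{Le:F-finite-reg-top-smooth}$\Rightarrow$\eqref{Le:F-finite-reg-regular}$\Rightarrow$\eqref{Le:F-finite-reg-pbasis}. Since regularity and the existence of a finite $p$-basis are Zariski-local, and formal smoothness is stable under localization, for the implications involving \eqref{Le:F-finite-reg-regular} I would first reduce to the case where $R$ is local with maximal ideal $\Fm$ and residue field $k$. Note that $R_\Fm$ is still $F$-finite, and the induced Frobenius shows that $k$ is $F$-finite as well, so $[k:k^p]=p^s<\infty$.

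For \eqref{Le:F-finite-reg-pbasis}$\Rightarrow$\eqref{Le:F-finite-reg-smooth}: given a finite $p$-basis $t_1,\dots,t_n$, so that $R$ is free over $R^p$ on the monomials $t^e=\prod_i t_i^{e_i}$ with $0\le e_i<p$, I would verify the infinitesimal lifting property by the standard argument. For a square-zero extension $C\to C_0$ of $\FF_p$-algebras with kernel $J$ and a map $f\colon R\to C_0$, the $p$-power map on $C$ factors through $C_0$, because additivity of the Frobenius together with $J^2=0$ gives $(c+j)^p=c^p$; hence $\tilde f$ is canonically forced on $R^p$, and choosing arbitrary lifts $\tilde t_i\in C$ of $f(t_i)$ then extends this uniquely to a lift $\tilde f\colon R\to C$ on the free generators. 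This is the $p$-basis criterion for formal smoothness over $\FF_p$ (EGA $0_{\mathrm{IV}}$, \S21, or Matsumura, \S26).

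The implication \eqref{Le:F-finite-reg-smooth}$\Rightarrow$\eqref{Le:F-finite-reg-top-smooth} is formal: the discrete topology is finer than every $I$-adic topology, so formal smoothness for the discrete topology is the strongest of these conditions and a fortiori yields the required $I$. For \eqref{Le:F-finite-reg-top-smooth}$\Rightarrow$\eqref{Le:F-finite-reg-regular} I would localize again; since $I\subseteq\Rad(R)=\Fm$, the $I$-adic topology refines the $\Fm$-adic one, whence formal smoothness for the $I$-adic topology implies formal smoothness for the $\Fm$-adic topology. A noetherian local $\FF_p$-algebra that is formally smooth over the perfect field $\FF_p$ for the $\Fm$-adic topology is regular; this is the classical relation between topological formal smoothness and regularity (EGA $0_{\mathrm{IV}}$, 22.5.8).

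The main obstacle is \eqref{Le:F-finite-reg-regular}$\Rightarrow$\eqref{Le:F-finite-reg-pbasis}. By Kunz's theorem, regularity of the noetherian ring $R$ is equivalent to flatness of the Frobenius $\phi$, i.e.\ to $R$ being flat over $R^p$; combined with $F$-finiteness this makes $R$ a finite free $R^p$-module (after localizing, $R$ is a domain so $\phi$ is injective, $R^p\cong R$ is local noetherian, and finite flat over a local ring is free). To produce an actual $p$-basis I would take a regular system of parameters $x_1,\dots,x_d$ of $R$ together with lifts $y_1,\dots,y_s$ of a finite $p$-basis of $k$, and claim that $\{x_i,y_j\}$ is a $p$-basis. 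The crux is to show that the $p^{d+s}$ monomials $x^ay^b$ with $0\le a_i,b_j<p$ form an $R^p$-basis of $R$. Here additivity of the Frobenius gives $\{c^p:c\in\Fm\}R=(x_1^p,\dots,x_d^p)$, so by Nakayama over the local ring $R^p$ it suffices to see that these monomials span the quotient $R/(x_1^p,\dots,x_d^p)$ over $k^p$; passing to the completion $\hat R\cong k[[x_1,\dots,x_d]]$ (Cohen) one computes $[R:R^p]=[\hat R:\hat R^p]=[k:k^p]\,p^d=p^{s+d}$, which matches the number of monomials, while the spanning follows from the associated-graded structure of the regular ring together with the $p$-basis property of $k$. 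I expect the verification of this basis claim---reconciling the rank count with the monomial spanning---to be the technical heart of the argument, the remaining implications being either formal or standard citations.
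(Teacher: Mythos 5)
Your proposal is correct, and for the first three implications it is essentially the paper's own argument: your lifting argument for \eqref{Le:F-finite-reg-pbasis}$\Rightarrow$\eqref{Le:F-finite-reg-smooth} is exactly the standard proof behind the paper's citation of \cite[$0_{\,\rm IV}$ (21.2.7)]{EGA}, the implication \eqref{Le:F-finite-reg-smooth}$\Rightarrow$\eqref{Le:F-finite-reg-top-smooth} is formal in both, and \eqref{Le:F-finite-reg-top-smooth}$\Rightarrow$\eqref{Le:F-finite-reg-regular} is the same localization plus \cite[$0_{\,\rm IV}$ (22.5.8)]{EGA}. The genuine divergence is in \eqref{Le:F-finite-reg-regular}$\Rightarrow$\eqref{Le:F-finite-reg-pbasis}. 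The paper takes the same candidate elements as you (lifts of a $p$-basis of the residue field $k$ together with a minimal generating set of $\Fm$), observes that they form a $p$-basis of the completion $\hat R\cong k[[x_1,\ldots,x_d]]$, and then descends the basis property along the faithfully flat map $R\to\hat R$, the one technical point being the isomorphism $\hat R\otimes_{R,\phi}R\cong\hat R$ via $\phi\otimes 1$, i.e.\ compatibility of Frobenius with completion, which is where $F$-finiteness enters. You instead invoke Kunz's theorem (regular $\Rightarrow$ $\phi$ flat) to make $R$ finite free over $R^p$, compute the rank through the completion, and prove generation by Nakayama plus a filtration/associated-graded argument, with the basis property then following from the equality of cardinality and rank. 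Both routes work, but note two things. First, the paper deliberately avoids Kunz's theorem (the Remark immediately after the lemma states it "will not be used here"), so your argument carries a heavier input at that point. Second, your rank identity $[R:R^p]=[\hat R:\hat R^p]$ is not free: it requires $R\otimes_{R^p}(R^p)^\wedge\cong\hat R$ and $(R^p)^\wedge\cong(\hat R)^p$, which is precisely the Frobenius--completion compatibility the paper isolates; once you make that explicit, faithfully flat descent of the basis property along $R\to\hat R$ is available and renders both Kunz's theorem and your Nakayama-plus-graded spanning step unnecessary. So your proof is sound, but the paper's descent argument achieves the conclusion from a proper subset of your ingredients.
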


\begin{proof}
\eqref{Le:F-finite-reg-pbasis}$\Rightarrow$\eqref{Le:F-finite-reg-smooth} 
follows from \cite[$0_{\,\rm IV}$ (21.2.7)]{EGA}
or \cite[Lemma 1.1.2]{Jong:Crystalline}, 
and \eqref{Le:F-finite-reg-smooth}$\Rightarrow$\eqref{Le:F-finite-reg-top-smooth} 
is clear.
\eqref{Le:F-finite-reg-top-smooth}$\Rightarrow$\eqref{Le:F-finite-reg-regular}
follows from \cite[$0_{\,\rm IV}$ (22.5.8)]{EGA} or \cite[Thm.~28.7]{Matsumura:Ring}
applied to the localizations of $A$ at all maximal ideals.
To prove \eqref{Le:F-finite-reg-regular}$\Rightarrow$\eqref{Le:F-finite-reg-pbasis}
one can assume that $R$ is local.
Let $a_1,\ldots,a_r\in R$ map to a $p$-basis of the residue field $R/\Fm_R$
and let $a_{r+1},\ldots,a_n$ be a minimal set of generators of the maximal
ideal $\Fm_R$. 
Since the completion $\hat R$ is isomorphic to a power series ring,
$a_1,\ldots,a_n$ form a $p$-basis of $\hat R$, 
and thus a $p$-basis of $R$ by faithfully flat descent;
note that $\hat R\otimes_{R,\phi}R\cong\hat R$ via $\phi\otimes 1$.
\end{proof}

\begin{Remark}
\label{Rk:Rings-dJ}
The basic rings in \cite[1.3.1]{Jong:Crystalline} are noetherian 
$\FF_p$-algebras $A$ which are $I$-adically complete and formally
smooth for the $I$-adic topology such that $A/I$ is of finite type
over a field $k$ with a finite $p$-basis. Such rings are $F$-finite
and thus satisfy the equivalent conditions of Lemma \ref{Le:F-finite-reg}.
\end{Remark}

\begin{Remark}
\label{Re:p-basis-formally-smooth}
\chang{The implication \eqref{Le:F-finite-reg-pbasis}$\Rightarrow$\eqref{Le:F-finite-reg-smooth} of Lemma \ref{Le:F-finite-reg} generalises as follows:
For any $\FF_p$-algebra $R$ with a $p$-basis $(x_i)_{i\in I}$, the homomorphism $\FF_p\to R$ satisfies the lifting criterion of formal smoothness with respect to thickenings of $\FF_p$-algebras $\pi:B\to B/J$ with $\phi_B(J)=0$. This follows from the proof of \cite[$0_{\,\rm IV}$ (21.2.7)]{EGA}. Explicitly, let $\bar\phi:B/J\to B$ be the homomorphism induced by $\phi_B$.
If a homomorphism $g:R\to B/J$ is given and if $b_i\in B$ is an inverse image of $g(x_i)$, the homomorphism $R[(T_i)_i]/((T_i^p-x_i)_i)\to B$ defined by $\bar\phi\circ g$ on $R$ and by $T_i\mapsto x_i$ defines a lift $\tilde g:R\to B$ of $g$ via \eqref{Eq:p-basis}.}
\end{Remark}

\begin{Remark}
\chang{In addition to the equivalences of Lemma \ref{Le:F-finite-reg},}
a noetherian $\FF_p$-algebra $R$ is regular 
iff $\phi:R\to R$ is flat by \cite[Thm.~2.1]{Kunz:Regular}. 
\end{Remark}

\begin{Defn}
\label{Def:F-nilp}
An $\FF_p$-algebra $R$ will be called $F$-nilpotent
if the kernel of the Frobenius endomorphism $\phi_R:R\to R$
is a nilpotent ideal. 
An $\FF_p$-scheme $X$ \chang{will be} called $F$-nilpotent if 
for \chang{every} affine open subscheme $U=\Spec R$ of $X$ the ring $R$ is $F$-nilpotent.
\end{Defn}

\begin{Remark}
If the kernel of $\phi_R$ is finitely generated, then $R$ is $F$-nilpotent,
in particular every noetherian $\FF_p$-algebra is $F$-nilpotent.
\end{Remark}

Let $R$ be an $\FF_p$-algebra.
For a given family $(a_i)_{i\in I}$ of elements of $R$ we consider
the rings
\begin{equation}
\label{Eq:Rapn}
R_n=R[(a_i^{1/p^n})_{i\in I}]=R[(T_i)_{i\in I}]/((T_i^{p^n}-a_i)_{i\in I})
\end{equation}
and
\begin{equation}
\label{Eq:Rapinf}
R_\infty=R[(a_i^{1/p^\infty})_{i\in I}]=\varinjlim_nR_n
\end{equation}
with respect to the inclusions $R_n\to R_{n+1}$ defined by $T_i\mapsto T_i^p$.

\begin{Lemma}
\label{Le:Fnil-remains}
If $R$ is an $F$-nilpotent\/ $\FF_p$-algebra and $a_1,\ldots,a_r\in R$,
then $R_\infty=R[(a_i^{1/p^\infty})_{1\le i\le r}]$ is $F$-nilpotent.
\end{Lemma}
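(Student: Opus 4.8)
The plan is to reduce the claim to the single root extraction $R_1=R[(a_i^{1/p})_i]$ (the case $n=1$ of \eqref{Eq:Rapn}) and to settle that case by base change along the Frobenius of $R$. Write $K_S=\Ker(\phi\colon S\to S)=\{x\in S:x^p=0\}$; by hypothesis $K_R^N=0$ for some $N$, and a ring is $F$-nilpotent precisely when this ideal is nilpotent.

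First I would reduce $R_\infty$ to $R_1$. Each $R_n$ is free over $R$ on the monomials $\prod_i a_i^{e_i}$ with $0\le e_i<p^n$, so $R_\infty$ is free over $R$ on $\{a^\alpha=\prod_i a_i^{\alpha_i}:\alpha_i\in\ZZ[1/p]\cap[0,1)\}$ and is free, hence faithfully flat, over $R_1$. The crucial identity is $K_{R_\infty}=K_{R_1}\cdot R_\infty$; only $\subseteq$ needs proof. Given $w=\sum_\alpha c_\alpha a^\alpha\in K_{R_\infty}$, split off the first $p$-adic digit of each exponent: put $k_i=\lfloor p\alpha_i\rfloor\in\{0,\dots,p-1\}$ and $\beta_i=p\alpha_i-k_i\in[0,1)$, so $a^\alpha=a^{\beta/p}b^k$ with $b^k=\prod_i a_i^{k_i/p}\in R_1$. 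Collecting terms gives $w=\sum_\beta a^{\beta/p}z_\beta$ with $z_\beta=\sum_k c_{(\beta+k)/p}b^k\in R_1$, and expanding $w^p=0$ in the free $R$-basis yields $z_\beta^p=\sum_k c_{(\beta+k)/p}^p a^k=0$, i.e.\ $z_\beta\in K_{R_1}$. Faithful flatness then gives $K_{R_\infty}^M=K_{R_1}^M R_\infty$, which vanishes iff $K_{R_1}^M=0$; so it is enough to prove $R_1$ is $F$-nilpotent. This is also what yields a bound independent of $n$: iterating the one-step result up the tower $R\subseteq R_1\subseteq R_2\subseteq\cdots$ would give bounds that grow with $n$.

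For the one-step case set $B=R_1\otimes_{R,\phi}R$. Since $R_1=R[T_1,\dots,T_r]/(T_i^p-a_i)$, base change replaces $a_i$ by $a_i^p$, and $T_i^p-a_i^p=(T_i-a_i)^p$; writing $\tau_i=T_i-a_i$ we get
\[
B\;\cong\;R[\tau_1,\dots,\tau_r]/(\tau_1^p,\dots,\tau_r^p).
\]
A short computation gives $K_B=(K_R,\tau_1,\dots,\tau_r)$, and because $K_R^N=0$ and $\tau_i^p=0$ this ideal satisfies $K_B^{\,N+r(p-1)}=0$, so $B$ is $F$-nilpotent. The structure map $c\colon R_1\to B$, $w\mapsto w\otimes1$, restricts to $\phi$ on $R$ and sends $T_i\mapsto T_i$, so $c(\sum_k s_kT^k)=\sum_k s_k^p T^k$; reading this off in the basis $\{T^k\}$ shows $\Ker c=K_R\cdot R_1$. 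Since $c$ carries $K_{R_1}$ into $K_B$, we obtain $K_{R_1}^{\,N+r(p-1)}\subseteq\Ker c=K_R R_1$, hence $K_{R_1}^{\,(N+r(p-1))N}\subseteq(K_R R_1)^N=K_R^N R_1=0$, and $R_1$ (so also $R_\infty$) is $F$-nilpotent.

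I expect the one-step case to be the real obstacle. The naive idea—pass to $R_{\mathrm{red}}$, where the analogue is visible, and lift—fails twice over: adjoining $p$-th roots turns a reduced ring into a non-reduced one, and the nilradical of an $F$-nilpotent ring need not be nilpotent (for instance $\FF_p[u_k:k\ge2]/(u_k^{p^k},\,u_iu_j\ (i\ne j))$ has $K=(u_k^{p^{k-1}})$ with $K^p=0$ but non-nilpotent nilradical), so the reduction modulo the nilradical destroys the only nilpotent ideal available. The Frobenius base change $R_1\otimes_{R,\phi}R\cong R[\tau]/(\tau_i^p)$ is precisely what circumvents this, producing a bona fide nilpotent ideal $(\tau_i)$ from the root variables while recording $K_R$ in the nilpotent kernel $K_R R_1$. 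The remaining work—the
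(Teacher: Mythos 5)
Your proof is correct and follows essentially the same route as the paper's: both reduce to the single extraction $R_1$ and then exploit the Frobenius base change $R_1\otimes_{R,\phi}R\cong R[\tau_1,\dots,\tau_r]/(\tau_i^p)$, whose structure map $R_1\to B$ has kernel $K_R R_1$. The paper packages the reduction via cocartesian squares (so that $\phi\colon R_\infty\to R_\infty$ is literally a base change of $\bar\phi\colon R_1\to R$) and factors $\bar\phi$ as $R_1\to R[((a_i^p)^{1/p})]\to R$ with both kernels nilpotent, but this differs from your explicit basis computation and your use of the nilpotent ideal $K_B$ only cosmetically.
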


\begin{proof}
Let $R_n=R[(a_i^{1/p^n})_{1\le i\le r}]$ as a subring of $R_\infty$.
The Frobenius $\phi$ of $R_n$ 
induces a homomorphism $\bar\phi:R_n\to R_{n-1}$,
and the commutative diagram
\[
\xymatrix@M+0.2em{
R_n \ar[r] \ar[d]_{\bar\phi} &
R_{n+1} \ar[d]^{\bar\phi} \\
R_{n-1} \ar[r] & R_n
}
\]
is cocartesian, 
thus $\phi:R_\infty\to R_\infty$
is the base change of $\bar\phi:R_1\to R$.
It suffices to show that the kernel of $\bar\phi$ is nilpotent.
But this $\bar\phi$ factors as
\[
R_1=R[a_i^{1/p}]\xrightarrow \psi R[(a_i^p)^{1/p}]\xrightarrow \pi R
\]
where $\psi$ is the base change of $\phi_R:R\to R$ and $\pi$
maps $(a_i^p)^{1/p}$ to $a_i$. 
The kernel of $\psi$ is nilpotent since this holds for the
kernel of $\phi_R$,
and the kernel of $\pi$
is generated by $(a_i^p)^{1/p}-a_i$ for $i=1,\ldots,r$,
thus nilpotent as well.
\end{proof}

\section{Dieudonn\'e crystals}

\label{Se:DC}

In this section we recall Dieudonn\'e crystals,
admissible filtrations, and the crystalline Dieudonn\'e functor.
As a base we take the PD scheme 
$(\Sigma,p\OOO_{\Sigma},\gamma)$ with $\Sigma=\Spec\ZZ_p$
where $\gamma$ are the canonical divided powers.
Sometimes we will also consider $\Sigma_n=\Spec\ZZ/p^n$,
viewed as a PD subscheme of $\Sigma$.

Let $X$ be a scheme on which $p$ is locally nilpotent and 
$X_0=X\times\Spec\FF_p$.
Let $\CRIS(X/\Sigma)$ be the big fppf crystalline site as in \cite{BBM}. 
Its objects are PD thickenings $(U,T,\delta)$ compatible with $\Sigma$
where $U$ is an $X$-scheme and $p$ is locally nilpotent on $T$,
and coverings of $(U,T,\delta)$ correspond to fppf coverings of $T$. 
The crystalline structure sheaf $\OOO_{X/\Sigma}$ 
on $\CRIS(X/\Sigma)$ is defined by 
$\OOO_{X/\Sigma}(U,T,\delta)=\OOO_T(T)$.
An $\OOO_{X/\Sigma}$-module $\MMM$
gives an $\OOO_T$-module $\MMM_{(U,T,\delta)}$,
in particular an $\OOO_U$-module $\MMM_{U}=\MMM_{(U,U,0)}$,
and the restriction of $\MMM$ to $\CRIS(X_0/\Sigma)$
will be denoted by $\MMM_0$.

A Dieudonn\'e crystal over $X$ is a triple $(\MMM,F,V)$ 
where $\MMM$ is a finite locally free $\OOO_{X/\Sigma}$-module
and $F:\phi^*(\MMM_0)\to\MMM_0$ and $V:\MMM_0\to\phi^*(\MMM_0)$ 
are linear maps with $FV=p$ and $VF=p$;
here $\phi$ is the Frobenius map of $X_0$.
Let $\Dcat(X)$ denote the category of Dieu\-donn\'e crystals over $X$.

For $(U,T,\delta)\in\CRIS(X_0/\Spec\FF_p)$, 
which implies that $T$ is an $\FF_p$-scheme,
the Frobenius morphism $\phi_T:T\to T$ 
has image in $U$ and thus induces a morphism $\phi_{U/T}:T\to U$;
this holds since a PD ideal in characteristic $p$ 
is annihilated by the Frobenius.
An admissible filtration for a Dieudonn\'e crystal
$(\MMM,F,V)$ over $X$ is a locally direct summand 
$\Filone\MMM_X\subseteq\MMM_X$ such that for \chang{every} 
$(U,T,\delta)\in\CRIS(X_0/\FF_p)$ we have 
\begin{equation}
\label{Eq:adm-fil}
\phi_{U/T}^*((\Filone\MMM_X)_U)=
\Ker(F:\phi^*(\MMM)_{(U,T,\delta)}\to\MMM_{(U,T,\delta)})
\end{equation}
inside $\phi_{U/T}^*(\MMM_U)=\phi^*(\MMM)_{(U,T,\delta)}$.
A variant of this definition appears in \cite[Chap.\ V]{Grothendieck:Montreal}.
The category of Dieudonn\'e crystals 
over $X$ with an admissible filtration,
also called filtered Dieudonn\'e crystals in the following,
will be denoted by
\[
\DFcat(X).
\]
The categories $\Dcat(X)$ and $\DFcat(X)$ do not
change if the fppf topology on $\CRIS(X/\Sigma)$ is replaced by
any topology finer than Zariski and coarser than fpqc, 
and one can also replace $\CRIS(X/\Sigma)$ by the corresponding small site.

\subsection*{The crystalline Dieudonn\'e functor}

Let 
\begin{equation}
\label{Eq:DDX}
\Dfun_X:\BT(X)\to\Dcat(X)
\end{equation}
be the contravariant crystalline Dieudonn\'e functor 
from $p$-divisible groups to Dieudonn\'e crystals 
as defined in \cite{BBM} and \cite{Mazur-Messing}.
For $G\in\BT(X)$ there is a natural exact sequence
of locally free $\OOO_X$-modules
\[
0\to\Lie(G)^\vee\to\Dfun_X(G)_X\to\Lie(G^\vee)\to 0,
\]
called the Hodge filtration of $G$,
which gives an extension of $\Dfun_X$ to a functor
\begin{equation}
\label{Eq:DDFX}
\DFfun_X:\BT(X)\to\DFcat(X)
\end{equation}
from $p$-divisible groups to filtered Dieudonn\'e crystals
defined by $\Filone\Dfun_X(G)_X=\Lie(G)^\vee$.
The Hodge filtration is admissible by \cite[Prop.~4.3.10]{BBM}.

\begin{Remark}[Reduction modulo $p$]
\label{Rk:red-BT-DDF}
The restriction functor of Dieuonn\'e crystals  
$\Dcat(X)\to\Dcat(X_0)$ is an equivalence, 
and lifts of a filtered Dieudonn\'e crystal
$\MMM\in\DFcat(X_0)$ 
to $\DFcat(X)$
correspond to lifts of $\Filone\MMM_{X_0}\subseteq\MMM_{X_0}$ 
to a locally direct summand of $\MMM_{(X_0,X,\gamma)}$.
If $p\ge 3$, then lifts \chang{along} the functor $\BT(X)\to\BT(X_0)$
correspond to lifts of the Hodge filtration 
by the Grothendieck-Messing Theorem \cite{Messing:Crystals},
and thus the diagram
\[
\xymatrix@M+0.2em@C+1em{
\BT(X) \ar[r]^{\DFfun_X} \ar[d] &
\DFcat(X) \ar[d] \\
\BT(X_0) \ar[r]^{\DFfun_{X_0}} &
\DFcat(X_0)
}
\]
is 2-cartesian.
In particular, if $p\ge 3$ and 
the functor $\DFfun_{X_0}$ is an equivalence,
then $\DFfun_{X}$ is an equivalence as well.
\end{Remark}

\subsection*{Forgetting the filtration}

\begin{Lemma}
\label{Le:DF-D-ff}
If $X$ is an $\FF_p$-scheme 
which is reduced or a locally noetherian l.c.i.\ scheme, 
the forgetful functor $\DFcat(X)\to\Dcat(X)$ is fully faithful.
\end{Lemma}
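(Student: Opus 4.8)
The plan is to prove fullness; faithfulness is automatic, because a morphism in $\DFcat(X)$ is simply a morphism of the underlying Dieudonné crystals that respects the Hodge filtration, so the map on $\Hom$-sets is injective. Thus it suffices to show that for objects of $\DFcat(X)$ with underlying crystals $\MMM$, $\NNN$, every morphism $f\colon\MMM\to\NNN$ in $\Dcat(X)$ satisfies $f_X(\Filone\MMM_X)\subseteq\Filone\NNN_X$. Since this is a local condition, I may assume $X=\Spec R$ is affine, with $R$ noetherian in the l.c.i.\ case. Writing $g\colon\Filone\MMM_X\to\NNN_X/\Filone\NNN_X$ for the composite of $f_X$ with the projection, the goal becomes $g=0$.

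First I would extract information from the admissibility condition \eqref{Eq:adm-fil} on a suitable thickening. Let $(U,T,\delta)\in\CRIS(X_0/\FF_p)$ with $U=\Spec R$, put $D=\OOO_T(T)$, and let $\bar\phi\colon R\to D$ be the ring map inducing $\phi_{U/T}$; it lifts the Frobenius of $R$, and it is well defined because in characteristic $p$ the PD ideal $J=\Ker(D\to R)$ satisfies $j^p=0$ for $j\in J$, so that $(s+j)^p=s^p$. Since $f$ commutes with $F$, pulling back along $\bar\phi$ carries $\Ker(F_\MMM)$ into $\Ker(F_\NNN)$; by \eqref{Eq:adm-fil} these kernels are $\bar\phi^*\Filone\MMM_X$ and $\bar\phi^*\Filone\NNN_X$, whence $\bar\phi^*(g)=0$. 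In local bases the matrix entries $g_{ij}\in R$ of $g$ therefore satisfy $\bar\phi(g_{ij})=0$, and the whole lemma reduces to finding a thickening for which $\bar\phi$ is injective.

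If $R$ is reduced I take the trivial thickening $D=R$, so $\bar\phi$ is the Frobenius of $R$, which is injective exactly because $R$ is reduced; this gives $g=0$ at once. For the l.c.i.\ case the Frobenius of $R$ itself has a kernel, so I would first reduce to complete local rings: by the Krull intersection theorem $R\hookrightarrow\prod_{\Fm}\hat R_{\Fm}$, and each $\Spec\hat R_{\Fm}$ is an admissible base $U$ in the big crystalline site, so it is enough to prove $g\otimes_R\hat R_\Fm=0$. By Cohen's structure theorem a complete local l.c.i.\ $\FF_p$-algebra is a quotient $\hat R_\Fm\cong S/I$ with $S=k[[x_1,\dots,x_d]]$ regular and $I$ generated by a regular sequence, and I take $D$ to be the divided power envelope of $I$ in $S$. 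Because $I$ is a regular sequence, $D$ is a free $S$-module, so $S\hookrightarrow D$ is a split inclusion; lifting $r\in\hat R_\Fm$ to $s\in S\subseteq D$ one has $\bar\phi(r)=s^p$ computed inside $S$, and since $S$ is a domain of characteristic $p$, $\bar\phi(r)=0$ forces $s^p=0$, hence $s=0$ and $r=0$. Thus $\bar\phi$ is injective, $g\otimes\hat R_\Fm=0$, and finally $g=0$.

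I expect the l.c.i.\ case to be the main obstacle. The reduced case is immediate from injectivity of the absolute Frobenius, but for l.c.i.\ schemes one must manufacture a divided power thickening on which the lifted Frobenius becomes injective, and this is precisely where the complete-intersection hypothesis enters: through the reduction to a complete intersection presentation over a regular local ring, and through the $S$-freeness of the divided power envelope of a regular sequence, which makes $S$ a direct summand of $D$ and thereby detects the vanishing of $s^p$.
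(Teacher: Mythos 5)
Your overall strategy (full faithfulness reduces to showing that a crystal morphism preserves the filtration, which by admissibility \eqref{Eq:adm-fil} reduces to producing a PD thickening of $R$ on which the lifted Frobenius $\bar\phi$ is injective) is exactly the paper's, and your reduced case and the reduction to complete local rings are correct. The gap is in the one step where the l.c.i.\ hypothesis is actually used. Your claim that the divided power envelope $D$ of a regular-sequence ideal $I=(t_1,\dots,t_r)\subseteq S$ is a \emph{free $S$-module}, so that $S\hookrightarrow D$, is false in characteristic $p$: in any $\FF_p$-algebra, an element $x$ of a PD ideal satisfies $x^p=p!\,\gamma_p(x)=0$, so the images of $t_1^p,\dots,t_r^p$ in $D$ vanish, and $S\to D$ is not even injective (already for $S=\FF_p[[t]]$, $I=(t)$, the kernel contains $t^p$); in particular $D$ is not $S$-flat. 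The freeness statement you are half-remembering is flatness of such envelopes \emph{over the base} (\cite[Lemma 2.3.3]{BBM}), or $S$-torsion-freeness in the mixed-characteristic setting where $S$ is $\ZZ_p$-flat; neither applies here. Consequently the step ``$\bar\phi(r)=s^p$ computed inside $S$, and $S$ is a domain'' collapses, and with it your proof of injectivity of $\bar\phi$ -- precisely the point where ``complete intersection'' must enter.

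The repair is what the paper does: instead of the full PD envelope, use the finite truncation $R'=S/(t_1^p,\dots,t_r^p)$, whose kernel $I/(t_1^p,\dots,t_r^p)$ carries the trivial divided powers with $\delta_p(t_i)=0$; existence of these divided powers is checked in the universal case $\Lambda=\FF_p[[T_1,\dots,T_r]]$, $T_i\mapsto t_i$, and transported by flatness of $\Lambda\to S$. Then $\phi_{R/R'}:R\to R'$ is identified with the base change of $\phi_S:S\to S$ along $S\to S/(t_1^p,\dots,t_r^p)$, and $\phi_S$ is faithfully flat because $S$ is regular noetherian (Kunz \cite{Kunz:Regular}), giving the required injectivity. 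Note that your desired conclusion ``$\bar\phi:R\to D$ is injective'' is in fact true, but the only way to see it is to map $D\to R'$ by the universal property and quote the injectivity of $\phi_{R/R'}$ -- so working with the full envelope $D$ saves nothing and hides the actual mechanism (Frobenius flatness of the regular ring $S$), which your argument never invokes.
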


\begin{proof}
We have to show that a Dieudonn\'e crystal $(\MMM,F,V)$ carries at most
one admissible filtration.\footnote{\chang{Indeed, then the forgetful functor is fully faithful on isomorphisms, hence fully faithful since a homomorphism $u:M\to N$ can be encoded by the automorphism $\left(\begin{smallmatrix}1&0\\u&1\end{smallmatrix}\right)$ of $M\oplus N$.}}
If $X$ is reduced this holds since the
Frobenius endomorphism $\phi_R$ of a reduced $\FF_p$-algebra $R$ is injective,
so a direct summand of a projective $R$-module is determined by its
scalar extension \chang{along} $\phi_R$.
In the l.c.i.\ case 
we can assume that $X=\Spec R$ where $R$ is a complete local ring. 
Then $R=A/J$ where $A$ is a power series ring in finitely many variables over a field $k$ and the ideal $J$ is generated by a regular sequence $t_1,\ldots,t_r$. 
Let $J'=(t_1^p,\ldots,t_r^p)$ and $R'=A/J'$.
There are divided powers $\delta$ on the ideal $J/J'$ with 
$\delta_p(t_i)=0$.
Indeed, this is clear when $A$ equals 
$\Lambda=\FF_p[[T_1,\ldots,T_r]]$ with $t_i=T_i$,
and the general case follows since the homomorphism 
$\Lambda\to A$ defined by $T_i\mapsto t_i$ is flat.
The homomorphism $\phi_{R/R'}:R\to R'$ induced by $\phi_{R'}$
is injective because it is a base change of $\phi_A:A\to A$, 
which is faithfully flat. 
Hence a direct summand of a projective $R$-module is determined by its
scalar extension \chang{along} $\phi_{R/R'}$, and the lemma follows.
\end{proof}

\begin{Lemma}
\label{Le:DF-D-eq}
\chang{If $X$ is an $\FF_p$-scheme which locally has a $p$-basis,} 
the forgetful functor $\DFcat(X)\to\Dcat(X)$ is an equivalence.
\end{Lemma}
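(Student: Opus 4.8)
The plan is to prove both halves of the equivalence at once by showing that, when $X$ has a $p$-basis, every Dieudonn\'e crystal carries exactly one admissible filtration and that every morphism of Dieudonn\'e crystals automatically respects it. First I would reduce to the case $X=\Spec R$ with $R$ admitting a $p$-basis $(a_i)$, and use the standard description of crystals by modules with an integrable connection, which is available here because $\Omega^1_{R/\FF_p}$ is free on the $da_i$. The crucial reformulation is that, evaluated at the trivial thickening $(X,X,0)$, the admissibility condition \eqref{Eq:adm-fil} reads $\phi^*(\Filone\MMM_X)=\Ker(F\colon\phi^*\MMM_X\to\MMM_X)$, and that this single identity already forces the condition at every $(U,T,\delta)$. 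Indeed, since a PD ideal in characteristic $p$ is killed by Frobenius, the rule $(U,T,\delta)\mapsto\phi_{U/T}^*((\Filone\MMM_X)_U)$ is itself a crystal, and $\Ker F$ is a sub-crystal of $\phi^*\MMM_0$ whose transition maps are induced from those of $\phi^*\MMM_0$; two such sub-crystals that agree at $(X,X,0)$ therefore agree everywhere. Thus admissible filtrations on $\MMM$ correspond bijectively to locally direct summands $L\subseteq\MMM_X$ with $\phi^*L=\Ker F$.

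It then remains to descend $\Ker F$ along the Frobenius, and this is where the $p$-basis is decisive. The connection $\nabla$ on $\MMM_X$ coming from the crystal $\MMM_0$ pulls back to the connection $\phi^*\nabla$ on $\phi^*\MMM_X$ attached to $\phi^*\MMM_0$, and the $p$-curvature of $\phi^*\nabla$ vanishes because it is a Frobenius pullback. Since $F$ is a morphism of crystals it is horizontal, so $\Ker F$ is a $\phi^*\nabla$-horizontal, locally direct summand of $\phi^*\MMM_X$. By Cartier descent along $\phi$ --- valid for a scheme with a $p$-basis --- the horizontal locally direct summands of a module with zero $p$-curvature are exactly the Frobenius pullbacks of locally direct summands of $\MMM_X$; hence there is a unique $L\subseteq\MMM_X$ with $\phi^*L=\Ker F$. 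Setting $\Filone\MMM_X=L$ proves essential surjectivity, and uniqueness of $L$ gives uniqueness of the filtration. For full faithfulness, a morphism of Dieudonn\'e crystals commutes with $F$, hence carries $\Ker F$ into $\Ker F$, hence by functoriality of Cartier descent carries $L$ into $L$; so it is automatically filtered. (Uniqueness of the filtration can also be obtained from Lemma \ref{Le:DF-D-ff}, since a ring with a $p$-basis is reduced.)

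The step I expect to require the most care is the descent itself, and before running it one must know that $\Ker F$ is a locally direct summand of $\phi^*\MMM_X$ with locally free cokernel --- this is what makes it a sub-crystal and lets Cartier descent apply. This is not formal from $FV=VF=p=0$ over an arbitrary base, but follows from the crystalline structure: on a flat $\ZZ_p$-thickening $\tilde R$ of a local chart the relations $\tilde F\tilde V=\tilde V\tilde F=p$, with $p$ a non-zero-divisor, force $\tilde F$ to be injective and $\Image\tilde V$ to be a locally direct summand of the expected rank, so that modulo $p$ one recovers $\Ker F=\Image V$ as a locally direct summand. The genuine obstacle is then to run Cartier descent and to verify the vanishing of the $p$-curvature in the possibly non-noetherian, infinite-$p$-basis setting, together with the bookkeeping that the resulting $L$ satisfies \eqref{Eq:adm-fil} at all PD thickenings simultaneously; once the reformulation of the first paragraph and the crystal--connection dictionary are in place, both of these are controlled by the equivalence between crystals and modules with connection.
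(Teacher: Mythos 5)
Your skeleton --- reduce admissibility to a single condition at the trivial thickening $(X,X,0)$, then descend $\Ker F$ along the Frobenius --- is exactly the skeleton of the paper's proof (the paper descends by faithfully flat descent along $\phi$, with the descent datum supplied by uniqueness of admissible filtrations over $R''=R'\otimes_RR'$, whereas you package the same descent as Cartier descent; those are interchangeable). The problem is that both pivotal steps are supported by arguments that do not hold up. The main gap is your justification that $\Ker F$ is a locally direct summand with locally free quotient. You claim this is forced, over a flat $\ZZ_p$-lift $\tilde R$ of a chart, by the relations $\tilde F\tilde V=\tilde V\tilde F=p$ together with $p$ being a non-zero-divisor. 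That implication is false: take $\tilde R=\ZZ_p[[x,y]]/(xy-p)$, which is $\ZZ_p$-flat and whose reduction $R=\FF_p[[x,y]]/(xy)$ is even reduced, and take $\tilde M=\tilde N=\tilde R$, $\tilde F=x$, $\tilde V=y$. Then $\tilde F\tilde V=\tilde V\tilde F=p$ and $\tilde F$ is injective, but $\Ker(F\bmod p)=\Image(V\bmod p)=(y)\cong R/(x)$, which is not a direct summand of $R$ (it is not even flat, and the rank of $\Coker F$ jumps between the two branches of $\Spec R$). So flatness of the lift plus the relations cannot ``force'' the conclusion; what makes the statement true is the $p$-basis hypothesis itself, which your argument never uses at this point. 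For instance, in the noetherian case a local ring with a $p$-basis is regular (Kunz), hence in a flat lift one has $p\notin\Fm^2$ at each maximal ideal, and only with that input can one put $\tilde F$ locally in the form $\mathrm{diag}(1,\ldots,1,p,\ldots,p)$ and conclude that $\Coker F$ is locally free. This direct-summand property is the crux of the lemma, and your proposal does not prove it.

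The second gap is the rigidity claim ``two sub-crystals that agree at $(X,X,0)$ agree everywhere.'' This is not a general fact about the characteristic-$p$ crystalline site: the value of a crystal at $(X,X,0)$ does not determine its values at other thickenings. It is true here precisely because, when $R$ has a $p$-basis, every $(U,T,\delta)\in\CRIS(X/\FF_p)$ admits a (non-unique) morphism to $(X,X,0)$; this retraction is the paper's key construction, obtained by lifting $R\to\OOO(U)$ to $\OOO(T)$ canonically on the subring $\phi(R)$ (well defined because the PD kernel is killed by $p$-th powers) and then extending by arbitrary lifts of the images of a $p$-basis. Your cited reason --- that a PD ideal in characteristic $p$ is killed by Frobenius --- only yields the morphism $\phi_{U/T}\colon T\to U$, not this lifting. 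Without the retraction you also cannot verify that $\Ker F$ is a sub-crystal at arbitrary thickenings: thickenings with PD torsion admit no flat $\ZZ_p$-lift, so your local-chart argument says nothing about them, and kernels do not commute with base change in general. Once these two points are supplied (direct summand via the $p$-basis, and the retraction trick), the remainder of your plan --- horizontality of $F$, vanishing $p$-curvature, Cartier descent, and uniqueness from reducedness --- does go through and is a legitimate variant of the paper's descent argument.
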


\begin{proof}
See \cite[Prop.~2.5.2]{Jong:Crystalline}.
We may assume that $X=\Spec R$ where $R$ has a $p$-basis $(x_i)_{i\in I}$,
and we have to show that for a Dieudonn\'e crystal $(\MMM,F,V)$
over $\Spec R$ an admissible filtration $Q\subseteq\MMM_R$ exists,
which is then unique since $X$ is reduced.
Let 
\[
Q'=\Ker(F_R:\phi_R^*(\MMM_R)\to\MMM_R).
\]
Since $R$ has a $p$-basis, every affine $(\Spec A,\Spec B,\delta)$ 
in $\CRIS(X/\Spec\FF_p)$ maps non-uniquely
to the trivial PD thickening $(X,X,0)$, 
\chang{using Remark \ref{Re:p-basis-formally-smooth}.}
Hence a direct summand $Q\subseteq\MMM_R$ is admissible iff
$\phi_R^*(Q)=Q'$.
Let $R'=R$ as an $R$-algebra via $\phi_R$,
let $R''=R'\otimes_RR'$, and let $\MMM'$ and
$\MMM''$ denote the base change of $\MMM$  
to Dieudonn\'e crystals over $\Spec R'$ and $\Spec R''$.
Then $Q'$ is an admissible
filtration for $\MMM'$, and we have to show that $Q'$ descends
to $R$, or equivalently that the scalar extensions of $Q'$
\chang{along} the two homomorphisms $R'\rightrightarrows R''$ are equal.
This holds because a Dieudonn\'e crystal over $\Spec R''$ carries
at most one admissible filtration, i.e.\
the functor $\DFcat(\Spec R'')\to \Dcat(\Spec R'')$
is fully faithful, which is a variant of Lemma \ref{Le:DF-D-ff}.
More precisely, $R''$ is isomorphic to 
$R'[\{Y_i\}_{i\in I}]/(\{Y_i^p\}_{i\in I})$
where $Y_i$ is the image of $x_i\otimes1-1\otimes x_i$; 
cf.\ Lemma \ref{Le:R''-RI} below.
Let $B=R'[\{Y_i\}_{i\in I}]/(\{Y_i^{p^2}\}_{i\in I})$.
Then the kernel of $B\to R''$ carries divided powers $\delta$
with $\delta(Y_i^p)=0$, 
and the homomorphism $\phi_{B/R''}:R''\to B$ is injective.
Hence $\DFcat(\Spec R'')\to\Dcat(\Spec R'')$ is fully faithful as required.
\end{proof}

\begin{Remark}
In general, the functor $\Dfun_X$ is not fully faithful, 
for example when $X=\Spec R$ with $R=k[X,Y]/I^2$ 
where $k$ is a field and $I=(X,Y)$; see \cite{Berthelot-Ogus}.
In this example the functor $\DFfun_X$ is not fully faithful either
because $\DFcat(X)\to\Dcat(X)$ is fully faithful.
Proof: Let $R'=k[X,Y]/I^{2p}$.
Then $R'\to R$ is a PD thickening with trivial divided powers,
and the homomorphism $\phi_{R'/R}:R\to R'$ is injective.
\end{Remark}

\section{Torsion in PD envelopes}
\label{Se:torsion-PD}

Let $R$ be a ring in which $p$ is nilpotent.
In this section we collect some permanence properties
of torsion in PD envelopes.

In the following, a presentation of $R$ will be
a surjective ring homomorphism $A\to R$ where $A$ is 
\chang{$p$-complete} 
and $\ZZ_p$-flat and $A_0=A/p$ has a $p$-basis.
For a presentation $A\to R=A/I$ we consider the \chang{$p$-completion} 
of the PD envelope relative to the PD ring $(\ZZ_p,p\ZZ_p,\gamma)$
\begin{equation}
\label{Eq:DAR}
D=D_\gamma(A\to R)^\wedge=D_{A,\gamma}(I)^\wedge.
\end{equation}
One particular choice is $A=\ZZ_p[R]^\wedge$, 
the \chang{$p$-complete} polynomial ring with set of variables $R$.
In that case we write $D=D(R)$ as in \cite{Jong-Messing}
and call $D(R)$ the universal \chang{$p$-complete} PD envelope of $R$.

It will be sufficient to consider the case where $R$ is an $\FF_p$-algebra 
because in general, with $R_0=R/p$ we have $D_\gamma(A\to R)=D_\gamma(A\to R_0)$,
and every presentation $A\to R_0$ can be lifted to a presentation $A\to R$.

\begin{Lemma}
\label{Le:D-Acris}
If $R$ is a semiperfect $\FF_p$-algebra,
i.e.\ the Frobenius \chang{homomorphism} $\phi_R:R\to R$ is surjective,
and if $A_0$ is perfect, then $D$ coincides with $A_{\cris}(R)$, 
the universal \chang{$p$-complete} PD thickening of $R$.
\end{Lemma}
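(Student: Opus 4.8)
The plan is to show that $D$ satisfies the universal property that characterizes $A_{\cris}(R)$, namely that it is the initial object in the category of $p$-adic PD thickenings of $R$ compatible with the canonical divided powers $\gamma$ on $(\ZZ_p,p\ZZ_p)$; since an initial object is unique up to unique isomorphism, this identifies $D$ with $A_{\cris}(R)$. The starting observation is that, because $A$ is $\ZZ_p$-flat and $p$-adic with $A_0=A/p$ perfect, $A$ is canonically the ring of Witt vectors $A=W(A_0)$; in particular $A$ carries Teichm\"uller representatives $[a]\in A$ for $a\in A_0$, and these topologically generate $A$ for the $p$-adic topology. First I would record that $D$ is indeed a $p$-adic PD thickening of $R$: by construction $D_{A,\gamma}(I)$ carries a PD ideal with quotient $A/I=R$, and since $p=0$ in $R$ this quotient is unchanged by $p$-adic completion, so $D$ comes equipped with a PD ideal $\III$ and an identification $D/\III=R$.

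Next I would unwind the universal property of the PD envelope. For any $p$-adic PD thickening $(B,J,\delta)$ of $R$, a PD homomorphism $D\to B$ lying over $R$ is, by the universal property of $D_{A,\gamma}(I)$ together with the $p$-adic completeness of $B$, the same datum as a ring homomorphism $f\colon A\to B$ whose composite $A\to B\to B/J=R$ equals the given surjection $A\to R$; such an $f$ automatically carries $I$ into the PD ideal $J$. Thus the entire statement reduces to the claim that there is a \emph{unique} ring homomorphism $f\colon A\to B$ lifting $A\to R$ along $B\twoheadrightarrow R$.

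For this lifting claim I would combine the perfectness of $A_0$ with the PD structure on $J$. Since $\delta$ is compatible with $\gamma$ we have $p\in J$, and for every $x\in J$ the identity $x^p=p!\,\delta_p(x)$ gives $x^p\in pB$; hence the $p$-th power map contracts $J$ into $pB$. Given $a\in A_0$, I lift the image in $R$ of each root $a^{1/p^n}\in A_0$ to an arbitrary $\beta_n\in B$; then every $\beta_n^{p^n}$ is a lift of the image of $a$, two choices of $\beta_n$ differ by an element of $J$, and the relation $x^p\in pB$ forces $(\beta_n^{p^n})_n$ to converge $p$-adically to an element $\tau(a)\in B$ independent of all choices. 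The assignment $a\mapsto\tau(a)$ is a multiplicative lift of $A_0\to R$, so by the universal property of $W(A_0)$ it extends to a ring homomorphism $f\colon A=W(A_0)\to B$ lifting $A\to R$. The same congruence estimate applied to two lifts shows that any lift is forced on the Teichm\"uller representatives $[a]=[a^{1/p^n}]^{p^n}$, whence $f$ is unique.

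The main obstacle is precisely this unique lifting step: one must make the Teichm\"uller-type limit well defined and verify it is a ring homomorphism. Both rest on the two inputs that $A_0$ is perfect, so that compatible $p$-power roots exist and $A=W(A_0)$, and that $J$ is a PD ideal, so that $x\in J$ implies $x^p\in pB$ and the resulting $p$-adic contraction yields both existence and uniqueness of the limit. The semiperfectness of $R$ enters only in guaranteeing that a presentation with $A_0$ perfect exists and that $A_{\cris}(R)$ is the universal such thickening. Once the unique lift is in hand, the identification $D\cong A_{\cris}(R)$ follows formally from the uniqueness of initial objects.
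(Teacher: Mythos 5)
Your proof is correct and coincides with the paper's approach: the paper's entire proof is the one-line assertion ``One verifies that $D$ has the universal property of $A_{\cris}(R)$,'' and your argument is exactly that verification, identifying PD morphisms $D\to B$ over $R$ with ring homomorphisms $A\to B$ lifting $A\to R$, and then proving unique existence of such lifts by the Teichm\"uller-limit argument based on $A=W(A_0)$ and $x^p\in pB$ for $x$ in the PD ideal. The one step you state loosely---that the multiplicative lift $\tau$ extends to a ring homomorphism ``by the universal property of $W(A_0)$''---is the standard bijection $\Hom(W(A_0),B)\cong\Hom(A_0,B/pB)$ for $p$-adically complete $B$ (note that $\tau\bmod p$ is additive because $p$-th roots in the perfect ring $A_0$ add), so this is a legitimate citation rather than a gap.
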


\begin{proof}
One verifies that $D$ has the universal property of $A_{\cris}(R)$.
\end{proof}

\begin{Prop}
\label{Pr:PD-env-tf}
If $R$ is fixed and the ring $D$ of \eqref{Eq:DAR}
is torsion free for one presentation $A\to R$, 
then $D$ is torsion free for every presentation $A\to R$.
\end{Prop}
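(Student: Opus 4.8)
The plan is to prove that torsion-freeness of $D$ is independent of the presentation by comparing any two presentations through a common refinement and transporting torsion-freeness along faithfully flat maps. As noted before the statement we may assume that $R$ is an $\FF_p$-algebra, so ``torsion free'' means $\ZZ_p$-flat, i.e.\ that $p$ is a nonzerodivisor. If $D\to D''$ is a faithfully flat homomorphism of $p$-adic $\ZZ_p$-algebras, then $p$ is a nonzerodivisor on $D$ if and only if it is one on $D''$: tensoring $0\to\ker(p\colon D\to D)\to D\xrightarrow{p}D$ with the flat algebra $D''$ gives $\ker(p\colon D\to D)\otimes_D D''=\ker(p\colon D''\to D'')$, and faithful flatness detects vanishing of this kernel in both directions. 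Thus it suffices, given two presentations $A\to R$ and $A'\to R$ with envelopes $D$ and $D'$, to produce a third presentation $A''\to R$ whose envelope $D''$ receives faithfully flat maps $D\to D''$ and $D'\to D''$; then $D$ is torsion free iff $D''$ is iff $D'$ is, and the Proposition follows.

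First I would take $A''=A\hatotimes_{\ZZ_p}A'$ with the augmentation $A''\to R$ induced by the two given ones. This is again a presentation: $A''$ is $p$-adic and $\ZZ_p$-flat, and $A''_0=A_0\otimes_{\FF_p}A'_0$ has a $p$-basis, namely the union of $p$-bases of $A_0$ and of $A'_0$. The maps $a\mapsto a\otimes 1$ and $a'\mapsto 1\otimes a'$ are maps of presentations, and by symmetry it is enough to treat $A\to A''$. Since $A'_0$ is formally smooth over $\FF_p$ and $A'$ is a $\ZZ_p$-flat $p$-adic lift, $A'$ is $p$-adically formally smooth over $\ZZ_p$; hence $A\to A''$ is $p$-adically formally smooth, and in particular flat.

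It remains to show that $D\to D''$ is faithfully flat, which is the heart of the matter. Writing $I=\ker(A\to R)$ and $I''=\ker(A''\to R)$ we have $IA''\subseteq I''$, but \emph{not} equality, so flat base change does not apply directly, and I would proceed in two steps. First, flat base change of PD envelopes along $A\to A''$ (\cite{Berthelot-Ogus}) gives $D_{A'',\gamma}(IA'')^\wedge\cong D\hatotimes_AA''=D\hatotimes_{\ZZ_p}A'=:C$, a faithfully flat $D$-algebra. Second, the transitivity of divided power envelopes (\cite{Berthelot-Ogus}) identifies $D''$ with the completed PD envelope of the image of $I''$ in $C$; since the PD ideal of $D$ already base-changes to a PD ideal of $C$, this reduces to forming the PD envelope of $K=\ker(R\otimes_{\FF_p}A'_0\to R)$ over $C$. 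Choosing a $p$-basis of $A'_0$, the formal smoothness of $A'_0$ over $\FF_p$ lets me identify this last envelope as a completed PD-polynomial algebra over $D$ — the same mechanism by which the variables $Y_i=x_i\otimes1-1\otimes x_i$ produce a PD-polynomial structure in the proof of Lemma \ref{Le:DF-D-eq} — which is free on divided-power monomials and hence faithfully flat over $D$. I expect the main obstacle to be exactly this last identification: tracking PD-compatibility through the base-change and transitivity steps and verifying that the extra generators coming from $I''\supsetneq IA''$ contribute PD-polynomial, and therefore faithfully flat, directions rather than torsion, for which the $p$-basis on $A'_0$ is essential.
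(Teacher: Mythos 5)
Your overall strategy is sound and genuinely different from the paper's. You compare the two presentations directly through the common refinement $A''=A\hatotimes_{\ZZ_p}A'$ and transport torsion-freeness along faithfully flat maps, whereas the paper never confronts such a ``diagonal'' situation: it adjoins polynomial variables mapping to zero (Step 1, where the envelope becomes a PD polynomial ring by transitivity), then performs a flat base change $A\to A^\infty=W(A_0^{\per})$ which changes $R$ into $R^\infty=R\otimes_AA^\infty$ as well (Step 2), and identifies the result with $A_{\cris}(R^\infty)$, which after the normalization of Step 3 is the same ring for both presentations. Your steps 1 and 2 (flat base change along $A\to A''$, then transitivity of PD envelopes) are fine, as is the transfer principle, provided everything is run modulo $p^r$ for \emph{every} $r$: faithful flatness mod $p$ alone cannot detect torsion, so the key flatness assertion must be established at all finite levels, not just in characteristic $p$.

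The gap is the final identification, which you yourself flag as the expected obstacle: it is not a routine verification but is exactly where the whole difficulty of the proposition is concentrated, and the proposal gives no argument for it. Two points make it delicate. First, the envelope in question is certainly not flat (let alone PD-polynomial) over $C=D\hatotimes_{\ZZ_p}A'$; PD envelopes are essentially never flat over the ambient ring, e.g.\ the envelope of $(y)\subseteq\FF_p[y]$ is $\FF_p\langle y\rangle$, in which $y^p=p!\,y^{[p]}=0$, so it has nonzero $\FF_p[y]$-torsion. The claim therefore has to be the absorption statement $D''\cong D\langle\eta_j\rangle^\wedge$ with $\eta_j=1\otimes\tilde y_j-\tilde b_j\otimes1$ (translated lifts of a $p$-basis $(y_j)$ of $A'_0$), in which all of $A'$ disappears into the PD-polynomial structure. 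Second, proving this amounts to showing that for every ($\gamma$-compatible, $p$-adic) PD thickening $S\to R$, lifts $A''\to S$ of $A''\to R$ correspond exactly to pairs consisting of a PD morphism $D\to S$ and \emph{arbitrary} lifts $t_j\in S$ of the images $b_j\in R$ of the $p$-basis; equivalently, that the envelope is unchanged when $A''$ is replaced by its subring $A[\tilde\eta_j]^\wedge$. In characteristic $p$ this is precisely the Frobenius-square/relative-perfectness mechanism of the paper's Lemma \ref{Le:E(1)-xi} (the relevant map $A_0[\eta_j]\to A_0\otimes_{\FF_p}A'_0$ carries a $p$-basis to a $p$-basis, and Frobenius kills PD ideals) --- this, rather than Lemma \ref{Le:DF-D-eq}, is the right template; to get it mod every $p^r$ one needs in addition a lifting argument (formal \'etaleness of $A''$ over $A[\tilde\eta_j]^\wedge$, using that $\Omega^1_{A'_0/\FF_p}$ is free on the $p$-basis), and throughout one must never invoke torsion-freeness of $D$, $C$ or $D''$, since that is what is being proven. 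With these ingredients your route does close up and makes a clean alternative proof; as written, however, its crucial step is asserted rather than proved. A separate small flaw: ``$p$-adically formally smooth, hence flat'' is not a valid inference without finiteness hypotheses; the flatness you need should instead be read off directly from $A''/p^r=A/p^r\otimes_{\ZZ/p^r}A'/p^r$, which is flat over $A/p^r$ because $A'/p^r$ is flat over $\ZZ/p^r$.
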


\begin{proof}
We can assume that $pR=0$ and
proceed in several steps.
\chang{Let us call a presentation $A\to R$ and the associated ring $D$ of polynomial type if $A$ is the $p$-completion of a polynomial algebra over $\ZZ_p$.}

{\it Step 1.}
Let $\pi:A\to R$ be a presentation
and $A'=A[(T_i)_{i\in I}]^\wedge$ for a set $I$, 
with an extension $\pi':A'\to R$ of $\pi$.
Let $D$ and $D'$ be the rings \eqref{Eq:DAR}
associated to $\pi$ and $\pi'$.
We claim that $D$ is torsion free iff $D'$ is torsion free.
Let $\tilde\pi:A'\to A$ be a homomorphism of $A$-algebras that lifts $\pi'$. 
By a change of variables we may assume that $\tilde\pi(T_i)=0$.
Then $E=D_\gamma(A'\to A)^\wedge$ is 
the \chang{$p$-complete} PD polynomial algebra
$A\langle(T_i)_{i\in I}\rangle^\wedge$.
\chang{By \cite[I Cor.~1.7.2]{Berthelot:CohCristalline}, the kernel of $ D\hatotimes_AE\to R$ carries divided powers extending those on the kernels of  $D\to R$ and $E\to A$. One deduces that $D'\cong D\hatotimes_AE=D\langle(T_i)_{i\in I}\rangle^\wedge$,
and the claim follows.}

\chang{{\it Step 2.}
As a consequence, if one choice of $D$ of polynomial type is torsion free, then every choice of $D$ of polynomial type is torsion free.}

{\it Step 3.}
Let again $A\to R$ and $D$ be given. 
Let $A_0^\infty=A_0^{\per}$ be the perfect hull of $A_0$, 
let $A^\infty=W(A_0^\infty)$ be the unique lift of $A_0^\infty$, 
and choose a homomorphism $A\to A^\infty$ that lifts $A_0\to A_0^\infty$. 
Then $A/p^r\to A^\infty/p^r$ is faithfully flat.
Let $R^\infty=R\otimes_AA^\infty$.
If $(x_i)$ is a $p$-basis of $A_0$ 
with image $(\bar x_i)$ in $R$ then $R^\infty=R[(\bar x_i^{1/p^\infty})]$
as in \eqref{Eq:Rapinf}, in particular $R^\infty$ depends only on 
the reductions $\bar x_i$.
The \chang{$p$-complete} PD envelopes $D=D_\gamma(A\to R)^\wedge$
and $D^\infty=D_\gamma(A^\infty\to R^\infty)^\wedge$ 
are related by $D^\infty=D\hatotimes_AA^\infty$,
moreover $D^\infty\cong A_{\cris}(R^\infty)$ by Lemma \ref{Le:D-Acris}.
Hence in this situation $D$ is torsion free 
iff $A_{\cris}(R^\infty)$ is torsion free, \chang{by step 1.}

{\it Step 4.}
One concludes as follows. 
\chang{Since the property that $D$ is torsion free does not change if 
polynomial variables are adjoined to $A$ by step 1, 
we can assume that a $p$-basis $(x_i)_{i\in I}$ of $A_0$ generates $R$ as a ring.
Then there is a presentation $A'\to R$ of polynomial type with a $p$-basis $(y_i)_{i\in I}$ of $A'_0$ such that $x_i$ and $y_i$ have equal image in $R$ for all $i$.
Then the rings $R^\infty$ of step~3 associated to $A\to R$ and to $A'\to R$ coincide, 
hence $D$ is torsion free iff $A_{\cris}(R^\infty)$ is torsion free
iff $D'=D_\gamma(A'\to R)^\wedge$ is torsion free by step 3.
This finishes the proof by step 2.}
\end{proof}

It will be convenient to use the following terminology.

\begin{Defn}
\label{Def:PD-tor}
A ring $R$ in which $p$ is nilpotent is PD torsion free 
if for some (equivalently: any) presentation $A\to R$ the ring $D$
of \eqref{Eq:DAR} is torsion free.
A scheme $X$ on which $p$ is locally nilpotent is PD torsion free 
if for \chang{every} affine open subscheme $\Spec R\subseteq X$ 
the ring $R$ is PD torsion free.
\end{Defn}

\begin{Remark}
\chang{$X$ is PD torsion free iff $X_0=X\times\Spec\FF_p$
has this property.}
\end{Remark}

\begin{Remark}
\label{Re:semiperf-pdtorsion}
\chang{A semiperfect $\FF_p$-algebra $R$ is PD torsion free iff the ring $A_{\cris}(R)$ is torsion free, by Lemma \ref{Le:D-Acris}.}
\end{Remark}

\begin{Prop}[\cite{Jong-Messing}]
\label{Pr:PD-tor-lci}
\chang{Every} excellent l.c.i.\ scheme $X$ over $\FF_p$ is PD torsion free.
\end{Prop}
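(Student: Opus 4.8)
The plan is to reduce the statement to the case of a complete local ring, where the complete intersection structure furnishes a presentation by a regular sequence over a regular base, and then to invoke the flatness of PD envelopes of regular immersions.

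First I would record the elementary reductions. Being PD torsion free is a Zariski-local condition on $X$ and, by Proposition \ref{Pr:PD-env-tf}, is independent of the chosen presentation; so it suffices to treat an affine piece $\Spec R$ with $R$ an excellent noetherian l.c.i.\ $\FF_p$-algebra and to exhibit \emph{one} convenient presentation $A\to R=A/I$ for which $D=D_{A,\gamma}(I)^\wedge$ is torsion free. Since $\ZZ_p$ is a PID, torsion free is the same as $\ZZ_p$-flat, and it is enough to show that the \emph{uncompleted} envelope $D_{A,\gamma}(I)$ is $\ZZ_p$-flat: the $p$-adic completion of a $\ZZ_p$-flat module is again $p$-torsion free, because the $p$-torsion of $M/p^nM$ equals $p^{n-1}M/p^nM$ and hence dies in the inverse limit. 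I would also use that PD envelopes commute with flat base change of the ambient ring \cite{Berthelot:CohCristalline}, so that $\ZZ_p$-flatness of $D_{A,\gamma}(I)$ may be tested after any faithfully flat extension $A\to A'$, which reflects injectivity of multiplication by $p$, and in particular localizes on $\Spec A$.

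The heart of the argument is the reduction to the case that $R$ is a complete local ring, and here excellence is essential: for an excellent ring the formal fibres are regular, so the l.c.i.\ hypothesis—phrased in terms of the complete local rings—descends to the statement that every localization $R_\Fp$ is a complete intersection, and, symmetrically, PD torsion freeness of $R$ can be checked on the completions $\hat R_\Fm$ by faithfully flat descent along $R\to\prod_\Fm\hat R_\Fm$ (a product of flat modules over a noetherian, hence coherent, ring) together with the flat base change for PD envelopes. I therefore assume $R$ complete local. By the Cohen structure theorem and the definition of a complete intersection ring, $R\cong S/(f_1,\dots,f_c)$ where $S$ is a complete regular local $\FF_p$-algebra and $f_1,\dots,f_c$ is a regular sequence; writing $S$ as a power series ring over a coefficient field shows that $S$ carries a $p$-basis. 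Choosing a Cohen ring for the residue field and forming the corresponding power series ring produces a $\ZZ_p$-flat $p$-adic noetherian local ring $A$ with $A/p\cong S$, so that $A/p$ has a $p$-basis; this is a presentation in the sense of Definition \ref{Def:PD-tor}, with $I=(p,\tilde f_1,\dots,\tilde f_c)$ for lifts $\tilde f_i$ of the $f_i$. Since $p$ is a nonzerodivisor on $A$ and $f_1,\dots,f_c$ is a regular sequence on $A/p=S$, the sequence $p,\tilde f_1,\dots,\tilde f_c$ is regular on $A$, that is, $I$ is generated by a regular sequence.

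It then remains to feed this into the structure theory of PD envelopes: by the flatness of the divided power envelope of a regular immersion \cite{Berthelot:CohCristalline}, $D_{A,\gamma}(I)$ is flat over $A$, and as $A$ is itself $\ZZ_p$-flat it follows that $D_{A,\gamma}(I)$ is $\ZZ_p$-flat, hence torsion free, and therefore so is its completion $D$ by the first paragraph. The genuine obstacle will lie not in this last, essentially formal, step but in the reduction to complete local rings: one must argue that PD torsion freeness is insensitive to passing to the completions $\hat R_\Fm$, and that the l.c.i.\ condition on complete local rings propagates to the regular-sequence presentations that actually occur. Both rely on the good behaviour of completion for excellent rings—regularity of the formal fibres and the attendant descent of the complete intersection property—so excellence, rather than any feature of the divided powers, is where the weight of the proof falls.
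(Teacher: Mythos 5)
Your plan is, in outline, exactly the de Jong--Messing argument that the paper itself cites for this proposition (reduce to complete local rings using excellence, then present by a regular sequence over a lift of a complete regular local ring and quote flatness of PD envelopes of regular immersions), and your final step --- $p,\tilde f_1,\dots,\tilde f_c$ regular in $A$, hence $D_{A,\gamma}(I)$ is $\ZZ_p$-flat, hence so is its completion --- is correct. But two earlier steps have genuine gaps, and both occur precisely because the proposition carries no $F$-finiteness hypothesis. The first is the claim that ``writing $S$ as a power series ring over a coefficient field shows that $S$ carries a $p$-basis.'' This is false when the residue field $k$ is not $F$-finite. If $[k:k^p]=\infty$, then $S=k[[x]]$ has no $p$-basis: a $p$-basis would make $S$ a free module over $S^p=k^p[[x^p]]$, necessarily of infinite rank since $S/x^pS=k[x]/(x^p)$ is infinite dimensional over $k^p$; but $S$ is $(x^p)$-adically complete, while an infinite-rank free module over the complete local ring $k^p[[x^p]]$ is never adically complete (the map into its completion misses convergent families with infinitely many nonzero coordinates). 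Hence your Cohen presentation $\Lambda\to\hat R$, with $\Lambda/p\cong k[[x_1,\dots,x_n]]$, is not a presentation in the sense of Definition \ref{Def:PD-tor} unless $k$ is $F$-finite (in which case Lemma \ref{Le:F-finite-reg} does produce a finite $p$-basis), and Proposition \ref{Pr:PD-env-tf} --- which you invoke to transfer torsion freeness from this one envelope to all presentations --- simply does not apply to it. Bridging from the Cohen-type envelope, which is indeed torsion free, to the universal envelope $D(\hat R)$ requires a different comparison, of the kind carried out in Lemma \ref{Le:DAR-lci}; that comparison is the actual content of \cite[Lemma 4.7]{Jong-Messing}.

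The second gap is the reduction to completions. ``Flat base change for PD envelopes'' concerns flat maps of the ambient ring $A$ of an envelope $D_{A,\gamma}(I)$; it says nothing about a flat map $R\to R'$ of the quotients, and the universal envelope $D(-)$ is \emph{not} compatible with arbitrary faithfully flat base change in $R$ --- this is exactly why the paper proves Lemma \ref{Le:DAR-lci} and Corollary \ref{Co:DAR-lci-tf} only for l.c.i.\ morphisms. The completion $R\to\hat R_\Fm$ is faithfully flat but is not an l.c.i.\ morphism in the sense those statements cover (it is not essentially of finite type, nor a filtered union of flat finitely generated l.c.i.\ subextensions), so neither the Berthelot base-change result nor Corollary \ref{Co:DAR-lci-tf} justifies descending PD torsion freeness along it. The mechanism that converts your correct observation ``the formal fibres are regular'' into the needed statement ``$D(R)/p^n\to D(\hat R_\Fm)/p^n$ is faithfully flat'' is Popescu's general N\'eron desingularization: excellence makes $R\to\hat R_\Fm$ a regular homomorphism, hence a filtered colimit of smooth $R$-algebras, to each of which Lemma \ref{Le:DAR-lci} (smooth being l.c.i.) applies, and flatness passes to the colimit since universal PD envelopes commute with filtered colimits. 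This is where the weight of excellence actually falls in de Jong--Messing's proof; without Popescu or a substitute, your reduction step fails, and with it (plus a repaired presentation comparison) your argument becomes their proof.
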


\begin{proof}
One can assume that $X=\Spec R$ where $R$ is local.
Then the ring $D(R)$ is torsion free by 
\cite[Lemma 4.7]{Jong-Messing}.
\end{proof}

The following is implicit in the proof of \cite[Lemma 4.7]{Jong-Messing}.

\begin{Lemma}
\label{Le:DAR-lci}
Let $A\to R$ and $A'\to R'$ be presentations 
of rings in which $p$ is nilpotent 
and let $D$ and $D'$ be the associated rings \eqref{Eq:DAR}.
Assume that $A'=A[T_1,\ldots,T_n]^\wedge$ such that the
inclusion $A\to A'$ induces a homomorphism $u:R\to R'$.
If $u$ is a syntomic homomorphism, then $D/p^r\to D'/p^r$ is flat.
\end{Lemma}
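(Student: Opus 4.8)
The plan is to reduce, by the usual dévissage, to a flatness statement for the divided power envelope of a regular immersion over the divided power ring $D/p^r$, and to deduce this from the structure theory of \cite{Berthelot:CohCristalline} as applied in \cite{Jong-Messing}. Throughout I work modulo $p^r$ and use that, since $p$ is nilpotent, completed polynomial rings such as $A'=A[T_1,\ldots,T_n]^\wedge$ reduce to ordinary polynomial rings; I may also assume $pR=0$, so that $p\in I=\Ker(A\to R)$ and $D/p^r$ is a divided power ring with PD ideal $\bar I=ID$ and quotient $D/\bar I=R$. The induced map $u:R\to R'$ factors as $R\to R''\to R'$, where $R''=R[T_1,\ldots,T_n]=A'/IA'$, the first arrow being the polynomial (hence smooth) extension attached to the new variables and the second being surjective. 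That $u$ is an l.c.i.\ homomorphism means, locally on $R'$, that the second arrow is a regular immersion, say $R'=R''/(\bar f_1,\ldots,\bar f_c)$ with $\bar f_1,\ldots,\bar f_c$ a regular sequence, and moreover that $R'$ is flat over $R$. The flatness of $R'$ over $R$ is the feature of the hypothesis that does the real work, and since flatness is local I assume such a presentation of $R'$ globally.

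Next I separate the smooth part from the regular immersion. Applying flat base change for divided power envelopes along the flat map $A\to A'$ (as in Step 1 of the proof of Proposition \ref{Pr:PD-env-tf}, via \cite[I Cor.~1.7.2]{Berthelot:CohCristalline}), I identify the intermediate envelope
\[
D_0:=D_\gamma(A'\to R'')^\wedge\cong D\hatotimes_A A'=D[T_1,\ldots,T_n]^\wedge,
\]
which is topologically free, hence flat, over $D$, and which satisfies $D_0/\bar I_0=R''$ for its PD ideal $\bar I_0$. Lifting the $\bar f_i$ to elements $f_i\in D_0$ and using transitivity of divided power envelopes, $D'$ becomes the completed divided power envelope, relative to $D_0$ with its PD structure, of the ideal $(f_1,\ldots,f_c)$; modulo $p^r$ this is the divided power envelope of $(f_1,\ldots,f_c)$ over $D_0/p^r$.

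The core step is a flatness statement for this last envelope over the base PD ring $S:=D/p^r$, with PD ideal $\Fa:=\bar I/p^r$, so that $S/\Fa=R$ and $B:=D_0/p^r=S[T_1,\ldots,T_n]$ is flat over $S$ with $B/\Fa B=R''$. One equips $D'/p^r$ with the divided power filtration in the variables $f_i$; its associated graded is the divided power polynomial algebra $\bigl(B/(f_1,\ldots,f_c)\bigr)\langle\xi_1,\ldots,\xi_c\rangle$, free over $B/(f_1,\ldots,f_c)$. Hence it suffices to show that $B/(f_1,\ldots,f_c)$ is flat over $S$. Here $B$ is flat over $S$, the images $\bar f_i$ in $B/\Fa B=R''$ form a regular sequence, and the quotient $R''/(\bar f_i)=R'$ is flat over $S/\Fa=R$ by the l.c.i.\ hypothesis; the local criterion of flatness then gives both that $f_1,\ldots,f_c$ is a regular sequence in $B$ and that $B/(f_1,\ldots,f_c)$ is flat over $S$. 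Feeding this back through the filtration (which is exhaustive, separated, and complete after $p$-adic completion) yields that $D'/p^r$ is flat over $D/p^r$, as desired.

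The main obstacle is precisely the application of the local flatness criterion over the PD base $S=D/p^r$ rather than over $\ZZ/p^r$. In \cite{Jong-Messing} the base is essentially $\ZZ_p$, the quotient $S/\Fa$ is the field $\FF_p$, and flatness of the relevant quotient is automatic; here $S/\Fa=R$ is an arbitrary $\FF_p$-algebra and the PD ideal $\Fa$ fails to be nilpotent as an ideal (only each of its elements is, since $x^p=p!\,\gamma_p(x)=0$), so one must propagate flatness across $\Fa$ using the divided power filtration and completeness rather than naive nilpotence. This is exactly the point at which the flatness of $u$ is indispensable: it supplies the flatness of $R'=R''/(\bar f_i)$ over $R$ that makes the associated graded flat over $S$, and without it $D'/p^r$ genuinely acquires torsion over $D/p^r$. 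Carrying out this flatness argument across $\Fa$, following \cite{Berthelot:CohCristalline} and the computation implicit in \cite[Lemma 4.7]{Jong-Messing}, is the technical heart of the proof.
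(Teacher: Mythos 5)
Your overall architecture does match the paper's: assume $pR=0$, split off the polynomial variables by identifying $D_\gamma(A'\to R[T_1,\ldots,T_n])^\wedge\cong D[T_1,\ldots,T_n]^\wedge$ via flat base change of PD envelopes, and reduce everything to flatness over $S=D/p^r$ of the PD envelope of $(f_1,\ldots,f_c)$; you are also right that the flatness contained in the l.c.i.\ hypothesis is indispensable and that the statement fails without it. But the technical core is asserted rather than proved, and the argument you sketch for it breaks at two points. First, the ``local criterion of flatness'' over $(S,\Fa)$ with $\Fa=\bar I/p^r$ does not apply: $\Fa$ is a nil ideal but not a nilpotent one, and $S$ is neither noetherian nor $\Fa$-adically complete, which are exactly the settings in which that criterion is valid. (This step is repairable: since $\Fa$ is nil, $\Spec S$ and $\Spec R$ have the same points and residue fields, so your hypotheses imply that $f_1,\ldots,f_c$ is a regular sequence in every \emph{fibre} of $B=S[T_1,\ldots,T_n]$ over $S$, and the slicing criterion for finitely presented algebras over an arbitrary base then gives both regularity in $B$ and flatness of $B/(f_\bullet)$ over $S$. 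This fibrewise formulation is precisely the one the paper takes as its definition of ``complete intersection with respect to the $T_i$''.)

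Second, and more seriously, the passage from flatness of the associated graded $(B/(f_\bullet))\langle\xi_1,\ldots,\xi_c\rangle$ to flatness of $D'/p^r$ itself is a genuine gap, not a dévissage. Your parenthetical claim that the PD filtration is ``complete after $p$-adic completion'' is false: already for $B=(\ZZ/p^r)[x]$ and the ideal $(x)$, the envelope is $\bigoplus_{k\ge 0}(\ZZ/p^r)x^{[k]}$, whereas its completion along the PD filtration is the product $\prod_{k}(\ZZ/p^r)x^{[k]}$. Moreover, even an exhaustive, separated and complete filtration with flat graded pieces does not force flatness without coherence or finiteness hypotheses (the module $\prod_{n\ge 0}S$ with the filtration $\prod_{n\ge m}S$ has flat graded pieces and is complete, yet is flat over $S$ only when $S$ is coherent). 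Closing exactly this gap is the content of \cite[Lemma 2.3.3]{BBM}, which the paper invokes instead of reproving: the PD envelope of a regular immersion of flat, finite-type $D/p^r$-schemes is flat over $D/p^r$. The paper's proof thus consists of (i) arranging the data as a regular embedding $\Spec\tilde R_r\to\Spec\tilde A_r$ of flat $D_r$-schemes so that this lemma applies, and (ii) identifying the resulting envelope with $D'_r$ by extending divided powers from $\bar I\tilde D_r$ and from the envelope ideal to their sum (your appeal to transitivity of PD envelopes plays this role and is essentially fine). A last, smaller point: ``since flatness is local I assume such a presentation of $R'$ globally'' also needs justification, because localizing $R'$ changes the presentation and hence $D'$; the paper handles this in its final paragraph by applying the complete-intersection case to the localizations $R'\to R'_{g_i}$ and using that $D'/p^r\to\prod_i D'_i/p^r$ is faithfully flat.
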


\begin{proof}
We can assume that $pR=0$.

Assume first that $u$ is a relative complete intersection
with respect to the $T_i$ in the sense that
\[
R'=R[T_1,\ldots,T_n]/(f_1,\ldots,f_m)
\] 
where $f_1,\ldots,f_m$ is a regular sequence in every fibre.
We consider the rings $\tilde A=D[T_1,\ldots, T_n]$ 
and $\tilde R=\tilde A/(g_1,\ldots,g_r)$ where $g_i$ is a lift of $f_i$,
and the \chang{$p$-complete} PD envelope relative to the given PD thickening
$D\to R=D/\bar I$
\[
\tilde D=D_{(D,\bar I,\delta)}(\tilde A\to\tilde R)^\wedge.
\]
There is a natural homomorphism $\tilde R\to R'$.
We claim that the kernel of the composition $\tilde D\to \tilde R\to R'$
carries divided powers which identify $\tilde D$ with $D'$. 

Let $A_r=A/p^r$ etc.
Then $\Spec\tilde R_r\to\Spec\tilde A_r$ 
is a regular embedding of flat $D_r$-schemes of finite \chang{presentation},
which implies that $\tilde D_r$ is flat over $D_r$ by \cite[Lemma 2.3.3]{BBM} and its proof.
Let $J_r=\Ker(\tilde D_r\to\tilde R_r)$ and $I_r=\Ker(D_r\to R)$.
The divided powers on $I_r$ extend to 
$I_r\tilde D_r=I_r\otimes_{D_r}\tilde D_r$,
and we have $J_r\cap I_r\tilde D_r=I_r\otimes_{D_r}J_r$.
It follows that the divided powers on $I_r\tilde D_r$
and on $J_r$ are compatible and extend to $I_r\tilde D_r+J_r$,
which is the kernel of $\tilde D_r\to R'$.
One verifies that the resulting PD thickening $\tilde D_r\to R'$
has the universal property of $D'_r\to R'$, which proves the claim.
Since $\tilde D_r$ is flat over $D_r$, the lemma is proved when
$u$ is a relative complete intersection.

In general one can cover $\Spec R'$ 
by open sets $\Spec R'_i$ with $R'_i=R'_{h_i}$,
equipped with the presentation $A'_i=A'[T_{n+1}]^\wedge\to R'_i$
defined by $T_{n+1}\mapsto h_i$,
such that $R\to R'_i$ is a relative complete intersection with respect to
$T_1,\ldots,T_{n+1}$. 
Let $D'_i$ be the ring \eqref{Eq:DAR} associated to $A'_i\to R'_i$.
Then by the first part of the proof, 
$D\to D'_i$ is flat modulo $p^r$ 
and $D'\to\prod D'_i$ is faithfully flat modulo $p^r$. 
Hence $D\to D'$ is flat mod $p^r$.
\end{proof}

\begin{Cor}
\label{Co:DAR-lci-tf}
Let $R\to R'$ be a faithfully flat syntomic homomorphism 
of rings in which $p$ is nilpotent.
Then $D(R)/p^r\to D(R')/p^r$ is faithfully flat. 
In particular, $R$ is PD torsion free iff this holds for $R'$. 
\end{Cor}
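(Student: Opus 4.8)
The plan is to treat the two assertions separately: first establish flatness of $D(R)/p^r\to D(R')/p^r$, then upgrade it to faithful flatness by a surjectivity-on-spectra argument, and finally read off the equivalence of PD torsion-freeness. Throughout I use that the map in question is the functorial one induced by $\ZZ_p[R]^\wedge\to\ZZ_p[R']^\wedge$ on the universal presentations, and that by the reduction explained just before Lemma~\ref{Le:D-Acris} we may assume $pR=0$; note also that $R\to R'$ faithfully flat forces $R\hookrightarrow R'$, and that both faithful flatness and the l.c.i.\ property are preserved after tensoring with $\FF_p$.

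First I would prove flatness when $R\to R'$ is a \emph{global} complete intersection, say $R'=R[T_1,\ldots,T_n]/(f_1,\ldots,f_m)$ with $(f_j)$ a regular sequence in every fibre. Here Lemma~\ref{Le:DAR-lci} applies verbatim to the \emph{economical} presentation $A'_{\mathrm{ec}}=A[T_1,\ldots,T_n]^\wedge\to R'$ built from the universal $A=\ZZ_p[R]^\wedge\to R$, and yields that $D(R)/p^r\to D_{\mathrm{ec}}/p^r$ is flat. The point I expect to be the main obstacle is that the corollary is phrased in terms of the \emph{universal} envelope $D(R')$, whose presentation has infinitely many polynomial variables, whereas Lemma~\ref{Le:DAR-lci} only allows finitely many. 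This gap is bridged by Step~1 of the proof of Proposition~\ref{Pr:PD-env-tf}: since $\ZZ_p[R']^\wedge$ is obtained from $A'_{\mathrm{ec}}$ by adjoining further polynomial variables, all mapping into $R'$, that step identifies $D(R')$ with the PD polynomial extension $D_{\mathrm{ec}}\langle\cdots\rangle^\wedge$, which is faithfully flat over $D_{\mathrm{ec}}$ modulo $p^r$. As the composite $D(R)\to D_{\mathrm{ec}}\to D(R')$ is the functorial map, flatness follows in the complete intersection case.

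Next I would pass from the complete intersection case to the general l.c.i.\ case by faithfully flat descent, exactly as in the last paragraph of the proof of Lemma~\ref{Le:DAR-lci}. Since $\Spec R'$ is quasi-compact I can choose finitely many $g_i$ with $\bigcup_i\Spec R'_{g_i}=\Spec R'$ such that each $R\to R'_{g_i}$ is a complete intersection; each localization $R'\to R'_{g_i}=R'[S]/(Sg_i-1)$ is a complete intersection as well. Applying the complete intersection case to $R\to R'_{g_i}$ and to $R'\to R'_{g_i}$ gives that $D(R)/p^r\to D(R'_{g_i})/p^r$ and $D(R')/p^r\to D(R'_{g_i})/p^r$ are flat. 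The product map $D(R')/p^r\to\prod_i D(R'_{g_i})/p^r$ is moreover faithfully flat because the cover is finite, so the standard descent of flatness through a faithfully flat map shows $D(R)/p^r\to D(R')/p^r$ is flat. To obtain faithful flatness it then suffices to check surjectivity on spectra: since $p$ is nilpotent in $D(R)/p^r$ and the kernel of $D(R)/p^r\to R$ is a PD ideal, hence nil modulo $p$, the space $\Spec(D(R)/p^r)$ is canonically homeomorphic to $\Spec R$, and likewise for $R'$; under these identifications the map becomes $\Spec R'\to\Spec R$, which is surjective because $R\to R'$ is faithfully flat.

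Finally I would deduce the equivalence of PD torsion-freeness. Faithful flatness of $D(R)/p^r\to D(R')/p^r$ for every $r$ gives, by $p$-adic separatedness of $D(R)$, that the map $D(R)\to D(R')$ is injective; hence if $D(R')$ is $p$-torsion free then so is $D(R)$. For the converse, if $D(R)$ is $p$-torsion free then each $D(R)/p^r$ is flat over $\ZZ/p^r$, so $D(R')/p^r$ is flat over $\ZZ/p^r$ as well, and the characterization of $\ZZ/p^r$-flatness by $(D(R')/p^r)[p]=p^{r-1}(D(R')/p^r)$ together with separatedness forces any $p$-torsion element of $D(R')$ to lie in $\bigcap_r p^{r-1}D(R')=0$. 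Thus $D(R)$ is torsion free iff $D(R')$ is, which is the asserted equivalence. I expect the only genuinely delicate point to be the presentation-matching in the second paragraph; the remaining steps are faithfully flat descent and formal torsion bookkeeping.
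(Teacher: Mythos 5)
Your second paragraph and your closing faithfulness/torsion arguments are sound, and they rest on the same pillar as the paper, namely Lemma \ref{Le:DAR-lci}. The genuine problem is your third paragraph. There you apply the second paragraph to the maps $R\to R'_{g_i}$ and $R'\to R'_{g_i}$, but that paragraph uses, essentially, that the source injects into the target: only then is $\ZZ_p[R'_{g_i}]^\wedge$ obtained from the economical presentation by adjoining polynomial variables, so that Step~1 of Proposition \ref{Pr:PD-env-tf} applies and, moreover, identifies the composite $D(R)\to D_{\mathrm{ec}}\to D(R'_{g_i})$ with the functorial map. Localizations are not injective in general (take $R=\FF_p$, $R'=\FF_p\times\FF_p$, $g_1=(1,0)$: this $R\to R'$ is faithfully flat and l.c.i., yet $R'\to R'_{g_1}$ has a kernel). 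When the map has a kernel, distinct variables of the economical presentation have equal images in the target, the variable sets do not embed, and the claimed identification of $D(R'_{g_i})$ with $D_{\mathrm{ec}}\langle\cdots\rangle^\wedge$ breaks down. One can still compare the two presentations through a common polynomial refinement, but then the two resulting maps out of $D(R)$ are induced by \emph{different} lifts of $R\to R'_{g_i}$, the relevant triangle no longer commutes, and flatness of the \emph{functorial} map does not follow; flatness is not obviously independent of the choice of lift, and you would owe an argument for that. This is exactly why the paper performs the covering-and-descent step once and for all inside the proof of Lemma \ref{Le:DAR-lci}, at the level of the chosen presentations $A'_i=A'[T_{n+1}]^\wedge$, where every comparison map is the evident one, and never for universal envelopes of the localizations.

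The good news is that your third paragraph is also unnecessary: Lemma \ref{Le:DAR-lci} is stated for l.c.i.\ homomorphisms, not just complete intersections — the covering argument you try to reproduce is the second half of its proof. Since a faithfully flat l.c.i.\ map of affine schemes is of finite type, choose finitely many distinct generators $t_1,\ldots,t_n\in R'\setminus R$ and apply the lemma directly to $A=\ZZ_p[R]^\wedge$ and $A'_{\mathrm{ec}}=A[T_1,\ldots,T_n]^\wedge\to R'$; your second paragraph then completes the proof verbatim, since here $R\hookrightarrow R'$ does hold. With this repair your route is correct and close to the paper's: where you pass from $D_{\mathrm{ec}}$ to $D(R')$ by one application of Step~1 (a PD polynomial extension, faithfully flat mod $p^r$), the paper instead writes $D(R')/p^r$ as the filtered colimit of the envelopes of $\ZZ_p[R\cup I]^\wedge\to R'$ over finite generating subsets $I\subseteq R'\setminus R$, each flat over $D(R)/p^r$ by the lemma. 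Your explicit treatment of faithfulness (nil kernel, hence surjectivity on spectra) and of the torsion-freeness equivalence fills in details the paper leaves implicit. One last caution: the blanket reduction to $pR=0$ in your first paragraph has the same flavor of defect as paragraph three, since $D(R)$ and $D(R/p)$ are envelopes of \emph{different} presentations; it is harmless only because the repaired argument never uses it — both the lemma and Step~1 are proved for $p$ nilpotent.
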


\begin{proof}
Note that $R\subseteq R'$ is a subring.
Let $A=\ZZ_p[R]^\wedge$ and $D=D(R)$. 
For a finite subset $I\subseteq R'\setminus R$ 
which generates $R'$ as an $R$-algebra let $A_I=\ZZ_p[R\cup I]^\wedge$ 
and let $D_I$ be the ring \eqref{Eq:DAR} for the natural presentation $A_I\to R'$.
Then $D/p^r\to D_I/p^r$ is flat by Lemma \ref{Le:DAR-lci},
and $D(R')/p^r$ is the filtered colimit over $I$ of $D_I/p^r$,
thus flat over $D/p^r$.
\end{proof}

\begin{Remark}
\label{Re:pbasis-pd-tf}
\chang{Every $\FF_p$-scheme $X$ which locally has a $p$-basis is PD torsion free. Indeed, one can assume that $X=\Spec R$ where $R$ has a $p$-basis, which allows a presentation $A\to R$ with $A/p=R$ by \cite[Prop.~1.1.7]{Berthelot-Messing} or \cite[Lemmas 1.1.2 \& 1.2.2]{Jong:Crystalline}, and then $D=A$.}
\end{Remark}

\section{Frames and windows}
\label{Se:frames}

We use the terminology of frames and windows
as in \cite{Zink:Windows} and \cite{Lau:Frames}
with some modifications adapted to the present situation.

\begin{Defn}
\label{Def:frame}
A frame $\u A=(A,I,R,\sigma,\sigma_1)$
consists of \chang{$p$-complete} rings $A$ and $R=A/I$ for an ideal $I\subseteq A$,
a ring homomorphism $\sigma:A\to A$ which induces the Frobenius
on $A/p$, and a $\sigma$-linear map $\sigma_1:I\to A$ with $p\sigma_1=\sigma$.

An $\u A$-window $\u M=(M,M_1,\Phi,\Phi_1)$ 
consists of a finite projective $A$-module $M$, 
a submodule $M_1\subseteq M$ with $IM\subseteq M_1$ such that
$M/M_1$ is projective over $R$,
and $\sigma$-linear maps $\Phi:M\to M$ and $\Phi_1:M_1\to M$ 
with $p\Phi_1=\Phi$ and $\Phi_1(ax)=\sigma_1(a)\Phi(x)$ 
for $a\in A$ and $x\in M$,
such that $M$ is generated by $\Phi(M)+\Phi_1(M_1)$.

Homomorphisms of $\u A$-windows are module homomorphisms
that preserve all data. 
The category of $\u A$-windows is denoted by $\Win(\u A)$.
\end{Defn}

\begin{Remark}
\label{Rk:Psi}
For an $\u A$-window $\u M$ as in Definition \ref{Def:frame}
there is a decomposition $M=L\oplus T$ with $M_1=L\oplus IT$, 
called a normal decomposition,
and the pair $(\Phi,\Phi_1)$ corresponds
to a $\sigma$-linear isomorphism $\Psi:L\oplus T\to M$
defined by $\Psi=\Phi_1$ on $L$ and $\Psi=\Phi$ on $T$.
Proof:
The existence of $\sigma_1$ implies that $x^p=0$ 
for \chang{all} $x\in\Ker(A/p\to R/p)$,
hence finite projective $R$-modules can be lifted to $A$,
and the decomposition of $M$ follows.
See \cite[Lemma 2.6]{Lau:Frames} for more details.
It also follows that $I+pA\subseteq\Rad(A)$,
which means that $\u A$ is a frame in the sense of \cite[Def.~2.1]{Lau:Frames}.
\end{Remark}

\begin{Remark}[Functoriality]
\label{Rk:frame-funct}
Let $\u A'=(A',I',R',\sigma,\sigma_1)$ be another frame.
A frame homomorphism $\alpha:\u A\to\u A'$ is a ring homomorphism
$A\to A'$ that induces a homomorphism $R\to R'$ and commutes with $\sigma$ and $\sigma_1$.
It induces a base change functor $\alpha^*:\Win(\u A)\to\Win(\u A')$;
see \cite[Lemma 2.10]{Lau:Frames}.
Universal property:
If windows $\u M$ over $\u A$ and $\u M'$ over $\u A'$ are given,
define an $\alpha$-homomorphism $\u M\to\u M'$ to be a homomorphism of $A$-modules
that preserves the filtration and commutes with $\Phi$ and $\Phi_1$.
Then $\alpha$-homomorphisms $\u M\to\u M'$ correspond to
homomorphisms of $\u A'$-windows $\alpha^*\u M\to\u M'$. 
\end{Remark}

\begin{Example}
\label{Ex:frame-W}
For a \chang{$p$-complete} ring $R$ the ring of $p$-typical Witt vectors
$W(R)$ carries a frame structure $\u W(R)=(W(R),I(R),R,\sigma,\sigma_1)$
where $\sigma$ is the Witt vector Frobenius and $\sigma_1$ is the inverse
of the bijective Verschiebung homomorphism $v:W(R)\to I(R)$.
A window over $\u W(R)$ is a 3n-display over $R$ 
in the sense of \cite{Zink:Display}, 
\chang{commonly called display in later literature.}
\end{Example}

\begin{Example}
\label{Ex:Witt-frame}
Let $R$ be a \chang{$p$-complete} ring.
A frame for $R$ in the sense of \cite{Zink:Windows} is a 
\chang{$p$-complete} PD thickening $A\to R=A/I$ where $A$ is torsion free,
with a Frobenius lift $\sigma$ on $A$.
Then $\sigma(I)\subseteq pA$, and $(A,I,R,\sigma,\sigma_1)$
with $\sigma_1=p^{-1}\sigma$ is a frame in the sense of
Definition \ref{Def:frame}.
\end{Example}

Let us recall a version of the deformation lemma for windows.
An endomorphism $f$ of an abelian group is called \chang{locally}
nilpotent if \chang{any} element is annihilated by some power of $f$.

\begin{Prop}
\label{Pr:deform-win}
Let $\alpha:\u A'\to\u A$ be a homomorphism of frames 
such that $A'\to A$ is surjective with kernel $N$
and $R'\to R$ is bijective.
If $\sigma_1:N\to N$ induces a \chang{locally} nilpotent endomorphism of $N/p$, 
then the base change functor $\alpha^*:\Win(\u A')\to\Win(\u A)$
is an equivalence.
\end{Prop}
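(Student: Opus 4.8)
The plan is to prove that the base change functor $\alpha^*$ is an equivalence by constructing a quasi-inverse that deforms a window over $\u A$ back to a window over $\u A'$, and the key point will be to show that the deformation data is determined up to the ambiguity controlled by the pointwise-nilpotent endomorphism $\sigma_1$ on $N/p$. First I would reduce to understanding the fibers of $\alpha^*$ over a fixed object. Given a window $\u M=(M,M_1,\Phi,\Phi_1)\in\Win(\u A)$, a lift to $\Win(\u A')$ consists of a finite projective $A'$-module $M'$ with $M'\otimes_{A'}A\cong M$ together with the window structure $(M'_1,\Phi',\Phi'_1)$. Since $R'\to R$ is bijective and $N=\Ker(A'\to A)$ is killed by the projectivity/lifting mechanism from Remark \ref{Rk:Psi} (the existence of $\sigma_1$ forces $x^p=0$ on the kernel mod $p$, so finite projective modules lift uniquely up to isomorphism), the module $M'$ and submodule $M'_1$ are essentially rigid: I would use a normal decomposition $M=L\oplus T$, $M_1=L\oplus IT$ and lift it to $M'=L'\oplus T'$ with $M'_1=L'\oplus I'T'$, which is possible and unique up to isomorphism. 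So the real content is lifting the pair $(\Phi,\Phi_1)$, equivalently the $\sigma$-linear isomorphism $\Psi:L\oplus T\to M$.

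The next step is to phrase the lifting of $\Psi$ as a fixed-point problem. Via the normal decomposition, a window structure on $M'$ corresponds to a $\sigma$-linear isomorphism $\Psi':L'\oplus T'\to M'$ reducing to $\Psi$ modulo $N$. Two such lifts differ by an $A'$-linear automorphism of $M'$ congruent to the identity modulo $N$, i.e.\ by an element of $\Hom(M',NM')\cong\Hom_A(M,N\otim_? M)$ suitably interpreted; and the condition that $\Psi'$ be an isomorphism intertwining the frame data translates, after choosing an arbitrary set-theoretic lift $\tilde\Psi'$, into solving an equation of the form $U=\Theta(U)$ where $\Theta$ is an affine-$\sigma_1$-semilinear operator on the group $G=\Hom_{A'}(M',NM'/\text{[higher]})$. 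The crucial structural input is that the linear part of $\Theta$ is built from $\sigma_1$ acting on $N$, so that $\Theta$ (or rather its linearization) is pointwise nilpotent on $G/p$, hence pointwise nilpotent on $G$ after accounting for $p$-adic completeness since $N$ is an ideal in a $p$-adic ring.

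I would then invoke a standard successive-approximation / formal argument: because $\sigma_1$ is pointwise nilpotent on $N/p$ and everything is $p$-adically complete, the operator $1-\Theta$ is bijective on $G$, giving existence and uniqueness of the fixed point. This simultaneously shows that every window over $\u A$ lifts (essential surjectivity of $\alpha^*$) and that the lift is unique up to unique isomorphism, which upgrades to full faithfulness by applying the same fixed-point analysis to the group $\Hom$ between two windows rather than to $\Aut$ of a single one. Concretely, for windows $\u M',\u N'\in\Win(\u A')$, a homomorphism $\alpha^*\u M'\to\alpha^*\u N'$ lifts uniquely to $\u M'\to\u N'$ by the analogous contraction argument on the Hom-groups.

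The main obstacle I anticipate is the bookkeeping needed to verify that the relevant operator is genuinely pointwise nilpotent, not merely topologically nilpotent: the hypothesis only gives pointwise nilpotence of $\sigma_1$ on $N/p$, so I must be careful to factor the fixed-point operator through quotients on which $\sigma_1$ acts pointwise nilpotently and then lift through the $p$-adic filtration $N\supseteq pN\supseteq p^2N\supseteq\cdots$ one graded piece at a time. On each graded piece $p^iN/p^{i+1}N$ the operator is controlled by $\sigma_1$ on $N/p$, which is pointwise nilpotent by hypothesis, so $1-\Theta$ is bijective there; assembling these and using $p$-adic completeness of $M'$ yields bijectivity of $1-\Theta$ on the whole group. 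This is precisely the mechanism underlying the deformation lemmas in \cite{Lau:Frames} and \cite{Zink:Windows}, so I expect the argument to follow the pattern of \cite[Lemma 2.10]{Lau:Frames} or \cite[Thm.~3.2]{Zink:Windows} with the pointwise-nilpotence hypothesis replacing the usual topological nilpotence condition.
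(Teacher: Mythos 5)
Your proposal is correct in substance and runs on the same engine as the paper's proof --- deformation of windows along a surjection of frames whose kernel carries a $\sigma_1$-action that is pointwise nilpotent modulo $p$, with $p$-adic completeness providing convergence --- but the organization is genuinely different, and the difference is instructive. You re-derive the contraction argument under the weakened hypothesis: lift a normal decomposition, encode the remaining ambiguity as a fixed-point equation $U=\Theta(U)$ on $\Hom_{A'}(M',NM')$, and invert $1-\Theta$ by induction through $N\supseteq pN\supseteq p^2N\supseteq\cdots$ together with completeness. The paper never reopens that argument: it observes that $N$, as the kernel of a homomorphism of $p$-adic rings, is closed and hence itself $p$-adic; it forms the quotient frames $\u B{}_n=(A'/p^nN,I'/p^nN,R',\sigma,\sigma_1)$; it notes that $\sigma_1$ is pointwise nilpotent on the finite-level kernels $N/p^nN$ (this is exactly your graded-piece computation, using $\sigma_1(p^ix)=p^i\sigma_1(x)$), so that the existing deformation theorem \cite[Thm.~3.2]{Lau:Frames} applies verbatim to give $\Win(\u B{}_n)\cong\Win(\u A)$; and it concludes from $\u A'=\varprojlim_n\u B{}_n$ by the limit lemma \cite[Le.~2.12]{Lau:Frames}. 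In short, where you modify the proof of the known theorem, the paper reduces to its unmodified statement; that buys a six-line proof and eliminates the bookkeeping you rightly flag as the main obstacle, at the cost of invoking the limit formalism for window categories. If you execute your plan, three points need care. (i) The completeness you lean on is not automatic ``since $N$ is an ideal in a $p$-adic ring'' --- ideals of $p$-adic rings need not be $p$-adically complete; what saves you is that $N$ is closed because $A$ is separated, and then $N$ is $p$-adic by the argument of \cite[Tag 090T]{Stacks-Project}, which is precisely the paper's first step and is what makes your Hom-groups complete. (ii) Essential surjectivity needs no fixed point at all: once the normal decomposition lifts, any $\sigma$-linear lift $\Psi'$ of $\Psi$ has linearization that is an isomorphism modulo $N\subseteq\Rad(A')$, hence an isomorphism, and it already defines a window whose base change is $\u M$; the contraction is needed only for full faithfulness, i.e.\ for lifting homomorphisms uniquely. (iii) Your citations are transposed: the deformation theorem whose pattern you are following is \cite[Thm.~3.2]{Lau:Frames} (Lemma 2.10 of that paper is base change of windows), while \cite{Zink:Windows} contains the analogous theory but not under those numbers.
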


\begin{proof}
Since $A$ and $A'$ are \chang{$p$-complete}, 
$N$ is closed and hence complete for the $p$-adic topology of $A'$.
It follows that $N$ is \chang{$p$-complete}; 
see the proof of 
\cite[\href{http://stacks.math.columbia.edu/tag/090T}{Tag 090T}]{Stacks-Project}.
For $n\ge 0$ we define a frame $\u B{}_n=(A'/p^nN,I'/p^nN,R',\sigma,\sigma_1)$
as a quotient of $\u A'$.
Since $\sigma$ is \chang{locally} nilpotent on $N/p^{n}N$, the
projection homomorphism $\u B{}_{n}\to\u B{}_0=\u A$ 
induces an equivalence of windows by \cite[Thm.~3.2]{Lau:Frames}.
Since $\u A'=\varprojlim_n\u B{}_n$ taken componentwise,
the result follows by \cite[Le.~2.12]{Lau:Frames}.
\end{proof}

We will also use a sheaf version of frames and windows.

\begin{Defn}
\label{Def:frame-topos}
Let $T$ be a topos. 

A ring $\BBB$ in $T$ is called \chang{$p$-complete} if 
$\BBB\xrightarrow\sim\varprojlim_r\BBB/p^r$.

A frame in $T$ is a collection
$\u\AAA=(\AAA,\III,\RRR, \sigma,\sigma_1)$
where $\AAA$ and $\RRR$ are \chang{$p$-complete} rings in $T$
with $\RRR=\AAA/\III$ for an ideal $\III\subseteq \AAA$,
$\sigma:\AAA\to \AAA$ is a ring homomorphism 
which induces the Frobenius homomorphism on $\AAA/p$,
and $\sigma_1:\III\to \AAA$ is a $\sigma$-linear map 
with $p\sigma_1=\sigma$.

If $\u\AAA$ is a frame in $T$, 
an $\u\AAA$-window is a collection $\u\MMM=(\MMM,\MMM_1,\Phi,\Phi_1)$
where $\MMM$ is a finite locally free $\AAA$-module,
$\MMM_1\subseteq \MMM$ is a submodule with $\III\MMM\subseteq \MMM_1$
such that $\MMM/\MMM_1$ is locally free over $\RRR$,
and $\Phi:\MMM\to \MMM$ and $\Phi_1:\MMM_1\to \MMM$ are $\sigma$-linear maps
with $p\Phi_1=\Phi$ on $\MMM_1$ and $\Phi_1(ax)=\sigma_1(a)\Phi(x)$ 
for local sections $a\in \III$ and $x\in \MMM$,
such that $\MMM$ is generated by $\Phi(\MMM)+\Phi_1(\MMM_1)$.

Homomorphisms of $\u \AAA$-windows are module homomorphisms
that preserve all data. 
The category of $\u \AAA$-windows is denoted by $\Win(\u \AAA/T)$.
\end{Defn}

Remark \ref{Rk:Psi} carries over to the sheaf version as follows.

\begin{Lemma}
\label{Le:Psi}
For $\AAA$-modules $\MMM_1\subseteq \MMM$ 
as in Definition \ref{Def:frame-topos},
locally there is a decomposition of $\AAA$-modules 
$\MMM=\LLL\oplus \TTT$ with $\MMM_1=\LLL\oplus \III\TTT$. 
If such a decomposition is given, 
pairs $(\Phi,\Phi_1)$ such that $(\MMM,\MMM_1,\Phi,\Phi_1)$ is an
$\u \AAA$-window correspond to $\sigma$-linear isomorphisms 
$\Psi:\LLL\oplus \TTT\to \MMM$ defined by 
$\Psi=\Phi_1$ on $\LLL$ and $\Psi=\Phi$ on $\TTT$.
\end{Lemma}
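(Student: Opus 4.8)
The plan is to reduce both assertions to local computations with modules over a ring, since they are local on $T$ and are established in the affine case in \cite[Lemma 2.6]{Lau:Frames}; the only thing to check is that those arguments use nothing beyond algebraic identities valid for local sections of $\u\AAA$, so that they transcribe directly to the topos. First I would treat the existence of a normal decomposition. Working locally we may assume $\MMM$ is free, and that the inclusion $\MMM_1/\III\MMM\subseteq\MMM/\III\MMM$ is split over $\RRR$, say $\MMM/\III\MMM=(\MMM_1/\III\MMM)\oplus\bar\TTT$ with $\bar\TTT\cong\MMM/\MMM_1$ locally free. It then suffices to lift the two summands to direct summands $\LLL,\TTT\subseteq\MMM$ over $\AAA$; the key input, exactly as in Remark \ref{Rk:Psi}, is that $p\sigma_1=\sigma$ together with the fact that $\sigma$ induces the Frobenius on $\AAA/p$ forces $x^p=0$ for every local section $x\in\Ker(\AAA/p\to\RRR/p)$. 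Hence this kernel is nil, finite locally free $\RRR$-modules lift to $\AAA$ by lifting idempotents modulo a nil ideal and then along $\AAA\to\AAA/p$ using $p$-adic completeness of $\AAA$. One checks as above that $\MMM=\LLL\oplus\TTT$ gives $\MMM_1=\LLL+\III\MMM=\LLL\oplus\III\TTT$, the desired decomposition.

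Given such a decomposition I would set up the correspondence explicitly. From a window $(\MMM,\MMM_1,\Phi,\Phi_1)$ I define $\Psi\colon\LLL\oplus\TTT\to\MMM$ by $\Psi=\Phi_1$ on $\LLL$ and $\Psi=\Phi$ on $\TTT$; this is $\sigma$-linear by construction. Conversely, from a $\sigma$-linear map $\Psi$ I set $\Phi(\ell+t)=p\Psi(\ell)+\Psi(t)$ on $\MMM=\LLL\oplus\TTT$ and $\Phi_1(\ell+at)=\Psi(\ell)+\sigma_1(a)\Psi(t)$ on $\MMM_1=\LLL\oplus\III\TTT$, where the formula on $\III\TTT$ is well defined because $\TTT$ is locally free, so that the multiplication map $\III\otimes_\AAA\TTT\to\III\TTT$ is an isomorphism. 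A direct verification using $p\sigma_1=\sigma$ and the $\sigma$-linearity of $\sigma_1$ shows these two assignments are mutually inverse: the relation $\Phi(\ell)=p\Phi_1(\ell)=p\Psi(\ell)$ holds because $\LLL\subseteq\MMM_1$, the relation $\Phi_1(at)=\sigma_1(a)\Phi(t)=\sigma_1(a)\Psi(t)$ is the axiom $\Phi_1(ax)=\sigma_1(a)\Phi(x)$, and $\sigma$-linearity of $\Phi$ and $\Phi_1$ together with $p\Phi_1=\Phi$ are precisely the compatibilities making the two descriptions agree.

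It remains to match the generation condition with the requirement that $\Psi$ be an isomorphism, which I expect to be the only substantive point. Writing $\Psi^\sharp\colon\sigma^*(\LLL\oplus\TTT)\to\MMM$ for the $\AAA$-linearization of $\Psi$, one computes from the formulas above that $\Phi_1(\LLL)=\Psi(\LLL)$ and $\Phi(\TTT)=\Psi(\TTT)$, so the $\AAA$-submodule generated by $\Phi(\MMM)+\Phi_1(\MMM_1)$ is exactly $\Image(\Psi^\sharp)$. Thus generation is equivalent to surjectivity of $\Psi^\sharp$. Since $\LLL\oplus\TTT=\MMM$ as $\AAA$-modules, the source $\sigma^*(\MMM)$ and the target $\MMM$ are finite locally free of the same rank, and a surjection between finite locally free modules of equal rank is an isomorphism; hence generation is equivalent to $\Psi^\sharp$, and therefore $\Psi$, being a $\sigma$-linear isomorphism. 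As every one of these verifications is an identity in local sections of $\u\AAA$, they hold in $T$, which proves the lemma.
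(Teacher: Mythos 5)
Your proof is correct and follows essentially the same route as the paper: you derive the normal decomposition from the fact that $p\sigma_1=\sigma$ forces $x^p=0$ on $\Ker(\AAA/p\to\RRR/p)$, lift idempotents through the nil kernel and along $\AAA\to\AAA/p$ by $p$-adic completeness, and then set up the standard correspondence between $(\Phi,\Phi_1)$ and $\Psi$. The only difference is cosmetic: where the paper simply cites \cite[Lemma 2.6]{Lau:Frames} for that correspondence, you write out its content (well-definedness of $\Phi_1$ on $\III\TTT$ via $\III\otimes_\AAA\TTT\cong\III\TTT$, and generation $\Leftrightarrow$ surjectivity $\Leftrightarrow$ bijectivity of $\Psi^\sharp$ by equality of ranks), which is exactly the argument that reference contains.
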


\begin{proof}
The existence of $\sigma_1$ implies that
\chang{every} $x$ in the kernel of $\AAA/p\to \RRR/p$
satisfies $x^p=0$.
It follows that the map of idempotents
$\Idem(M_n(\AAA))\to\Idem(M_n(\RRR))$ is surjective as sheaves.
Indeed, let $e\in M_n(\RRR)$ be idempotent.
Locally $e$ can be lifted to an element $y\in M_n(\AAA)$,
with image $y_r\in M_n(\AAA/p^r)$.
Since the kernel of $\AAA/p^r\to \RRR/p^r$ is a nil-ideal,
there is a unique idempotent $\tilde e_r\in M_n(\AAA/p^r)$ 
which is a polynomial in $y_r$ with integral coefficients
such that $\tilde e_r$ and $e$ have equal image in $M_n(\RRR/p^r)$.
The system $(\tilde e_r)_r$ gives an idempotent 
$\tilde e\in M_n(\AAA)$ which maps to $e$.
Now the surjective map of finite locally free $\RRR$-modules 
$\MMM/\III\MMM\to \MMM/\MMM_1$
splits locally, and the resulting decomposition of $\MMM/\III\MMM$
lifts to a decomposition of $\MMM$ locally.
For the correspondence between $(\Phi,\Phi_1)$ and $\Psi$ see
\cite[Lemma 2.6]{Lau:Frames}.
\end{proof}

Definition \ref{Def:frame} and Definition \ref{Def:frame-topos} 
are related as follows.
Let $\LF(\AAA)$ denote the category of finite locally free $\AAA$-modules.

\begin{Lemma}
\label{Le:win-win}
Let $\u\AAA$ be a frame in $T$ as in Definition \ref{Def:frame-topos} 
and consider the \chang{collection of} global sections $\u A=\Gamma(T,\u\AAA)$
defined by $A=\Gamma(T,\AAA)$ and $R=\Gamma(T,\RRR)$ etc.
If $A\to R$ is surjective, 
then $\u A$ is a frame as in Definition \ref{Def:frame}.
If moreover the functor of global sections induces equivalences
$\LF(\AAA)\xrightarrow\sim($finite projective $A$-modules)
and $\LF(\RRR)\xrightarrow\sim($finite projective $R$-modules),
then it induces an equivalence $\Win(\u\AAA/T)\cong\Win(\u A)$.
\end{Lemma}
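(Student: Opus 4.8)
The plan is to deduce both assertions from the two module-level equivalences in the hypothesis, together with the formal fact that $\Gamma=\Gamma(T,-)$, being a right adjoint, commutes with all limits.

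For the first assertion I would verify Definition \ref{Def:frame} directly. Since $\Gamma$ commutes with the limits $\varprojlim_r\AAA/p^r$ and $\varprojlim_r\RRR/p^r$ defining $p$-adicness, the rings $A$ and $R$ are $p$-adic; applying $\Gamma$ to $0\to\III\to\AAA\to\RRR\to0$ and using the surjectivity of $A\to R$ identifies $R$ with $A/I$ for $I=\Gamma(T,\III)$; and $\sigma$, $\sigma_1$ together with the relations $p\sigma_1=\sigma$ and $\sigma$-linearity descend to global sections by functoriality. I would keep this part brief. More useful for what follows are the compatibilities of $\Gamma$ with the operations in Definition \ref{Def:frame-topos}: writing any $\MMM\in\LF(\AAA)$ as the image of an idempotent in $M_n(\AAA)$—legitimate because the equivalence $\LF(\AAA)\simeq(\text{proj }A)$ identifies $\End(\AAA^n)$ with $M_n(A)$—and using that $\Gamma$ preserves finite direct sums, one gets $\Gamma(\sigma^*\MMM)\cong\sigma^*\Gamma(\MMM)$, $\Gamma(\III\MMM)=I\Gamma(\MMM)$ and $\Gamma(\MMM/\III\MMM)\cong\Gamma(\MMM)/I\Gamma(\MMM)$. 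The $\LF(\RRR)$-equivalence then shows this last module is finite projective over $R$, and that a surjection of locally free $\RRR$-modules remains surjective on global sections; this upgrades the left-exactness of $\Gamma$ to exactness on the short exact sequences occurring below.

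Granting these, the main point is that the window data is governed not by $\MMM$ and $\MMM_1$ separately but by a $\sigma$-linear isomorphism between \emph{locally free} modules, which is exactly what the hypothesis lets me transport. Concretely, for essential surjectivity I would start from a window $\u M=(M,M_1,\Phi,\Phi_1)$ over $\u A$, choose a normal decomposition $M=L\oplus T$, $M_1=L\oplus IT$ with its $\sigma$-linear isomorphism $\Psi_M\colon L\oplus T\to M$ as in Remark \ref{Rk:Psi}, and transport $L,T$ through $\LF(\AAA)\simeq(\text{proj }A)$ to a decomposition $\MMM=\LLL\oplus\TTT$ with $\Gamma\LLL=L$, $\Gamma\TTT=T$. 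Setting $\MMM_1=\LLL\oplus\III\TTT$ gives $\III\MMM\subseteq\MMM_1$, a locally free quotient $\MMM/\MMM_1\cong\TTT/\III\TTT$, and $\Gamma\MMM_1=M_1$ by the compatibilities above. Transporting the linearization of $\Psi_M$ through the equivalence (using $\Gamma\sigma^*\cong\sigma^*\Gamma$) yields a $\sigma$-linear isomorphism $\Psi\colon\LLL\oplus\TTT\to\MMM$, and Lemma \ref{Le:Psi} turns $\Psi$ back into a window $\u\MMM$ over $\u\AAA$ whose global sections are $\u M$ by construction.

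The same device handles the remaining points. For well-definedness of $\Gamma\colon\Win(\u\AAA/T)\to\Win(\u A)$ I would check that the global sections of a sheaf window satisfy Definition \ref{Def:frame}: the module conditions follow from the compatibilities, and the generation condition is read off by transporting a normal decomposition of $\MMM$ to $M$ and using that $\Gamma$ reflects isomorphisms of locally free modules, so that $\Psi$ being an isomorphism (Lemma \ref{Le:Psi}) forces $\Psi_M=\Gamma\Psi$ to be one. Faithfulness is immediate since $\Gamma$ is faithful on $\LF(\AAA)$. For fullness I would lift a window morphism $g\colon M\to M'$ to the unique $\AAA$-linear $f\colon\MMM\to\MMM'$ with $\Gamma f=g$, check $f(\MMM_1)\subseteq\MMM'_1$ via the $\LF(\RRR)$-equivalence, and check commutation with $\Phi,\Phi_1$ by faithfulness on $\LF(\AAA)$—on $\III\MMM$ the relation $\Phi_1(ax)=\sigma_1(a)\Phi(x)$ reduces $\Phi_1$-compatibility to $\Phi$-compatibility, and on a locally free lift of $\MMM_1/\III\MMM$ it is again an equality of maps of locally free modules. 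The one genuine obstacle, which all of the above is designed to circumvent, is that $\MMM_1$ is not locally free and the generation condition is not visible at the level of $\LF$; reducing everything to the isomorphism $\Psi$ via Remark \ref{Rk:Psi} and Lemma \ref{Le:Psi} is what makes the locally-free hypothesis sufficient.
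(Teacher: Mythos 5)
Your treatment of the equivalence $\Win(\u\AAA/T)\cong\Win(\u A)$ is correct and is essentially the paper's own argument: reduce a window to a normal decomposition plus a $\sigma$-linear isomorphism $\Psi$ (Remark \ref{Rk:Psi} and Lemma \ref{Le:Psi}), produce the sheaf-level normal decomposition by transporting one from the module side through $\LF(\AAA)\simeq(\text{finite projective }A\text{-modules})$, match the filtrations through $\LF(\RRR)\simeq(\text{finite projective }R\text{-modules})$, and let $\Psi'=\Gamma(T,\Psi)$ mediate between the two window structures. Your idempotent argument for the compatibilities $\Gamma(\sigma^*\MMM)\cong\sigma^*\Gamma(\MMM)$ and $\Gamma(\III\MMM)=I\Gamma(\MMM)$ makes explicit what the paper leaves implicit; this is fine.

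There is, however, a genuine gap in your proof of the first assertion, at exactly the point where the paper spends most of its effort. You assert that $A$ is $p$-adic ``since $\Gamma$ commutes with the limits $\varprojlim_r\AAA/p^r$.'' What commuting with limits gives is $A\cong\varprojlim_r\Gamma(T,\AAA/p^r)$, and this is \emph{not} the statement that $A$ is $p$-adic, because $\Gamma(T,\AAA/p^r)$ need not equal $A/p^rA$: the quotient $\AAA/p^r$ is a sheaf quotient, and $\Gamma$ is only left exact, so the kernel of $A\to\Gamma(T,\AAA/p^r)$ is $\Gamma(T,p^r\AAA)$, which contains $p^rA$ but may be strictly larger, and the map need not be surjective either. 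Thus your one sentence conflates two different inverse systems, and Definition \ref{Def:frame} requires completeness and separatedness for the system $A/p^rA$. The paper closes exactly this gap: it sets $A_n'$ equal to the image of $A\to A_n=\Gamma(T,\AAA/p^n)$, notes that $A\cong\varprojlim_nA_n'$ and that $p^nA\subseteq\Ker(A\to A_n')$, and then invokes the argument in the proof of \cite[Tag 090T]{Stacks-Project}: a ring that is complete and separated for a linear topology defined by ideals $J_n\supseteq p^nA$ is automatically $p$-adically complete and separated (a $p$-adic Cauchy sequence is Cauchy for the coarser topology; writing its partial sums as $y_m-y_n=p^n\sum_{k=n}^{m-1}p^{k-n}z_k$ and passing to the limit in the coarser topology shows the limit also works $p$-adically). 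The same repair is needed for $R$. Once this step is supplied, the rest of your argument goes through unchanged, since it only uses the frame structure on $\u A$ after this point is granted.
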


\begin{proof}
Let us show that the ring $A$ is \chang{$p$-complete}.
Let $A_n=\Gamma(T,\AAA/p^n)$ and let $A_n'$ be the image of $A\to A_n$.
Since $\AAA$ is \chang{$p$-complete} we have $A\cong\varprojlim A_n$,
hence $A\cong\varprojlim A_n'$.
Since $p^nA$ lies in the kernel of $A\to A_n'$
it follows that $A$ is \chang{$p$-complete}; see the proof of 
\cite[\href{http://stacks.math.columbia.edu/tag/090T}{Tag 090T}]{Stacks-Project}.
Similarly $R$ is \chang{$p$-complete}, and $\u A$ is a frame if $A\to R$ is surjective.
Assume that the hypothesis on finite locally free modules holds.
For given $\MMM\in\LF(\AAA)$ let  
$\bar\MMM=\MMM\otimes_\AAA\RRR\in\LF(\RRR)$,
and consider the associated finite projective modules 
$M=\Gamma(T,\MMM)$ over $A$ and $\bar M=\Gamma(T,\bar\MMM)$ over $R$.
Then $\MMM=M\otimes_A\AAA$ and $\bar\MMM=\bar M\otimes_R\RRR$,
moreover $\bar M=M\otimes_AR$ since this holds after $\otimes_R\RRR$.
It follows that submodules $\MMM_1\subseteq\MMM$ such that
$\MMM/\MMM_1$ is a finite locally free $\RRR$-module correspond to
submodules $M_1\subseteq M$ such that $M/M_1$ is 
finite projective over $R$, via $M_1=\Gamma(T,\MMM_1)$.
If $M_1\subseteq M$ is given, 
choose a decomposition $M=L\oplus T$ with $M_1=L\oplus IT$,
which gives $\MMM=\LLL\oplus\TTT$ with $\MMM_1=\LLL\oplus\III\TTT$
for $\LLL=L\otimes_A\AAA$ and $\TTT=T\otimes_A\AAA$.
Then window structures on $(\MMM,\MMM_1)$ and on $(M,M_1)$
correspond to $\sigma$-linear isomorphisms 
$\Psi:\LLL\oplus\TTT\to\MMM$ and $\Psi':L\oplus T\to M$
by Remark \ref{Rk:Psi} and Lemma \ref{Le:Psi},
and the relation $\Psi'=\Gamma(T,\Psi)$
gives a bijective correspondence between the window structures.
\end{proof}

Finally let us record an elementary fact.

\begin{Lemma}
\label{Le:win-phi-mod}
Let $\u\AAA$ be a frame in a topos as in Definition \ref{Def:frame-topos}
such that $p\in\III$ and $\sigma_1(p)=1$. 
For $\u\AAA$-windows $\u\MMM$, $\u\NNN$ the natural homomorphism
$\rho:\Hom(\u\MMM,\u\NNN)\to\Hom_{\AAA,\Phi}(\MMM,\NNN)$ 
from window homomorphisms to $\Phi$-module homomorphisms
is injective with cokernel annihilated by $p$.
\end{Lemma}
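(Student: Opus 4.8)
The plan is to note first that $\rho$ leaves the underlying $\AAA$-linear map unchanged: a window homomorphism $f\colon\u\MMM\to\u\NNN$ is, by Definition~\ref{Def:frame-topos}, an $\AAA$-linear map $\MMM\to\NNN$ that preserves the filtration, commutes with $\Phi$, and commutes with $\Phi_1$, and $\rho$ merely forgets the filtration and $\Phi_1$ conditions. Hence injectivity is immediate, since $\rho(f)=0$ forces $f=0$ already as a map of $\AAA$-modules. All the conditions in question are statements about local sections, so the entire verification is local and presents no topos-theoretic difficulty; I would carry it out pointwise.

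For the cokernel I would take an arbitrary $g\in\Hom_{\AAA,\Phi}(\MMM,\NNN)$ and show that $pg$ lies in the image of $\rho$, that is, that $pg$ satisfies the two extra window-homomorphism conditions. The map $pg$ is again $\AAA$-linear and commutes with $\Phi$, because $g$ does and $\sigma(p)=p$. For the filtration, the hypothesis $p\in\III$ gives $p\NNN\subseteq\III\NNN\subseteq\NNN_1$, so that $(pg)(\MMM)\subseteq p\NNN\subseteq\NNN_1$ and in particular $(pg)(\MMM_1)\subseteq\NNN_1$.

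The step I would treat most carefully, and the one where both hypotheses $p\in\III$ and $\sigma_1(p)=1$ are genuinely used, is the compatibility of $pg$ with $\Phi_1$. For a local section $x\in\MMM_1$, using $p\Phi_1=\Phi$ on $\MMM_1$ and $g\Phi=\Phi g$, one has
\[
(pg)(\Phi_1 x)=g(p\Phi_1 x)=g(\Phi x)=\Phi(gx).
\]
On the other hand, since $gx\in\NNN$ and $p\in\III$, the defining relation $\Phi_1(ay)=\sigma_1(a)\Phi(y)$ with $a=p$, together with $\sigma_1(p)=1$, gives
\[
\Phi_1\bigl((pg)x\bigr)=\Phi_1(p\cdot gx)=\sigma_1(p)\,\Phi(gx)=\Phi(gx).
\]
The two expressions coincide, so $pg$ commutes with $\Phi_1$ and is therefore a window homomorphism. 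This shows $pg\in\Image(\rho)$ for every $g\in\Hom_{\AAA,\Phi}(\MMM,\NNN)$, so the cokernel of $\rho$ is annihilated by $p$. I expect no genuine obstacle: the proof is an elementary check of the window axioms for $pg$, with all the substance residing in the two hypotheses that convert the $\Phi$-compatibility of $g$ into the $\Phi_1$-compatibility of $pg$.
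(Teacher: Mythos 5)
Your proof is correct and follows essentially the same route as the paper: injectivity is immediate, and for the cokernel one checks that $pg$ preserves the filtration via $p\NNN\subseteq\III\NNN\subseteq\NNN_1$ and commutes with $\Phi_1$ by the two computations $\Phi_1(p\,g(x))=\sigma_1(p)\Phi(g(x))=\Phi(g(x))$ and $(pg)(\Phi_1(x))=g(\Phi(x))$ using $p\Phi_1=\Phi$. The paper's proof is just a terser version of exactly this argument.
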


\begin{proof}
Clearly $\rho$ is injective.
Assume that $g:\MMM\to\NNN$ commutes with $\Phi$ and let $h=pg$.
Then $h(\MMM_1)\subseteq p\NNN\subseteq\NNN_1$, 
and $h$ commutes with $\Phi_1$ since for $x\in\MMM_1$ we have
$\Phi_1(h(x))=\Phi_1(pg(x))=\sigma_1(p)\Phi(g(x))=\Phi(g(x))$ and
$h(\Phi_1(x))=g(\Phi(x))$ using that $p\Phi_1=\Phi$.
\end{proof}

\subsection*{\chang{Divided Frobenius on PD envelopes}}

\chang{We use the notion of $\delta$-rings of \cite{Bhatt-Scholze:Prisms}. Recall that $\gamma$ denotes the divided powers on the ideal $p\ZZ_p\subseteq\ZZ_p$. Our primary source of frames is the following.\footnote{An earlier version of this article used a weaker statement with a longer proof.}

\begin{Lemma}
\label{Le:delta-pd-sigma1}
Let $A$ be a $\delta$-ring over $\ZZ_p$ and $R=A/I$ for an ideal $I$. Let $D=D_{\gamma}(A\to R)$ be the pd envelope relative to $\gamma$, and let $J\subseteq D$ be the natural PD ideal. Then $D$ carries a natural structure of $\delta$-ring, which yields a Frobenius lift $\sigma:D\to D$, and there is a natural $\sigma$-linear map $\sigma_1:J\to D$ that satisfies $p\sigma_1=\sigma|_J$.
\end{Lemma}

\begin{proof}
Cf.\ \cite[Remark 4.12]{Hoff-Lau:Sheared}. 
The $\delta$-structure on $A$ corresponds to a homomorphism $A\to W_2(A)$.
 The kernel of $W_2(D)\to R$ carries natural divided powers compatible with $\gamma$ by \cite[\S2.3]{Zink:Display}.
Hence the composition $A\to W_2(A)\to W_2(D)$ extends to a PD homomorphism $D\to W_2(D)$, which gives a $\delta$-structure on $D$. For $a\in J$ we set $\sigma_1(a)=(p-1)!a^{[p]}+\delta(a)$. Clearly $p\sigma_1=\sigma$. One verifies that $\sigma_1$ is a $\sigma$-linear map by a calculation or by passing to a universal situation where $D$ is torsion free. 
\end{proof}

\begin{Cor}[{\cite[Lemma 4.1.8]{Scholze-Weinstein}}]
\label{Co:SW}
For semiperfect $\FF_p$-algebras $R$ there is a unique functorial $\sigma$-linear map $\sigma_1:I_{\cris}(R)\to A_{\cris}(R)$ such that $p\sigma_1=\sigma$, moreover $\sigma_1(p)=1$ and $\sigma_1(\gamma_n([a]))=\frac{(pn)!}{p\cdot n!}\gamma_{pn}([a])$ for $a\in\ker(R^\flat\to R)$.
\end{Cor}

\begin{proof}
Lemma \ref{Le:delta-pd-sigma1} with $A=W(R^\flat)$ gives a functorial $\sigma_1$ with $p\sigma_1=\sigma$. Uniqueness and the remaining formulas hold after multiplication by a power of $p$, hence in general since in the universal case $A_{\cris}$ is torsion free.
\end{proof}
}

\section{Dieudonn\'e theory over semiperfect rings}
\label{Se:semiperfect}

Let $R$ be a semiperfect $\FF_p$-algebra,
i.e.\ the Frobenius $\phi_R:R\to R$ is surjective.
The ring $R^\flat=\varprojlim(R,\phi_R)$ is perfect,
and the projection $R^\flat\to R$ is surjective, thus 
\[
R=R^\flat/J
\]
for an ideal $J$.
The ring $A_{\inf}(R)=W(R^\flat)$ is the universal pro-infinitesimal
thickening of $R$, and $A_{\cris}(R)=D_\gamma(A_{\inf}(R)\to R)^\wedge$
as in \eqref{Eq:DAR} is the universal \chang{$p$-complete} PD thickening of $R$.
Let $I_{\cris}(R)$ be the kernel of $A_{\cris}(R)\to R$.
The Frobenius $\phi_R$ induces an endomorphism $\sigma$ of $A_{\cris}(R)$.
\chang{Corollary \ref{Co:SW} yields} a functorial frame
\begin{equation}
\label{Eq:uAcrisR}
\u A{}_{\cris}(R)=(A_{\cris}(R),I_{\cris}(R),R,\sigma,\sigma_1).
\end{equation}
\chang{By \cite[Th.~6.3]{Lau:Semiperfect} there is a contravariant functor
from $p$-divisible groups over $R$ to windows over this frame,
\begin{equation}
\label{Eq:PhiRcris}
\Phi_R^{\cris}:\BT(\Spec R)\to\Win(\u A{}_{\cris}(R)),
\end{equation}
which is functorial in $R$
such that for $\u M=\Phi_R^{\cris}(G)$ the triple
$(M,\Filone M,F)$ is the value of the filtered Dieudonn\'e crystal $\DFfun(G)$ at $A_{\cris}(R)$, and this determines $\Phi_R^{\cris}$.} 
We recall the following facts:

\begin{Remark}
By \cite[Th.~7.10]{Lau:Semiperfect}
the functor $\Phi_R^{\cris}$ is an equivalence if $R$ is iso-balanced.
The ring $R$ is called balanced if $\phi(J)=J^p$,
or equivalently if the ideal $\bar J=\Ker(\phi_R)$ satisfies $\bar J^p=0$, 
and $R$ is called iso-balanced 
if there is an ideal $I\subseteq R$ 
annihilated by a power of $\phi$ such that $R/I$ is balanced;
then $I$ is nilpotent.
If $J$ is finitely generated then $R$ is iso-balanced.
\end{Remark}

\begin{Remark}
\label{Rk:Acris-DF}
If the ring $A_{\cris}(R)$ is torsion free,
the category $\Win(\u A{}_{\cris}(R))$
is equivalent to the category of filtered Dieudonn\'e crystals 
$\DFcat(\Spec R)$,
and the functor $\Phi_R^{\cris}$ 
corresponds to $\DFfun_{\Spec R}$;
see  \cite[Prop.~2.6.4]{Cais-Lau}.
This holds when $R$ is a complete intersection
in the sense that $J$ is generated by a regular sequence.
In that case $R$ is iso-balanced, so it follows that
the functor $\DFfun_{\Spec R}$ is an equivalence.
\end{Remark}

We will generalize these results in two directions.

\subsection{$F$-nilpotent semiperfect rings}

Let $R=R^\flat/J$ be a semiperfect $\FF_p$-algebra as above.
The ring $R$ is $F$-nilpotent in the sense of Definition \ref{Def:F-nilp}
iff $J^{p^r}\subseteq\phi(J)$ for some $r\ge 0$. 
One verifies that every iso-balanced semiperfect ring is $F$-nilpotent.

Assume that $J=K_0\supseteq K_1\supseteq\ldots$ 
is a decreasing sequence of ideals of $R^\flat$ 
such that $K_i^p\subseteq K_{i+1}$; 
this is called an admissible sequence of ideals 
in \cite[Def.~7.5]{Lau:Semiperfect}. 
Then the set 
\[
W(K_*)=\{(a_0,a_1,\ldots)\in W(R^\flat)\mid a_i\in K_i\}
\]
is an ideal of $W(R^\flat)$,
and the ring $A(K_*)=W(R^\flat)/W(K_*)$ 
is a straight weak lift of $R$ in the sense of 
\cite[Def.~7.3]{Lau:Semiperfect} by \cite[Lemma 7.6]{Lau:Semiperfect},
which implies that it carries a natural frame structure
\[
\u A(K_*)=(A(K_*),pA(K_*),R,\sigma,\sigma_1)
\] 
as a quotient of the frame $\u W(R^\flat)$ of Example \ref{Ex:frame-W}.
By \cite[Lemma 7.4]{Lau:Semiperfect}, 
\chang{the unique homomorphism $\varkappa:A_{\cris}(R)\to A(K_*)$ of PD thickenings of $R$ is a } frame homomorphism
\[
\varkappa:\u A{}_{\cris}(R)\to\u A(K_*).
\]
The composition of $\Phi_R^{\cris}$ with the base change functor 
$\varkappa^*$ is a functor 
\[
\Phi_{K_*}:\BT(R)\to\Win(\u A(K_*)).
\]
The minimal admissible sequence $J_*$ is defined by $J_i=J^{p^i}$.

\begin{Prop}
\label{Pr:kappa-equiv}
If $R$ is $F$-nilpotent and $K_*=J_*$ is the minimal admissible sequence, 
$\varkappa$
induces an equivalence of the window categories.
\end{Prop}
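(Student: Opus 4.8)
The plan is to deduce the equivalence from the deformation lemma for windows, Proposition~\ref{Pr:deform-win}, applied to the frame homomorphism $\varkappa\colon\u A{}_{\cris}(R)\to\u A(J_*)$. First I would record the structural hypotheses. Writing $S=R^\flat$, so that $R=S/J$ and $A_{\cris}(R)=D_\gamma(W(S)\to R)^\wedge$, the ring map underlying $\varkappa$ sits in a commutative triangle with the surjection $W(S)\to A(J_*)=W(S)/W(J_*)$; since the latter factors through $A_{\cris}(R)$, the map $\varkappa\colon A_{\cris}(R)\to A(J_*)$ is surjective. Both frames have $R$ as quotient and $\varkappa$ induces the identity on $R$, so the kernel $N=\Ker(\varkappa)$ is contained in $I_{\cris}(R)$. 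Because $\varkappa$ commutes with $\sigma_1$ we have $\sigma_1(N)\subseteq N$, and from $\sigma_1(pn)=\sigma(n)=p\sigma_1(n)$ for $n\in N$ one sees that $\sigma_1$ descends to an endomorphism of $N/pN$. Everything thus reduces to the single point that this induced endomorphism of $N/pN$ is pointwise nilpotent.

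To analyse $N/pN$ I would reduce the short exact sequence $0\to N\to A_{\cris}(R)\to A(J_*)\to 0$ modulo $p$, obtaining an exact sequence
\[
A(J_*)[p]\xrightarrow{\ \partial\ }N/pN\to\Ker(\bar\varkappa)\to 0 ,
\]
where $\bar\varkappa\colon A_{\cris}(R)/p\to A(J_*)/p$ is the reduction. Here $A(J_*)/p=W(S)/(W(J_*)+pW(S))=R$, while $A_{\cris}(R)/p$ is the divided power envelope $D=D_{R^\flat,\gamma}(J)$ (the divided powers of $p$ all die modulo $p$), so that $\Ker(\bar\varkappa)$ is the canonical PD ideal $\bar I\subseteq D$ with $D/\bar I=R$. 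Hence $N/pN$ is an extension of the PD augmentation ideal $\bar I$ by the image of the $p$-torsion $A(J_*)[p]$, and I must control $\sigma_1$ on both contributions.

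On the PD part the computation is explicit and does not use the hypothesis. For $x\in J$ one has $\sigma([x])=[x^p]=[x]^p=p!\,\gamma_p([x])$, whence $\sigma(\gamma_n([x]))=\tfrac{(pn)!}{n!}\gamma_{pn}([x])$; since $v_p\!\big(\tfrac{(pn)!}{n!}\big)=n$ this gives $\sigma_1(\gamma_n([x]))\in p^{n-1}D$. Modulo $p$ therefore $\sigma_1$ kills every $\gamma_n([x])$ with $n\ge 2$ and sends $[x]=\gamma_1([x])$ to a unit multiple of $\gamma_p([x])$, which is then killed; absorbing coefficients from $S$ through $\sigma_1(ax)=\sigma(a)\sigma_1(x)$ and using $x^p=0$ in $D/p$, one sees that $\sigma_1$ is nilpotent in at most two steps on the quotient $\bar I$.

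The essential point, where $F$-nilpotence and the choice $J_i=J^{p^i}$ enter, is the torsion, and the difficulty is that the two contributions are \emph{not} separately preserved by $\sigma_1$, so they must be treated together. A class in $A(J_*)[p]$ is represented by $w\in W(S)$ with $pw\in W(J_*)$ but $w\notin W(J_*)$; its image is $\overline{pw}\in N/pN$ and $\sigma_1(\overline{pw})=\overline{\sigma(w)}=\overline{F(w)}$. The relation $pw\in W(J_*)$ forces $w_i^{\,p}\in J^{p^{i+1}}$, so $F(w)$ again lies in $W(J_*)$, and on such Witt representatives $\sigma_1$ acts essentially as the downward Verschiebung-inverse inherited from $\u W(S)$, lowering the Witt level while the ideal memberships are governed by the Frobenius $\phi$ of $S$. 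I would therefore introduce a single decreasing filtration on $N/pN$ combining the PD-degree with the Witt level and show that $\sigma_1$ strictly increases it; the hypothesis $J^{p^r}\subseteq\phi(J)$ is exactly what makes this filtration exhaustive with nilpotent graded pieces, ruling out an infinite tower of torsion classes and yielding that every element of $N/pN$ is annihilated by some power of $\sigma_1$. Proposition~\ref{Pr:deform-win} then gives that $\varkappa^{*}\colon\Win(\u A{}_{\cris}(R))\to\Win(\u A(J_*))$ is an equivalence. I expect this last filtration estimate—making the interaction of the PD filtration with the Witt filtration $W(J_*)$ precise and extracting a uniform bound from $J^{p^r}\subseteq\phi(J)$—to be the main obstacle, with the divided power identity above the routine ingredient.
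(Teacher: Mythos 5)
Your overall strategy is the paper's: reduce Proposition \ref{Pr:kappa-equiv} to Proposition \ref{Pr:deform-win} applied to the surjection $\varkappa$, so that everything hinges on pointwise nilpotence of $\sigma_1$ on $N/pN$ with $N=\Ker(\varkappa)$, and your divided-power computation ($\sigma_1([a]^{[n]})=c_n[a]^{[np]}$ with $v_p(c_n)=n-1$, hence $\sigma_1^2=0$ on that part modulo $p$) is exactly the paper's \eqref{Eq:sigma1-an}. The genuine gap is that the single step in which $F$-nilpotence and the choice $K_*=J_*$ do any work---nilpotence of $\sigma_1$ on the Witt-vector contribution---is never proved. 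After the correct observation that a torsion class $\overline{pw}$ is sent by $\sigma_1$ to $\overline{F(w)}$ with $F(w)\in W(J_*)$, you stop at announcing ``a single decreasing filtration combining the PD-degree with the Witt level'' whose construction and key property you defer, and which you yourself identify as the main obstacle. So the proposal establishes the routine half of the argument and postulates the essential half; it is incomplete exactly where the hypothesis is consumed. (A secondary imprecision: the identification $A_{\cris}(R)/p\cong D_{R^\flat,\gamma}(J)$ needs justification in the presence of PD torsion, but your argument only needs that $\Ker(\bar\varkappa)$ is spanned by the classes of the $[a]^{[n]}$, which is true.)

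For comparison, the paper closes the missing step by a short computation rather than a filtration, and the organization matters: instead of the Tor sequence, whose two terms are not separately $\sigma_1$-stable (as you note), it uses the additive decomposition $N/pN=\Image(N_0)+\Image(W(J_*))$, where $N_0\subseteq N$ is the ideal generated by the elements $[a]^{[n]}$ for $a\in J$; the point is that $N_0$ \emph{is} $\sigma_1$-stable, so only elements of $W(J_*)$ must be traced. Write an element of $W(J_*)$ as $\sum_{i<r}v^i([a_i])$ plus an element of $W(J_*)\cap p^rW(R^\flat)$, using $p^rW(R^\flat)=v^rW(R^\flat)$ for perfect $R^\flat$. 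For $a=p^rc$ in the second piece, iterating $\sigma_1(px)=\sigma(x)$ gives $\sigma_1^r(a)=F^r(c)$, and the iterated hypothesis $J^{p^{r+s}}\subseteq\phi(J^{p^s})$ shows componentwise that $F^r(c)\in F(W(J_*))=\sigma(W(J_*))$, whence $\sigma_1^r(a)=p\sigma_1(b)\in pN$. For $a\in J_i$ with $i<r$, one has $v^i([a])=p^i[a^{1/p^i}]$, so $\sigma_1^i(v^i([a]))=[a]$, and two further applications of \eqref{Eq:sigma1-an} land in $pN_0$. By additivity of $\sigma_1$ this gives the uniform bound that $\sigma_1^{r+1}$ vanishes on $N/pN$---more than the pointwise nilpotence required. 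Your reduction of torsion classes to classes of $W(J_*)$ is compatible with this scheme, so the fix is to replace the projected filtration by these two estimates.
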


\begin{proof}
This is an application of Proposition \ref{Pr:deform-win}.
\chang{Clearly} $\varkappa$ is surjective.
Let $N\subseteq A_{\cris}(R)$ be the kernel of $\varkappa$.
We have to show that $\sigma_1$ induces a \chang{locally} nilpotent
endomorphism of $N/pN$. 
Let $N_0\subseteq A_{\cris}(R)$ be the ideal 
generated by the elements
$[a]^{[n]}$ for $a\in J$ and $n\ge 1$ (not completed),
where the exponent $^{[n]}$ denotes the $n$-th divided power.
Then $N_0\subseteq N$.
\chang{Any} $x\in A_{\cris}(R)$ can be written as $x=b+\sum_{i\ge 0}p^iy_i$
with $b\in W(R^\flat)$ and $y_i\in N_0$,
and we have $x\in N$ iff $b\in N$ iff $b\in W(J_*)$.
It follows that $N/pN$ is generated by $N_0$ and $W(J_*)$.
For $a\in J$ we have
\begin{equation}
\label{Eq:sigma1-an}
\sigma_1([a]^{[n]})=c_n[a]^{[np]}
\qquad
\text{with} 
\qquad
c_n=(np)!/(n!\, p)
\end{equation}
by \chang{Corollary \ref{Co:SW}},
and $c_n$ is divisible by $p$ if $n\ge p$, 
so $\sigma_1^2$ is zero on $N_0/pN_0$.
Since $R$ is $F$-nilpotent we have 
$J^{p^r}\subseteq\phi(J)$ for some $r$,
and thus $J^{p^{r+s}}\subseteq\phi(J^{p^s})$ for \chang{all} $s$.
Hence for $a\in W(J_*)\cap p^rW(R^\flat)$ we have
$\sigma_1^r(a)=\sigma(b)=p\sigma_1(b)$ with $b\in W(J_*)$
and thus $\sigma_1^r(a)=0$ in $N/pN$.
Finally, for $a\in J_i$ with $0\le i<r$ the element $v^i([a])$ of $W(J_*)$
is mapped to $pN_0$ by $\sigma_1^{i+2}$ by \eqref{Eq:sigma1-an} again.
It follows that $\sigma_1^{r+1}$ is zero on $N/pN$.
\end{proof}

Let $R_1=R^\flat/J^p$ and $A=A(J_*)$ and $A_1=A(J_{*+1})$.
Then $R_1$ is semiperfect with
$R_1^\flat=R^\flat$, and $A_1$ gives a frame $\u A{}_1$ for $R_1$.

\begin{Prop}
\label{Pr:BT-R1-R}
There is a $2$-cartesian diagram of categories:
\[
\xymatrix@M+0.2em{
\BT(R_1) \ar[r] \ar[d]_{\Phi_{J_{*+1}}} & 
\BT(R) \ar[d]^{\Phi_{J_{*}}} \\ 
\Win(\u A{}_1) \ar[r] &
\Win(\u A)
}
\] 
\end{Prop}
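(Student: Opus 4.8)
The plan is to factor the lower frame homomorphism and reduce the assertion to Grothendieck--Messing theory. Write $\pi\colon\u A{}_1\to\u A$ for the frame homomorphism underlying the base change functor $\Win(\u A{}_1)\to\Win(\u A)$; it is surjective with kernel $N=W(J_*)/W(J_{*+1})$. The enabling observation is that $\sigma=F$ acts on $W(R^\flat)$ by $(a_i)_i\mapsto(a_i^p)_i$, and since $a_i\in J^{p^i}$ forces $a_i^p\in(J^{p^i})^p=J^{p^{i+1}}$, we get $F(W(J_*))\subseteq W(J_{*+1})$, i.e.\ $\sigma(N)=0$ in $A_1$. I would use this to interpose a frame $\u A'=(A_1,I',R,\sigma,\sigma_1')$ with enlarged ideal $I'=\ker(A_1\to R)=pA_1+N$ and $\sigma_1'$ the unique extension of $\sigma_1$ with $\sigma_1'|_N=0$ (well defined precisely because $\sigma(N)=0$), so that $\pi$ factors as $\u A{}_1\xrightarrow{j}\u A'\xrightarrow{q}\u A$. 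Here $q$ changes the ring by $A_1\to A$ but leaves the residue ring $R$ unchanged, while $j$ is the identity on $A_1$ and shrinks the ideal from $I'$ to $pA_1$, changing the residue ring from $R$ to $R_1$.

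Since $\sigma_1'$ vanishes on $N$, Proposition \ref{Pr:deform-win} applies to $q$ and shows that $q^*\colon\Win(\u A')\to\Win(\u A)$ is an equivalence. Hence the square in the statement is $2$-cartesian if and only if the square obtained by replacing $\u A$ with $\u A'$, the bottom arrow with $j^*$, and $\Phi_{J_*}$ with $\Phi'=(q^*)^{-1}\circ\Phi_{J_*}$ is $2$-cartesian; commutativity of the new square follows from that of the original one (functoriality of the crystalline window functor) because $q^*$ is faithful. The advantage is that $j^*$ is now a pure filtration operation: for an $\u A'$-window $(M,M_1',\Phi,\Phi_1')$, a lift to $\u A{}_1$ amounts to a submodule $M_1\subseteq M$ with $pM\subseteq M_1$, $M_1+NM=M_1'$ and $M/M_1$ finite locally free over $R_1$, the map $\Phi_1$ being forced as the restriction $\Phi_1'|_{M_1}$. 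I would check that such a refinement is automatically a window: the generation condition holds by Nakayama since the induced window over $A_1/N=A$ is the given one, and a lift has no nontrivial automorphisms because any $1+\nu$ with $\nu(M)\subseteq NM$ commuting with $\Phi$ and $\Phi_1$ must kill $\Phi(M)+\Phi_1(M_1)=M$ (using $\sigma(N)=0$ and $\sigma_1'|_N=0$). Thus the fibre of $j^*$ over $\Phi'(G)$ is the discrete set of lifts of the Hodge filtration of $G$ along $R_1\to R$.

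On the $p$-divisible group side I would invoke Grothendieck--Messing for the nilpotent PD thickening $R_1\to R$ (its kernel $J/J^p$ has vanishing $p$-th powers, with the standard divided powers): lifts of $G\in\BT(R)$ to $R_1$ are classified by lifts of the Hodge filtration inside $\Dfun(G)_{R_1}$. The two fibres then coincide as sets, because the module of $\Phi'(G)$ is the crystal value $\Dfun(G)_{A_1}$---this is forced since $\Phi_{J_{*+1}}(G_1)$ has module $\Dfun(G_1)_{A_1}=\Dfun(G)_{A_1}$ for any lift $G_1$ (crystal rigidity along $R_1\to R$) and maps to $\Phi'(G)$ under $j^*$---so that $M/pM=\Dfun(G)_{R_1}$ and the filtration $M_1/pM$ is exactly the Grothendieck--Messing datum. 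Matching objects this way, together with the corresponding statement for homomorphisms (a morphism in the $2$-fibre product over $f\colon G\to G'$ exists, and is then unique, exactly when $\Dfun(f)_{R_1}$ respects the two filtration lifts, which is the Grothendieck--Messing condition for $f$ to lift), yields both essential surjectivity and full faithfulness of the comparison functor $\BT(R_1)\to\BT(R)\times_{\Win(\u A')}\Win(\u A{}_1)$.

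I expect the main obstacle to be the bookkeeping in this last matching: one must identify the ambient module $M/pM$ of the window fibre with the crystal $\Dfun(G)_{R_1}$ carrying the Hodge filtration compatibly, and check that the refinement $M_1\mapsto M_1/pM$ and the Grothendieck--Messing lift define the same torsor under $\Hom_R(\mathrm{Fil}_G,\,\bar M/\mathrm{Fil}_G)\otimes_R(J/J^p)$, including on morphisms. The computation $\sigma(N)=0$, which is exactly where the minimal admissible sequence and the shift to $J_{*+1}$ are used, is what makes both the interposition of $\u A'$ and the rigidity arguments go through.
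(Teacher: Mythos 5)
Your skeleton is the same as the paper's: interpose the frame $\u A'=(A_1,I,R,\sigma,\sigma_1')$ in which $\sigma_1'$ kills $N$, apply Proposition \ref{Pr:deform-win} to see that $\u A'\to\u A$ induces an equivalence of windows, and then match lifts along $j:\u A{}_1\to\u A'$ (refinements of the filtration) with lifts of $p$-divisible groups along the nilpotent PD thickening $R_1\to R$ via Grothendieck--Messing. But there is a genuine gap at your first step, and it is exactly the kind of issue this paper is built around. You justify the existence of $\sigma_1'$ by the computation $\sigma(N)=0$. That computation is correct ($F$ is the coordinatewise $p$-th power on $W(R^\flat)$, and $a_i\in J^{p^i}$ gives $a_i^p\in J^{p^{i+1}}$), but it does not give well-definedness of the extension-by-zero: you need $\sigma_1(N\cap pA_1)=0$, and since $A_1=W(R^\flat)/W(J_{*+1})$ may have $p$-torsion (this is the generic case the paper cares about), the relation $p\sigma_1=\sigma$ only shows that $\sigma_1(N\cap pA_1)$ is killed by $p$. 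On a ring with torsion, $\sigma_1$ carries strictly more information than $\sigma$ --- that is the whole point of ``divided'' Dieudonn\'e theory --- so no statement about $\sigma$ alone can settle this. The paper fills the gap by computing in Witt coordinates that $N\cap pA_1=v(W(J_{*+1}))/v(W(J_{*+2}))$, on which $\sigma_1=v^{-1}$ visibly takes values in the image of $W(J_{*+1})$, hence is zero in $A_1$. (An equivalent repair: if $py\in N$, a lift satisfies $v(F\tilde y)\in W(J_*)$, which forces $F\tilde y\in W(J_{*+1})$ and so $\sigma_1(py)=\sigma(y)=0$ in $A_1$; some such coordinate argument is unavoidable.)

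A second divergence is worth noting. You define $\Phi'$ abstractly as $(q^*)^{-1}\circ\Phi_{J_*}$ and then identify its underlying module through a chosen lift $G_1$ and crystal rigidity; the paper instead constructs divided powers on $I$ (using $N^p=0$, via homogeneity of Witt multiplication, and compatibility with the canonical divided powers on $pA_1$), so that $A_1\to R$ is a genuine PD thickening, and then produces a concrete frame homomorphism $\tilde\varkappa:\u A{}_{\cris}(R)\to\u A'$ from the universal property of $A_{\cris}(R)$, checking $\sigma_1$-compatibility by an explicit calculation with $[a]^{[n]}$. This makes the identification of $\Phi'(G)$ with crystal values canonical and functorial, which is precisely the ``bookkeeping'' you flag as the main obstacle. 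Your route can likely be completed, but observe that it does not actually avoid divided-power constructions: identifying the module of $\Phi_{J_{*+1}}(G_1)$ with $\Dfun(G_1)_{A_1}$ already uses that $pA_1$ carries divided powers and that $A_{\cris}(R_1)\to A_1$ is a PD morphism, and the independence of the chosen lift $G_1$ and functoriality in $G$ remain to be written out.
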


\begin{proof}
There are natural surjective homomorphisms $R_1\to R$
and $\u A{}_1\to\u A$ which induce the horizontal functors.
Let $I$ be the kernel of $A_1\to A\to R$.
We will define divided powers on $I$ and
a homomorphism $\sigma_1:I\to A_1$ which gives a frame
\[
\u A'=(A_1,I,R,\sigma,\sigma_1).
\]
Let $N$ be the kernel of $A_1\to A$.
Then $I=N+pA_1$, and
\[
N=W(J_*)/W(J_{*+1}),
\qquad
N\cap pA_1=v(W(J_{*+1}))/v(W(J_{*+2})).
\]
It follows that $\sigma_1:pA_1\to A_1$ is zero on $N\cap pA_1$
and thus extends uniquely to a homomorphism
$\sigma_1:I\to A_1$ with $\sigma_1(N)=0$.
For $a_1,\ldots,a_p\in W(J_*)$ we have $a_1\cdots a_p\in W(J_{*+1})$ 
by the homogeneity properties of the Witt vector multiplication. 
Hence $N^p=0$, 
and $N$ carries the trivial divided powers $\delta$ defined by
$\delta_p(x)=0$ for all $x\in N$.
For $a\in W(J_{*+1})$, the natural divided powers $\gamma$
on $pW(R^\flat)=I(R^\flat)$ give
$\gamma_p(v(a))=(p^{p-1}/p!)v(a^p)\in W(J_{*+1})$ 
since $a^p\in W(J_{*+2})$. 
It follows that the given divided powers $\gamma$ on $pA_1$
and $\delta$ on $N$ coincide on $N\cap pA_1$ and thus extend 
to divided powers on $I$.

There are natural frame homomorphisms $\u A{}_1\to\u A'\to \u A$ 
over the ring homomorphisms $R_1\to R=R$.
Since $\sigma_1$ is zero on $N$,
the homomorphism $\u A'\to\u A$ induces an equivalence of windows
by Proposition \ref{Pr:deform-win}.

Next we want to define a frame homomorphism $\u A{}_{\cris}(R)\to\u A'$.
The projection $W(R^\flat)\to A_1$ extends to a homomorphism 
$\tilde\varkappa:A_{\cris}(R)\to A_1$ 
of \chang{$p$-complete} PD thickenings of $R$
due to the divided powers on $I$.
The composition of $\tilde\varkappa$ with either
$A_1\to A$ or $A_{\cris}(R_1)\to A_{\cris}(R)$
is the homomorphism $\varkappa$ of $A$ or $A_1$.
The homomorphism $\tilde\varkappa$ commutes with $\sigma$
because $\sigma$ preserves the divided powers on both sides.
We claim that $\tilde\varkappa$ is a frame homomorphism
$\u A{}_{\cris}(R)\to\u A'$, which means that $\tilde\varkappa$
commutes with $\sigma_1$. This is a direct calculation:
The ideal $I_{\cris}(R)$ is generated
by $p$ and the elements $[a]^{[n]}$ for $a\in J$ and $n\ge 1$,
where $^{[n]}$ means $n$-th divided power.
Using \eqref{Eq:sigma1-an} we get
\[
\tilde\varkappa(\sigma_1([a]^{[n]}))
=c_n\tilde\varkappa([a]^{[np]})
=c_n\tilde\varkappa([a])^{[np]}
=0
\]
since $\tilde\varkappa([a])\in N$ and $N^{[p]}=0$, 
moreover
\[
\sigma_1(\tilde\varkappa([a]^{[n]}))
=\sigma_1(\tilde\varkappa([a])^{[n]})=0
\]
since $\tilde\varkappa([a])^{[n]}\in N$ and $\sigma_1(N)=0$.

The composition of $\Phi_R^{\cris}$ with the
base change functor $\tilde\varkappa^*$
is a functor $\Phi'$ such that the following diagram commutes.
\[
\xymatrix@M+0.2em{
\BT(R_1) \ar[r] \ar[d]^{\Phi_{J_{*+1}}} & 
\BT(R) \ar[r]^{\id} \ar[d]^{\Phi'} &
\BT(R) \ar[d]^{\Phi_{J_{*}}} \\
\Win(\u A{}_1) \ar[r] &
\Win(\u A') \ar[r]^\sim &
\Win(\u A)
}
\]

For a $p$-divisible group $G$ over $R$ and 
$\u M'=\Phi'(G)$, the module $M'\otimes_{A_1}R_1$
coincides with the value of the Dieudonn\'e crystal $\Dfun(G)_{R_1\to R}$,
where $R_1\to R$ is a PD thickening induced by the divided powers
on $I\subseteq A_1$.
The divided powers on $\Ker(R_1\to R)$ are trivial since
$N$ maps surjectively to this ideal.
By the Grothendieck-Messing Theorem
it follows that lifts of $G$ to $R_1$ and lifts of $\u M'$ to
$\u A{}_1$ correspond to lifts of the Hodge filtration in the same way,
and the proposition follows.
\end{proof}

\begin{Cor}
\label{Co:BT-Rflat-R}
If $R$ is $F$-nilpotent, 
there is a $2$-cartesian diagram of categories
\[
\xymatrix@M+0.2em{
\BT(R^\flat) \ar[r] \ar[d]_{\Phi^{\cris}_{R^\flat}} & 
\BT(R) \ar[d]^{\Phi_{J_{*}}} \\ 
\Win(\u W(R^\flat)) \ar[r] &
\Win(\u A)
}
\] 
\end{Cor}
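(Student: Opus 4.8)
The plan is to iterate Proposition \ref{Pr:BT-R1-R} and then pass to the inverse limit. For $n\ge 0$ set $R^{(n)}=R^\flat/J^{p^n}$, so that $R^{(0)}=R$ and $R^{(1)}=R_1$. By repeated application of Proposition \ref{Pr:BT-R1-R} each $R^{(n)}$ is semiperfect with $(R^{(n)})^\flat=R^\flat$ and defining ideal $J^{p^n}$, whose minimal admissible sequence is $J_{*+n}$, so that $\u A(J_{*+n})$ is the frame $\u A$ and $\u A(J_{*+n+1})$ the frame $\u A{}_1$ for $R^{(n)}$. Applying Proposition \ref{Pr:BT-R1-R} with $R^{(n)}$ in place of $R$ gives, for each $n$, a $2$-cartesian square relating $\BT(R^{(n+1)})$, $\BT(R^{(n)})$, $\Win(\u A(J_{*+n+1}))$ and $\Win(\u A(J_{*+n}))$, with vertical functors $\Phi_{J_{*+n+1}}$ and $\Phi_{J_{*+n}}$. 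By the pasting law for $2$-cartesian squares, composing the first $N$ of them yields a $2$-cartesian square
\[
\xymatrix@M+0.2em{
\BT(R^{(N)}) \ar[r] \ar[d]_{\Phi_{J_{*+N}}} &
\BT(R) \ar[d]^{\Phi_{J_*}} \\
\Win(\u A(J_{*+N})) \ar[r] &
\Win(\u A)
}
\]
whose right-hand column is the fixed functor $\Phi_{J_*}\colon\BT(R)\to\Win(\u A)$.

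Next I would pass to the limit over $N$. Since the right-hand column is independent of $N$, the $2$-fibre product commutes with the inverse limit, so the limiting square is again $2$-cartesian and identifies $\varprojlim_N\BT(R^{(N)})$ with $\BT(R)\times_{\Win(\u A)}\varprojlim_N\Win(\u A(J_{*+N}))$. It therefore remains to identify these two inverse limits with $\BT(R^\flat)$ and $\Win(\u W(R^\flat))$.

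For the window side I would show that $\u W(R^\flat)$ is the componentwise inverse limit of the frames $\u A(J_{*+N})=W(R^\flat)/W(J_{*+N})$. The ideal $W(J_{*+N})$ consists of Witt vectors whose $i$-th component lies in $J^{p^{N+i}}$, and since $R^\flat$ is perfect one has $I(R^\flat)=pW(R^\flat)$, matching the ideal $pA(J_{*+N})$. Working coordinatewise, both $\bigcap_NW(J_{*+N})=0$ and $W(R^\flat)=\varprojlim_NA(J_{*+N})$ reduce to the single statement that $\bigcap_mJ^{p^m}=0$ and $R^\flat=\varprojlim_mR^\flat/J^{p^m}$. Granting this, $\u W(R^\flat)=\varprojlim_N\u A(J_{*+N})$ componentwise, whence $\Win(\u W(R^\flat))=\varprojlim_N\Win(\u A(J_{*+N}))$ by \cite[Lemma 2.12]{Lau:Frames}, exactly as in the proof of Proposition \ref{Pr:deform-win}.

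The completeness statement is where $F$-nilpotence enters, and is the main point. Writing elements of $R^\flat=\varprojlim(R,\phi)$ as sequences $(z_0,z_1,\ldots)$ with $z_{i+1}^p=z_i$, one checks directly that $\phi^k(J)=\{z\in R^\flat:z_k=0\}$ and that, because $R$ is semiperfect, $R^\flat=\varprojlim_kR^\flat/\phi^k(J)$ with transition maps $\phi$; this already gives completeness for the filtration $(\phi^k(J))_k$. The inclusion $\phi(J)\subseteq J^p$ always yields $\phi^k(J)\subseteq J^{p^k}$, while $F$-nilpotence provides $r$ with $J^{p^r}\subseteq\phi(J)$ and hence $J^{p^{kr}}\subseteq\phi^k(J)$; thus the filtrations $(\phi^k(J))_k$ and $(J^{p^m})_m$ are cofinal, so $\bigcap_mJ^{p^m}=0$ and $R^\flat=\varprojlim_mR^{(m)}$ with successive kernels $J^{p^m}/J^{p^{m+1}}$ of $p$-th power zero. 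Finally $\BT(R^\flat)=\varprojlim_N\BT(R^{(N)})$ follows from this completeness together with the standard fact that $p$-divisible groups over a ring complete for such a filtration are the inverse limit of those over its nilpotent quotients. I expect this last limit statement for $\BT$, together with the verification of completeness, to be the only real work; the categorical manipulations with $2$-fibre products are formal.
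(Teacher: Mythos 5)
Your proof is correct and takes essentially the same route as the paper's: iterate Proposition \ref{Pr:BT-R1-R} over the quotients $R^\flat/J^{p^n}$, paste the resulting $2$-cartesian squares, and pass to the inverse limit, identifying the limit of the window categories via the componentwise limit of frames and \cite[Lemma 2.12]{Lau:Frames}, and the limit of the categories $\BT(R^\flat/J^{p^n})$ via the standard completeness argument for $p$-divisible groups (the paper cites the analogue of \cite[II Lemma 4.16]{Messing:Crystals} and \cite[Lemma 2.4.4]{Jong:Crystalline} for this last step). Your cofinality argument is in fact more careful than the paper's one-line justification, which derives $R^\flat=\varprojlim_n R^\flat/J^{p^n}$ from the inclusion $\phi(J)\subseteq J^{p^r}$ --- the trivial direction, valid for any semiperfect ring --- whereas the needed input is the $F$-nilpotence inclusion $J^{p^r}\subseteq\phi(J)$, used exactly as in your cofinality comparison of the filtrations $(\phi^k(J))_k$ and $(J^{p^m})_m$.
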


\begin{proof}
Proposition \ref{Pr:BT-R1-R} applied to
$R_n=R^\flat/J^{p^n}$ for $n\ge 0$ gives $2$-cartesian squares
\[
\xymatrix@M+0.2em{
\BT(R_n) \ar[r] \ar[d]_{\Phi_{J_{*+n}}} & 
\BT(R) \ar[d]^{\Phi_{J_{*}}} \\ 
\Win(\u A{}_n) \ar[r] &
\Win(\u A)
}
\] 
with $A_n=W(R^\flat)/W(J_{*+n})$.
Since $R$ is $F$-nilpotent we have $\phi(J)\subseteq J^{p^r}$ for
some $r$, which implies that $R^\flat=\varprojlim_n R^\flat/J^{p^n}$
and thus $W(R^\flat)=\varprojlim_n A_n$.
The proposition follows since
the categories of $p$-divisible groups and windows 
preserve these limits 
by \cite[Lemma~2.12]{Lau:Frames} and the obvious analogue of
\cite[II Lemma~4.16]{Messing:Crystals}; 
see also \cite[Lemma~2.4.4]{Jong:Crystalline}.
\end{proof}

\begin{Thm}
\label{Th:semiperfect-Fnil}
For \chang{every} $F$-nilpotent semiperfect $\FF_p$-algebra $R$ the functor
\[
\Phi_R^{\cris}:\BT(R)\to\Win(\u A{}_{\cris}(R))
\]
is an equivalence.
\end{Thm}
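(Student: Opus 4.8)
The plan is to reduce the general case to the results already established for the minimal admissible sequence and for the perfect ring $R^\flat$, combining Proposition \ref{Pr:kappa-equiv} with Corollary \ref{Co:BT-Rflat-R}. First I would observe that the frame homomorphism $\varkappa:\u A{}_{\cris}(R)\to\u A(J_*)$ of Proposition \ref{Pr:kappa-equiv} induces an equivalence on window categories precisely when $R$ is $F$-nilpotent. Since by definition $\Phi_{J_*}=\varkappa^*\circ\Phi_R^{\cris}$, it therefore suffices to prove that the composite functor $\Phi_{J_*}:\BT(R)\to\Win(\u A(J_*))$ is an equivalence; the desired statement about $\Phi_R^{\cris}$ then follows formally by composing with the inverse of the equivalence $\varkappa^*$.

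To handle $\Phi_{J_*}$, I would invoke the $2$-cartesian diagram of Corollary \ref{Co:BT-Rflat-R}, which relates $\Phi_{J_*}$ over $R$ to the functor $\Phi^{\cris}_{R^\flat}$ over the perfect ring $R^\flat$. The left vertical arrow $\Phi^{\cris}_{R^\flat}:\BT(R^\flat)\to\Win(\u W(R^\flat))$ is an equivalence by Gabber's theorem in the perfect case (as recorded in the introduction; more precisely this is the balanced, indeed perfect, instance of \cite[Th.~7.10]{Lau:Semiperfect}, since a perfect ring is trivially iso-balanced with $J=0$). Given a $2$-cartesian square in which one of the parallel vertical functors is an equivalence, the opposite one is an equivalence as well: this is a standard fact about $2$-fiber products of categories. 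Hence $\Phi_{J_*}$ is an equivalence, which is exactly what was needed.

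Concretely, the argument runs as follows.

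\begin{proof}
By Proposition \ref{Pr:kappa-equiv} the frame homomorphism
$\varkappa:\u A{}_{\cris}(R)\to\u A(J_*)$ induces an equivalence
$\varkappa^*:\Win(\u A{}_{\cris}(R))\xrightarrow{\sim}\Win(\u A(J_*))$.
Since $\Phi_{J_*}=\varkappa^*\circ\Phi_R^{\cris}$,
it suffices to show that $\Phi_{J_*}:\BT(R)\to\Win(\u A(J_*))$
is an equivalence.
Corollary \ref{Co:BT-Rflat-R} gives a $2$-cartesian square
in which the left vertical functor
$\Phi^{\cris}_{R^\flat}:\BT(R^\flat)\to\Win(\u W(R^\flat))$
is an equivalence, as $R^\flat$ is perfect and hence iso-balanced,
so that \cite[Th.~7.10]{Lau:Semiperfect} applies.
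In a $2$-cartesian square of categories, if one of the two
parallel vertical functors is an equivalence then so is the other;
applying this to $\Phi_{J_*}$ shows that $\Phi_{J_*}$ is an equivalence.
Therefore $\Phi_R^{\cris}=(\varkappa^*)^{-1}\circ\Phi_{J_*}$
is an equivalence as well.
\end{proof}

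The main obstacle in this chain is entirely contained in the inputs already proved, namely the pointwise nilpotence computation underlying Proposition \ref{Pr:kappa-equiv} and the delicate construction of the intermediate frame $\u A'$ in Proposition \ref{Pr:BT-R1-R}; once those are in hand the theorem is a short formal deduction. I would only need to be careful that the left-hand functor of Corollary \ref{Co:BT-Rflat-R} really is an equivalence in the generality required, i.e.\ that the perfect case of $\Phi^{\cris}_{R^\flat}$ is covered, but this is the classical Gabber equivalence and poses no difficulty.
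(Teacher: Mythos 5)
Your reduction is the same as the paper's: use Proposition \ref{Pr:kappa-equiv} to replace $\Phi_R^{\cris}$ by $\Phi_{J_*}$, and then exploit the $2$-cartesian square of Corollary \ref{Co:BT-Rflat-R} together with Gabber's theorem for the perfect ring $R^\flat$. However, the step you dismiss as ``a standard fact about $2$-fiber products of categories'' is false, and it is precisely where the remaining content of the theorem sits. In the square of Corollary \ref{Co:BT-Rflat-R}, the left vertical functor $\Phi^{\cris}_{R^\flat}$ is the \emph{pullback} of the right vertical functor $\Phi_{J_*}$ along the bottom functor $\Win(\u W(R^\flat))\to\Win(\u A)$. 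Equivalences are stable under $2$-base change, so ``right vertical is an equivalence $\Rightarrow$ left vertical is an equivalence'' does hold; you need the converse, and that direction fails in general. A trivial counterexample: if the bottom functor is the inclusion of the empty category, the $2$-fiber product is empty, so the left vertical $\emptyset\to\emptyset$ is an equivalence no matter what the right vertical is. Even essential surjectivity of the bottom functor does not suffice: let $\mathcal{D}$ be the category with two objects $d_1,d_2$ and a single non-identity arrow $d_1\to d_2$, and let both the top-right and the bottom-left category be the discrete category on $\{d_1,d_2\}$, mapping to $\mathcal{D}$ in the obvious way. The $2$-fiber product is again discrete on two objects, so the left vertical is an equivalence and the bottom is essentially surjective (even bijective on objects), yet the right vertical is not full, hence not an equivalence.

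The paper closes exactly this gap with two inputs that your argument omits: it first records that the reduction functor $\Win(\u W(R^\flat))\to\Win(\u A)$ is \emph{essentially surjective} (by the proof of \cite[Thm.~5.7]{Lau:Semiperfect}), and it then deduces that $\Phi_{J_*}$ is an equivalence from the $2$-cartesian square by appealing to \cite[Lemma 5.9]{Lau:Semiperfect}, which is the categorical descent statement with the correct (nontrivial) hypotheses; after that, Proposition \ref{Pr:kappa-equiv} finishes the proof as you say. So your proposal is structurally right but not a proof as written: you must supply the essential surjectivity of the bottom functor and a valid version of the descent lemma rather than the symmetric claim you assert. Two minor further points: your parenthetical ``precisely when $R$ is $F$-nilpotent'' overstates Proposition \ref{Pr:kappa-equiv}, which asserts only one implication; and the paper quotes Gabber's theorem as \cite[Thm.~6.4]{Lau:Smoothness} rather than via the iso-balanced case of \cite{Lau:Semiperfect}, though that substitution is harmless since for perfect $R^\flat$ one has $\u A{}_{\cris}(R^\flat)=\u W(R^\flat)$.
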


\begin{proof}
The functor $\Phi_{R^\flat}:\BT(R^\flat)\to\Win(\u W(R^\flat))$
is an equivalence by a result of Gabber; 
see \cite[Thm.\ 6.4]{Lau:Smoothness}.
The reduction functor $\Win(\u W(R^\flat))\to\Win(\u A)$
is essentially surjective
by the proof of \cite[Thm.~5.7]{Lau:Semiperfect}. 
The $2$-cartesian square of Corollary \ref{Co:BT-Rflat-R} 
implies that $\Phi_{J_*}:\BT(R)\to\Win(\u A)$ is an equivalence,
using \cite[Lemma 5.9]{Lau:Semiperfect}. 
Proposition \ref{Pr:kappa-equiv} gives the result.
\end{proof}

\begin{Cor}
\label{Co:semiperfect-Fnil}
\chang{For every $F$-nilpotent semiperfect $\FF_p$-algebra $R$ which is PD torsion free (Definition \ref{Def:PD-tor}), the functor $\DFfun_{\Spec R}$ is an equivalence.}
\end{Cor}

\begin{proof}
Use Theorem \ref{Th:semiperfect-Fnil} and Remarks \ref{Re:semiperf-pdtorsion} and \ref{Rk:Acris-DF}.
\end{proof}

\subsection{Infinite complete intersections}

For a perfect $\FF_p$-algebra $S_0$ and a set $I$ we consider the 
semiperfect ring
\begin{equation}
\label{Eq:RI}
R=S_0[\{Y_i\}_{i\in I}]^{\per}/(\{Y_i\}_{i\in I})
\end{equation}
where $\per$ means perfect hull.
If the set $I$ is finite, then $R$ is a quotient of a
perfect ring by a regular sequence, and the functors
$\Phi_R^{\cris}$ and $\DFfun_{\Spec R}$ 
are equivalences by \cite[Cor.~5.11 \& 5.13]{Lau:Semiperfect}.
In the following we verify that this also holds when $I$ is infinite.

First we construct a lift of $R$, i.e.\ a \chang{$p$-complete}
and $\ZZ_p$-flat ring $A$ such that $A/p=R$ with a Frobenius
lift $\sigma:A\to A$.
Let $S=R^{\flat}=\varprojlim(R,\phi)$
and $R=S/J$ as earlier.
By a slight abuse of notation we write
\[
Y_i=(Y_i^{p^{-r}})_r\in S.
\] 
Let $K_0\subseteq W(S)$ be the ideal 
generated by the elements $[Y_i]$ for $i\in I$,
let $K\subseteq W(S)$ be the closure of $K_0$ 
with respect to the limit topology in
\[
W(S)=\varprojlim_{n,m}W_n(S/\phi^m(J)),
\]
and let $A=W(S)/K$. The Frobenius of $W(S)$ induces $\sigma:A\to A$.

\begin{Lemma}
\label{Le:lift-RI}
The ring $A$ is \chang{$p$-complete} and $\ZZ_p$-flat and $A/p=R$.
\end{Lemma}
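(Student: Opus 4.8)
The plan is to derive all three properties from the good behaviour of Witt vectors on the perfect ring $S$. Since $S$ is perfect, $W(S)$ is $\ZZ_p$-flat and $p$-adically complete and separated, with $W(S)/p=S$ and with the Witt Frobenius as a canonical lift $\sigma$ of the Frobenius of $S$; moreover $\sigma$ carries $K_0$ into itself and is continuous for the limit topology, so it preserves the closure $K$ and induces $\sigma$ on $A$. Thus the lemma is entirely a statement about the ideal $K\subseteq W(S)$, and I would reduce it to two assertions: that the reduction $\pi:W(S)\to S=W(S)/p$ carries $K$ onto $J$, and that $K$ is $p$-saturated, i.e.\ $pW(S)\cap K=pK$.

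First I would establish $A/p=R$. As $A/p=W(S)/(K+pW(S))$ and $W(S)/p=S$, this equals $S/\bar K$ with $\bar K=\pi(K)$, so I must show $\bar K=J$. Each generator $[Y_i]$ of $K_0$ satisfies $\pi([Y_i])=Y_i\in J$, and $J$ is open, hence closed, for the topology $S=\varprojlim_m S/\phi^m(J)$ because $\phi^0(J)=J$; this gives $\bar K\subseteq J$. For the reverse inclusion I would use that the $Y_i$ generate $J$ topologically: from the explicit form of $R$ in \eqref{Eq:RI} and of $S=R^\flat$, the ideal $(Y_i)_{i\in I}$ surjects onto $J/\phi^m(J)$ for every $m$. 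Given $z\in J$, I can then pick $w_k\in(Y_i)_{i\in I}$ with $w_k\to z$ in $S$, lift each $w_k$ through $\pi$ to a finite sum of products $[s][Y_i]$ lying in $K_0$ (using multiplicativity of Teichm\"uller representatives and additivity of $\pi$), and, since $\pi$ is a quotient map between complete rings and $K$ is closed, assemble these into an element $\tilde z\in K$ with $\pi(\tilde z)=z$. This yields $J\subseteq\bar K$.

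For $\ZZ_p$-flatness it suffices that $A$ is $p$-torsion free, i.e.\ that $K$ is $p$-saturated. Writing each element of $W(S)$ uniquely as $\sum_{n\ge0}p^n[c_n]$ with $c_n\in S$ (possible since $S$ is perfect), I would follow the membership of $x$ and of $px$ in $K$ through this expansion, the point being that the topology defining $K$ is compatible with the Teichm\"uller coordinates, so that $px\in K$ forces, coordinate by coordinate, exactly the conditions that already place $x\in K$. This $p$-saturation is the step I expect to be the main obstacle: passage to the closure could \emph{a priori} manufacture spurious $p$-divisibilities, and $K$ must be simultaneously large enough that $\bar K=J$ yet small enough to avoid $p$-torsion. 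It is precisely the topology built from $\phi^m(J)$, rather than the coarser $p$-adic or $J$-adic topologies, that is engineered to balance these two demands. Once $p$-saturation is known, the exact sequences $0\to A\xrightarrow{p^n}A\to A/p^n\to0$ become available, and $p$-adic completeness and separatedness of $A$ amount to $A\xrightarrow{\sim}\varprojlim_n A/p^n$, equivalently $\bigcap_n(K+p^nW(S))=K$; here I would again invoke the closure definition of $K$, checking via the presentation $W(S)=\varprojlim_m W(S/\phi^m(J))$ and a Mittag-Leffler argument that an element congruent to $K$ modulo every $p^n$ already lies in $K$.
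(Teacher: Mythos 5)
Your reduction of the lemma to three statements about $K\subseteq W(S)$ --- that $\pi(K)=J$, that $pW(S)\cap K=pK$, and that $\bigcap_n(K+p^nW(S))=K$ --- is sound, and the first and third parts of your plan are essentially recoverable: the approximation argument for $J\subseteq\pi(K)$ works once one notes that $\phi^m(J)/\phi^{m+1}(J)$ is generated by the $Y_i^{p^m}$, whose Teichm\"uller lifts $[s][Y_i^{p^m}]$ lie in $K_0\cap\Ker(W(S)\to W(S/\phi^m(J)))$, so the successive corrections are small; and your Mittag--Leffler remark does yield $A\cong\varprojlim_{n,m}\bigl(W_n(S/\phi^m(J))/\bar K_0\bigr)$ (the images of $K_0$ form a surjective system), whence $p$-adic separatedness, while completeness follows from completeness of $W(S)$. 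The genuine gap is exactly where you predicted it: the $p$-saturation $pW(S)\cap K=pK$, for which you offer only a heuristic. The proposed mechanism --- following membership in $K$ ``coordinate by coordinate'' through the expansion $x=\sum_n p^n[c_n]$ --- cannot work as stated, for two reasons. First, $K$ is the closure of the \emph{ideal} $K_0$ generated by the $[Y_i]$, and membership in such an ideal is not a condition on Teichm\"uller coordinates: already $a[Y_1]+b[Y_2]$ has an expansion involving the carry polynomials of Witt-vector addition, so elements of $K_0$ admit no coordinate-wise description. Second, the open subgroups defining the topology are the kernels of $W(S)\to W_n(S/\phi^m(J))$, and the rings $S/\phi^m(J)$ are not perfect, so in these quotients there is no Teichm\"uller expansion to compare against.

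More fundamentally, your argument nowhere uses that the $Y_i$ are independent variables (a regular-sequence-type input); for the closure of an ideal $([a_i])\subseteq W(S)$ attached to an arbitrary semiperfect presentation, torsion-freeness is simply false, so no argument can close this gap without that hypothesis. This is precisely where the paper's proof takes a different, and essentially unavoidable, route: for fixed $n$ the system $W_n(S/\phi^m(J))/\bar K_0$ stabilizes for $m\ge n-1$ (because $v^r([Y_i^{p^m}])=p^r[Y_i^{p^{m-r}}]\in K_0$ for $r\le n-1$), which identifies $A/p^n$ with the stable value $W_n(S/\phi^{n-1}(J))/\bar K_0$; then, using that $\phi^{-n}$ induces $S/\phi^n(J)\cong R=\varinjlim_M R_M$, this stable value is exhibited as a filtered colimit over \emph{finite} subsets $M\subseteq I$ of the rings $A_{M,n}=W_n(S_M)/\bar K_M$, each flat over $\ZZ/p^n$ by the finite (regular sequence) case. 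Flatness --- hence your $p$-saturation --- enters only through this reduction to finitely many variables, and your proposal would need to incorporate it to be complete.
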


\begin{proof}
We will write $A_n=A/p^n$.
If the set $I$ is finite, then $K_0=K$ and the lemma is
easily verified using that the elements $Y_i$ of $S$ form a regular sequence.
In the general case,
by definition we have
\[
A=\varprojlim_{n,m}(W_n(S/\phi^m(J))/\bar K_0)
\]
where $\bar K_0$ is the image of $K_0$.
For fixed $n$, the ring in the limit stabilizes for $m\ge n-1$
because the ideal $\phi^m(J)/\phi^{m+1}(J)$ is generated by all
$Y_i^{p^m}$ for $i\in I$, and for $r\le n-1$ the shift
$v^r([Y_i^{p^m}])=p^r[Y_i^{p^{m-r}}]$ lies in $K_0$.
The stable value is given by
\[
W_n(S/\phi^{n-1}(J))/\bar K_0\cong A_n.
\]
Clearly $A_1=R$. We have to show that $A_n$ is flat over $\ZZ/p^n$.
For each finite subset $M\subseteq I$ let $R_M\subseteq R$
be the analogue of $R$ with $M$ in place of $I$,
let $S_M=R_M^\flat$, and let $J_M$ be the kernel of $S_M\to R_M$,
which is generated by $Y_i$ for $i\in M$,
and let $K_M\subseteq W(S_M)$ be the ideal generated by $[Y_i]$ for $i\in M$.
Then the ring
\[
A_{M,n}=W_n(S_M/\phi^{n-1}(J_M))/\bar K_M=W_n(S_M)/\bar K_M
\]
is flat over $\ZZ/p^n$. 
Now $R$ is the colimit over $M$ of $R_M$,
and $\phi^{-n}$ induces an isomorphism $S/\phi^n(J)\cong R$.
It follows that $A_n$ is the colimit over $M$ of $A_{M,n}$, 
so $A_n$ is flat over $\ZZ/p^n$.
\end{proof}

\begin{Lemma}
\label{Le:Acris-RI}
\chang{For $R$ as in \eqref{Eq:RI},} the ring $A_{\cris}(R)$ is $p$-torsion free.
\end{Lemma}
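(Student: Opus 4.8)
The plan is to compute the completed divided power envelope defining $A_{\cris}(R)$ explicitly, reduce everything to the fact that the \emph{uncompleted} envelope is a free module over $W(S)$, and then check that the $p$-adic completion creates no torsion. I would first assemble the structural facts already available from the construction preceding Lemma~\ref{Le:lift-RI}. The ring $S=R^\flat$ is perfect, so $A_{\inf}(R)=W(S)$ is $p$-torsion free; since $F$ is bijective on $W(S)$ the relation $FV=p$ gives $VW(S)=pW(S)$, hence $p^rW(S)=V^rW(S)$ and $W(S)/p^r=W_r(S)$. Moreover $J=\Ker(S\to R)$ is generated by the elements $Y_i=(Y_i^{p^{-r}})_r$, and these form a regular sequence: writing $S=\colim_n S_0[\{Y_i^{1/p^n}\}]$ one has $Y_i=(Y_i^{1/p^n})^{p^n}$, a power of a variable, so every finite subfamily is a regular sequence in the polynomial ring $S_0[\{Y_i^{1/p^n}\}]$, and regularity is preserved by the flat transition maps and the colimit. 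Consequently $\Ker(W(S)\to R)$ is generated by $p$ together with the Teichm\"uller lifts $[Y_i]$, and $\{[Y_i]\}_{i\in I}$ is a regular sequence in $W(S)$ that remains regular modulo $p$.

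Next I would identify $D=D_\gamma(W(S)\to R)$, so that $A_{\cris}(R)=D^\wedge$ by Lemma~\ref{Le:D-Acris}. For a finite subfamily the divided powers of $p$ are already given and the remaining generators form a regular sequence modulo $p$, so by the standard computation of the PD envelope of a regular immersion (as in \cite[I Cor.~1.7.2]{Berthelot:CohCristalline} and \cite[Lemma 2.3.3]{BBM}, which is the finite complete intersection case underlying Remark~\ref{Rk:Acris-DF}) the envelope is the divided power polynomial algebra $W(S)\langle\xi_i\rangle$ with $\xi_i\mapsto[Y_i]$, a free $W(S)$-module on the PD monomials $\prod_i\xi_i^{[n_i]}$. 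Since the PD envelope commutes with filtered colimits of presentations, passing to the colimit over finite subsets $M\subseteq I$ gives $D=W(S)\langle\xi_i:i\in I\rangle$, still a free $W(S)$-module; write $D=\bigoplus_\alpha W(S)\,e_\alpha$ with $e_\alpha$ the PD monomials.

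Finally I would show the completion introduces no torsion. Since $D$ is free over $W(S)$ and $W(S)$ is $p$-torsion free, $D/p^rD=\bigoplus_\alpha W_r(S)\,e_\alpha$, and the canonical map
\[
A_{\cris}(R)=D^\wedge=\varprojlim_r\ \bigoplus_\alpha W_r(S)\,e_\alpha\ \longrightarrow\ \prod_\alpha W(S),
\]
sending a compatible system to its coordinatewise limit, is injective, because a system mapping to $0$ has all coordinates $0$ in $W(S)$ and hence in each $W_r(S)$. As $\prod_\alpha W(S)$ is $p$-torsion free, so is $A_{\cris}(R)$. I expect the main obstacle to lie in the passage to the infinite regular sequence: one must verify that $\{Y_i\}_{i\in I}$ genuinely generates $J$ and is regular, and that the PD envelope is still the divided power polynomial algebra when $I$ is infinite, which is exactly where the colimit description of $S$ from Lemma~\ref{Le:lift-RI} and the reduction to finite subfamilies enter. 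Once freeness over $W(S)$ is established, the completion step is routine.
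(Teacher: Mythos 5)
Your proof breaks at its foundation: you identify $S=R^\flat$ with $\colim_n S_0[\{Y_i^{1/p^n}\}]$. That colimit is the perfect ring $P=S_0[\{Y_i\}]^{\per}$ appearing in \eqref{Eq:RI}, whereas $R^\flat=\varprojlim(R,\phi)$ is the \emph{completion} $\varprojlim_m P/(\{Y_i^{p^m}\}_{i\in I})$, which strictly contains $P$ (it contains series such as $\sum_n Y^{p^n}$). This is fatal in two ways. First, for infinite $I$ the ideal $J=\Ker(S\to R)$ is \emph{not} generated by the $Y_i$: take infinitely many distinct indices $(j_n)$ and set $z=\sum_n Y_{j_n}^{p^n}$; this converges in $S$ and lies in $J$, but specializing all variables except $Y_{j_N}$ to zero (a ring map compatible with the completion) sends $z$ to $Y_{j_N}^{p^N}\neq 0$ and sends any finite combination $\sum_k Y_{i_k}s_k$ with $j_N\notin\{i_k\}$ to $0$, so $z\notin(\{Y_i\})S$. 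Hence your description $\Ker(W(S)\to R)=(p,\{[Y_i]\})$, and the entire computation of the envelope built on it, is unjustified. Second, even if you replace $S$ by $P$ (legitimate, since Lemma \ref{Le:D-Acris} applies equally to $W(P)\to R$), your colimit step still fails: neither $(\cdot)^\flat$ nor the full Witt functor commutes with filtered colimits, so $W(P)\neq\colim_M W(P_M)$ — an element of $W(P)$ may involve infinitely many variables across its coordinates. This is exactly the difficulty of the infinite case, and it can only be handled after truncating mod $p^n$: one shows that $A_{\cris}(R)/p^n$ is the PD envelope relative to $\Sigma_n$ of $W_n(S/\phi^n(J))\to R$ (which requires the separate verification that $W_n(\phi^n(J))$ dies in every PD thickening killed by $p^n$), and \emph{that} arrow is a filtered colimit of the finite-subfamily arrows because truncated Witt vectors do commute with filtered colimits. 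Lemma \ref{Le:lift-RI} provides a colimit description of such truncations, not of $S$ itself; you have misread it.

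There is also an error that persists even in the finite case: the PD envelope of the ideal $(p,\{[Y_i]\})\subseteq W(S)$ is not the free PD polynomial algebra $W(S)\langle\xi_i\rangle$; it is the quotient $W(S)\langle\xi_i\rangle/(\xi_i-[Y_i])$, in which $[Y_i]\cdot[Y_i]^{[n]}=(n+1)[Y_i]^{[n+1]}$ is a nontrivial $W(S)$-linear relation among your proposed basis vectors. Freeness on PD monomials holds for the envelope of an ideal generated by polynomial \emph{variables}, as in \cite[I Cor.~1.7.2]{Berthelot:CohCristalline}, not for the envelope of a regular sequence of ring elements. What regularity actually buys is that the envelope becomes flat over $\ZZ/p^n$ after reduction mod $p^n$ (\cite[Lemma 2.3.3]{BBM}); flatness passes through the filtered colimit, and flatness of $D/p^n$ over $\ZZ/p^n$ for every $n$ forces the completion $\varprojlim_n D/p^n$ to be $p$-torsion free. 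Your final step — embedding $D^\wedge$ into $\prod_\alpha W(S)$ coordinatewise — has no substitute once freeness is gone. The paper's proof is precisely this corrected route: finite case by regularity in the form of flatness mod $p^n$, reduction of the infinite case to the finite one at the level of $W_n(S/\phi^n(J))\to R$, and the flatness-implies-torsion-free conclusion for the completion.
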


\begin{proof}
If the set $I$ is finite, the lemma holds because the
elements $Y_i$ of $S$ form a regular sequence.
In general we note that $A_{\cris}(R)/p^n$ is the PD envelope
relative to $\Sigma_n$ of the kernel of $W_n(S)\to R$,
or equivalently of $W_n(S/\phi^n(J))\to R$; 
note that $W_n(\phi^n(J))$ maps to zero 
in every PD thickening of $R$ annihilated by $p^n$
since $v^r([a^{p^n}])=p^r[a]^{p^{n-r}}$ becomes divisible by $p^n$ for $a\in J$. 
Using the notation of the proof of Lemma \ref{Le:lift-RI},
the arrow $W_n(S/\phi^n(J))\to R$ is the colimit over all
finite subsets $M\subseteq I$ of the arrows $W_n(S_M/\phi^n(J_M))\to R_M$.
The PD envelope relative to $\Sigma_n$ of the latter 
is flat over $\ZZ/p^n$, and the lemma follows.
\end{proof}

Lemma \ref{Le:lift-RI} implies that there is a frame
$
\u A=(A,pA,R,\sigma,\sigma_1)
$
with $\sigma_1=p^{-1}\sigma$.
The universal property of $A_{\cris}(R)$ gives a ring homomorphism
$A_{\cris}(R)\to A$, which is a frame homomorphism 
\[
\varkappa:\u A{}_{\cris}(R)\to\u A
\]
since $A$ is torsion free.
We have functors
\[
\BT(\Spec R)
\xrightarrow{\Phi_R^{\cris}}
\Win(\u A{}_{\cris}(R))
\xrightarrow{\varkappa^*}
\Win(\u A)
\]
where $\Phi_R^{\cris}$ can be defined directly by evaluation of the
Dieudonn\'e crystal at $A_{\cris}(R)$ 
since this ring is torsion free.
The composition $\Phi_A=\varkappa^*\circ\Phi_R^{\cris}$
is defined by evaluation of the Dieudonn\'e crystal at $A$.

\begin{Lemma}
\label{Le:varkappa-RI} 
The homomorphism $\varkappa$ induces an equivalence of windows.
\end{Lemma}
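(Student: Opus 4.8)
The plan is to deduce the statement from the deformation criterion, Proposition \ref{Pr:deform-win}, applied to the frame homomorphism $\varkappa:\u A{}_{\cris}(R)\to\u A$. First I would dispatch the formal hypotheses. The composition $W(S)\to A_{\cris}(R)\xrightarrow{\varkappa}A$ is the projection $W(S)\to W(S)/K=A$, which is surjective, so $\varkappa$ is surjective, and $\varkappa$ induces the identity on $R$. Writing $N=\Ker(\varkappa)$, compatibility of $\varkappa$ with the two augmentations to $R$ gives $N\subseteq I_{\cris}(R)$, so $\sigma_1$ is defined on $N$; and since $\varkappa$ is a frame homomorphism it commutes with $\sigma_1$ (Remark \ref{Rk:frame-funct}), whence $\sigma_1(N)\subseteq N$. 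It then suffices to show that $\sigma_1$ is pointwise nilpotent on $N/pN$.

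The core of the argument is an explicit description of $N$. Since the $Y_i$ form a regular sequence in $S$, the kernel of $W(S)\to R$ is generated by $p$ and the $[Y_i]$, and as in the setup of Lemma \ref{Le:Acris-RI} the ring $A_{\cris}(R)$ is the $p$-adic completion of the divided power polynomial algebra $D=W(S)\langle[Y_i]\rangle=W(S)\oplus D^+$, where $D^+$ is spanned over $W(S)$ by the monomials $\prod[Y_i]^{[n_i]}$ with $\sum n_i\ge 1$. Because $[Y_i]\in K$ and $\varkappa$ is a PD morphism, $\varkappa([Y_i]^{[n]})=\gamma_n(\varkappa([Y_i]))=0$ for $n\ge 1$, so $\varkappa$ is evaluation at $[Y_i]=0$ followed by reduction modulo $K$. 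Hence $D^+\subseteq N$, the restriction of $\varkappa$ to $W(S)$ is the quotient $W(S)\to W(S)/K$, and $N$ is the $p$-adic closure of $K\oplus D^+$; in particular $N/pN$ is topologically generated by the images of $K$ and of $D^+$.

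I would then compute $\sigma_1$ on these two parts, using that $A_{\cris}(R)$ is torsion free (Lemma \ref{Le:Acris-RI}), so $\sigma_1=p^{-1}\sigma$ on $I_{\cris}(R)$ and $\sigma$ is multiplicative and preserves divided powers. For a monomial $\mu=\prod_{j=1}^k[Y_{i_j}]^{[n_j]}\in D^+$, \eqref{Eq:sigma1-an} gives $\sigma([Y_i]^{[n]})=p\,c_n[Y_i]^{[np]}$, so $\sigma(\mu)$ is divisible by $p^k$ and $\sigma_1(\mu)=p^{k-1}(\prod_j c_{n_j})\prod_j[Y_{i_j}]^{[n_jp]}$. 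For $k\ge 2$ this is already $\equiv 0\pmod p$, while for $k=1$ the second iterate $\sigma_1^2([Y_i]^{[n]})=c_n c_{np}[Y_i]^{[np^2]}$ carries the factor $c_{np}$, divisible by $p$ since $np\ge p$; thus $\sigma_1^2\equiv 0\pmod p$ on $D^+$. For the degree-zero part I use the key identity $p\,\sigma_1([Y_i])=\sigma([Y_i])=[Y_i]^p=p\,(p-1)!\,[Y_i]^{[p]}$, so by torsion-freeness $\sigma_1([Y_i])=(p-1)!\,[Y_i]^{[p]}\in D^+$; by $\sigma$-linearity and continuity $\sigma_1$ carries $K$ into $D^+$ modulo $p$, and therefore $\sigma_1^3\equiv 0\pmod p$ on all of $N$. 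Finally, since $A=A_{\cris}(R)/N$ is $\ZZ_p$-flat (Lemma \ref{Le:lift-RI}), the sequence $0\to N\to A_{\cris}(R)\to A\to 0$ gives $N\cap pA_{\cris}(R)=pN$, so $\sigma_1^3=0$ on $N/pN$. Proposition \ref{Pr:deform-win} then yields the equivalence.

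I expect the only delicate points to be the bookkeeping around the $p$-adic closures — identifying $N$ as the closure of $K\oplus D^+$ and checking that the uniform bound $\sigma_1^3\equiv 0\pmod p$ survives passage to the completion and then to $N/pN$, where the torsion-freeness of $A$ is precisely what converts ``divisible by $p$'' into ``lies in $pN$''. The algebraic heart is the observation that $\sigma_1$ sends the generators $[Y_i]$ of $K$ into the divided power part $D^+$, on which $\sigma_1$ is already $2$-step nilpotent modulo $p$; this is exactly what lets the argument dispense with any $F$-nilpotence hypothesis on $R$, in contrast to the minimal-sequence case treated in Proposition \ref{Pr:kappa-equiv} (note that for infinite $I$ the ring $R$ is in general not $F$-nilpotent).
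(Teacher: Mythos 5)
Your strategy is the same as the paper's: apply the deformation criterion of Proposition \ref{Pr:deform-win} to $\varkappa$, use $\ZZ_p$-flatness of $A$ (Lemma \ref{Le:lift-RI}) to identify $N/pN$ with $\Ker\bigl(A_{\cris}(R)/p\to R\bigr)$, and kill this kernel by iterating $\sigma_1$ via the divided-power formula \eqref{Eq:sigma1-an}. Your formal preliminaries (surjectivity of $\varkappa$, $\sigma_1(N)\subseteq N$) and the computations of $\sigma_1$ on divided-power monomials are correct, as is your closing observation that $R$ fails to be $F$-nilpotent for infinite $I$, so that Proposition \ref{Pr:kappa-equiv} cannot be invoked here.

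There is, however, a genuine gap: the structural claim that $A_{\cris}(R)$ is the $p$-adic completion of $W(S)\langle[Y_i]\rangle=W(S)\oplus D^+$ with $D^+$ a \emph{free} $W(S)$-module on the PD monomials is false, and it is exactly this claim that you use to identify $N$ and hence the generators of $N/pN$. Already for one variable there is the nontrivial $W(S)$-linear relation $[Y_i^p]\cdot 1=[Y_i]^p=p!\,[Y_i]^{[p]}$ between the monomials $1$ and $[Y_i]^{[p]}$; equivalently, $W(S)\to A_{\cris}(R)$ is not injective modulo $p$ (e.g.\ $[Y_i^p]\equiv 0$ in $A_{\cris}(R)/p$ while $Y_i^p\neq 0$ in $S$): by the proof of Lemma \ref{Le:Acris-RI}, $A_{\cris}(R)/p^n$ is a PD envelope over $W_n(S/\phi^n(J))$, not over $W_n(S)$. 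Moreover, for infinite $I$ — the only case at issue — the kernel of $W(S)\to R$ is only \emph{topologically} generated by $p$ and the $[Y_i]$: the ideal $J$ contains convergent sums such as $\sum_{k}Y_{i_k}^{p^k}$ over infinitely many distinct indices, which do not lie in the algebraic ideal $(\{Y_i\})S$, so the regular-sequence description you start from is not available. What your argument actually needs is weaker and is true: $N/pN=\Ker\bigl(A_{\cris}(R)/p\to R\bigr)$ is generated as an ideal by the classes of the $[Y_i]^{[n]}$, $n\ge 1$. This is precisely what the paper asserts, and it follows not from any freeness but from the filtered-colimit description over finite subsets $M\subseteq I$ in the proof of Lemma \ref{Le:Acris-RI} (for finite $M$ the sequence is regular, and passage to $S/\phi^n(J)$ kills the problematic limit elements, so $J/\phi^n(J)$ is honestly generated by the $Y_i$). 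Once this is substituted, your proof closes; in fact the split of $N/pN$ into a $K$-part and a $D^+$-part becomes unnecessary, since treating all generators $[Y_i]^{[n]}$ uniformly (your ``key identity'' is the case $n=1$ of \eqref{Eq:sigma1-an}) gives $\sigma_1^2=0$ on $N/pN$, as in the paper, rather than $\sigma_1^3=0$.
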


\begin{proof}
Let $N\subseteq A_{\cris}(R)$ be the kernel of $\varkappa$.
By Proposition \ref{Pr:deform-win} it suffices to show
that $\sigma$ induces a \chang{locally} nilpotent endomorphism of $N/p$.
Since $A$ is torsion free, 
$N/p$ is the kernel of $A_{\cris}(R)/p\to A/p=R$,
which is generated by the divided powers
$[Y_i]^{[n]}$ for $i\in I$ and $n\ge 1$.
Using \eqref{Eq:sigma1-an} it follows that $\sigma_1^2=0$ on $N/p$.
\end{proof}

Let $\tilde I$ be the kernel of $A\to R\xrightarrow\phi R$.
One verifies that $\tilde I$ is a PD ideal,
using that $\tilde I$ is generated by $p$ and $[Y_i^{1/p}]$ for $i\in I$,
and $[Y_i^{1/p}]^p=0$ in $A$.

\begin{Lemma}
\label{Le:PD-tI}
The divided powers on $\tilde I$ induce \chang{locally} nilpotent
divided powers $\delta$ on the ideal 
$\bar J=\tilde I/pA=\Ker(\phi:R\to R)$.
\end{Lemma}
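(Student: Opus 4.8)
The plan is to obtain $\delta$ by descending the divided powers $\gamma$ on $\tilde I$ along the quotient $A\to A/pA=R$, and then to check pointwise nilpotence by a computation on the generators $[Y_i^{1/p}]$.

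First I would verify that $pA$ is a sub-PD-ideal of $(\tilde I,\gamma)$, which is what makes the descent legitimate. Since $pA\subseteq\tilde I$, it suffices to show $\gamma_n(pA)\subseteq pA$ for all $n\ge 1$. For $a\in A$ one has $\gamma_n(pa)=a^n\gamma_n(p)=a^n\cdot p^n/n!$, and $v_p(p^n/n!)=n-v_p(n!)\ge 1$, so $\gamma_n(p)\in p\ZZ_p$ and $\gamma_n(pa)\in pA$. Hence $\gamma$ induces a PD structure $\delta$ on the image $\tilde I/pA=\bar J$ of $\tilde I$ in $R$, and this is the structure in the statement.

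The crux of the argument is to establish the identity $[Y_i^{1/p}]^{[n]}=0$ in $A$ for every $n\ge p$. By multiplicativity of Teichm\"uller representatives, $[Y_i^{1/p}]^p=[Y_i]=0$ in $A=W(S)/K$, so $[Y_i^{1/p}]^n=0$ for $n\ge p$; since $A$ is $\ZZ_p$-flat, $n!$ is a non-zero-divisor in $A$, and the PD identity $n!\,x^{[n]}=x^n$ then forces $x^{[n]}=0$. Reducing modulo $pA$, the generators $y_i=[Y_i^{1/p}]\bmod pA$ of the ideal $\bar J$ satisfy $\delta_n(y_i)=0$ for all $n\ge p$.

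Pointwise nilpotence of $\delta$ then follows by a purely formal manipulation. An arbitrary element of $\bar J$ is a finite combination $x=\sum_{k=1}^{m}r_ky_{i_k}$ with $r_k\in R$. From $\delta_n(r_ky_{i_k})=r_k^{\,n}\delta_n(y_{i_k})$ and the sum formula $\delta_n(\sum_k z_k)=\sum_{a_1+\dots+a_m=n}\prod_k\delta_{a_k}(z_k)$, once $n\ge m(p-1)+1$ every term of the expansion contains a factor $\delta_{a_k}(y_{i_k})$ with $a_k\ge p$ and hence vanishes; thus $\delta_n(x)=0$ for $n\gg 0$. I expect the main point requiring care to be the flatness argument of the third paragraph, which is where the vanishing is genuinely produced; the descent and the combinatorial bookkeeping of the last step are routine.
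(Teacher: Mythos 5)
Your proof is correct and takes essentially the same route as the paper: the paper's one-line argument likewise works on the generators $Y_i^{1/p}$, using $[Y_i^{1/p}]^p=[Y_i]=0$ together with the $\ZZ_p$-flatness of $A$ to kill their divided powers. You merely make explicit two steps the paper leaves implicit, namely that $pA$ is a sub-PD ideal of $\tilde I$ (so that $\delta$ is well defined on $\bar J$) and the combinatorial passage from the vanishing $\delta_n(y_i)=0$, $n\ge p$, on generators to pointwise nilpotence on arbitrary elements of $\bar J$.
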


\begin{proof}
The ideal $\bar J$ is generated by $Y_i^{1/p}$ for $i\in I$.
Since $[Y_i^{1/p}]\in A$ is an inverse image of \chang{$Y_i^{1/p}$} with $[Y_i^{1/p}]^p=0$,
it follows that $\delta_p(Y_i^{1/p})=0$.
\end{proof}

\begin{Thm}
\label{Th:semiperfect-RI}
For $R$ as in \eqref{Eq:RI} the functor $\Phi_R^{\cris}$ is an equivalence.
\end{Thm}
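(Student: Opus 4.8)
The first reduction is to pass from $\Phi_R^{\cris}$ to $\Phi_A$. By Lemma \ref{Le:varkappa-RI} the base change functor $\varkappa^*$ is an equivalence, and by construction $\Phi_A=\varkappa^*\circ\Phi_R^{\cris}$, so it suffices to prove that
\[
\Phi_A:\BT(R)\to\Win(\u A)
\]
is an equivalence of categories. The advantage is that $\u A$ is the honest torsion-free frame of Lemma \ref{Le:lift-RI}, so $\Phi_A$ is given by evaluating the Dieudonn\'e crystal at $A$, with no PD torsion to worry about.

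For a finite subset $M\subseteq I$ let $R_M\subseteq R$ be the analogous ring built from $M$, as in the proof of Lemma \ref{Le:lift-RI}, so that $R_M=R_M^\flat/(Y_i)_{i\in M}$ is the quotient of a perfect ring by a regular sequence. Then $\Phi_{R_M}^{\cris}$ is an equivalence by \cite[Cor.~5.11 \& 5.13]{Lau:Semiperfect}, and composing with the evident analogue of $\varkappa$ shows that $\Phi_{A_M}:\BT(R_M)\to\Win(\u A{}_M)$ is an equivalence, where $\u A{}_M$ is the frame attached to $R_M$ exactly as $\u A$ is attached to $R$. The plan is to recover $\Phi_A$ as the filtered colimit of the $\Phi_{A_M}$ over all finite $M\subseteq I$. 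On the source side this is unproblematic: since $R=\varinjlim_M R_M$ and $p$-divisible groups are of finite presentation, one has $\BT(R)=\varinjlim_M\BT(R_M)$ as a filtered $2$-colimit, and likewise for morphisms. On the target side one uses that $A/p^n=\varinjlim_M A_M/p^n$, already established in the proof of Lemma \ref{Le:lift-RI}, together with the descent of finite locally free modules and their window data to a finite stage of a filtered colimit of rings, and the fact that windows over $\u A$ are detected by their reductions modulo powers of $p$ (the limit argument of \cite[Lemma 2.12]{Lau:Frames}).

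I expect the heart of the matter to be the interchange of the inverse limit over $n$ with the filtered colimit over $M$. A window over $\u A$ yields, modulo each $p^n$, a window over some $\u A{}_{M(n)}$, and one must produce a single finite $M$ that works simultaneously for all $n$, and similarly for $p$-divisible groups. This failure to converge along the Frobenius tower is exactly what distinguishes the present case from Corollary \ref{Co:BT-Rflat-R}, since $R$ is not $F$-nilpotent when $I$ is infinite. This is where Lemma \ref{Le:PD-tI} enters: the ideal $\bar J=\Ker(\phi\colon R\to R)$ carries pointwise nilpotent divided powers, so by the Grothendieck--Messing Theorem \cite{Messing:Crystals} on the $\BT$-side and by the window deformation of Proposition \ref{Pr:deform-win} on the $\Win(\u A)$-side, the two deformation problems along the PD thickening attached to $\bar J$ are governed by the same lifts of the Hodge filtration. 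I would use this rigidity to reduce the essential surjectivity and full faithfulness of $\Phi_A$ to the corresponding statements over a reduction that lives at a finite stage $M$, where the equivalence is already known. Producing that finite stage uniformly in $n$, i.e.\ making the limit--colimit interchange legitimate, is the step I expect to require the most care.
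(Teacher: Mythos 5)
Your first reduction is exactly the paper's: by Lemma \ref{Le:varkappa-RI} it suffices to show that $\Phi_A=\varkappa^*\circ\Phi_R^{\cris}$ is an equivalence. But at that point the paper simply invokes \cite[Th.~5.7]{Lau:Semiperfect}, whose hypotheses --- a lift $A$ of the semiperfect ring $R$ with a Frobenius lift such that $\bar J=\Ker(\phi_R)$ carries pointwise nilpotent divided powers --- are precisely what Lemmas \ref{Le:lift-RI} and \ref{Le:PD-tI} were set up to provide, with no finiteness assumption on the index set $I$ anywhere. Your proposal replaces this citation by a finite-approximation argument over the finite subsets $M\subseteq I$, and the step you yourself flag as the heart of the matter --- interchanging $\varprojlim_n$ with $\varinjlim_M$ on the window side --- is exactly where it breaks down, and it is not repaired by the mechanism you suggest. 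The ring $A$ is the $p$-adic \emph{completion} of $\varinjlim_M A_M$, not the colimit itself, so the matrix of $\Psi$ in a normal decomposition (Remark \ref{Rk:Psi}) of a window over $\u A$ has entries that are arbitrary elements of $A$ and need not live at any finite level; that every window over $\u A$ is at least \emph{isomorphic} to one defined over some $\u A{}_M$ is, a posteriori, a consequence of the theorem (since $\BT(R)=\varinjlim_M\BT(R_M)$ and $\Phi_{A_M}$ is an equivalence), but assuming it is circular. In other words, the assertion that $\varinjlim_M\Win(\u A{}_M)\to\Win(\u A)$ is an equivalence is essentially equivalent to the theorem, not a tool for proving it.

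The rigidity you invoke to close this gap also cannot do the job as described: the PD thickening attached to $\bar J$ is $R\to R/\bar J$, and $R/\bar J$ is isomorphic to $R$ itself via $\phi$ --- it still involves all the variables $Y_i$ --- so deforming along it relates $R$ to its own Frobenius twist, never to any finite-level ring $R_M$; quotienting by $\bar J$ does not cut down the index set. (This is the same reason the Frobenius-tower argument of Corollary \ref{Co:BT-Rflat-R} is unavailable here: every stage $R/\Ker(\phi^n)\cong R$, and $R$ is not $F$-nilpotent for infinite $I$, as you correctly note.) Your instinct that Lemma \ref{Le:PD-tI} together with Grothendieck--Messing and window deformation is the engine of the proof is sound --- that is how the proof of \cite[Th.~5.7]{Lau:Semiperfect} itself runs --- but the correct use of Lemma \ref{Le:PD-tI} is to verify the hypothesis of that general theorem, which applies to $R$ directly and uniformly in $I$, rather than to legitimize a reduction to finite $I$. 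As written, your argument has a genuine gap at its central step.
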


\begin{proof}
By Lemma \ref{Le:PD-tI} the hypotheses of \cite[Th.~5.7]{Lau:Semiperfect}
are satisfied, which implies that the functor 
$\Phi_A=\varkappa^*\circ\Phi_R^{\cris}$ is an equivalence.
The result follows since $\varkappa^*$ is an equivalence by
Lemma \ref{Le:varkappa-RI}.
\end{proof}

\begin{Cor}
\label{Co:semiperfect-RI}
For $R$ as in \eqref{Eq:RI} the functor
$
\DFfun_{\Spec R}
$
is an equivalence.
\end{Cor}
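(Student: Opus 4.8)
The plan is to obtain this as a formal consequence of Theorem \ref{Th:semiperfect-RI}, by transporting the equivalence $\Phi_R^{\cris}$ through the comparison between windows over $\u A{}_{\cris}(R)$ and filtered Dieudonn\'e crystals. The essential ingredient is that the ring $A_{\cris}(R)$ is torsion free. This is precisely Lemma \ref{Le:Acris-RI}: although that lemma is phrased as $p$-torsion freeness, for the $\ZZ_p$-algebra $A_{\cris}(R)$ this coincides with torsion freeness over $\ZZ_p$, since any relation $p^nx=0$ with $n\ge 1$ minimal reduces to $p(p^{n-1}x)=0$ and hence to a contradiction with $p$-torsion freeness.

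Granting this, Remark \ref{Rk:Acris-DF} (via \cite[Prop.~2.6.4]{Cais-Lau}) supplies an equivalence of categories
\[
\Win(\u A{}_{\cris}(R))\xrightarrow{\sim}\DFcat(\Spec R)
\]
under which the functor $\Phi_R^{\cris}$ of \eqref{Eq:PhiRcris} is identified with $\DFfun_{\Spec R}$. Composing the inverse of this equivalence with $\Phi_R^{\cris}$ and invoking Theorem \ref{Th:semiperfect-RI}, which asserts that $\Phi_R^{\cris}$ is an equivalence, I would conclude at once that $\DFfun_{\Spec R}$ is an equivalence.

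I do not anticipate any genuine difficulty: all the substantive work has already been done, namely the construction and the equivalence of $\Phi_R^{\cris}$ in Theorem \ref{Th:semiperfect-RI} and the torsion-freeness in Lemma \ref{Le:Acris-RI}. The only point demanding attention is to confirm that the hypothesis of Remark \ref{Rk:Acris-DF} is in force, and this is exactly what Lemma \ref{Le:Acris-RI} guarantees; with that in hand the corollary reduces to a one-line combination of the two preceding results.
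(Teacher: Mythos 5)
Your proposal is correct and is essentially identical to the paper's proof: the paper likewise combines Lemma \ref{Le:Acris-RI} (torsion freeness of $A_{\cris}(R)$) with the equivalence $\Win(\u A{}_{\cris}(R))\cong\DFcat(\Spec R)$ of Remark \ref{Rk:Acris-DF}, under which $\Phi_R^{\cris}$ corresponds to $\DFfun_{\Spec R}$, and then invokes Theorem \ref{Th:semiperfect-RI}. Your side remark reconciling $p$-torsion freeness with $\ZZ_p$-torsion freeness is a harmless (and correct) clarification that the paper leaves implicit.
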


\begin{proof}
\chang{Use Theorem \ref{Th:semiperfect-RI}, Lemma \ref{Le:Acris-RI}, and Remark \ref{Rk:Acris-DF}.}
\end{proof}

\subsection{Extension to rings where $p$ is nilpotent}

Let $R$ be a ring in which $p$ is nilpotent such that $R_0=R/p$ is semiperfect.
There is a universal \chang{$p$-complete} PD thickening $\pi:A_{\cris}(R)\to R$,
which gives a frame $\u A{}_{\cris}(R)$ as in \eqref{Eq:uAcrisR}.
Namely $A_{\cris}(R)=A_{\cris}(R_0)$ as a ring, 
$\pi$ is the unique lift of $A_{\cris}(R_0)\to R_0$,
and $\sigma_1:I_{\cris}(R)\to A_{\cris}(R)$ 
is the restriction of $\sigma_1:I_{\cris}(R_0)\to A_{\cris}(R_0)$.
Windows over $\u A{}_{\cris}(R)$ correspond to windows $\u M$ 
over $\u A{}_{\cris}(R_0)$ together with a lift of 
$\bar M_1\subseteq M\otimes_{A_{\cris}(R_0)}R_0$
to a direct summand of $M\otimes_{A_{\cris}(R_0)}R$.
Hence the Hodge filtration of a $p$-divisible group gives
an extension of $\Phi_{R_0}^{\cris}$ to a contravariant functor
\begin{equation}
\label{Eq:PhiRcris-gen}
\Phi_R^{\cris}:\BT(R)\to\Win(\u A{}_{\cris}(R)).
\end{equation}
If $p\ge 3$ and $\Phi_{R_0}^{\cris}$ is an equivalence,
then $\Phi_R^{\cris}$ is an equivalence.

\section{Dieudonn\'e theory by $p$-root descent}
\label{Se:descent}

In this section we derive properties of the crystalline Dieudonn\'e
functor by descent from the semiperfect case, using a topology that
allows infinite extractions of $p$-th roots.

\subsection{A $p$-root topology}
\label{Subsect:pr}

Let $R$ be a ring in which $p$ is nilpotent.
For a family $(a_i)_{i\in I}$ of elements of $R$ we define 
$R[(a_i^{1/p^n})_{i\in I}]$ and $R[(a_i^{1/p^\infty})_{i\in I}]$
as in \eqref{Eq:Rapn} and \eqref{Eq:Rapinf}.

\begin{Defn}
Let $R$ be a ring in which $p$ is nilpotent.
A morphism $f:\Spec R'\to\Spec R$ is called
\begin{itemize}
\item
an extraction of a $p$-th root if $R'\cong R[a^{1/p}]$ with $a\in R$,
\item
a simultaneous extraction of $p$-th roots
if $R'\cong R[(a_i^{1/p})_{i\in I}]$ for some family $(a_i)_{i\in I}$ of elements of $R$,
\item
a $p$-root extension if $R'=\varinjlim_i B_i$ 
for a sequence of rings $R=B_0\to B_1\to B_2\to\ldots$
where each $\Spec B_{i+1}\to\Spec B_{i}$ 
is a simultanous extraction of $p$-th roots.
\end{itemize}
\end{Defn}

The main example of a $p$-root extension is
$R\to R_\infty=R[(a_i^{1/p^\infty})_{i\in I}]$
for a family $(a_i)$ as above.
If $R/p$ is generated by the $a_i$ as an algebra over $\phi(R/p)$,
then $R_\infty/p$ is semiperfect.
The class of $p$-root extensions is closed under composition
and under base change,
moreover $p$-root extensions are faithfully flat universal homeomorphisms.
We consider the topology which is generated
by the Zariski topology and $p$-root extensions:

\begin{Defn}
\label{Def:top}
Let $X$ be a scheme on which $p$ is locally nilpotent.
A  morphism $f:X'\to X$ 
is called a $p$-root morphism ($\nameoftop$-morphism) 
if $f$ is a $p$-root extension Zariski locally, 
i.e.\ for \chang{any} $y\in X'$ there are affine open
sets $U'\subseteq X'$ and $U\subseteq X$ with $y\in U'$ and $f(U')\subseteq U$
such that $f:U'\to U$ is a $p$-root extension. 
A $\nameoftop$-covering is a surjective family of $\nameoftop$-morphisms.
We denote by $X_\nameoftop$ the category of $\nameoftop$-sheaves on
the category of $X$-schemes.
\end{Defn}

\begin{Remark}
In \cite{Fontaine-Jannsen} a morphism of $\FF_p$-schemes 
is called a $p$-root-morphism or $p$-morphism if Zariski locally 
it is a finite succession of extractions of $p$-th roots.
Thus the $\nameoftop$-morphisms used here a pro-version of the 
$p$-morphisms of \cite{Fontaine-Jannsen}.
For sheaves that commute with filtered colimits of rings, 
passing to the pro-version makes no difference.
\end{Remark}

For a scheme $X$ on which $p$ is locally nilpotent
let $(X/\Sigma)_{\CRIS,\nameoftop}$ be the topos of $\nameoftop$-sheaves
on $\CRIS(X/\Sigma)$, where 
a $\nameoftop$-covering of $(U,T,\delta)\in\CRIS(X/\Sigma)$ 
corresponds to a $\nameoftop$-covering of $T$ as usual.
A quasi-coherent (resp.\ finite locally free) crystal on $X$ 
is a quasi-coherent (resp.\ finite locally free)
$\OOO_{X/\Sigma}$-module on $(X/\Sigma)_{\CRIS,\Zar}$, 
or equivalently on $(X/\Sigma)_{\CRIS,\nameoftop}$;
the equivalence holds by faithfully flat descent of modules.

\begin{Remark} [Functoriality]
\label{Rk:X/S-pr-funct} 
For a morphism $f:X'\to X$ 
there is a morphism of topoi
$f=f_{\CRIS}:(X'/\Sigma)_{\CRIS,\nameoftop}\to(X/\Sigma)_{\CRIS,\nameoftop}$
such that $f_{\CRIS}^{-1}$ is the evident restriction functor,
and $f_{\CRIS}$ becomes a morphism of ringed topoi using the \chang{obvious} identification 
$f_{\CRIS}^{-1}(\OOO_{X/\Sigma})=\OOO_{X'/\Sigma}$;
see \cite[1.1.10]{BBM}.
Similarly, $f$ induces a morphism of ringed topoi 
$f:X'_\nameoftop\to X_\nameoftop$.
\end{Remark}

\begin{Lemma}
\label{Le:descent-crystals}
The fibered categories of quasi-coherent crystals,
of finite locally free crystals,
of Dieudonn\'e crystals, and of filtered Dieudonn\'e crystals 
over the category of schemes on which $p$ is locally nilpotent 
are $\nameoftop$-stacks.
\end{Lemma}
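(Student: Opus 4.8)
The plan is to check the two defining properties of a stack --- that the presheaves of morphisms are $\nameoftop$-sheaves and that descent data for objects are effective --- and to reduce both to faithfully flat descent of modules. Since the $\nameoftop$-topology is generated by the Zariski topology together with $p$-root extensions, and since each of the four fibered categories is visibly a Zariski stack (objects and morphisms being defined by sheaf conditions on the crystalline site, which glue over a Zariski cover of the base), it suffices to treat a single $p$-root extension $f\colon X'=\Spec R'\to X=\Spec R$. The decisive input is that a $p$-root extension is faithfully flat, as recorded above: a simultaneous extraction of $p$-th roots $R[(a_i^{1/p})_{i\in I}]$ is free over $R$ on the monomials in the adjoined roots of exponent $<p$, and an arbitrary $p$-root extension is a filtered colimit of such maps.

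For effectivity of descent data I would proceed thickening by thickening. Let $\u\MMM'$ be a crystal over $X'$ of the appropriate type together with a descent datum along $f$, i.e.\ an isomorphism of its two pullbacks to $X'\times_XX'$ satisfying the cocycle condition over $X'\times_XX'\times_XX'$. Given an affine object $(U,T,\delta)\in\CRIS(X/\Sigma)$, form $U'=U\times_XX'$, a faithfully flat affine $U$-scheme which is again a $p$-root extension. I would then lift $U'\to U$ to a faithfully flat PD thickening $T'\to T$: writing $U'\to U$ as a simultaneous extraction of $p$-th roots of elements of $R$, one adjoins to $\OOO_T$ the $p$-th roots of chosen lifts of the corresponding elements of $\OOO_U$, obtaining $T'$ finite free over $T$ with $U'=U\times_TT'$. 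Evaluating $\u\MMM'$ on the resulting thickenings of $U'$, of $U'\times_UU'$, and of $U'\times_UU'\times_UU'$ --- which thicken the terms $U\times_XX'_\bullet$ of the \v Cech nerve --- turns the descent datum into a descent datum of $\OOO_{T'}$-modules along the faithfully flat morphism $T'\to T$. Faithfully flat descent of quasi-coherent modules then produces an $\OOO_T$-module $\MMM_{(U,T,\delta)}$, and finite local freeness descends along $T'\to T$.

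It remains to see that the modules $\MMM_{(U,T,\delta)}$ assemble into a crystal on $X$ and that the extra structures descend. The crystal transitivity isomorphisms are isomorphisms of modules and descend with the objects, while independence of the auxiliary lift $T'$ follows from the crystal property of $\u\MMM'$, which canonically identifies its values on any two PD thickenings of $U'$. Morphism descent is the same mechanism applied on each thickening to homomorphisms of modules, whose faithfully flat descent is classical; this yields the prestack property, and, applied to the crystals $\phi^*(\MMM_0)$ and $\MMM_0$, it shows that the operators $F$ and $V$ of a Dieudonn\'e crystal descend, the relations $FV=VF=p$ persisting because they may be tested after the faithfully flat pullback to $X'$. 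For a filtered Dieudonn\'e crystal the Hodge filtration $\Filone\MMM_X\subseteq\MMM_X$ is a submodule with locally free quotient over $\OOO_X$, hence descends as a locally direct summand, and its admissibility, the identity \eqref{Eq:adm-fil}, is an equality of submodules that holds after, and therefore before, the faithfully flat base change.

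I expect the main obstacle to be precisely the interaction between the descent direction and the crystalline direction: the PD thickenings $T$ occurring in $\CRIS(X/\Sigma)$ do not themselves lie over $X$, so one cannot simply base change them along $f$ but must instead lift the faithfully flat cover $U'\to U$ to a faithfully flat PD thickening $T'\to T$, and then verify that the non-canonicity of such lifts is absorbed by the stratification encoded in the crystal. Once this compatibility is secured, every remaining point is an instance of classical faithfully flat descent of modules, of their homomorphisms, and of direct summands.
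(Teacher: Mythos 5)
Your proposal takes exactly the paper's route. The paper's entire proof consists of your key observation: a $p$-root covering $U'=U\times_XX'\to U$ lifts to a $p$-root extension $T'\to T$ of any PD thickening $(U,T,\delta)$ (adjoin $p$-th roots of lifts of the relevant elements), and $U'\to T'$ is again a PD thickening because $\OOO_T\to\OOO_{T'}$ is flat, so that the divided powers extend to $\Ker(\OOO_{T'}\to\OOO_{U'})=\Ker(\OOO_T\to\OOO_U)\otimes_{\OOO_T}\OOO_{T'}$; after this, everything reduces to classical faithfully flat descent of modules, of their homomorphisms, and of direct summands. (A cosmetic slip: $T'$ is free over $T$ but not \emph{finite} free once infinitely many roots are adjoined; only faithful flatness is used.)

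One justification is incorrect as stated, at the step where the descended modules are assembled into a crystal. You assert that independence of the auxiliary lift $T'$ ``follows from the crystal property of $\MMM'$, which canonically identifies its values on any two PD thickenings of $U'$''. A crystal identifies its values only along \emph{morphisms} of PD thickenings; two lifts $T'_1$ and $T'_2$ of $U'\to U$ need not admit any map between them, so the crystal structure alone gives no comparison. What does the job is the descent datum, not the crystal property: $T'_1\times_TT'_2$ is a PD thickening of $U'\times_UU'=U\times_X(X'\times_XX')$ (again by flatness of $T'_1\times_TT'_2$ over $T$), and evaluating on it the given isomorphism between the two pullbacks of $\MMM'$ to $X'\times_XX'$ identifies the pullbacks of $\MMM'_{(U',T'_1)}$ and $\MMM'_{(U',T'_2)}$ over $T'_1\times_TT'_2$; by the cocycle condition this identification is compatible with the two \v{C}ech descent data, hence descends along the faithfully flat morphism $T'_1\times_TT'_2\to T$ to a canonical isomorphism between the two descended $\OOO_T$-modules. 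The same device yields functoriality of $(U,T,\delta)\mapsto\MMM_{(U,T,\delta)}$, which is where independence of the lift is genuinely needed, since a morphism of thickenings forces a comparison of two different lifts over the source. With this replacement your argument is complete and coincides with the paper's.
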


\begin{proof}
This is straightforward, using that for a $p$-root extension
$f:\Spec R'\to\Spec R$ and a PD thickening $\Spec R\to\Spec B$  
there is a $p$-root extension $g:\Spec B'\to\Spec B$ which lifts $f$,
and $\Spec R'\to \Spec B'$ is naturally a PD thickening since $B\to B'$ is flat. 
\chang{For example, if $(X_i\to X)_i$ is a $\nameoftop$-covering and if quasi-coherent crystals $\MMM_i$ over $X_i$ with descent isomorphisms over $X_i\times_XX_j$ are given, one defines a quasi-coherent crystal $\MMM$ over $X$ as follows. For $(U,T,\delta)\in\CRIS(X/\Sigma)$, if the given morphism $U\to X$ factors over some $X_i$, then $\MMM_{T}=(\MMM_i)_{T}$, which is independent of the factorisation by the descent isomorphisms. In general, such a factorisation exists $\nameoftop$-locally in $T$, and $\MMM_{T}$ is defined by fpqc descent of quasi-coherent sheaves.}
\end{proof}

\subsection{Schemes with a local $p$-basis}

\begin{Thm}
\label{Th:eq-p-basis}
\chang{Let $X$ be an $\FF_p$-scheme which locally has a $p$-basis.}
Then the functor
\chang{$\Dfun_X:\BT(X)\to\Dcat(X)$}
from $p$-divisible groups to Dieudonn\'e crystals is an equivalence.
\end{Thm}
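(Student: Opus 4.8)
The plan is to deduce the statement from the semiperfect case by $\nameoftop$-descent, after first passing to the filtered functor. Since $X$ has locally a $p$-basis, the forgetful functor $\DFcat(X)\to\Dcat(X)$ is an equivalence by Lemma \ref{Le:DF-D-eq}, and $\Dfun_X$ is the composition of $\DFfun_X$ with this functor; hence it suffices to prove that $\DFfun_X$ is an equivalence. As this property is local on $X$, I may assume that $X=\Spec R$ where $R$ carries a $p$-basis $(x_i)_{i\in I}$.

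First I would produce a semiperfect $\nameoftop$-cover. Put $R_\infty=R[(x_i^{1/p^\infty})_{i\in I}]$; then $\Spec R_\infty\to X$ is a $p$-root extension, hence a $\nameoftop$-covering. Each intermediate ring $R_n=R[(x_i^{1/p^n})_{i\in I}]$ again carries a $p$-basis, namely $(x_i^{1/p^n})_{i\in I}$, so it is reduced and $\phi$ is injective on it; passing to the colimit shows that $\phi$ is injective on $R_\infty$. On the other hand each $x_i^{1/p^n}$ with $n\ge1$ equals $\phi(x_i^{1/p^{n+1}})$, and the $p$-basis expansion of $R$ writes every element of $R$ as a $\phi$-image in $R_\infty$; hence $\phi$ is surjective and $R_\infty$ is perfect.

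The technical core is the identification of the fibre products. Using that $R_n$ is free over $R$ on the monomials in the $x_i^{1/p^n}$, the change of variables $Z_i=x_i^{1/p^n}\otimes1-1\otimes x_i^{1/p^n}$, which satisfy $Z_i^{p^n}=0$, gives
\[
R_n\otimes_RR_n\cong R_n[(Z_i)_{i\in I}]/((Z_i^{p^n})_{i\in I}).
\]
In the colimit these difference elements acquire all $p$-power roots, exhibiting $R_\infty\otimes_RR_\infty$ as a ring of the form \eqref{Eq:RI} with perfect base $S_0=R_\infty$; cf.\ Lemma \ref{Le:R''-RI}. The analogous computation with two families of difference variables shows that the triple fibre product $R_\infty\otimes_RR_\infty\otimes_RR_\infty$ is again of the form \eqref{Eq:RI}. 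I expect this bookkeeping with the nilpotent difference elements and their $p$-power roots to be the main obstacle.

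Finally I would run the descent. By Corollary \ref{Co:semiperfect-RI} the functor $\DFfun$ is an equivalence over each of $\Spec R_\infty$ (the case of empty $I$, a perfect ring), $\Spec(R_\infty\otimes_RR_\infty)$, and the triple fibre product. Now $\DFcat$ is a $\nameoftop$-stack by Lemma \ref{Le:descent-crystals}, $\BT$ is a $\nameoftop$-stack by fpqc descent of $p$-divisible groups, and $\DFfun$ is a morphism of stacks since $\Dfun$ and the Hodge filtration commute with base change. Full faithfulness of $\DFfun_X$ then follows from the sheaf property of the $\Hom$-presheaves together with the equivalence over $\Spec R_\infty$ and $\Spec(R_\infty\otimes_RR_\infty)$, and essential surjectivity follows by descending a $p$-divisible group along the cover, the cocycle condition being controlled by the triple fibre product. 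Hence $\DFfun_X$, and with it $\Dfun_X$, is an equivalence.
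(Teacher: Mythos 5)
Your proposal is correct and takes essentially the same route as the paper: reduce to $\DFfun_X$ via Lemma \ref{Le:DF-D-eq}, pass to the perfect cover $R_\infty=R[(x_i^{1/p^\infty})_{i\in I}]=R^{\per}$, identify $R_\infty\otimes_RR_\infty$ and the triple fibre product as rings of the form \eqref{Eq:RI} over $S_0=R_\infty$ (this is exactly Lemma \ref{Le:R''-RI}), apply Corollary \ref{Co:semiperfect-RI}, and conclude by faithfully flat descent of $p$-divisible groups and $\nameoftop$-descent of filtered Dieudonn\'e crystals. Your finite-level computation $R_n\otimes_RR_n\cong R_n[(Z_i)]/((Z_i^{p^n}))$ followed by a colimit is only a mild variant of the paper's direct argument with the $R'$-basis of fractional-exponent monomials $\u y^{\u a}$.
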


\begin{proof}
\chang{We will show that the functor $\DFfun_X:\BT(X)\to\DFcat(X)$ is an equivalence, which is equivalent to the theorem by Lemma \ref{Le:DF-D-eq}.}
We may assume that $X=\Spec R$ where $R$ has a $p$-basis $(x_i)_{i\in I}$.
Let $R'=R^{\per}$ be the perfect hull of $R$.
Then $R'=R[(x_i^{1/p^\infty})_{i\in I}]$,
in particular $\Spec R'\to\Spec R$ is a $p$-root extension.
Let $R''=R'\otimes_RR'$ and $R'''=R'\otimes_RR'\otimes_RR'$.
Let $y_i=x_i\otimes 1-1\otimes x_i\in R'\otimes_{\FF_p} R'$.

\begin{Lemma}
\label{Le:R''-RI}
If $R''$ is viewed as an $R'$-algebra by one of the factors,
there is an isomorphism of $R'$-algebras
\[
R''\cong R'[\{Y_i\}_{i\in I}]^{\per}/(\{Y_i\}_{i\in I})
\]
which sends $Y_i^{p^{-r}}$ to the image of $y_i^{p^{-r}}$ 
under $R'\otimes_{\FF_p} R'\to R''$.
\end{Lemma}

\begin{proof}
The monomials $\u x^{\u a}=\prod x_i^{a_i}$ %(
with exponents $\u a\in ([0,1)\cap\ZZ[1/p])^{(I)}$ %]
form a basis of $R'$ as an $R$-module.
Hence the elements $1\otimes\u x^{\u a}$ form a basis of $R''$
as an $R'$-module by the first factor.
By writing out $\u y^{\u a}=\prod y_i^{a_i}$ as an $R'$-linear combination
of the elements $1\otimes\u x^{\u a}$ one sees that
the elements $\u y^{\u a}$ form an $R'$-basis of $R''$ as well.
Since $y_i$ maps to zero in $R''$, the lemma follows.
\end{proof}

We continue the proof of Theorem \ref{Th:eq-p-basis}.
By Lemma \ref{Le:R''-RI}
the ring $R''$ takes the form \eqref{Eq:RI} with $S_0=R'$,
and the same holds for $R'''=R''\otimes_{R'}R''$.
Hence the functor $\DFfun_{\Spec T}$ is an equivalence 
for $T\in\{R',R'',R'''\}$ by Corollary \ref{Co:semiperfect-RI}.
By faithfully flat descent of $p$-divisible groups and 
$\nameoftop$-descent of filtered Dieudonn\'e crystals \chang{with respect to} 
$\Spec R'\to\Spec R$ (Lemma \ref{Le:descent-crystals})
it follows that the functor $\DFfun_{\Spec R}$ is an equivalence.
\end{proof}

\begin{Remark}
\label{Rk:dJ-case}
Theorem \ref{Th:eq-p-basis} extends
\cite[Main Theorem 1]{Jong:Crystalline}, 
which states that the functor $\Dfun_{\FX}$ is an equivalence 
if $\FX$ is a formal scheme which locally takes the form $\FX=\Spf A$ 
for $A$ as in Remark \ref{Rk:Rings-dJ}.
Indeed, Theorem \ref{Th:eq-p-basis} applies to $X=\Spec A$,
\chang{which locally has a finite $p$-basis by Lemma \ref{Le:F-finite-reg},}
moreover $\BT(X)\cong\BT(\FX)$ and $\Dcat(X)\cong\Dcat(\FX)$
by \cite[Lemma 2.4.4 \& Prop.\ 2.4.8]{Jong:Crystalline}.
\end{Remark}

\subsection{Locally complete intersections}

\begin{Thm}
\label{Th:eq-Ffin-Fnil-tf}
If $X$ is an $F$-finite and $F$-nilpotent scheme over $\FF_p$
which is PD torsion free, then the functor\/
$
\DFfun_X:\BT(X)\to\DFcat(X)
$
from $p$-divisible groups to filtered Dieudonn\'e crystals is an equivalence.
\end{Thm}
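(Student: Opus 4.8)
The plan is to reduce the statement to the semiperfect case treated in Theorem \ref{Th:semiperfect-Fnil} by descent along the topology $\nameoftop$. Since $F$-finiteness, $F$-nilpotence, and PD torsion freeness are all affine-local, and since $\BT$ and $\DFcat$ are Zariski stacks, I may assume $X=\Spec R$ with $R$ an $F$-finite, $F$-nilpotent, PD torsion free $\FF_p$-algebra, and it suffices to prove that $\DFfun_{\Spec R}$ is an equivalence. The key inputs are that for a semiperfect $F$-nilpotent $\FF_p$-algebra $S$ the functor $\Phi_S^{\cris}$ is an equivalence by Theorem \ref{Th:semiperfect-Fnil}, and that if moreover $A_{\cris}(S)$ is torsion free then $\Win(\u A{}_{\cris}(S))\cong\DFcat(\Spec S)$ with $\Phi_S^{\cris}$ corresponding to $\DFfun_{\Spec S}$ by Remark \ref{Rk:Acris-DF}; recall that for semiperfect $S$ torsion freeness of $A_{\cris}(S)$ is exactly the condition that $S$ be PD torsion free, by Lemma \ref{Le:D-Acris}.

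To build the cover I use that $\phi\colon R\to R$ being finite means $R$ is generated as an algebra over $\phi(R)$ by finitely many elements $a_1,\dots,a_r$. Set
\[
R'=R[(a_i^{1/p^\infty})_{1\le i\le r}],
\]
so that $\Spec R'\to\Spec R$ is a faithfully flat $\nameoftop$-covering and $R'$ is semiperfect by the description of $p$-root extensions in Section \ref{Subsect:pr}. I then form the Čech nerve $R''=R'\otimes_RR'$ and $R'''=R'\otimes_RR'\otimes_RR'$. The decisive observation is that, because there are only finitely many generators $a_i$, base change identifies $R''$ with the finite $p^\infty$-root extension $R'[(a_i^{1/p^\infty})_{1\le i\le r}]$ of $R'$, and likewise $R'''$ with such an extension of $R''$; each step adjoins only the same $r$ roots.

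I would then verify that $R',R'',R'''$ are simultaneously semiperfect, $F$-nilpotent, and PD torsion free. Semiperfectness is immediate, since the image of Frobenius is a subring containing every simple tensor $a^p\otimes b^p$ and hence all of the tensor product. $F$-nilpotence propagates from $R$ to $R'$, then to $R''$ and $R'''$, by applying Lemma \ref{Le:Fnil-remains} at each stage, which is legitimate precisely because each overlap is a finite $p^\infty$-root extension of the previous ring. PD torsion freeness propagates the same way: each of $R\to R'$, $R'\to R''$, $R''\to R'''$ is faithfully flat and, being a filtered colimit of finite complete intersections, falls within the scope of Corollary \ref{Co:DAR-lci-tf}, which carries PD torsion freeness from each ring to the next. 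Consequently $A_{\cris}$ is torsion free for each of $R',R'',R'''$, and Theorem \ref{Th:semiperfect-Fnil} together with Remark \ref{Rk:Acris-DF} shows that $\DFfun_{\Spec T}$ is an equivalence for $T\in\{R',R'',R'''\}$.

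Finally I would conclude by $\nameoftop$-descent: $\BT$ satisfies faithfully flat descent and $\DFcat$ is a $\nameoftop$-stack by Lemma \ref{Le:descent-crystals}, while $\DFfun$ is compatible with base change, so an equivalence on $R',R'',R'''$ descends to an equivalence on $R$. The main obstacle is this middle step of keeping $R',R'',R'''$ simultaneously semiperfect, $F$-nilpotent, and PD torsion free; the point that makes it go through is the finiteness coming from $F$-finiteness, which forces every Čech overlap to be a finite $p^\infty$-root extension, so that one never has to pass to a full perfection, where $F$-nilpotence would typically be destroyed.
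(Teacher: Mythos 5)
Your proposal is correct and is essentially the paper's own proof: the same cover $R'=R[(a_i^{1/p^\infty})_{1\le i\le r}]$ built from $F$-finiteness, propagation of semiperfectness, $F$-nilpotence (Lemma \ref{Le:Fnil-remains}) and PD torsion freeness (Corollary \ref{Co:DAR-lci-tf} via a colimit) to $R'$, $R''$, $R'''$, then Theorem \ref{Th:semiperfect-Fnil} with Remark \ref{Rk:Acris-DF}, and finally $\nameoftop$-descent via Lemma \ref{Le:descent-crystals}. Your explicit identification of the \v{C}ech overlaps as formal adjunctions $R''\cong R'[(a_i^{1/p^\infty})]$ (and likewise for $R'''$) correctly spells out the step the paper leaves implicit when it applies Lemma \ref{Le:Fnil-remains} to all three rings.
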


See Definitions \ref{Def:F-nilp} and \ref{Def:PD-tor}
for $F$-nilpotent and PD torsion free schemes.

\begin{proof}
We \chang{may} assume that $X=\Spec R$ \chang{is affine}.
Let $a_1,\ldots,a_r$ generate $R$ as an algebra over $\phi_R(R)$,
using that $R$ is $F$-finite, and let 
\begin{equation}
\label{Eq:R'}
R'=R[(a_i^{1/p^\infty})_{1\le i\le r}]
\end{equation}
and $R''=R'\otimes_RR'$, {} $R'''=R'\otimes_RR'\otimes_RR'$.
Each ring $T\in\{R',R'',R'''\}$
is semiperfect, and $F$-nilpotent by Lemma \ref{Le:Fnil-remains}.
\chang{Moreover, $T$ is PD torsion free since this holds for $R$, 
using Corollary \ref{Co:DAR-lci-tf} and a colimit argument.
Hence the functor $\DFfun_{\Spec T}$ is an equivalence by Corollary \ref{Co:semiperfect-Fnil}.}
\chang{As in the proof of Theorem \ref{Th:eq-p-basis}, by faithfully flat descent of $p$-divisible groups and 
$\nameoftop$-descent of filtered Dieudonn\'e crystals \chang{with respect to} 
$\Spec R'\to\Spec R$ (Lemma \ref{Le:descent-crystals})
it follows that the functor $\DFfun_{\Spec R}$ is an equivalence.}
\end{proof}

\begin{Cor}
\label{Co:eq-lci}
If $X$ is an $F$-finite l.c.i.\ scheme over $\FF_p$, 
then \chang{the functor} $\DFfun_X$ is an equivalence.
\end{Cor}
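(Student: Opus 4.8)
The plan is to deduce Corollary \ref{Co:eq-lci} from Theorem \ref{Th:eq-Ffin-Fnil-tf} by verifying that an $F$-finite l.c.i.\ scheme $X$ over $\FF_p$ satisfies all three hypotheses of that theorem: $F$-finiteness, $F$-nilpotence, and PD torsion freeness. Since the conclusion of the theorem is precisely that $\DFfun_X$ is an equivalence, once the hypotheses are checked there is nothing further to prove. All three verifications are already essentially available in the excerpt, so the corollary is a matter of assembling the pieces correctly.

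First I would observe that $F$-finiteness is part of the hypothesis of the corollary, so nothing is needed there. For $F$-nilpotence, I would use that an l.c.i.\ scheme is by definition locally noetherian (its complete local rings are complete intersection rings), and that every locally noetherian $\FF_p$-scheme is $F$-nilpotent; this is stated in the introduction just before Theorem \ref{Th:Int-FF} and follows from the remark after Definition \ref{Def:F-nilp}, since in a noetherian ring the kernel of $\phi$ is finitely generated and hence a nilpotent ideal. Thus every l.c.i.\ scheme over $\FF_p$ is automatically $F$-nilpotent.

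The remaining and most substantive point is PD torsion freeness. Here I would invoke the chain of implications recorded in the text surrounding Proposition \ref{Pr:PD-tor-lci}: an $F$-finite locally noetherian scheme is excellent (by \cite{Kunz:Noetherian}, cited at the start of Section \ref{Se:Fin-Frob}), and an excellent l.c.i.\ scheme over $\FF_p$ is PD torsion free by Proposition \ref{Pr:PD-tor-lci}. So the $F$-finiteness hypothesis is used twice: once directly as a hypothesis of Theorem \ref{Th:eq-Ffin-Fnil-tf}, and once to upgrade the l.c.i.\ property to excellence and thereby obtain PD torsion freeness. I do not anticipate a genuine obstacle, since the deductions are citations of results already in place; the only care required is to confirm that ``l.c.i.\ scheme'' in the sense used here (locally noetherian with complete intersection complete local rings) is exactly the hypothesis needed for Proposition \ref{Pr:PD-tor-lci}, which it is. With all three hypotheses of Theorem \ref{Th:eq-Ffin-Fnil-tf} verified, the corollary follows immediately.
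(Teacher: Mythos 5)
Your proposal is correct and follows exactly the paper's own argument: $F$-nilpotence comes from local noetherianness (kernel of $\phi$ finitely generated, hence nilpotent), PD torsion freeness from Kunz's theorem that noetherian plus $F$-finite implies excellent combined with Proposition \ref{Pr:PD-tor-lci}, and then Theorem \ref{Th:eq-Ffin-Fnil-tf} applies directly. Nothing is missing.
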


\begin{proof}
Noetherian and $F$-finite implies excellent by
\cite[Th.~2.5]{Kunz:Noetherian}, 
hence $X$ is PD torsion free by 
Proposition \ref{Pr:PD-tor-lci},
moreover noetherian implies $F$-nilpotent.
\chang{Hence Theorem \ref{Th:eq-Ffin-Fnil-tf} applies.}
\end{proof}

\section{Divided Dieudonn\'e crystals}
\label{Se:divided}

Throughout this section
let $X$ be a scheme on which $p$ is locally nilpotent.
Let $X_\nameoftop$ and $(X/\Sigma)_{\CRIS,\nameoftop}$
be defined as in Section \ref{Subsect:pr}.

\begin{Lemma}
\label{Le:iu}
There are morphisms of topoi
\[
X_{\nameoftop}\xrightarrow {\;i\;}(X/\Sigma)_{\CRIS,\nameoftop}\xrightarrow{\;u\;} X_{\nameoftop}
\]
with $u\circ i=\id$ where $i_*=u^{-1}$ is defined by 
$i_*(F)(U,T,\delta)=F(U)$.
\end{Lemma}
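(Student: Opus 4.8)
The plan is to obtain both morphisms from the two tautological functors between the underlying sites and to read off the asserted formula, as well as the identity $u\circ i=\id$, from a single adjunction between these functors. Write $\SSS$ for the category of $X$-schemes and $\CCC=\CRIS(X/\Sigma)$, each with the $\nameoftop$-topology, and introduce the projection
\[
v\colon\CCC\to\SSS,\quad (U,T,\delta)\mapsto U,
\qquad
w\colon\SSS\to\CCC,\quad U\mapsto(U,U,0).
\]
Then $v\circ w=\id_{\SSS}$, the functor $w$ is fully faithful, and $w$ is left adjoint to $v$: a morphism $(U,U,0)\to(U_0,T_0,\delta_0)$ in $\CCC$ is the same datum as an $X$-morphism $U\to U_0$, whence $\Hom_{\CCC}(wU,c)=\Hom_{\SSS}(U,vc)$.

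First I would check that $v$ and $w$ are continuous and cocontinuous for the two $\nameoftop$-topologies. The only input beyond bookkeeping is the lifting property already used in the proof of Lemma~\ref{Le:descent-crystals}: a $\nameoftop$-covering of the quotient $U$ of a PD thickening $T$ lifts to a $\nameoftop$-covering of $T$, while conversely a $\nameoftop$-covering $\{T_i\to T\}$ restricts along the nilimmersion $U\hookrightarrow T$ to a $\nameoftop$-covering $\{U\times_T T_i\to U\}$, fibre products being formed componentwise. The restriction statement makes $v$ and $w$ continuous and shows that $(U,T,\delta)\mapsto F(U)$ is a sheaf whenever $F$ is; the lifting statement gives cocontinuity of $v$; and $w$ is cocontinuous because a $\nameoftop$-covering of a trivial thickening $(U,U,0)$ is merely a $\nameoftop$-covering of $U$ and so already lies in the image of $w$. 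I expect the cocontinuity of $v$ --- lifting covers through PD envelopes --- to be the one genuinely geometric step, everything after it being formal.

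Granting this, the standard formalism attaching a morphism of topoi to a cocontinuous functor, with automatically exact inverse image (cf.\ the classical construction in \cite[1.1.10]{BBM}), produces both maps at once. Cocontinuity of $w$ yields $i\colon X_{\nameoftop}\to(X/\Sigma)_{\CRIS,\nameoftop}$ with inverse image $i^{-1}(\GGG)(U)=\GGG(U,U,0)$, whose exactness is visible objectwise, and cocontinuity of $v$ yields $u$ in the opposite direction with inverse image $u^{-1}(F)(U,T,\delta)=F(U)$. It remains only to identify the direct image $i_*$. Since $v$ is continuous, $F\mapsto F\circ v$ lands in sheaves, and the adjunction $w\dashv v$ descends to $\Hom(i^{-1}\GGG,F)=\Hom(\GGG,F\circ v)$; the isomorphism is produced from the canonical maps $(U,U,0)\to(U,T,\delta)$, along which $F\circ v$ is constant. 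By uniqueness of adjoints this gives $i_*(F)=F\circ v$, that is $i_*(F)(U,T,\delta)=F(U)$, and in particular $i_*=u^{-1}$.

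Finally $u\circ i=\id$ is formal. On inverse images $(u\circ i)^{-1}=i^{-1}\circ u^{-1}$ sends $F$ first to $F\circ v$ and then to $(F\circ v)\circ w=F\circ(v\circ w)=F$, using $v\circ w=\id_{\SSS}$. As a morphism of topoi is determined by its inverse image, this yields $u\circ i=\id$. I would close by noting that only the explicit functor $i_*\colon F\mapsto[(U,T,\delta)\mapsto F(U)]$ is needed later, so the less explicit right adjoint $u_*$ supplied by the formalism calls for no further analysis.
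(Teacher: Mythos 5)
Your proof is correct and follows essentially the same route as the paper's: both construct $i$ and $u$ from the two canonical functors between the underlying sites, with precomposition giving the explicit $i^{-1}(\GGG)(U)=\GGG(U,U,0)$ and $i_*=u^{-1}:F\mapsto F\circ v$, both isolate the lifting of $p$-root coverings through PD thickenings as the one geometric input (for exactness of $u^{-1}$, i.e.\ cocontinuity of $v$), and both deduce $u\circ i=\id$ from $i^{-1}\circ u^{-1}=\id$. The only cosmetic difference is that the paper verifies the adjunctions and sheaf-preservation directly and records the explicit right adjoint $u_*(G)(U)=\Gamma((U/\Sigma)_{\CRIS},G)$ (which it uses later, e.g.\ to define $\OOO_X^{\cris}$), whereas you outsource these verifications to the standard continuous/cocontinuous-functor formalism and leave $u_*$ abstract.
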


\begin{proof}
Let $i^{-1}(G)(U)=G(U,U,0)$ and $u_*(G)(U)=\Gamma((U/\Sigma)_{\CRIS},G)$.
Then $i_*$ and $i^{-1}$ preserve sheaves and $i^{-1}$ is an
exact left adjoint of $i_*$, and $u_*$ is a right adjoint of $u^{-1}$
on the level of presheaves,
moreover $i^{-1}\circ u^{-1}=\id$. 
Using that $p$-root extensions can be lifted to PD thickenings
one verifies that $u_*$ preserves sheaves and that $u^{-1}$ is exact.
\end{proof}

\begin{Remark}
For $n\ge 1$ let $X_n=X\times\Spec\ZZ/p^n$.
There are analogous morphisms of topoi
\[
(X_n)_{\nameoftop}\xrightarrow {\;i_n\;}(X_n/\Sigma_n)_{\CRIS,\nameoftop}
\xrightarrow{\;u_n\;} (X_n)_{\nameoftop}
\]
with $(u_n)_*(G)(U)=\Gamma((U/\Sigma_n)_{\CRIS},G)$.
\end{Remark}

\begin{Remark}
For a morphism of schemes $f:X'\to X$ 
the morphisms $i$ and $u$ of Lemma \ref{Le:iu} 
are compatible with the morphisms $f:X'_{\nameoftop}\to X_{\nameoftop}$
and $f:(X/\Sigma)_{\CRIS,\nameoftop}\to(X/\Sigma)_{\CRIS,\nameoftop}$
of Remark \ref{Rk:X/S-pr-funct}.
The same holds for $i_n$ and $u_n$.
\end{Remark}

\begin{Remark}
If $U=\Spec R$ is an affine $X$-scheme where $R/p$ is semiperfect, 
the category $\CRIS(U/\Sigma)$ has the initial pro-object $\Spf A_{\cris}(R)$,
and 
\[
u_*(G)(U)=\varprojlim_nG(U,\Spec A_{\cris}(R)/p^n,\delta).
\]
This formula determines the sheaf $u_*(G)$
since schemes $U$ of this type form a basis of the $\nameoftop$-topology of $X$.
\end{Remark}

\subsection*{The sheaf \boldmath{$\OOO_X^{\cris}$} and its locally free modules}

Following Fontaine-Messing \cite{Fontaine-Messing} 
we define $\OOO_X^{\cris}=u_*\OOO_{X/\Sigma}$,
which is a ring in $X_{\nameoftop}$.
Then $i$ and $u$ are morphisms of ringed topoi
\[
(X_{\nameoftop},\OOO_X)
\xrightarrow {\;i\;}
((X/\Sigma)_{\CRIS,\nameoftop},\OOO_{X/\Sigma})
\xrightarrow {\;u\;}
(X_{\nameoftop},\OOO_X^{\cris})
\]
in a natural way.
We also consider the torsion version
$\OOO_{X,n}^{\cris}=(u_n)_*\OOO_{X_n/\Sigma_n}$ in $(X_n)_\nameoftop$.
For a morphism $f:X'\to X$ we have 
$f^{-1}\OOO_X^{\cris}=\OOO_{X'}^{\cris}$
and thus an evident morphism of ringed topoi
\[
f:(X'_{\nameoftop},\OOO_{X'}^{\cris})\to
(X_{\nameoftop},\OOO_X^{\cris}).
\]

The canonical PD ideal $\III_{X/\Sigma}\subseteq\OOO_{X/\Sigma}$ 
is the kernel of $\OOO_{X/\Sigma}\to i_*\OOO_X$.
The functor $u_*$ applied to this map gives a homomorphism
$\OOO_X^{\cris}\to\OOO_X$ with kernel $\III_X^{\cris}=u_*\III_{X/\Sigma}$.

\begin{Lemma}
The homomorphism $\OOO_X^{\cris}\to\OOO_X$ is surjective,
the sheaf $\OOO_X^{\cris}$ is \chang{$p$-complete},
i.e.\ $\OOO_X^{\cris}\cong\varprojlim_n\OOO_X^{\cris}/p^n$, 
and we have $\OOO_X^{\cris}/p^n\cong j_{n*}\OOO_{X,n}^{\cris}$
where $j_n:(X_n)_\nameoftop\to X_\nameoftop$ is the natural morphism.
\end{Lemma}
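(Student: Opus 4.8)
The plan is to reduce every assertion to a statement about $A_{\cris}(R)$ on a basis of the topology. Recall that the affine $X$-schemes $U=\Spec R$ for which $R/p$ is semiperfect form a basis $\BBB$ of the $\nameoftop$-topology, and that all four sheaves occurring here are determined by their sections on $\BBB$. On such $U$ the explicit formula recorded above gives $\OOO_X^{\cris}(U)=\varprojlim_n\OOO_{X/\Sigma}(U,\Spec A_{\cris}(R)/p^n,\delta)=A_{\cris}(R)$, the last equality because $A_{\cris}(R)$ is $p$-adically complete and separated by construction. First I would dispose of surjectivity: the map $\OOO_X^{\cris}\to\OOO_X$ is $u_*$ applied to $\OOO_{X/\Sigma}\to i_*\OOO_X$, and on $U\in\BBB$ it is the canonical projection $\pi:A_{\cris}(R)\to R$, which is surjective; since surjectivity of the section maps over a basis forces a morphism of sheaves to be an epimorphism, the first claim follows.

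Next I would establish the identity $\OOO_X^{\cris}/p^n\cong j_{n*}\OOO_{X,n}^{\cris}$, which is the technical core. Restricting a crystal on $(X/\Sigma)_{\CRIS,\nameoftop}$ to the subsite $(X_n/\Sigma_n)_{\CRIS,\nameoftop}$ and pushing forward yields a natural reduction map $\OOO_X^{\cris}\to j_{n*}\OOO_{X,n}^{\cris}$; since the target is a $\ZZ/p^n$-algebra this map kills $p^n$ and thus factors through the sheaf quotient $\OOO_X^{\cris}/p^n$. To see the induced map is an isomorphism I would evaluate on $U\in\BBB$. On one side the truncated analogue of the formula above gives $j_{n*}\OOO_{X,n}^{\cris}(U)=\OOO_{X,n}^{\cris}(U_n)=A_{\cris}(R)/p^nA_{\cris}(R)$, using that $A_{\cris}(R)/p^n$ is the initial object of $\CRIS(U_n/\Sigma_n)$ and that $A_{\cris}(R)$ as a ring depends only on $R/p$. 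On the other side the presheaf quotient of $\OOO_X^{\cris}$ on $U$ is exactly $A_{\cris}(R)/p^nA_{\cris}(R)$, and the reduction map matches the two compatibly with the restriction maps of $\BBB$. Hence the presheaf quotient already agrees on $\BBB$ with the sheaf $j_{n*}\OOO_{X,n}^{\cris}$; as a presheaf and a sheaf that coincide on a basis have the same sheafification, the desired isomorphism results.

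Finally, $p$-adicity follows formally. Inverse limits of sheaves are computed on sections, so by the previous step $\varprojlim_n\OOO_X^{\cris}/p^n\cong\varprojlim_n j_{n*}\OOO_{X,n}^{\cris}$ has value $\varprojlim_nA_{\cris}(R)/p^nA_{\cris}(R)=A_{\cris}(R)$ on each $U\in\BBB$, again by $p$-adic completeness of $A_{\cris}(R)$. Under these identifications the canonical comparison map $\OOO_X^{\cris}\to\varprojlim_n\OOO_X^{\cris}/p^n$ becomes the identity of $A_{\cris}(R)$ over every basis object, so it is an isomorphism of sheaves.

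The step I expect to be the main obstacle is the middle one. Concretely, one must verify that $A_{\cris}(R)/p^n$ is the initial object of the truncated crystalline site $\CRIS(U_n/\Sigma_n)$ for semiperfect $R/p$ (so that the truncated formula holds without an inverse limit), and that the reduction morphism $\OOO_X^{\cris}\to j_{n*}\OOO_{X,n}^{\cris}$ is genuinely compatible with the transition maps along $\BBB$, so that the sheafification comparison applies. These are bookkeeping facts about the crystalline site and the functoriality of $A_{\cris}$ rather than deep difficulties, but they are where care is needed; once they are in place, surjectivity and $p$-adicity are immediate consequences of the explicit description on $\BBB$.
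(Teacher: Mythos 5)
Your proposal is correct and follows essentially the same route as the paper: both arguments reduce everything to the basis of affine $X$-schemes $U=\Spec R$ with $R/p$ semiperfect, where $\OOO_X^{\cris}(U)=A_{\cris}(R)$ and $j_{n*}\OOO_{X,n}^{\cris}(U)=A_{\cris}(R)/p^n$, identify $j_{n*}\OOO_{X,n}^{\cris}$ with the sheafification of the presheaf quotient $\OOO_X^{\cris}/p^n$ via agreement on this basis, and then obtain surjectivity and $p$-adicity from the surjectivity of $A_{\cris}(R)\to R$ and the $p$-adic completeness of $A_{\cris}(R)$. The only (immaterial) difference is the order of steps: the paper first verifies $\OOO_X^{\cris}\cong\varprojlim_n j_{n*}\OOO_{X,n}^{\cris}$ and then compares with the quotients, while you establish the mod-$p^n$ identification first and deduce the limit statement sectionwise.
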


\begin{proof}
One verifies directly that $\OOO_X^{\cris}=\varprojlim_nj_{n*}\OOO_{X,n}^{\cris}$.
If $U=\Spec R$ is an affine $X$-scheme such that $R/p$ is semiperfect,
then $\OOO_X^{\cris}(U)=A_{\cris}(R)$
and $j_{n*}\OOO_{X,n}^{\cris}(U)=A_{\cris}(R)/p^n$. 
Hence $j_{n*}\OOO_{X,n}^{\cris}$ coincides with the presheaf quotient
$\OOO_X^{\cris}/p^n$ on a basis of the topology,
thus $j_{n*}\OOO_{X,n}\cong \OOO_X^{\cris}/p^n$ as sheaves. 
Since $A_{\cris}(R)\to R$ is surjective, the lemma follows.
\end{proof}

\begin{Lemma}
\label{Le:u**}
The functors $u_*$ and $u^*$ between $\OOO_{X/\Sigma}$-modules
and $\OOO_X^{\cris}$-modules induce inverse equivalences
between the categories of finite locally free modules,
$\LF(\OOO_{X/\Sigma})\cong\LF(\OOO_X^{\cris})$.
\end{Lemma}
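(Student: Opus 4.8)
The plan is to exhibit $u_*$ and $u^*$ as quasi-inverse functors by reducing to a basis of the $\nameoftop$-topology and computing there in terms of $A_{\cris}(R)$. Recall that $u$ is a morphism of ringed topoi with $u_*\OOO_{X/\Sigma}=\OOO_X^{\cris}$, so $u_*$ sends $\OOO_{X/\Sigma}$-modules to $\OOO_X^{\cris}$-modules, while $u^*\NNN=u^{-1}\NNN\otimes_{u^{-1}\OOO_X^{\cris}}\OOO_{X/\Sigma}$ sends $\OOO_X^{\cris}$-modules to $\OOO_{X/\Sigma}$-modules, with adjunction unit $\NNN\to u_*u^*\NNN$ and counit $u^*u_*\MMM\to\MMM$. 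First I would observe that $u^*$ preserves finite local freeness for free reasons: the pullback of a module that is $\nameoftop$-locally free is again $\nameoftop$-locally free, and finite locally free $\OOO_{X/\Sigma}$-modules are by definition the finite locally free crystals. It then remains to show that $u_*$ also preserves finite local freeness and that the unit and counit are isomorphisms on the two subcategories.

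All of these are assertions about sheaves and so can be tested $\nameoftop$-locally. Since finite locally free crystals form a $\nameoftop$-stack by Lemma \ref{Le:descent-crystals}, since finite locally free $\OOO_X^{\cris}$-modules form a $\nameoftop$-stack because finite local freeness is a local condition and $\OOO_X^{\cris}$-modules satisfy faithfully flat descent through the levels $\OOO_X^{\cris}/p^n$, and since $u_*$ and $u^*$ commute with the restriction morphisms of Remark \ref{Rk:X/S-pr-funct}, it suffices to verify the claim after restricting to a member $U=\Spec R$ of the basis of the $\nameoftop$-topology for which $R/p$ is semiperfect.

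Over such a $U$ the crystalline site $\CRIS(U/\Sigma)$ has the pro-initial object $\Spf A_{\cris}(R)$ and $\OOO_X^{\cris}(U)=A_{\cris}(R)$. I would use this to identify both ends with finite projective $A_{\cris}(R)$-modules. Evaluation at $A_{\cris}(R)$ sends a finite locally free crystal $\MMM$ to $M=\varprojlim_n\MMM_{A_{\cris}(R)/p^n}$, and conversely every finite projective $A_{\cris}(R)$-module extends uniquely to such a crystal by the crystalline formalism; on the other side, the module analogue of Lemma \ref{Le:win-win} identifies finite locally free $\OOO_X^{\cris}$-modules on $U$ with finite projective $A_{\cris}(R)$-modules via $\NNN\mapsto\Gamma(U,\NNN)$. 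Under these two identifications one has $\Gamma(U,u_*\MMM)=\Gamma((U/\Sigma)_{\CRIS,\nameoftop},\MMM)=M$, so $u_*$ becomes the identity functor on finite projective $A_{\cris}(R)$-modules; in particular $u_*\MMM$ is finite locally free, and the unit and counit become the evident isomorphisms.

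The main obstacle is the comparison on the basis, namely matching the two notions of finite local freeness with finite projectivity over $A_{\cris}(R)$. For crystals this is the standard fact that over the universal $p$-adic PD thickening a crystal is a module with a PD-stratification, trivialised here by the pro-initiality of $A_{\cris}(R)$. The delicate half is the $\OOO_X^{\cris}$-side, where I must show that a module which is free after some $\nameoftop$-cover of $U$ has finite projective global sections over $A_{\cris}(R)$, and conversely that a finite projective $A_{\cris}(R)$-module becomes free after a $p$-root cover of $U$, so that the two local-freeness conditions genuinely coincide. Once this module-level analogue of Lemma \ref{Le:win-win} is in place, the counit isomorphism is exactly the statement that $u^*$ undoes evaluation at $A_{\cris}(R)$ and reconstructs the crystal, and the equivalence follows by descent.
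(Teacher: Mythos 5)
Your reduction to a semiperfect affine basis and the easy half of the statement (that $u^*$ preserves finite local freeness, and that the unit $\NNN\to u_*u^*\NNN$ reduces locally to the tautology $u_*\OOO_{X/\Sigma}=\OOO_X^{\cris}$) agree with the paper's proof. But the heart of the lemma is exactly the step you defer: showing that for a finite locally free crystal $\MMM$ the pushforward $u_*\MMM$ is a finite locally free $\OOO_X^{\cris}$-module and that the counit $u^*u_*\MMM\to\MMM$ is an isomorphism. You propose to obtain this from a ``module analogue of Lemma \ref{Le:win-win}'', i.e.\ from the identification of $\LF(\OOO_X^{\cris})$ over a semiperfect affine with finite projective $A_{\cris}(R)$-modules. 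In the paper that identification is Corollary \ref{Co:Ocris-Acris}, and it is \emph{deduced from} Lemma \ref{Le:u**}, so it is not available beforehand; and Lemma \ref{Le:win-win} cannot supply it either, since it takes precisely such an equivalence between locally free sheaf modules and projective modules as a \emph{hypothesis}. Your own text concedes the point (``I must show\ldots'', ``Once this \ldots is in place''), so the proposal names the obstacle but does not overcome it; what remains is the actual mathematical content of the lemma.

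What is needed, and what the paper does, is a direct argument: after a further $\nameoftop$-covering one may assume $\bar M=\MMM(X,X,0)$ is finite free over $R$; then each $M_n=\MMM(X,\Spec A_n,\delta)$, with $A_n=A_{\cris}(R)/p^n$, is finite projective over $A_n$ with $M_n\otimes_{A_n}R=\bar M$ free, hence $M_n$ is free by Nakayama (the kernel of $A_n\to R$ is a nil ideal, since any PD element $x$ satisfies $x^p\in pA_n$ and $p$ is nilpotent in $A_n$), so $u_*\MMM(X)=\varprojlim_n M_n$ is finite free over $A_{\cris}(R)$. Crucially, one must then check that an $A_{\cris}(R)$-basis $(e_i)$ of $u_*\MMM(X)$ restricts to an $A_{\cris}(R')$-basis of $u_*\MMM(X')$ for \emph{every} semiperfect affine $X'=\Spec R'$ over $X$; this holds because an $R$-basis of $\MMM(X,X,0)$ maps to an $R'$-basis of $\MMM(X',X',0)$ by the crystal property, and the same Nakayama-plus-limit argument applies over $R'$. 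This base-change compatibility is what makes the sheaf $u_*\MMM$ free over $\OOO_X^{\cris}$, and it is absent from your outline: finite projectivity of the global sections alone gives neither local freeness of the sheaf $u_*\MMM$ nor the counit isomorphism you assert, so the gap is genuine even though your overall reduction strategy is the right one.
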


\begin{proof}
If $\MMM$ is a finite locally free $\OOO_X^{\cris}$-module,
then $u^*\MMM$ is a finite locally free $\OOO_{X/\Sigma}$-module,
moreover $\MMM\to u_*u^*\MMM$ is an isomorphism because this can be verified locally,
and it holds for $\MMM=\OOO_X^{\cris}$ by the definition of $\OOO_X^{\cris}$.
Let $\MMM$ by a locally free $\OOO_{X/\Sigma}$-module.
We claim that the $\OOO_X^{\cris}$-module $u_*\MMM$ 
is finite locally free and that $u^*u_*\MMM\to \MMM$ is an isomorphism.
This can be verified $\nameoftop$-locally, 
so let $X=\Spec R$ such that $R/p$ is semiperfect.
By passing to a $\nameoftop$-covering of $X$ 
we can assume that $\bar M=\MMM(X,X,0)$ is a finite free $R$-module.
For $n\ge 1$ with $p^nR=0$ let $A_n=A_{\cris}(R)/p^n$.
Then $M_n=\MMM(X,\Spec A_n,\delta)$ is a finite projective $A_n$-module
with $M_n\otimes_{A_n}R=\bar M$,
thus $M_n$ is free by Nakayama's Lemma,
and $u_*(\MMM)(X)=\varprojlim_nM_n$ is a finite free 
module over $A_{\cris}(R)=\OOO^{\cris}_X(X)$.
If $X'\to X$ is a morphism and $X'=\Spec R'$ where $R'/p$ is semiperfect,
then an $R$-basis of $\MMM(X,X,0)$ maps to an $R'$-basis of $\MMM(X',X',0)$,
which implies that an $A_{\cris}(R)$-basis $(e_i)$ of $u_*(\MMM)(X)$ maps to an
$A_{\cris}(R')$-basis of $u_*(\MMM)(X')$.
Hence $u_*\MMM$ is free over $\OOO_X^{\cris}$ with basis $(e_i)$,
and the assertion follows.
\end{proof}

\begin{Cor}
\label{Co:Ocris-Acris}
Let $X=\Spec R$ such that $R/p$ is semiperfect.
Then finite locally free $\OOO_X^{\cris}$-modules $\MMM$
are equivalent to finite projective modules $M$ over $A=A_{\cris}(R)$
by the functors $M=\Gamma(X,\MMM)$ and $\MMM=M\otimes_A\OOO_X^{\cris}$.
\end{Cor}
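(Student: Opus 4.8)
The plan is to combine the equivalence $\LF(\OOO_{X/\Sigma})\cong\LF(\OOO_X^{\cris})$ of Lemma \ref{Le:u**} with a direct analysis of the global sections functor over the affine semiperfect base. I first record the normalization $\Gamma(X,\OOO_X^{\cris})=A$: since $\OOO_X^{\cris}=u_*\OOO_{X/\Sigma}$ and $R/p$ is semiperfect, the formula for $u_*$ gives $\Gamma(X,\OOO_X^{\cris})=\varprojlim_n\OOO_{X/\Sigma}(X,\Spec A/p^n,\delta)=\varprojlim_n A/p^n=A$. Write $F(\MMM)=\Gamma(X,\MMM)$ and $G(M)=M\otimes_A\OOO_X^{\cris}$. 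The functor $G$ lands in $\LF(\OOO_X^{\cris})$: a finite projective $M$ is a direct summand of some $A^m$, so $G(M)$ is a direct summand of $(\OOO_X^{\cris})^m$. The whole problem is then to show that the two composites $F\circ G$ and $G\circ F$ are naturally isomorphic to the identity.

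For $F\circ G\cong\id$, both functors are additive, so it suffices to evaluate on a free module $M=A^m$: there $G(M)=(\OOO_X^{\cris})^m$ and $F(G(M))=A^m$ by the normalization above, and the case of a general finite projective $M$ follows by applying the defining idempotent. To see that $F$ itself takes values in finite projective $A$-modules, I would use Lemma \ref{Le:u**} to write $\MMM\cong u_*\NNN$ with $\NNN=u^*\MMM\in\LF(\OOO_{X/\Sigma})$, so that $F(\MMM)=\varprojlim_n N_n$ with $N_n=\NNN(X,\Spec A/p^n,\delta)$ a compatible system of finite projective $A/p^n$-modules. Since $A$ is $p$-adic, lifting idempotents through this $p$-adic limit shows that $F(\MMM)$ is finite projective over $A$ with $F(\MMM)/p^n=N_n$.

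It remains to prove that the counit $\eta_\MMM\colon F(\MMM)\otimes_A\OOO_X^{\cris}\to\MMM$ realizing $G\circ F\cong\id$ is an isomorphism. As a map of finite locally free $\OOO_X^{\cris}$-modules it may be checked on a $\nameoftop$-covering $X'=\Spec R'\to X$, with $R'/p$ semiperfect, chosen so that $\MMM|_{X'}$ is free; put $A'=A_{\cris}(R')$. The step I expect to be the main obstacle is the base-change compatibility $F(\MMM)\otimes_A A'\cong\Gamma(X',\MMM|_{X'})$. This is where the crystal structure enters: the functorial lift $A\to A'$ of $R\to R'$ is a morphism of PD thickenings, so the transition isomorphisms of the crystal $\NNN$ give $\NNN(X',\Spec A'/p^n,\delta)\cong N_n\otimes_{A/p^n}A'/p^n$, and passing to the limit yields $\Gamma(X',\MMM|_{X'})\cong F(\MMM)\otimes_A A'$ because $F(\MMM)$ is finite projective and hence its base change is $p$-adically complete. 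Granting this, over $X'$ both the source and the target of $\eta_\MMM$ are identified, by the free case already computed in the proof of Lemma \ref{Le:u**}, with the free $\OOO_{X'}^{\cris}$-module on a basis of $F(\MMM)\otimes_A A'=\Gamma(X',\MMM|_{X'})$, and $\eta_\MMM$ induces this base-change isomorphism on global sections. Hence $\eta_\MMM$ is an isomorphism over $X'$, and therefore an isomorphism, which completes the equivalence.
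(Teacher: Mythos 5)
Your proof follows essentially the same route as the paper's, with the paper's two ingredients unpacked: the paper composes the classical dictionary between finite locally free $\OOO_{X/\Sigma}$-modules and compatible systems of finite projective $A/p^n$-modules with the equivalence $\LF(\OOO_{X/\Sigma})\cong\LF(\OOO_X^{\cris})$ of Lemma \ref{Le:u**}, while you verify directly that $\Gamma(X,-)$ and $-\otimes_A\OOO_X^{\cris}$ are quasi-inverse, re-deriving the crystal-theoretic inputs where needed. The retract argument for $F\circ G\cong\id$, the inverse-limit argument showing $F(\MMM)=\varprojlim_n N_n$ is finite projective with $F(\MMM)/p^n\cong N_n$, and the base-change identification $\Gamma(X',\MMM|_{X'})\cong F(\MMM)\otimes_A A'$ via the crystal transition maps are all correct, and the last of these is indeed the crux of the counit computation.

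There is, however, one genuine gap: the claim that $G(M)=M\otimes_A\OOO_X^{\cris}$ is finite locally free \emph{because} it is a direct summand of $(\OOO_X^{\cris})^m$. In a general ringed topos a direct summand of a finite free module need not be locally free: over the punctual topos ringed by a ring $B$ with a non-free finitely generated projective module (a Dedekind domain with nontrivial class group, say), ``finite locally free'' forces finite free, yet every finite projective $B$-module is a summand of a free one. So the implication is not formal, and it cannot be skipped, since the corollary asserts an equivalence onto $\LF(\OOO_X^{\cris})$; note also that one cannot repair it post hoc by the retract argument applied to the counit, as that would presuppose exactly the claim at issue. The fix is short and is the same argument the paper uses inside the proof of Lemma \ref{Le:u**}: since $M\otimes_AR$ is finite projective over $R$, cover $X$ by principal opens $\Spec R_f$ on which it becomes free (the reduction of $R_f$ modulo $p$ is again semiperfect, as $f$ mod $p$ is a $p$-th power); the kernel of $A_{\cris}(R_f)\to R_f$ lies in $\Rad(A_{\cris}(R_f))$ because it is nil modulo $p$ by the existence of $\sigma_1$ (Remark \ref{Rk:Psi}) and $A_{\cris}(R_f)$ is $p$-adic, so Nakayama's Lemma shows that the finite projective module $M\otimes_AA_{\cris}(R_f)$ is free, and hence $G(M)|_{\Spec R_f}\cong\bigl(M\otimes_AA_{\cris}(R_f)\bigr)\otimes_{A_{\cris}(R_f)}\OOO^{\cris}_{\Spec R_f}$ is free. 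With this inserted, your proof is complete.
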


\begin{proof}
Finite locally free $\OOO_{X/\Sigma}$-modules $\MMM$
are equivalent to finite projective $A$-modules $M$
by the functors $M=\varprojlim_n\MMM(X,\Spec A/p^n,\delta)$
and $\MMM=M\otimes_A\OOO_{X/\Sigma}$.
Hence Lemma \ref{Le:u**} gives the result.
\end{proof}

\subsection*{The frame structure on \boldmath{$\OOO_X^{\cris}$}}

The sheaf $\OOO_X^{\cris}$ carries a natural Frobenius lift 
\[
\sigma:\OOO_X^{\cris}\to\OOO_X^{\cris}
\] 
which can be defined as follows.
If $U=\Spec R$ is an affine $X$-scheme where $R/p$ is semiperfect,
then the Frobenius $\phi:R/p\to R/p$ induces an endomorphism
$\sigma$ of $A_{\cris}(R/p)=A_{\cris}(R)=\OOO_X^{\cris}(U)$.
Since such $U$ form a basis of the $\nameoftop$-topology, 
this defines $\sigma$ as an endomorphism of $\OOO_X^{\cris}$.

\begin{Remark}
Without reference to semiperfect rings 
the definition of $\sigma$ goes as follows.
Assume first that $X$ is an $\FF_p$-scheme.
Then for \chang{every} $X$-scheme $U$ the Frobenius morphism $\phi_U:U\to U$ induces
an endomorphism $\sigma=\phi_U^*$ of
$\OOO_X^{\cris}(U)=\Gamma(U/\Sigma,\OOO_{U/\Sigma})$,
thus an endomorphism $\sigma$ of $\OOO_X^{\cris}$.
In general this gives an endomorphism $\sigma$ 
of $\OOO_{X_0}^{\cris}$,
which is extended to $\OOO_X^{\cris}$ using that 
$\OOO_X^{\cris}=i_*\OOO_{X_0}^{\cris}$ for the natural morphism
$i:(X_0)_\nameoftop\to X_\nameoftop$.
\end{Remark}

\begin{Remark}
\label{Rk:phi-sigma}
If $X$ is an $\FF_p$-scheme,
for an $\OOO_X^{\cris}$-module $\MMM$ there is a natural homomorphism
$w:\sigma^*(\MMM)\to\phi^*(\MMM)$ defined as follows 
(here $\sigma^*$ means scalar extension \chang{along} $\sigma$
and $\phi^*$ means inverse image under $\phi$).
For \chang{any} scheme $U\to X$ the Frobenius $\phi_U:U\to U$ induces a
homomorphism 
\[
\phi_U^*:\MMM(U\to X)\to\MMM(U\to X\xrightarrow\phi X),
\]
which gives a $\sigma$-linear map $\MMM\to\phi^*(\MMM)$,
whose linearization is $w$. 
The homomorphism $w$ is an isomorphism if $\MMM$ is quasi-coherent, 
in particular if $\MMM$ is finite locally free.
Indeed, $w$ is an isomorphism for $\MMM=\OOO_X^{\cris}$,
and thus for quasi-coherent $\MMM$ 
since the construction is local 
and the functors $u^*$ and $\phi^*$ are right exact.
\end{Remark}

\begin{Lemma}
\label{Le:sigma1-O}
There is a unique $\sigma$-linear homomorphism
\[
\sigma_1:\III_X^{\cris}\to\OOO_X^{\cris}
\]
with $p\sigma_1=\sigma$ which is functorial in $X$.
If $p\OOO_X=0$ then $\sigma_1(p)=1$.
\end{Lemma}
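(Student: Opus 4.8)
The plan is to construct $\sigma_1$ as a morphism of sheaves on $X_\nameoftop$ by specifying it on the basis of affine $X$-schemes $U=\Spec R$ with $R/p$ semiperfect, for which $\OOO_X^{\cris}(U)=A_{\cris}(R)$ and $\III_X^{\cris}(U)=I_{\cris}(R)$, and then to check that the resulting map has the asserted properties. On such a $U$ I would take $\sigma_1(U)$ to be the Scholze--Weinstein map from \eqref{Eq:uAcrisR}, i.e.\ the unique functorial $\sigma$-linear map $\sigma_1\colon I_{\cris}(R)\to A_{\cris}(R)$ with $\sigma_1(p)=1$ of \cite[Lemma 4.1.8]{Scholze-Weinstein}. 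When $p$ is merely nilpotent on $R$ I set $R_0=R/p$, use $A_{\cris}(R)=A_{\cris}(R_0)$ together with the inclusion $I_{\cris}(R)\subseteq I_{\cris}(R_0)$, and restrict the map attached to $R_0$. The relation $p\sigma_1=\sigma$ holds on $U$ because it is part of the frame structure of $\u A{}_{\cris}(R)$.

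First I would verify that these local maps glue. Since the affine schemes $U=\Spec R$ with $R/p$ semiperfect form a basis of the $\nameoftop$-topology, it suffices to check that for a $\nameoftop$-morphism $U'\to U$ between basis objects the two maps are intertwined by the restriction homomorphism $A_{\cris}(R)\to A_{\cris}(R')$; this is exactly the functoriality of the Scholze--Weinstein construction in the semiperfect ring. The glued morphism $\sigma_1$ is then $\sigma$-linear and satisfies $p\sigma_1=\sigma$ because both statements hold section-wise on the basis, and it is functorial in $X$ because $\OOO_X^{\cris}$, $\III_X^{\cris}$ and $\sigma$ all pull back along any $f\colon X'\to X$ and the construction is local. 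If $p\OOO_X=0$, then locally $pR=0$, so $p\in\III_X^{\cris}$ and the normalization $\sigma_1(p)=1$ is inherited from the semiperfect case.

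For uniqueness I would argue on the basis and then appeal to the sheaf property. Suppose $\sigma_1'$ is another functorial $\sigma$-linear map with $p\sigma_1'=\sigma$. On a semiperfect $U=\Spec R$ with $pR=0$ its restriction is again functorial with $p\sigma_1'=\sigma$. On a perfect ring $S$ the ring $A_{\cris}(S)=W(S)$ is $p$-torsion free, so $p\sigma_1'=\sigma$ forces $\sigma_1'=\sigma/p$ there, and in particular $\sigma_1'(p)=\sigma(p)/p=1$. Applying functoriality to the surjection $R^\flat\to R$, which induces $W(R^\flat)=A_{\cris}(R^\flat)\to A_{\cris}(R)$ carrying $p$ to $p$, propagates $\sigma_1'(p)=1$ to every semiperfect $R$ with $pR=0$. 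By the uniqueness clause of \cite[Lemma 4.1.8]{Scholze-Weinstein}, $\sigma_1'$ agrees with the Scholze--Weinstein map on this basis, hence $\sigma_1'=\sigma_1$ as morphisms of sheaves over every $\FF_p$-scheme. The case of nilpotent $p$ follows because the closed immersion $X_0\hookrightarrow X$ identifies $\OOO_{X_0}^{\cris}$ with $\OOO_X^{\cris}$ and realizes $\III_X^{\cris}\subseteq\III_{X_0}^{\cris}$, so functoriality forces $\sigma_1$ over $X$ to be the restriction of the already-unique map over $X_0$.

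The step I expect to be most delicate is the uniqueness, precisely because $p\sigma_1=\sigma$ does not by itself pin down $\sigma_1$ when $A_{\cris}(R)$ has $p$-torsion, which is the central phenomenon motivating the whole paper; the normalization $\sigma_1(p)=1$ must be recovered from functoriality via the $p$-torsion-free perfect case before the Scholze--Weinstein uniqueness statement can be invoked. The gluing and the verification of $\sigma$-linearity and $p\sigma_1=\sigma$ are then routine once the local construction is in place.
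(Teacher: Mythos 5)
Your proposal is correct and takes essentially the same route as the paper: construct $\sigma_1$ on the basis of affine $X$-schemes with semiperfect reduction via \cite[Lemma 4.1.8]{Scholze-Weinstein}, glue and restrict from $R/p$ to $R$, and recover the normalization $\sigma_1(p)=1$ (needed before the Scholze--Weinstein uniqueness clause applies) from a $p$-torsion-free perfect case by functoriality. The only cosmetic difference is the anchor for that last step: the paper uses $A_{\cris}(\FF_p)=\ZZ_p$ via the structure map $\FF_p\to R$, while you use $A_{\cris}(R^\flat)=W(R^\flat)$ via $R^\flat\to R$; both work for the same reason.
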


\begin{proof}
\chang{For $U=\Spec R$ such that $R/p$ is semiperfect,
Corollary \ref{Co:SW} provides a natural $\sigma$-linear map $\sigma_1:I_{\cris}(R/p)\to A_{\cris}(R/p)$ with $p\sigma_1=\sigma$, and thus $\sigma_1:I_{\cris}(R)\to A_{\cris}(R)$ by restriction. If $U$ is an $X$-scheme, this map is a homomorphism 
$\sigma_1:\III_X^{\cris}(U)\to\OOO_X^{\cris}(U)$,
which extends to a homomorphism
$\sigma_1:\III_X^{\cris}\to\OOO_X^{\cris}$
since affine $X$-schemes with semiperfect reduction
form a \chang{basis} of the $\nameoftop$-topology of $X$. Uniqueness and the formula $\sigma_1(p)=1$ follow from the corresponding statements of Corollary \ref{Co:SW}.}
\end{proof}

Hence we have a frame in $X_\nameoftop$ in the sense of Definition \ref{Def:frame-topos},
\[
\u\OOO{}_X^{\cris}=(\OOO_X^{\cris},\III_X^{\cris},\OOO_X,\sigma,\sigma_1).
\]

\begin{Defn}
A divided Dieudonn\'e crystal over $X$ is a window over $\u\OOO{}_X^{\cris}$
as in Definition \ref{Def:frame-topos}.
The category of divided Dieudonn\'e crystals over $X$
will be denoted by
$\DDcat(X)=\Win(\u\OOO_X^{\cris}/X_{\nameoftop})$.
\end{Defn}

\begin{Lemma}
\label{Le:DD-stack}
Divided Dieudonn\'e crystals over schemes $X$ \chang{on} which $p$
is locally nilpotent form a $\nameoftop$-stack.
\end{Lemma}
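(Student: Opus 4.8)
The plan is to verify the two conditions defining a stack for the $\nameoftop$-topology — that morphisms between divided Dieudonné crystals form a sheaf (full faithfulness of restriction to a covering) and that objects equipped with descent data glue (effectivity) — by splitting a window over $\u\OOO_X^{\cris}$ into its constituent pieces and descending each separately. The key structural fact, already recorded in the preceding remarks and in Lemma \ref{Le:sigma1-O}, is that the whole frame $\u\OOO_X^{\cris}=(\OOO_X^{\cris},\III_X^{\cris},\OOO_X,\sigma,\sigma_1)$ is compatible with pullback along a morphism $f:X'\to X$: one has $f^{-1}\OOO_X^{\cris}=\OOO_{X'}^{\cris}$, hence $f^{-1}\III_X^{\cris}=\III_{X'}^{\cris}$ and $f^{-1}\OOO_X=\OOO_{X'}$, and the operators $\sigma$, $\sigma_1$ are functorial in $X$. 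Thus for a $\nameoftop$-covering $\{U_\alpha\to X\}$, restriction carries a window over $\u\OOO_X^{\cris}$ to windows over the $\u\OOO_{U_\alpha}^{\cris}$ together with canonical gluing isomorphisms, and the question is whether this restriction functor is an equivalence onto descent data.

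First I would treat the underlying module. A divided Dieudonné crystal has for its module a finite locally free $\OOO_X^{\cris}$-module $\MMM$. By Lemma \ref{Le:u**} the adjoint functors $u_*$ and $u^*$ give an equivalence $\LF(\OOO_{X/\Sigma})\xrightarrow{\sim}\LF(\OOO_X^{\cris})$, identifying such modules with finite locally free crystals on $X$, and this identification is compatible with pullback. Since finite locally free crystals form a $\nameoftop$-stack by Lemma \ref{Le:descent-crystals}, so do finite locally free $\OOO_X^{\cris}$-modules: both their homomorphisms and the objects themselves descend. This is the only genuinely non-formal input.

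The remaining data is then local, and descends formally. The quotient $\bar\MMM=\MMM/\III_X^{\cris}\MMM=\MMM\otimes_{\OOO_X^{\cris}}\OOO_X$ is a finite locally free $\OOO_X$-module, and the condition $\III_X^{\cris}\MMM\subseteq\MMM_1$ means that $\MMM_1$ is the preimage in $\MMM$ of the submodule $\bar\MMM_1:=\MMM_1/\III_X^{\cris}\MMM\subseteq\bar\MMM$, which is a direct summand because $\bar\MMM/\bar\MMM_1\cong\MMM/\MMM_1$ is finite locally free over $\OOO_X$. Since $p$-root extensions are faithfully flat, finite locally free $\OOO_X$-modules and their direct summands satisfy $\nameoftop$-descent, so $\MMM_1$ descends and the local freeness of $\MMM/\MMM_1$ is preserved. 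The $\sigma$-linear maps $\Phi:\MMM\to\MMM$ and $\Phi_1:\MMM_1\to\MMM$ are the same as $\OOO_X^{\cris}$-linear maps $\sigma^*\MMM\to\MMM$ and $\sigma^*\MMM_1\to\MMM$, that is, global sections of the sheaves $\uHom(\sigma^*\MMM,\MMM)$ and $\uHom(\sigma^*\MMM_1,\MMM)$; these satisfy descent because sections of a sheaf do and because $\sigma$ commutes with restriction. Finally the window axioms $p\Phi_1=\Phi$ on $\MMM_1$, the identity $\Phi_1(ax)=\sigma_1(a)\Phi(x)$ for local sections $a\in\III_X^{\cris}$, $x\in\MMM$, and the requirement that $\Phi(\MMM)+\Phi_1(\MMM_1)$ generate $\MMM$ are conditions checkable on the members of a $\nameoftop$-covering, hence hold for the glued data precisely when they hold locally.

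Assembling these observations yields both full faithfulness and effectivity of the restriction functor, which is the stack property. I expect no genuine obstacle beyond bookkeeping once the module-level descent is in hand; the one point requiring a little care is that the gluing isomorphisms for the separate pieces $\MMM$, $\MMM_1$, $\Phi$, $\Phi_1$ are mutually compatible, but this is automatic, since they all arise by restricting a single object along functorial pullback of the frame $\u\OOO_X^{\cris}$.
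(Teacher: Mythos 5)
Your proof is correct, but it is considerably heavier than the paper's, which consists of the single sentence ``This is immediate from the definition.'' The reason the paper can say this is worth internalizing: a divided Dieudonn\'e crystal is \emph{defined} as a window over the frame $\u\OOO{}_X^{\cris}$ in the topos $X_\nameoftop$, so every constituent ($\MMM$, $\MMM_1$, $\Phi$, $\Phi_1$) is a sheaf, or a morphism of sheaves, on the very site with respect to which descent is being tested, and every condition in Definition \ref{Def:frame-topos} --- including finite local freeness of $\MMM$ and of $\MMM/\MMM_1$, which is required \emph{with respect to the $\nameoftop$-topology} --- is a $\nameoftop$-local condition. Sheaves of modules in a topos glue along covering families, subsheaves and sheaf morphisms glue, and local conditions can be checked on a covering; combined with $f^{-1}\OOO_X^{\cris}=\OOO_{X'}^{\cris}$ and functoriality of $\sigma,\sigma_1$, this is the entire proof. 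Where you diverge is in treating the underlying module as ``the only genuinely non-formal input'' and routing its descent through the equivalence $\LF(\OOO_{X/\Sigma})\cong\LF(\OOO_X^{\cris})$ of Lemma \ref{Le:u**} together with the stack property of crystals (Lemma \ref{Le:descent-crystals}). That detour is sound --- granted the compatibility of $u_*$, $u^*$ with pullback, which the paper's remarks supply --- and it would be genuinely needed if local freeness had been imposed Zariski-locally rather than $\nameoftop$-locally; but under the actual definition it is superfluous, and it makes the lemma appear to depend on the crystal comparison when in fact it is purely formal. So: correct proof, same conclusion, but you can delete your second paragraph's machinery and let topos-theoretic descent do all the work, which is precisely what the author intends by ``immediate.''
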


\begin{proof}
This is immediate from the definition.
\end{proof}

The following is a special case of a more general result for arbitrary 
rings $R$ in Corollary \ref{Co:DD-Win} below, where also a connection appears.

\begin{Prop}
\label{Pr:dd-win-semiperf}
If $X=\Spec R$ where $R/p$ is semiperfect,
taking global sections gives an equivalence 
\chang{
\[
\Gamma_R:\DDcat(\Spec R)\xrightarrow\sim\Win(\u A{}_{\cris}(R))
\]
}
between divided Dieudonn\'e crystals over $X$
and windows over $\u A{}_{\cris}(R)$.
\end{Prop}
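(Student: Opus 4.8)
The plan is to apply the comparison between a sheaf-frame and its frame of global sections, Lemma \ref{Le:win-win}, to the frame $\u\OOO{}_X^{\cris}$ in the topos $T=X_\nameoftop$. First I would identify the global sections of this sheaf-frame. Affine $X$-schemes $\Spec S$ for which $S/p$ is semiperfect form a basis of the $\nameoftop$-topology, and on such a basis one has $\OOO_X^{\cris}(\Spec S)=A_{\cris}(S)$; in particular, taking $S=R$, the global sections are $\Gamma(X_\nameoftop,\OOO_X^{\cris})=A_{\cris}(R)$, while by left exactness of $\Gamma$ the ideal $\III_X^{\cris}$ has global sections $I_{\cris}(R)$ and $\OOO_X$ has global sections $R$. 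The Frobenius lift $\sigma$ and the map $\sigma_1$ on global sections coincide with those of $\u A{}_{\cris}(R)$ by the construction of $\sigma$ and by Lemma \ref{Le:sigma1-O}. Hence $\Gamma(T,\u\OOO{}_X^{\cris})=\u A{}_{\cris}(R)$ as frames, and the structure homomorphism $A_{\cris}(R)\to R$ is surjective.

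Next I would verify the two hypotheses of Lemma \ref{Le:win-win} on finite locally free modules. The equivalence $\LF(\OOO_X^{\cris})\xrightarrow\sim(\text{finite projective }A_{\cris}(R)\text{-modules})$ under global sections is precisely Corollary \ref{Co:Ocris-Acris}, which in turn rests on Lemma \ref{Le:u**} and on the fact that $\CRIS(\Spec S/\Sigma)$ has the initial pro-object $\Spf A_{\cris}(S)$ for semiperfect $S/p$, so that finite locally free crystals are simply finite projective $A_{\cris}$-modules with no connection appearing. For the companion equivalence $\LF(\OOO_X)\xrightarrow\sim(\text{finite projective }R\text{-modules})$, I would invoke faithfully flat descent of modules: $\nameoftop$-coverings are faithfully flat (indeed $p$-root extensions are faithfully flat universal homeomorphisms), so finite locally free $\OOO_X$-modules for the $\nameoftop$-topology descend uniquely to finite projective $R$-modules and conversely. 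The same descent also confirms the equality $\Gamma(X_\nameoftop,\OOO_X)=R$ used above.

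With both hypotheses in place, Lemma \ref{Le:win-win} yields the asserted equivalence
\[
\DDcat(\Spec R)=\Win(\u\OOO_X^{\cris}/X_\nameoftop)\xrightarrow\sim\Win(\u A{}_{\cris}(R)),
\]
induced by the global-sections functor, which is exactly the claim.

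I do not expect a serious obstacle here: the proposition is essentially a repackaging of Corollary \ref{Co:Ocris-Acris} through Lemma \ref{Le:win-win}. The one point deserving genuine care is the identification of global sections, namely that evaluating $\OOO_X^{\cris}$, $\III_X^{\cris}$, and the operators $\sigma,\sigma_1$ on the basis of semiperfect affines really reproduces the frame $\u A{}_{\cris}(R)$ on the nose, and that the hypotheses of Lemma \ref{Le:win-win} concerning equivalences of finite locally free modules are met; once this is checked, the remaining steps are the already-established Corollary \ref{Co:Ocris-Acris} and routine faithfully flat descent.
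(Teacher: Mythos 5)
Your proposal is correct and follows essentially the same route as the paper: the paper's proof likewise identifies $\Gamma(X,\OOO_X^{\cris})=A_{\cris}(R)$ and $\Gamma(X,\OOO_X)=R$ and then applies Lemma \ref{Le:win-win}, with the module hypotheses supplied by Corollary \ref{Co:Ocris-Acris} and the equivalence between finite locally free $\OOO_X$-modules and finite projective $R$-modules. Your write-up simply spells out in more detail the points the paper leaves implicit (the basis of semiperfect affines, the identification of $\sigma,\sigma_1$, and the descent argument for $\OOO_X$-modules).
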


\begin{proof}
We have $A_{\cris}(R)=\Gamma(X,\OOO_X^{\cris})$ and $R=\Gamma(X,\OOO_X)$.
Hence the result follows from Lemma \ref{Le:win-win},
using Corollary \ref{Co:Ocris-Acris}
and the equivalence between finite locally free $\OOO_X$-modules
and finite projective $R$-modules. 
\end{proof}

\subsection*{Relation with filtered Dieudonn\'e crystals}

\begin{Prop}
\label{Pr:tfD-tfO}
The scheme $X$
is PD torsion free
(Definition \ref{Def:PD-tor})
iff the ring\/ $\OOO_X^{\cris}(U)$ is torsion free for every
$\nameoftop$-morphism $U\to X$.
\end{Prop}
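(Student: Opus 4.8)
The plan is to test both conditions against the values of $\OOO_X^{\cris}$ on the basis of the $\nameoftop$-topology consisting of affine schemes with semiperfect reduction, where this value is a completed $p$-adic PD envelope, and then to transport torsion freeness of such envelopes along $p$-root extensions. Two inputs make the basis case transparent. First, if $V=\Spec R'$ is an affine $X$-scheme with $R'/p$ semiperfect, then $\OOO_X^{\cris}(V)=A_{\cris}(R')$, and by Lemma \ref{Le:D-Acris} this is the ring $D$ of a presentation of $R'$ with perfect $A_0$; combined with Proposition \ref{Pr:PD-env-tf} and Definition \ref{Def:PD-tor} this shows that $\OOO_X^{\cris}(V)$ is torsion free if and only if $R'$ is PD torsion free. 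Second, a $p$-root extension $S\to R'$ is faithfully flat and, being a filtered colimit of finite simultaneous extractions $S[T_1,\ldots,T_n]/(T_i^p-b_i)$, is a filtered colimit of faithfully flat complete intersection morphisms; hence Corollary \ref{Co:DAR-lci-tf} (through Lemma \ref{Le:DAR-lci} on the finite stages, followed by passage to the colimit) shows that $S$ is PD torsion free if and only if $R'$ is.

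To deduce that torsion freeness of every $\OOO_X^{\cris}(U)$ forces $X$ to be PD torsion free, I would argue one affine open $\Spec S\subseteq X$ at a time. Choosing a $p$-root extension $S\to S_\infty$ with semiperfect reduction, the composite $\Spec S_\infty\to\Spec S\hookrightarrow X$ is a $\nameoftop$-morphism, so $\OOO_X^{\cris}(\Spec S_\infty)=A_{\cris}(S_\infty)$ is torsion free by hypothesis; the first input makes $S_\infty$ PD torsion free, and the transport result makes $S$ PD torsion free. Since this holds for every affine open of $X$, the scheme $X$ is PD torsion free.

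For the converse I would use that $\OOO_X^{\cris}$ is a sheaf on $X_\nameoftop$. Given any $\nameoftop$-morphism $U\to X$, I would choose a $\nameoftop$-covering $\{V_j\to U\}$ by affine schemes $V_j=\Spec R'_j$ with $R'_j/p$ semiperfect and such that each $V_j\to X$ is a $p$-root extension into an affine open $\Spec S_j\subseteq X$: first cover $U$ by affines, refine so that the map to $X$ is a $p$-root extension into an affine open, and then extract further $p$-th roots to make the reduction semiperfect, using that localizations of semiperfect rings remain semiperfect. By hypothesis each $S_j$ is PD torsion free, hence so is $R'_j$ by the transport result, hence $\OOO_X^{\cris}(V_j)=A_{\cris}(R'_j)$ is torsion free by the first input. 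The sheaf axiom embeds $\OOO_X^{\cris}(U)$ into the equalizer inside $\prod_j\OOO_X^{\cris}(V_j)$, which is torsion free, so $\OOO_X^{\cris}(U)$ is torsion free.

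The step I expect to be the main obstacle is ensuring that an arbitrary $p$-root extension, possibly involving infinitely many roots, is genuinely covered by Corollary \ref{Co:DAR-lci-tf}: its statement is phrased for l.c.i.\ morphisms, so I must realize the infinite extension as a filtered colimit of finite complete intersections and verify that flatness, and faithful flatness, of $D(S)/p^r\to D(R'_j)/p^r$ survive the colimit exactly as in the proof of that corollary. The remaining work—refining covers so that every piece simultaneously has semiperfect reduction and maps by a $p$-root extension into an affine open of $X$—is routine, since $p$-root extensions are closed under composition and base change and localizations of semiperfect rings stay semiperfect.
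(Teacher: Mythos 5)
Your proposal is correct and follows essentially the same route as the paper: reduce to affine pieces with semiperfect reduction via $p$-root extensions, identify $\OOO_X^{\cris}$ there with $A_{\cris}$ (Lemma \ref{Le:D-Acris} together with Proposition \ref{Pr:PD-env-tf}), transport torsion freeness of PD envelopes along the faithfully flat maps $D/p^r\to D'/p^r$ of Corollary \ref{Co:DAR-lci-tf} extended by a colimit, and use the sheaf axiom to handle a general $\nameoftop$-morphism $U\to X$. The only cosmetic difference is in the ``if'' direction, where the paper avoids invoking Corollary \ref{Co:DAR-lci-tf} by directly embedding $D$ into $D\hatotimes_A A^\infty=A_{\cris}(R^\infty)$ for a chosen presentation $A\to R$, whereas you descend PD torsion freeness from $S_\infty$ to $S$ via the Corollary; both rest on the same faithful-flatness-mod-$p^r$ fact, with the same colimit caveat that the paper itself invokes.
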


\begin{proof}
One reduces to the case that $X=\Spec R$ over $\FF_p$.

Assume that $\OOO_X^{\cris}(U)$ is torsion free for every 
$\nameoftop$-morphism $U\to X$.
Let $A\to R$ be a presentation with \chang{$p$-complete} PD envelope $D$ as in \eqref{Eq:DAR},
and define $A^\infty$ and $R^\infty$ 
as in the proof of Proposition \ref{Pr:PD-env-tf}.
We have an injective ring homomorphism
$D\to D\hatotimes_AA^\infty= A_{\cris}(R^\infty)=\OOO_X^{\cris}(\Spec R^\infty)$.
Since $\Spec R^\infty\to\Spec R$ is a $\nameoftop$-morphism,
$\OOO_X^{\cris}(\Spec R^\infty)$ is torsion free, thus $D$ is torsion free.

Now assume that $R$ is PD torsion free and let $U\to X$ be a $\nameoftop$-morphism. We have to show that $\OOO_X^{\cris}(U)$ is torsion free.
By passing to a $\nameoftop$-covering of $U$ we can assume that $U=\Spec R'$
with semiperfect $R'$ and that $U\to X$ factors as $U\to Z\to X$ 
where $Z\to X$ is an open immersion and $U\to Z$ is a $p$-root covering.
Then $R'$ is PD torsion free by Corollary \ref{Co:DAR-lci-tf} 
using a colimit, thus $\OOO_X^{\cris}(U)=A_{\cris}(R')$ is torsion free.
\end{proof}

\begin{Prop}
\label{Pr:tf-DD-DF}
There is a natural functor
\begin{equation*}
\rho_X:\DDcat(X)\to\DFcat(X)
\end{equation*}
from divided Dieudonn\'e crystals to filtered Dieudonn\'e crystals,
which is an equivalence if $X$
is PD torsion free.
\end{Prop}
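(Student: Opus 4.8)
The plan is to construct $\rho_X$ directly from the window data by means of the sheaf dictionary of Section \ref{Se:divided}, and then to establish the equivalence by $\nameoftop$-descent to the semiperfect case, where Proposition \ref{Pr:tfD-tfO} makes the role of the torsion hypothesis transparent.

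To construct $\rho_X$, start from a divided Dieudonn\'e crystal $\u\MMM=(\MMM,\MMM_1,\Phi,\Phi_1)$. By Lemma \ref{Le:u**} the module $\MMM$ corresponds to a finite locally free $\OOO_{X/\Sigma}$-module $\NNN=u^*\MMM$, the underlying crystal. Under this equivalence the operator $\sigma$ on $\OOO_X^{\cris}$ matches the crystalline Frobenius through the isomorphism $w:\sigma^*\MMM\xrightarrow{\sim}\phi^*\MMM$ of Remark \ref{Rk:phi-sigma}, so the $\sigma$-linear map $\Phi$ yields a Frobenius $F:\phi^*\NNN_0\to\NNN_0$ over $X_0$; the pair $(\Phi,\Phi_1)$, reorganized through a normal decomposition (Lemma \ref{Le:Psi}) into the $\sigma$-linear isomorphism $\Psi$, supplies in the standard way the Verschiebung $V$ with $FV=VF=p$. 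Since $\III_X^{\cris}\MMM\subseteq\MMM_1$ with $\MMM/\MMM_1$ locally free over $\OOO_X$, the reduction $\Filone\NNN_X:=\MMM_1/\III_X^{\cris}\MMM$ is a locally direct summand of $\NNN_X\cong\MMM_X$. It then remains to check, $\nameoftop$-locally, that $(\NNN,F,V)$ is a Dieudonn\'e crystal and that this filtration is admissible in the sense of \eqref{Eq:adm-fil}; as every step is natural in $\u\MMM$, together they define the functor $\rho_X$.

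For the equivalence when $X$ is PD torsion free, I would use that $\DDcat$ and $\DFcat$ are $\nameoftop$-stacks, by Lemma \ref{Le:DD-stack} and Lemma \ref{Le:descent-crystals}, so that $\rho$ may be tested $\nameoftop$-locally. Since affine $\Spec R$ with $R/p$ semiperfect form a basis of the $\nameoftop$-topology, I may take $X=\Spec R$ of this form, and then Proposition \ref{Pr:dd-win-semiperf} identifies $\DDcat(\Spec R)$ with $\Win(\u A{}_{\cris}(R))$. As $X$ is PD torsion free, $A_{\cris}(R)=\OOO_X^{\cris}(X)$ is torsion free by Proposition \ref{Pr:tfD-tfO}, so Remark \ref{Rk:Acris-DF} supplies an equivalence $\Win(\u A{}_{\cris}(R))\cong\DFcat(\Spec R)$. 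Under these identifications $\rho_{\Spec R}$ sends $(M,M_1,\Phi,\Phi_1)$ to the filtered Dieudonn\'e crystal whose crystalline Frobenius is $\Phi$ and whose Hodge filtration is the image of $M_1$, and its inverse equips the module with the filtration $M_1$ given by the Hodge filtration and with $\Phi_1:=p^{-1}(\Phi|_{M_1})$. This $\Phi_1$ exists because $\Phi(M_1)\subseteq pM$ and is uniquely determined because $A_{\cris}(R)$ is torsion free. Hence $\rho_{\Spec R}$ is an equivalence, and the global claim follows by descent.

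I expect the main obstacle to lie in the construction rather than in the equivalence: one must produce the full Dieudonn\'e crystal $(\NNN,F,V)$ intrinsically from the window and, above all, verify the admissibility identity \eqref{Eq:adm-fil} relating $\MMM_1$ to $\Ker F$ on every PD thickening. The conceptual point, which simultaneously explains the torsion hypothesis, is that passing from $\Phi_1$ to $\Phi=p\Phi_1$ discards precisely the choice of dividing $\Phi|_{\MMM_1}$ by $p$; Proposition \ref{Pr:tfD-tfO} shows that this choice is forced exactly when $X$ is PD torsion free, which is why $\rho_X$ becomes an equivalence in that situation.
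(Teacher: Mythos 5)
Your second half---the equivalence when $X$ is PD torsion free---coincides with the paper's proof: $\nameoftop$-descent via Lemmas \ref{Le:DD-stack} and \ref{Le:descent-crystals} reduces to $X=\Spec R$ with $R/p$ semiperfect, Proposition \ref{Pr:dd-win-semiperf} identifies $\DDcat(\Spec R)$ with $\Win(\u A{}_{\cris}(R))$, Proposition \ref{Pr:tfD-tfO} translates the hypothesis into torsion freeness of $A_{\cris}(R)$, and Remark \ref{Rk:Acris-DF} (i.e.\ \cite[Prop.~2.6.4]{Cais-Lau}) gives the equivalence with $\DFcat(\Spec R)$. No objection there, and your description of the inverse functor is consistent with that reference.

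The genuine gap is in the first half: the admissibility identity \eqref{Eq:adm-fil} is never proved, and you flag it yourself as the main unresolved obstacle. This is not a routine compatibility---without it, $\rho_X(\u\MMM)$ is not known to be an object of $\DFcat(X)$, so the functor is simply not constructed. The paper closes this point with a short rank argument after reducing, exactly as in the equivalence part, to $X=\Spec R$ with $R/p$ semiperfect, so that $\u\MMM$ is a window $(M,M_1,\Phi,\Phi_1)$ over $A=A_{\cris}(R)$ and $\Filone M_R$ is the image of $M_1$ in $M_R$. Writing $\bar\phi:R/p\to A/p$ for the map induced by the Frobenius of $A/p$, condition \eqref{Eq:adm-fil} amounts to the single equality $\bar\phi^*(\Filone M_{R/p})=\Ker\bigl(F:M^{\sigma}_{A/p}\to M_{A/p}\bigr)$, where $F$ is the linearization of $\Phi$. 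Both sides are direct summands of $M^{\sigma}_{A/p}$ of the same rank (compute with a normal decomposition $M=L\oplus T$, $M_1=L\oplus I_{\cris}(R)T$: each has rank $\rk L$, since the linearization of $\Psi$ is bijective), and the inclusion $\subseteq$ holds because $\Phi=p\Phi_1$ on $M_1$, so $F(1\otimes x)\in pM$ for $x\in M_1$; an inclusion of direct summands of equal rank is an equality. Two smaller points: to make your ``standard way'' of producing $V$ precise, characterize it as the unique map with $V(\Phi(x))=p\otimes x$ and $V(\Phi_1(x))=1\otimes x$ as in \cite[Remark 2.1.4]{Cais-Lau}---uniqueness is what makes the local constructions glue; and note that your global construction of $(\NNN,F,V)$ via Lemma \ref{Le:u**} and Remark \ref{Rk:phi-sigma} followed by a local admissibility check is only cosmetically different from the paper, which performs the entire construction after descent to the semiperfect case and glues by the stack property. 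With the rank argument supplied, your outline becomes the paper's proof.
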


\begin{proof}
By passing to a $\nameoftop$-covering of $X$
we can assume that $X=\Spec R$ with semiperfect $R/p$,
because divided Dieudonn\'e crystals and filtered Dieu\-donn\'e crystals 
form $\nameoftop$-stacks by Lemmas \ref{Le:descent-crystals}
and \ref{Le:DD-stack}; 
the condition that $X_0$ is PD torsion free passes
to a $\nameoftop$-covering and
means that $A_{\cris}(R)$ is torsion free;
see Proposition \ref{Pr:tfD-tfO}.

By Proposition \ref{Pr:dd-win-semiperf},
a divided Dieudonn\'e crystal over $X$ corresponds to a window
$\u M=(M,M_1,\Phi,\Phi_1)$ over $\u A=\u A{}_{\cris}(R)$.
Let $F:M^\sigma\to M$ be the linearization of $\Phi$.
There is a unique homomorphism $V:M\to M^\sigma$ with 
$V(\Phi(x))=p\otimes x$ and $V(\Phi_1(x))=1\otimes x$,
which implies $VF=p$ and $FV=p$;
see for example \cite[Remark 2.1.4]{Cais-Lau}.
The triple $(M,F,V)$ corresponds to a Dieudonn\'e crystal over $X$.
Let $M_R=M\otimes_{A_{\cris}(R)}R$ and let $\Filone M_R\subseteq M_R$
be the image of $M_1$. 
We want to define $\rho_X(\u M)=(M,F,V,\Filone M_R)$ 
and have to verify that the filtration is admissible
in the sense of \eqref{Eq:adm-fil}.
The Frobenius of $A/p$ induces $\bar\phi:R/p\to A/p$,
and \eqref{Eq:adm-fil} holds iff 
$\bar\phi^*(\Filone M_{R/p})$ is equal to 
$\Ker(F:M^{\sigma}_{A/p}\to M_{A/p})$.
These are two direct summands of $M^{\sigma}_{A/p}$ of the same rank, 
and the inclusion $\subseteq$ holds since $\Phi=p\Phi_1$ on $M_1$.
Equality follows, and $\rho_X$ is defined.

If $A_{\cris}(R)$ is torsion free,
$\rho_X$ is an equivalence by 
\cite[Prop.~2.6.4]{Cais-Lau}.
\end{proof}

\begin{Cor}
\label{Co:DD-DF}
The functor $\rho_X$ is an equivalence if
$X_0=X\times\Spec\FF_p$ is an excellent l.c.i.\ scheme.
\end{Cor}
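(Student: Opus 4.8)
The plan is to reduce the statement to the PD torsion free case already treated in Proposition \ref{Pr:tf-DD-DF}, so that no new Dieudonn\'e-theoretic input is needed. The entire content of the corollary is a matching of hypotheses: I must verify that the assumption on $X_0$ forces $X$ itself to be PD torsion free in the sense of Definition \ref{Def:PD-tor}, and then invoke the equivalence criterion directly.

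Concretely, I would proceed in three steps. First, I would apply Proposition \ref{Pr:PD-tor-lci} to the scheme $X_0 = X \times \Spec\FF_p$: since by hypothesis $X_0$ is an excellent l.c.i.\ scheme over $\FF_p$, that proposition yields that $X_0$ is PD torsion free. Second, I would invoke the remark recorded immediately after Definition \ref{Def:PD-tor}, namely that $X$ is PD torsion free if and only if $X_0 = X \times \Spec\FF_p$ has this property; combined with the first step this shows $X$ is PD torsion free. Third, I would apply the second assertion of Proposition \ref{Pr:tf-DD-DF}, which states precisely that $\rho_X$ is an equivalence whenever $X$ is PD torsion free, to conclude.

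I do not expect any genuine obstacle here, since all the substantive work has been carried out in the cited results: Proposition \ref{Pr:PD-tor-lci} (the Jong--Messing computation that PD envelopes of excellent l.c.i.\ rings are torsion free) and Proposition \ref{Pr:tf-DD-DF} (which, at the level of the semiperfect building blocks, rests on the torsion-freeness of $A_{\cris}(R)$ and the comparison of windows with filtered Dieudonn\'e crystals via \cite{Cais-Lau}). The only point requiring any care is that the l.c.i.\ and excellence hypotheses are placed on $X_0$ rather than on $X$, so the passage through the remark after Definition \ref{Def:PD-tor} is what licenses transferring the conclusion back to $X$; this is exactly why that remark was stated. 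Thus the proof is a short citation chain rather than an argument with a hard core.
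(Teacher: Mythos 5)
Your proposal is correct and is exactly the paper's argument: the proof there reads ``Use Propositions \ref{Pr:PD-tor-lci} and \ref{Pr:tf-DD-DF},'' and your three steps (including the passage through the remark after Definition \ref{Def:PD-tor} to transfer PD torsion freeness from $X_0$ to $X$) simply spell out that citation chain.
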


\begin{proof}
Use Propositions \ref{Pr:PD-tor-lci} and \ref{Pr:tf-DD-DF}.
\end{proof}

\begin{Lemma}[Reduction modulo $p$]
\label{Le:red-DD-DDF}
There is a $2$-cartesian diagram of categories:
\[
\xymatrix@M+0.2em{
\DDcat(X) \ar[r]^{\rho_X} \ar[d] &
\DFcat(X) \ar[d] \\
\DDcat(X_0) \ar[r]^{\rho_{X_0}} &
\DFcat(X_0)
}
\]
\end{Lemma}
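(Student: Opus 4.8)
The plan is to show that the comparison functor
\[
C_X:\DDcat(X)\longrightarrow\DDcat(X_0)\times_{\DFcat(X_0)}\DFcat(X),
\]
sending a divided Dieudonn\'e crystal $\u\MMM$ to the triple consisting of its reduction $\u\MMM|_{X_0}$, its image $\rho_X(\u\MMM)$, and the canonical comparison isomorphism $\rho_{X_0}(\u\MMM|_{X_0})\cong(\rho_X(\u\MMM))|_{X_0}$ coming from the naturality of $\rho$, is an equivalence. This functor is natural in $X$. Each of the four corners is a $\nameoftop$-stack on the category of schemes on which $p$ is locally nilpotent: for $\DDcat$ and $\DFcat$ this is Lemmas \ref{Le:DD-stack} and \ref{Le:descent-crystals}, and the assignments $Y\mapsto\DDcat(Y_0)$ and $Y\mapsto\DFcat(Y_0)$ inherit the property because $Y\mapsto Y_0$ carries $\nameoftop$-coverings to $\nameoftop$-coverings. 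Hence the $2$-fibre product is again a stack and $C$ is a morphism of stacks, so it suffices to prove that $C_X$ is an equivalence when $X$ ranges over a basis of the topology, namely $X=\Spec R$ with $R/p$ semiperfect.

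In that case $X_0=\Spec R_0$ with $R_0=R/p$ semiperfect, and Proposition \ref{Pr:dd-win-semiperf} identifies $\DDcat(X)\cong\Win(\u A{}_{\cris}(R))$ and $\DDcat(X_0)\cong\Win(\u A{}_{\cris}(R_0))$; recall that $A_{\cris}(R)=A_{\cris}(R_0)$ as rings, with $I_{\cris}(R)\subseteq I_{\cris}(R_0)$. Under these identifications the reduction functor $\DDcat(X)\to\DDcat(X_0)$ is base change along the frame homomorphism $\u A{}_{\cris}(R)\to\u A{}_{\cris}(R_0)$, which keeps $(M,\Phi,\Phi_1)$ and replaces the filtration $M_1$ by $M_1+I_{\cris}(R_0)M$. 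As recorded in the discussion preceding \eqref{Eq:PhiRcris-gen}, a window over $\u A{}_{\cris}(R)$ is the same datum as a window over $\u A{}_{\cris}(R_0)$ together with a lift of its filtration $\bar M_1\subseteq M\otimes_{A_{\cris}(R_0)}R_0$ to a direct summand of $M\otimes_{A_{\cris}(R_0)}R$, the lift being precisely $M_1/I_{\cris}(R)M$.

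On the filtered side, Remark \ref{Rk:red-BT-DDF} shows that $\Dcat(X)\to\Dcat(X_0)$ is an equivalence and that the fibre of $\DFcat(X)\to\DFcat(X_0)$ over a given filtered Dieudonn\'e crystal is exactly the set of lifts of $\Filone$ to a direct summand of $\MMM_{(X_0,X,\gamma)}$, which here equals $M\otimes_{A_{\cris}(R_0)}R$. Since $\rho_{X_0}$, resp.\ $\rho_X$, sends a window to its underlying Dieudonn\'e crystal equipped with the image of the filtration in $M\otimes_{A_{\cris}(R_0)}R_0$, resp.\ $M\otimes_{A_{\cris}(R_0)}R$ --- this is how $\rho$ is defined in the proof of Proposition \ref{Pr:tf-DD-DF} --- the filtration-lift data in the two fibre descriptions are literally the same subobjects of $M\otimes_{A_{\cris}(R_0)}R$. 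Thus $C_X$ matches a window over $\u A{}_{\cris}(R)$ with the pair consisting of its reduction and the filtered crystal determined by its filtration lift, and the matching is a bijection of data.

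The step needing care is to check that this matching respects all window axioms, so that $C_X$ is genuinely fully faithful and essentially surjective rather than a mere bijection on isomorphism classes. Concretely, starting from a window $\u M_0$ over $\u A{}_{\cris}(R_0)$ and a lift $M_1$ of its filtration, one must verify that $\Phi_1$ restricts to $M_1$, that $p\Phi_1=\Phi$ and $\Phi_1(ax)=\sigma_1(a)\Phi(x)$ persist for $a\in I_{\cris}(R)$ (using $I_{\cris}(R)\subseteq I_{\cris}(R_0)$ and that $\sigma_1$ on $I_{\cris}(R)$ is the restriction of $\sigma_1$ on $I_{\cris}(R_0)$), and above all that the generation condition $\Phi(M)+\Phi_1(M_1)=M$ survives passage to the smaller filtration. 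This last point is the main obstacle; it holds because $\Phi_1(I_{\cris}(R_0)M)\subseteq A_{\cris}(R_0)\cdot\Phi(M)$, so enlarging $M_1$ by $I_{\cris}(R_0)M$ does not change the $A_{\cris}(R_0)$-span of $\Phi(M)\cup\Phi_1(M_1)$. Once this bookkeeping is complete, the local equivalence globalises by $\nameoftop$-descent as in the first paragraph, proving that the square is $2$-cartesian.
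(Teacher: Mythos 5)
Your proof is correct and takes essentially the same route as the paper: reduce by $\nameoftop$-descent (all four corners being $\nameoftop$-stacks) to $X=\Spec R$ with $R/p$ semiperfect, then identify lifts along both vertical functors with lifts of the filtration to a direct summand of $M\otimes_{A_{\cris}(R)}R$, using Proposition \ref{Pr:dd-win-semiperf} and Remark \ref{Rk:red-BT-DDF}. The paper's proof is simply terser, leaving implicit the window-axiom bookkeeping (notably that the generation condition survives shrinking the filtration, via $\Phi_1(ax)=\sigma_1(a)\Phi(x)$) that you verify explicitly.
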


\begin{proof}
This holds because lifts \chang{along} both vertical functors correspond to
lifts of the Hodge filtration.
In more detail, by $\nameoftop$-descent we may assume that
$X=\Spec R$ where $R/p$ is semiperfect.
Let $A=A_{\cris}(R)=A_{\cris}(R_0)$.
Then lifts of an $\u A{}_{\cris}(R_0)$-window $\u M$
to an $\u A{}_{\cris}(R)$-window correspond to lifts of
$\bar M_1\subseteq M\otimes_AR_0$ to a direct summand of $M\otimes_AR$,
which also correspond to lifts of $\rho_{X_0}(\u M)$ to $\DFcat(X)$.
\end{proof}

\section{The divided Dieudonn\'e functor}
\label{Se:functor}

\begin{Prop}
\label{Pr:DD-functor}
For \chang{every} scheme $X$ on which $p$ is locally nilpotent
there is a functor
\[
\DDfun_X:\BT(X)\to\DDcat(X)
\] 
from $p$-divisible groups to divided Dieudonn\'e crystals, 
which is compatible with base change in $X$,
with an isomorphism $\DFfun_X\cong\rho_X\circ\DDfun_X$.
\end{Prop}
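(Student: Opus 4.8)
The plan is to obtain $\DDfun_X$ as the value at $X$ of a morphism of $\nameoftop$-stacks $\BT\to\DDcat$ on the category of schemes on which $p$ is locally nilpotent, defined first on a basis of the topology and then extended by descent. Both sides are $\nameoftop$-stacks: $p$-divisible groups satisfy faithfully flat descent and $\nameoftop$-morphisms are faithfully flat, while divided Dieudonn\'e crystals form a stack by Lemma \ref{Le:DD-stack}. The affine schemes $U=\Spec R$ with $R/p$ semiperfect form a basis of the $\nameoftop$-topology, and over such $U$ the fiber of $\DDcat$ is identified with $\Win(\u A{}_{\cris}(R))$ by Proposition \ref{Pr:dd-win-semiperf}. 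Constructing the functor on this basis and gluing is then the natural route, since all the delicate information---in particular the division of $\Phi$ on the filtration---is already packaged in the functor $\Phi_R^{\cris}$ over semiperfect rings.

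First I would define the local functor. For $U=\Spec R$ with $R/p$ semiperfect, let $\DDfun_U$ be the composite
\[
\BT(U)\xrightarrow{\;\Phi_R^{\cris}\;}\Win(\u A{}_{\cris}(R))\xrightarrow{\;\sim\;}\DDcat(U),
\]
where $\Phi_R^{\cris}$ is the functor \eqref{Eq:PhiRcris-gen} and the second arrow inverts the equivalence of Proposition \ref{Pr:dd-win-semiperf}. The point to check is compatibility with base change along $\nameoftop$-morphisms $U'\to U$ within the basis: the functor $\Phi_R^{\cris}$ is functorial in $R$ because both the crystalline Dieudonn\'e crystal and the frame $\u A{}_{\cris}(R)$ are functorial, the latter including the map $\sigma_1$ of Lemma \ref{Le:sigma1-O}; and the equivalence of Proposition \ref{Pr:dd-win-semiperf} is induced by global sections, hence commutes with restriction. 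Thus the $\DDfun_U$ assemble into a morphism of fibered categories over the basis.

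Next I would extend this to all $X$-schemes. Since the basis generates the $\nameoftop$-topology and the target $\DDcat$ is a stack, a morphism defined on a basis extends uniquely to a morphism of stacks: for arbitrary $W$ one chooses a $\nameoftop$-covering by basis objects, applies the local functors, and glues via the descent datum using the stack property of $\DDcat$, uniqueness ensuring independence of the covering. Evaluating at $W=X$ yields $\DDfun_X$, and compatibility with base change in $X$ is automatic, since we have produced a single morphism of stacks.

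Finally I would verify $\DFfun_X\cong\rho_X\circ\DDfun_X$. As $\DFcat$ is a $\nameoftop$-stack by Lemma \ref{Le:descent-crystals} and $\rho_X$ is defined uniformly, hence compatibly with base change, it suffices to exhibit the natural isomorphism on the basis and glue. On $U=\Spec R$ with $R/p$ semiperfect, unwinding the definition of $\rho$ from the proof of Proposition \ref{Pr:tf-DD-DF} applied to $\u M=\Phi_R^{\cris}(G)$ produces the triple $(M,F,V)$ with $M$ the value of the crystal $\Dfun(G)$ at $A_{\cris}(R)$, together with the filtration $\Filone M_R$ equal to the image of $M_1$; by the construction of $\Phi_R^{\cris}$ through the Hodge filtration this image is $\Lie(G)^\vee$, so $\rho_U(\DDfun_U(G))$ is exactly $\DFfun_U(G)$. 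The hard part will be precisely this last identification in the presence of PD torsion: there $\rho_X$ is not an equivalence, so the comparison cannot simply be read off from Remark \ref{Rk:Acris-DF} and must be checked at the level of the filtration datum $M_1$ of $\Phi_R^{\cris}(G)$, confirming that the submodule supplied by the semiperfect theory reduces to the Hodge filtration. By contrast, the descent formalism is routine once the local functoriality is established.
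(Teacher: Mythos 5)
Your proposal is correct and follows essentially the same route as the paper: reduce to $X=\Spec R$ with $R/p$ semiperfect by $\nameoftop$-descent (faithfully flat descent for $\BT$, Lemmas \ref{Le:DD-stack} and \ref{Le:descent-crystals} for the targets), take $\DDfun_X$ to be $\Phi_R^{\cris}$ of \eqref{Eq:PhiRcris} and \eqref{Eq:PhiRcris-gen} transported through the equivalence of Proposition \ref{Pr:dd-win-semiperf}, and obtain $\DFfun_X\cong\rho_X\circ\DDfun_X$ from the construction of $\Phi_R^{\cris}$ in \cite[Thm.~6.3]{Lau:Semiperfect} together with the observation that in the lift to $p$-nilpotent rings both sides are given by the Hodge filtration. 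Your closing remark correctly identifies where the real content lies: the identification of the filtration datum with $\Lie(G)^\vee$ is exactly what the cited construction of $\Phi_R^{\cris}$ supplies, and this is precisely what the paper invokes.
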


Here $\DFfun_X$ is the filtered Dieudonn\'e functor of \eqref{Eq:DDFX},
and $\rho_X$ is the functor of Proposition \ref{Pr:tf-DD-DF}.
We call $\DDfun_X$ the divided Dieudonn\'e functor.

\begin{proof}
One can assume that $X=\Spec R$ where $R/p$ is semiperfect
because $p$-divisible groups, divided Dieudonn\'e crystals,
and filtered Dieudonn\'e crystals form $\nameoftop$-stacks by
faithfully flat descent, Lemma \ref{Le:DD-stack}, 
and Lemma \ref{Le:descent-crystals}, respectively.
Then divided Dieudonn\'e crystals over $X$ are equivalent to 
$\u A{}_{\cris}(R)$-windows by Proposition \ref{Pr:dd-win-semiperf},
and the required functor $\DDfun_X$ is \chang{defined by} the functor 
$\Phi_R^{\cris}$ of \eqref{Eq:PhiRcris} and \eqref{Eq:PhiRcris-gen}.
The isomorphism $\DFfun_X\cong\rho_X\circ\DDfun_X$ 
is part of the construction of $\Phi_R^{\cris}$ 
in \cite[Thm.~6.3]{Lau:Semiperfect} when $R$ is an $\FF_p$-algebra,
and it extends to the general case because for $G\in\BT(X)$
with reduction $G_0\in\BT(X_0)$ the lifts
$\DFfun_X(G)$ of $\DFfun_{X_0}(G_0)$ and 
$\Phi_R^{\cris}(G)$ of $\Phi_{R_0}^{\cris}(G_0)$ 
are both given by the Hodge filtration of $G$.
\end{proof}

\begin{Remark}
\chang{If $X=\Spec R$ where $R/p$ is semiperfect, the construction yields a natural isomorphism $\Gamma_R\circ\DDfun_X\cong\Phi_R^{\cris}$.}
\end{Remark}

\begin{Thm}
\label{Th:Ffin-Fnil}
For an $F$-finite and $F$-nilpotent $\FF_p$-scheme $X$
the divided Dieudonn\'e functor\/ $\DDfun_X$ is an equivalence.
\end{Thm}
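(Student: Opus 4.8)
The plan is to follow the pattern of the proof of Theorem \ref{Th:eq-Ffin-Fnil-tf}, reducing to the semiperfect case by $\nameoftop$-descent; the present statement is in fact cleaner, since working directly with divided Dieudonn\'e crystals and windows over $\u A{}_{\cris}$ removes any need for a PD torsion-freeness hypothesis.

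First I would reduce to $X=\Spec R$ affine. As $R$ is $F$-finite, I can pick finitely many $a_1,\dots,a_r\in R$ generating $R$ as an algebra over $\phi(R)$ and set $R'=R[(a_i^{1/p^\infty})_{1\le i\le r}]$, together with $R''=R'\otimes_RR'$ and $R'''=R'\otimes_RR'\otimes_RR'$. The morphism $\Spec R'\to\Spec R$ is then a $\nameoftop$-covering, $R'$ is semiperfect, and $R''$, $R'''$ are semiperfect as well because the Frobenius stays surjective on tensor products.

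The one point requiring care is that each $T\in\{R',R'',R'''\}$ is $F$-nilpotent, so that Theorem \ref{Th:semiperfect-Fnil} applies. For $R'$ this is Lemma \ref{Le:Fnil-remains}. For $R''$ I would note that $1\otimes a_i=a_i\otimes 1$ in $R'\otimes_RR'$, whence $R''\cong R'[(a_i^{1/p^\infty})_{1\le i\le r}]$ with the $a_i$ regarded in the first factor $R'$; thus $R''$ is again $F$-nilpotent by Lemma \ref{Le:Fnil-remains}, and the same computation shows $R'''\cong R''[(a_i^{1/p^\infty})_{1\le i\le r}]$ is $F$-nilpotent. Theorem \ref{Th:semiperfect-Fnil} then gives that $\Phi_T^{\cris}\colon\BT(T)\to\Win(\u A{}_{\cris}(T))$ is an equivalence for each such $T$. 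Under the equivalence $\DDcat(\Spec T)\xrightarrow{\sim}\Win(\u A{}_{\cris}(T))$ of Proposition \ref{Pr:dd-win-semiperf}, the functor $\DDfun_{\Spec T}$ corresponds to $\Phi_T^{\cris}$ by the construction in Proposition \ref{Pr:DD-functor}, so $\DDfun_{\Spec T}$ is an equivalence.

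Finally I would descend along $\Spec R'\to\Spec R$: $p$-divisible groups satisfy faithfully flat descent and divided Dieudonn\'e crystals form a $\nameoftop$-stack by Lemma \ref{Le:DD-stack}, so the equivalences of $\DDfun$ over $R'$, $R''$, $R'''$ (the latter two supplying the descent data) assemble into an equivalence over $R$. The substantive difficulty lies entirely upstream in Theorem \ref{Th:semiperfect-Fnil}; for the present theorem the only real content is the verification that $R'$, $R''$, $R'''$ remain simultaneously semiperfect and $F$-nilpotent, after which the descent is formal.
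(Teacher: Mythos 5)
Your proposal is correct and follows essentially the same route as the paper: reduce to $X=\Spec R$, pass to $R'=R[(a_i^{1/p^\infty})]$ and its tensor powers $R''$, $R'''$, apply Theorem \ref{Th:semiperfect-Fnil} and Proposition \ref{Pr:dd-win-semiperf} to each, and conclude by faithfully flat descent of $p$-divisible groups and $\nameoftop$-descent of divided Dieudonn\'e crystals (Lemma \ref{Le:DD-stack}). Your explicit identification $R''\cong R'[(a_i^{1/p^\infty})]$ (and likewise for $R'''$) is a correct and welcome filling-in of the detail behind the paper's bare citation of Lemma \ref{Le:Fnil-remains} for all three rings.
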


\begin{proof}
We \chang{may} assume that $X=\Spec R$ \chang{is affine}.
\chang{Then $R$ is $F$-finite, and we define $R'$, $R''$, $R'''$ 
as in the proof of Theorem \ref{Th:eq-Ffin-Fnil-tf}.}
Each $T\in\{R',R'',R'''\}$ 
is semiperfect and $F$-nilpotent by Lemma \ref{Le:Fnil-remains}, 
hence the functor $\Phi_T^{\cris}$ is an equivalence 
by Theorem \ref{Th:semiperfect-Fnil},
and the functor $\DDfun_{\Spec T}$ is an equivalence
by Proposition \ref{Pr:dd-win-semiperf}.
By faithfully flat descent for $p$-divisible groups
and $\nameoftop$-descent for divided Dieudonn\'e crystals
(Lemma \ref{Le:DD-stack}) it follows that $\DDfun_{\Spec R}$
is an equivalence.
\end{proof}

\begin{Cor}
\label{Co:Ffin-Fnil}
Assume that $p\ge 3$.
Then for every scheme $X$ on which $p$ is locally nilpotent 
such that $X_0=X\times\Spec\FF_p$ is $F$-finite and $F$-nilpotent,
the divided Dieudonn\'e functor $\DDfun_X$ is an equivalence.
\end{Cor}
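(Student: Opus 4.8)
The plan is to deduce the statement from the characteristic-$p$ case, Theorem \ref{Th:Ffin-Fnil}, by a deformation argument in which the hypothesis $p\ge 3$ enters through the Grothendieck--Messing theorem. The functor $\DDfun_X$ is available by Proposition \ref{Pr:DD-functor} and is compatible with base change, so the reduction $X_0\to X$ produces a commutative square whose horizontal arrows are $\DDfun_X$ and $\DDfun_{X_0}$ and whose vertical arrows are the reduction functors $\BT(X)\to\BT(X_0)$ and $\DDcat(X)\to\DDcat(X_0)$. Since $X_0$ is an $F$-finite and $F$-nilpotent $\FF_p$-scheme, the bottom arrow $\DDfun_{X_0}$ is an equivalence by Theorem \ref{Th:Ffin-Fnil}. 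The whole claim therefore reduces to showing that this square is $2$-cartesian: once it is, the top arrow is an equivalence, because a $2$-cartesian square whose bottom arrow is an equivalence identifies its top arrow with the projection of the $2$-fibre product $\BT(X_0)\times_{\DDcat(X_0)}\DDcat(X)$ onto $\DDcat(X)$, and this projection is an equivalence precisely when $\DDfun_{X_0}$ is.

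To prove that the $\DDfun$-square is $2$-cartesian I would paste it on the right with the $2$-cartesian square of Lemma \ref{Le:red-DD-DDF}, whose horizontal arrows are $\rho_X$ and $\rho_{X_0}$. Using the isomorphism $\DFfun_X\cong\rho_X\circ\DDfun_X$ of Proposition \ref{Pr:DD-functor}, together with its analogue over $X_0$, the horizontal composites of the pasted rectangle are $\DFfun_X$ and $\DFfun_{X_0}$, so this outer rectangle is exactly the square shown to be $2$-cartesian in Remark \ref{Rk:red-BT-DDF}; this is the single point at which $p\ge 3$ is used. I would then invoke the pasting law for $2$-fibre products: if the right-hand ($\rho$) square and the outer rectangle are both $2$-cartesian, then so is the left-hand ($\DDfun$) square. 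Concretely, from $\DDcat(X)\simeq\DDcat(X_0)\times_{\DFcat(X_0)}\DFcat(X)$ and $\BT(X)\simeq\BT(X_0)\times_{\DFcat(X_0)}\DFcat(X)$ one computes $\BT(X_0)\times_{\DDcat(X_0)}\DDcat(X)\simeq\BT(X_0)\times_{\DFcat(X_0)}\DFcat(X)\simeq\BT(X)$ by transitivity of fibre products, which is the required $2$-cartesianness.

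I expect the main obstacle to be purely $2$-categorical bookkeeping: one must fix the coherence isomorphisms witnessing commutativity of each square---in particular the identification $\DFfun\cong\rho\circ\DDfun$ over both $X$ and $X_0$, compatibly with reduction---so that the pasting law applies cleanly and the resulting equivalence $\BT(X)\simeq\DDcat(X)\times_{\DDcat(X_0)}\BT(X_0)$ is genuinely realized by $\DDfun_X$. An alternative, more computational route avoids the pasting law: by $\nameoftop$-descent one reduces to $X=\Spec R$ with $R/p$ semiperfect, where $\DDfun_X$ becomes $\Phi_R^{\cris}$ under Proposition \ref{Pr:dd-win-semiperf}; one then chooses generators of $R_0$ over $\phi(R_0)$, forms the $p$-root tower as in the proof of Theorem \ref{Th:eq-Ffin-Fnil-tf}, uses that the reductions stay semiperfect and $F$-nilpotent by Lemma \ref{Le:Fnil-remains}, and applies the fact recorded after \eqref{Eq:PhiRcris-gen} that for $p\ge 3$ the equivalence of $\Phi_{R_0}^{\cris}$ propagates to $\Phi_R^{\cris}$. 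Either route works, but the $2$-cartesian pasting is the more conceptual one and reuses the deformation lemmas already in place.
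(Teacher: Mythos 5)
Your proposal is correct and follows essentially the same route as the paper: the paper's proof likewise combines Remark \ref{Rk:red-BT-DDF} (where $p\ge 3$ enters via Grothendieck--Messing) with Lemma \ref{Le:red-DD-DDF} to conclude that the square formed by $\DDfun_X$, $\DDfun_{X_0}$ and the reduction functors is $2$-cartesian, and then invokes Theorem \ref{Th:Ffin-Fnil} for the bottom arrow. You merely spell out the pasting-law bookkeeping that the paper leaves implicit, which is a fine way to make the one-line argument precise.
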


\begin{proof}
By Remark \ref{Rk:red-BT-DDF} and Lemma \ref{Le:red-DD-DDF},
for $p\ge 3$ the diagram
\[
\xymatrix@M+0.2em@C+1em{
\BT(X) \ar[r]^{\DDfun_X} \ar[d] &
\DDcat(X) \ar[d] \\
\BT(X_0) \ar[r]^{\DDfun_{X_0}} &
\DDcat(X_0)
}
\]
is $2$-cartesian, 
and $\DDfun_{X_0}$ is an equivalence by Theorem \ref{Th:Ffin-Fnil}. 
\end{proof}

\begin{Cor}
\label{Co:Isogeny}
Let $X$ be a scheme with $p^r\OOO_X=0$ 
such that $X_0=X\times\Spec\FF_p$ 
is $F$-finite and $F$-nilpotent.
For $p$-divisible groups $G$ and $H$ over $X$ with Dieudonn\'e crystals 
$\Dfun_X(G)=(\MMM,F,V)$ and\/ $\Dfun_X(H)=(\NNN,F,V)$, 
the homomorphism 
\begin{equation}
\label{Eq:HomGHMN}
\Hom(G,H)\to\Hom_F(\MMM,\NNN)
\end{equation}
is injective with cokernel annihilated by $p^r$.
In particular, the functor\/ $\Dfun_X$ is fully faithful up to isogeny.
\end{Cor}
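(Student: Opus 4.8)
The plan is to factor the comparison map~\eqref{Eq:HomGHMN} through window homomorphisms and to reduce the general case to the case $p\OOO_X=0$, where the hypotheses are exactly those of Theorem~\ref{Th:Ffin-Fnil}. For $p\ge 3$ one could argue in a single step, using that $\DDfun_X$ is an equivalence by Corollary~\ref{Co:Ffin-Fnil} together with the evident analogue of Lemma~\ref{Le:win-phi-mod} for a frame with $p^r\in\III$ (with $p\sigma_1=\sigma$ and $\sigma(p^r)=p^r$, the same computation gives injectivity with cokernel killed by $p^r$); but the reduction below avoids any restriction on $p$.

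First I would treat the case where $X$ is an $\FF_p$-scheme. Then $\DDfun_X$ is an equivalence by Theorem~\ref{Th:Ffin-Fnil}, so $\Hom(G,H)$ is identified with the group of window homomorphisms $\DDfun_X(G)\to\DDfun_X(H)$ over the frame $\u\OOO{}_X^{\cris}$ in $X_\nameoftop$. Since $p\OOO_X=0$ we have $p\in\III_X^{\cris}$ and $\sigma_1(p)=1$ by Lemma~\ref{Le:sigma1-O}, so Lemma~\ref{Le:win-phi-mod} shows that the forgetful map from window homomorphisms to $\Phi$-module homomorphisms is injective with cokernel killed by $p$. As $\Phi$-module homomorphisms of the $\OOO_X^{\cris}$-modules $u_*\MMM$, $u_*\NNN$ correspond, through Lemma~\ref{Le:u**} and the isomorphism $w$ of Remark~\ref{Rk:phi-sigma}, to $F$-linear homomorphisms of the Dieudonn\'e crystals, the map~\eqref{Eq:HomGHMN} is injective with cokernel killed by $p$ in this case.

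For general $r$ I would bootstrap from $X_0=X\times\Spec\FF_p$, writing $G_0,H_0$ for the reductions of $G,H$. The restriction $\Dcat(X)\to\Dcat(X_0)$ is an equivalence (Remark~\ref{Rk:red-BT-DDF}) and $F$ lives on the reduction, so $\Hom_F(\MMM,\NNN)$ is computed over $X_0$, where the previous step applies: the natural map $\gamma\colon\Hom(G_0,H_0)\to\Hom_F(\MMM,\NNN)$ is injective with cokernel killed by $p$. On the other hand the restriction $\beta\colon\Hom(G,H)\to\Hom(G_0,H_0)$ is injective by rigidity of homomorphisms of $p$-divisible groups along the nilpotent thickening $X_0\hookrightarrow X$, and~\eqref{Eq:HomGHMN} is the composite $\gamma\circ\beta$; in particular it is injective. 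Since $p\cdot\Hom_F(\MMM,\NNN)\subseteq\Image\gamma$, the cokernel of~\eqref{Eq:HomGHMN} will be annihilated by $p^r$ as soon as $p^{r-1}f_0$ lies in $\Image\beta$ for every $f_0\in\Hom(G_0,H_0)$, that is, as soon as $p^{r-1}f_0$ lifts from $X_0$ to $X$.

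The main obstacle is exactly this lifting statement, which I would settle by the deformation theory of homomorphisms rather than by Grothendieck--Messing, so as to stay valid at $p=2$. Filtering $X_0\hookrightarrow X$ by the square-zero steps $(X,\OOO_X/p^{k+1})\to(X,\OOO_X/p^{k})$ with ideal $\Fa_k=p^{k}\OOO_X/p^{k+1}\OOO_X$, so that $p\Fa_k=0$, the obstruction to lifting a homomorphism across one step is additive in that homomorphism and lies in a group built from $\Fa_k$, hence annihilated by $p$; replacing the homomorphism by $p$ times itself therefore removes the obstruction and yields a lift one layer further. The $r-1$ layers separating $\OOO_X/p$ from $\OOO_X/p^{r}=\OOO_X$ thus lift $p^{r-1}f_0$, so that $p^{r}\Hom_F(\MMM,\NNN)\subseteq\Image(\gamma\circ\beta)$ and the cokernel is annihilated by $p^{r}$. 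The final assertion is this statement after inverting $p$.
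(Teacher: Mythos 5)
Your proposal is correct and, in substance, is the paper's own proof: the characteristic-$p$ case is handled identically (Theorem \ref{Th:Ffin-Fnil} to identify $\Hom(G,H)$ with window homomorphisms, Lemma \ref{Le:win-phi-mod} for injectivity with cokernel killed by $p$, and Lemma \ref{Le:u**} together with Remark \ref{Rk:phi-sigma} to identify $\Phi$-module maps with $F$-linear maps of crystals), and the general case is reduced to it by the same factorization through $\Hom(G_0,H_0)$, the bounds $p^{r-1}$ and $p$ combining to give $p^r$. The only divergence is that the paper simply cites the rigidity statement (restriction to $X_0$ is injective with cokernel killed by $p^{r-1}$) from Katz and from Lau's earlier work, whereas you re-derive it by a Drinfeld-style obstruction argument over the square-zero layers $p^k\OOO_X/p^{k+1}\OOO_X$; that argument is standard and correct.
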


\begin{proof}
We can assume that $X=X_0$ since this does not change the
target of \eqref{Eq:HomGHMN},
and $\Hom(G,H)\to\Hom(G_0,H_0)$ is injective 
with cokernel annihilated by $p^{r-1}$;
see for example \cite[Lemma 1.1.3]{Katz:Serre-Tate}
and \cite[Lemma 3.2]{Lau:Relations}.

Let $\u\MMM'=\DDfun_X(G)$ and $\u\NNN'=\DDfun_X(H)$
be the divided Dieudonn\'e crystals associated to $G$ and $H$.
Under the equivalence of Lemma \ref{Le:u**},
the homomorphism of $\OOO_{X/\Sigma}$-modules $F:\phi^*\MMM\to\MMM$
corresponds to a homomorphism of $\OOO_X^{\cris}$-modules 
$F':\phi^*\MMM'\to\MMM'$, 
and the composition of $F'$ with the isomorphism 
$w:\sigma^*\MMM'\cong\phi^*\MMM'$ 
of Remark \ref{Rk:phi-sigma} is the homomorphism 
$\Phi:\sigma^*\MMM'\to\MMM'$ which is part of $\u\MMM'$.
Thus \eqref{Eq:HomGHMN} factors as
\[
\Hom(G,H)\to 
\Hom(\u\MMM',\u\NNN')\to 
\Hom_\Phi(\MMM',\NNN')\xrightarrow\sim
\Hom_F(\MMM,\NNN).
\]
Here the first arrow is an isomorphism by Theorem \ref{Th:Ffin-Fnil},
and the second arrow is injective 
with cokernel annihilated by $p$ by Lemma \ref{Le:win-phi-mod}.
\end{proof}

\begin{Remark}
\chang{Theorem \ref{Th:eq-Ffin-Fnil-tf} is a special case of
Theorem \ref{Th:Ffin-Fnil} by Proposition \ref{Pr:tf-DD-DF}, and the proofs of are parallel, interchanging $\DFcat(X)$ and $\DDcat(X)$.
This duplication emphasizes that the results of \S\ref{Se:descent} are independent of the notions of \S\ref{Se:divided}, at the cost of a slightly less economical presentation.}
\end{Remark}

\begin{Remark}
\label{Re:loc-pbasis-DDfun}
\chang{Theorem \ref{Th:eq-p-basis} implies that the functor $\DDfun_X$ is an equivalence for every $\FF_p$-scheme $X$ which locally has a $p$-basis, using Lemma \ref{Le:DF-D-eq}, Remark \ref{Re:pbasis-pd-tf}, and Proposition \ref{Pr:tf-DD-DF}.}
\end{Remark}

\section{Explicit divided Dieudonn\'e crystals}

\label{Se:Explicit-DD}

Let $R$ be a ring in which $p$ is nilpotent.
In this section we describe divided Dieudonn\'e crystals
over $\Spec R$ by windows with a connection.
The procedure is straightforward.
First we construct the relevant frame,
then define windows with a connection,
translate the connection to an HPD stratification,
and finally relate this with divided Dieudonn\'e crystals.

\subsection*{Windows with a connection}

\chang{In the following let $R=A/I$ where $A$ is a $p$-complete and $\ZZ_p$-flat ring with a Frobenius lift $\sigma:A\to A$ such that $A/p$ has a $p$-basis. Let $D=D_{\gamma}(A\to R)^\wedge$ be the $p$-complete PD envelope relative to $\gamma$ and $\bar I\subseteq D$ the natural PD ideal. The Frobenius lift $\sigma$ corresponds to a $\delta$-structure on $A$, and Lemma \ref{Le:delta-pd-sigma1} provides a natural $\sigma$-linear map $\sigma_1:\bar I\to A$ that gives a frame
\[
\u D=(D,\bar I,R,\sigma,\sigma_1)
\]
functorially associated to $(A\to R,\sigma)$.}
Let $(x_i)$ in $A$ map to a $p$-basis of $A/p$.
The module of continuous differentials $\hat\Omega_A$ is topologically
free with basis $(dx_i)$. The derivation $d:A\to\hat\Omega_A$ extends
uniquely to a PD derivation
\[
d:D\to\hat\Omega_D:=D\hatotimes_A\hat\Omega_A,
\]
which means that $d(a^{[n]})=a^{[n-1]}\otimes da$ 
for $a\in\bar I$ and $n\ge 1$,
and this is the universal PD derivation of $(D,\bar I)$ relative to $(\ZZ_p,\gamma)$ by \cite[\href{https://stacks.math.columbia.edu/tag/07HW}{Tag 07HW}]{Stacks-Project}.
Since $\sigma$ is a Frobenius lift, 
the endomorphism $d\sigma$ of $\hat\Omega_A$ 
is divisible by $p$, i.e.\ there is a well-defined $\sigma$-linear map
$(d\sigma)_1:\hat\Omega_A\to\hat\Omega_A$ with $d\sigma=p(d\sigma)_1$.
It induces a $\sigma_D$-linear map $(d\sigma)_1:\hat\Omega_D\to\hat\Omega_D$.

\begin{Defn}
\label{Def:win-nabla}
A connection on a $\u D$-window $\u M=(M, M_1,\Phi,\Phi_1)$
is a connection 
$\nabla:M\to M\otimes_D\hat\Omega_D$ such that $\Phi$ and $\Phi_1$
are horizontal in the sense that the following diagrams commute.
\begin{equation}
\label{Eq:Win-nabla-Phi}
\xymatrix@M+0.2em{
M \ar[r]^-\nabla \ar[d]_\Phi &
M\otimes_D\hat\Omega_D \ar[d]^{\Phi\otimes d\sigma} \\
M \ar[r]^-\nabla & M\otimes_D\hat\Omega_D
}
\end{equation}
\begin{equation}
\label{Eq:Win-nabla-Phi1}
\xymatrix@M+0.2em{
 M_1 \ar[r]^-\nabla \ar[d]_{\Phi_1} &
M\otimes_D\hat\Omega_D \ar[d]^{\Phi\otimes (d\sigma)_1} \\
M \ar[r]^-\nabla & M\otimes_D\hat\Omega_D
}
\end{equation}
We denote by $\Win(\u D)^\nabla$ the category of windows with a connection.
\end{Defn}

\begin{Remark}
If $D$ is torsion free, \eqref{Eq:Win-nabla-Phi} implies
\eqref{Eq:Win-nabla-Phi1}.
If $R$ is an $\FF_p$-algebra,
\eqref{Eq:Win-nabla-Phi1} implies \eqref{Eq:Win-nabla-Phi}
because for $x\in M$ we have $px\in M_1$ and $\Phi(x)=\Phi_1(px)$.
\end{Remark}

\begin{Lemma}
A connection $\nabla$ on a $\u D$-window $\u M$ is necessarily
integrable and topologically quasi-nilpotent.
\end{Lemma}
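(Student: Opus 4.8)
The plan is to work with the covariant derivatives $\theta_i=\nabla_{\partial_i}$ dual to the $p$-basis, writing $\nabla(m)=\sum_i\theta_i(m)\otimes dx_i$, and to read off from the two horizontality diagrams the corresponding Frobenius relations. Setting $(d\sigma)_1(dx_j)=\sum_i c_{ij}\,dx_i$ with $c_{ij}\in A$, so that $d\sigma(dx_j)=p\sum_i c_{ij}\,dx_i$, diagram \eqref{Eq:Win-nabla-Phi} unwinds to $\theta_i\circ\Phi=p\sum_j c_{ij}\,\Phi\circ\theta_j$ on $M$, and diagram \eqref{Eq:Win-nabla-Phi1} unwinds to $\theta_i\circ\Phi_1=\sum_j c_{ij}\,\Phi\circ\theta_j$ on $M_1$. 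The one auxiliary identity I would record at the outset is $\partial_l c_{ij}=\partial_i c_{lj}$: since $A$ is $\ZZ_p$-flat and $\hat\Omega_A$ is free, the form $(d\sigma)_1(dx_j)=p^{-1}d(\sigma x_j)$ is well defined and closed, whence $\sum_i dc_{ij}\wedge dx_i=0$. This kills all first-order terms in what follows, so that no hypothesis on torsion in $D$ is required.

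For integrability I would compute the curvature $K_{li}=[\theta_l,\theta_i]\in\End_D(M)$ on Frobenius images. Applying $\theta_l$ to the first relation and using it once more, the terms carrying $\partial_l c_{ij}-\partial_i c_{lj}$ drop out and one is left with $K_{li}\circ\Phi=p^2\sum_{j,k}c_{ij}c_{lk}\,\Phi\circ K_{kj}$ on $M$, and likewise $K_{li}\circ\Phi_1=p\sum_{j,k}c_{ij}c_{lk}\,\Phi\circ K_{kj}$ on $M_1$. Let $\mathcal K\subseteq M$ be the $D$-submodule generated by all $K_{li}(M)$. Because $\Phi(M)+\Phi_1(M_1)$ generates $M$ and each $K_{li}$ is $D$-linear, these two identities give $\mathcal K\subseteq p\,D\Phi(\mathcal K)$; since $\Phi(p\,y)=p\,\Phi(y)$, the same bound propagates to $D\Phi(\mathcal K)$, and iterating yields $\mathcal K\subseteq\bigcap_n p^nM=0$, using that $M$ is finite projective over the $p$-adic ring $D$. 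Hence $K_{li}=0$ and $\nabla$ is integrable.

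For topological quasi-nilpotence I would first reduce modulo $p$. There the first relation becomes $\bar\theta_i\circ\bar\Phi=0$, so once a derivative has been applied to a Frobenius image every further $\bar\theta$ can only differentiate the coefficients $c_{ij}$; in a normal decomposition $M=L\oplus T$ with $\Phi_1(L),\Phi(T)$ a $D$-basis (Remark \ref{Rk:Psi}), this shows that for each fixed $\bar m$ the element $\bar\theta^{\underline k}(\bar m)$ is a finite combination of iterated $\partial$-derivatives of the $\bar c_{ij}$. Since the derivations attached to a $p$-basis satisfy $\partial_i^{\,p}=0$ on $D/p$, these vanish for $|\underline k|$ large, giving quasi-nilpotence mod $p$. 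To pass to the $p$-adic statement I would run the same analysis modulo $p^N$, now gaining one factor of $p$ each time a derivative crosses $\Phi$ and, from $\partial_i^{\,jp}(D)\subseteq p^jD$, further factors from high-order differentiation of the bounded coefficients; every term of $\theta^{\underline k}(m)$ then lies in $p^NM$ once $|\underline k|$ is large, so $\theta^{\underline k}(m)\to0$ $p$-adically.

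The clean part is integrability: once the closedness identity $\partial_l c_{ij}=\partial_i c_{lj}$ removes the first-order terms, it is a short $p$-adic descent driven by the generation condition, insensitive to torsion in $D$. The real work is the quasi-nilpotence bootstrap: keeping honest track of how powers of $p$ accumulate simultaneously from Frobenius-crossing and from the divisibilities $\partial_i^{\,jp}(D)\subseteq p^jD$, uniformly enough to conclude the mod-$p^N$ vanishing for every $N$. I expect this bookkeeping — rather than any conceptual point — to be the main obstacle, and it is where the interaction between the PD-structure on $D$ and the $p$-basis derivations must be used with care.
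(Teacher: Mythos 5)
Your integrability argument is correct, and it is in substance the paper's own argument written in coordinates: your identity $\partial_l c_{ij}=\partial_i c_{lj}$ is the coordinate form of $d\circ d=0$ applied to $d(\sigma x_j)$, and after that both proofs run the same induction --- the curvature is $D$-linear, \eqref{Eq:Win-nabla-Phi} and \eqref{Eq:Win-nabla-Phi1} make it pick up a factor $p^2$ on $\Phi(M)$ and a factor $p$ on $\Phi_1(M_1)$, and generation of $M$ by $\Phi(M)+\Phi_1(M_1)$ together with $p$-adic separatedness forces it to vanish. (The paper does this coordinate-freely with $G=\nabla\circ\nabla$ and $\Lambda^2(d\sigma)_1$, which also avoids the convergence bookkeeping your expansions $\sum_i c_{ij}\,dx_i$ require when the $p$-basis is infinite; that difference is cosmetic.)

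The quasi-nilpotence half has a genuine gap, exactly at the point you flagged as the main obstacle. Both divisibility claims you rely on are false on $D$, although true on $A$: a $p$-basis gives $\partial_i^p=0$ on $A/p$ and hence $\partial_i^p(A)\subseteq pA$, but the divided powers destroy this on the PD envelope. Concretely, $\partial_i(a^{[n]})=a^{[n-1]}\partial_i(a)$, so for $A=\ZZ_p[x]^\wedge$ and $R=\FF_p$ one has $\partial^p(x^{[n]})=x^{[n-p]}\neq 0$ in $D/p$, refuting ``$\partial_i^p=0$ on $D/p$'', and $\partial^p(x^{[p]})=1\notin pD$, refuting $\partial_i^{jp}(D)\subseteq p^jD$. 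This matters because your reduction of $\bar\theta^{\u k}(\bar m)$ to iterated derivatives of the $\bar c_{ij}$ alone --- which do lie in $A/p$, where $\partial_i^p=0$ holds --- is valid only when $\bar m$ is one of the basis vectors $\bar\Phi_1(\bar l_\alpha)$, $\bar\Phi(\bar t_\beta)$ of Remark \ref{Rk:Psi}. A general $\bar m$ is a $D/p$-linear combination of these, and Leibniz differentiates those $D/p$-coefficients as well: for $m=c\,\Phi(t)$ with $c\in D$ one gets $\bar\theta_i^{\,k}(\bar m)=\partial_i^{\,k}(\bar c)\,\bar\Phi(\bar t)$, so what is actually needed is that \emph{every element of $D/p$} is killed by a sufficiently high power of $\partial_i$, the power depending on the element (it is $n+1$ for $\bar c=x^{[n]}$, so no uniform bound exists). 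This pointwise nilpotence of $\partial_i$ on $D/p$ is precisely the fact the paper invokes (``the derivation $N_i:D\to D$ is nilpotent modulo $p$''); it is true, but it requires its own argument using the PD structure: a fixed element of $D/p$ involves only finitely many divided powers, each application of $\partial_i$ either differentiates an $A/p$-coefficient (possible at most $p-1$ times) or lowers a divided-power index, and a weight count terminates. Once this element-wise statement replaces your false uniform claims, your mod-$p$ argument closes; the mod-$p^N$ bootstrap is then superfluous, since topological quasi-nilpotence as used in the paper is the mod-$p$ statement together with the automatic assertion that for fixed $x$ and $r$ almost all $N_i(x)$ vanish mod $p^r$.
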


\begin{proof}
Let $G=\nabla\circ\nabla:M\to M\otimes_D\Lambda^2\hat\Omega_D$,
where $\Lambda^2$ is taken in the topological sense.
Then $G$ is $D$-linear and the following diagrams commute.
\[
\xymatrix@M+0.2em{
 M_1 \ar[r]^-G \ar[d]_{\Phi_1} &
M\hatotimes_D\Lambda^2\hat\Omega_D 
\ar[d]^{\Phi\otimes p\Lambda^2(d\sigma)_1} \\
M \ar[r]^-G &
M\hatotimes_D\Lambda^2\hat\Omega_D
}
\qquad
\xymatrix@M+0.2em{
M \ar[r]^-G \ar[d]_{\Phi} &
M\otimes_D\Lambda^2\hat\Omega_D 
\ar[d]^{\Phi\otimes p^2\Lambda^2(d\sigma)_1} \\
M \ar[r]^-G &
M\otimes_D\Lambda^2\hat\Omega_D
}
\]
Since $M$ is generated by the images of $\Phi_1$ and $\Phi$ as a $D$-module
we deduce: if $G(M)$ lies in 
$p^rM\hatotimes_D\Lambda^2\hat\Omega_D$ for some $r\ge 0$
then the same holds for $r+1$. 
Hence $G=0$ and $\nabla$ is integrable.
Let $\theta_i:\hat\Omega_D\to D$ be given by $dx_i\mapsto 1$
and $dx_j\mapsto 0$ for $i\ne j$.

Let $N_i:M\to M$ be $\nabla$ composed with ${\id_M}\otimes\theta_i$.
Then the $N_i$ commute, and for \chang{any} $x\in M$ and $r\ge 1$,
almost all $N_i(x)$ are zero mod $p^r$.
We have to show that for $x\in M$ and \chang{every} $i$
some power $N_i^m(x)$ is zero in $M/pM$.
It suffices to consider $x\in\Phi_1(M)$ or $x\in\Phi(M)$
since these generate $M$.
If $x\in\Phi_1(M)$ then $\nabla(x)$ lies in the image of 
$D\Phi(M)\otimes\hat\Omega_D$ by \eqref{Eq:Win-nabla-Phi1},
thus $N_i(x)\in D\Phi(M)$.
Since the derivation $N_i:D\to D$ is nilpotent modulo $p$,
it suffices to consider $x\in\Phi(M)$.
Then $\nabla(x)=0$ mod $p$ by \eqref{Eq:Win-nabla-Phi}.
\end{proof}

\subsection*{Windows with an HPD stratification}

We keep the assumptions on $(A\to R,\sigma)$.
The definition of an HPD stratification
on a $\u D$-window is straightforward:
For $m\ge 0$ let 
\[
A(m)=A\hatotimes_{\ZZ_p}\ldots\hatotimes_{\ZZ_p} A
\]
with $m+1$ factors and let 
\[
D(m)=D_\gamma(A(m)\to R)^\wedge.
\]
We define $\sigma:A(m)\to A(m)$ by $\sigma$ on the factors. 
Lemma \chang{\ref{Le:delta-pd-sigma1}} gives frames 
\[
\u D(m)=(D(m),\bar I(m),\sigma,\sigma_1)
\] 
for $m\ge 0$, 
which form a cosimplicial frame $\u D(*)$ with $\u D(0)=\u D$.

Let $p_0,p_1:\u D\to\u D(1)$ be the homomorphisms that
correspond to the first and second coordinate,
and define $q_i:\u D\to\u D(2)$ for $0\le i\le 2$
and $q_{ij}:\u D(1)\to\u D(2)$ for $0\le i<j\le 2$ similarly.

\begin{Defn}
An HPD stratification on a $\u D$-window $\u M$ 
is an isomorphism of $\u D(1)$-windows
$\varepsilon:p_0^*\u M\cong p_1^*\u M$ such that
$q_{02}^*\varepsilon=q_{12}^*\varepsilon\circ q_{01}^*\varepsilon$
over $\u D(2)$.
Let $\Win(\u S)^{\HPD}$ be the category of $\u D$-windows with an
HPD stratification.
\end{Defn}

\begin{Prop}
\label{Pr:nabla-HPD}
There is an equivalence $\Win(\u D)^\nabla\cong\Win(\u D)^{\HPD}$.
\end{Prop}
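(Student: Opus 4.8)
The plan is to reduce the statement to the classical equivalence between quasi-coherent modules with an integrable, topologically quasi-nilpotent connection and modules with an HPD stratification, and then to verify that the two extra pieces of window data, the filtration and the pair $(\Phi,\Phi_1)$, correspond on the two sides. First I would record the explicit shape of $\u D(1)$. Writing $\xi_i\in D(1)$ for the image of $x_i\otimes 1-1\otimes x_i$, the map $p_0$ identifies $D(1)$ with the $p$-adically completed PD polynomial algebra $D\langle(\xi_i)\rangle^\wedge$, so $D(1)$ is the PD envelope of the diagonal and $\bar I(1)$ contains all $\xi_i$. On underlying modules the divided-power Taylor expansion $\varepsilon(1\otimes m)=\sum_\alpha\xi^{[\alpha]}\otimes\nabla_\alpha(m)$ sets up an equivalence between $D$-modules $M$ with an integrable topologically quasi-nilpotent connection $\nabla$ and $D$-modules with an HPD stratification $\varepsilon$, the connection being the first-order part of $\varepsilon$. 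By the preceding Lemma any connection occurring in a $\u D$-window is automatically integrable and topologically quasi-nilpotent, so this classical equivalence applies to the module $M$ underlying such a window, and the cocycle condition on $\varepsilon$ matches the integrability of $\nabla$.

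Next I would observe that $p_0,p_1,q_i,q_{ij}$ are frame homomorphisms, since $\u D(*)$ is a cosimplicial frame, so that the base-change windows $p_i^*\u M$ and the notion of HPD stratified window are defined, and the cocycle condition involves only the module-level $\varepsilon$. The filtration compatibility of $\varepsilon$ is then automatic: being $D(1)$-linear, $\varepsilon$ carries $\bar I(1)(p_0^*M)$ onto $\bar I(1)(p_1^*M)$, and for $m\in M_1$ the expansion of $\varepsilon(1\otimes m)$ has $\alpha=0$ term $1\otimes m\in 1\otimes M_1$ and higher terms in $\bar I(1)(p_1^*M)$; since both terms lie in $(p_1^*M)_1$ we get $\varepsilon((p_0^*M)_1)\subseteq(p_1^*M)_1$, with equality by applying the same to $\varepsilon^{-1}$. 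Hence the only remaining content is to identify ``$\varepsilon$ commutes with $\Phi$'' with diagram \eqref{Eq:Win-nabla-Phi} and ``$\varepsilon$ commutes with $\Phi_1$'' with diagram \eqref{Eq:Win-nabla-Phi1}.

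The heart of the argument, which I expect to be the main obstacle, is this last matching, since it forces a computation of how $\sigma$ acts on the divided-power variables. As the Frobenius of $p_1^*\u M$ sends $f\otimes m$ to $\sigma(f)\otimes\Phi(m)$, the identity $\varepsilon\circ\Phi=\Phi\circ\varepsilon$ (with $\Phi$ the Frobenii of $p_0^*\u M$ and $p_1^*\u M$) unwinds, after the Taylor expansion, to
\[
\textstyle\sum_\alpha\xi^{[\alpha]}\otimes\nabla_\alpha(\Phi m)=\sum_\alpha\sigma(\xi^{[\alpha]})\otimes\Phi(\nabla_\alpha m).
\]
Using $\sigma(\xi_i)=p_1(\sigma x_i)-p_0(\sigma x_i)$ with $\sigma(x_i)=x_i^p+p\tau(x_i)$ as in Lemma \ref{Le:sigma1-D}, one expands $\sigma(\xi_i)$ in the $\xi_j$ and finds that its $\xi$-linear part is $p$ times the matrix of $(d\sigma)_1$, reflecting $d\sigma=p(d\sigma)_1$ on $\hat\Omega_D$. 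Comparing the coefficient of $\xi_i$ on both sides yields exactly the horizontality diagram \eqref{Eq:Win-nabla-Phi} for $\nabla$, while the higher-order coefficients are forced by $\sigma$-linearity together with integrability; thus the displayed identity is equivalent to \eqref{Eq:Win-nabla-Phi}. The same computation applied to $\Phi_1$, with $\sigma_1$ in place of $\sigma$ and $(d\sigma)_1$ in place of $d\sigma$, identifies ``$\varepsilon$ commutes with $\Phi_1$'' with \eqref{Eq:Win-nabla-Phi1}. As in Lemma \ref{Le:sigma1-D}, when $D$ carries $p$-torsion the bookkeeping of the divided powers $\xi^{[\alpha]}$ and of $\sigma_1(\xi^{[\alpha]})$ must be handled by the functorial reduction to a torsion-free universal case; this is the delicate point that makes the coefficient comparison the crux of the proof.
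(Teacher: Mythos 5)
Your first-order analysis is sound: the observation that filtration compatibility of $\varepsilon$ is automatic from the Taylor expansion is correct, and comparing coefficients of $\xi_i$ does show that a stratification which is a window isomorphism forces \eqref{Eq:Win-nabla-Phi} and \eqref{Eq:Win-nabla-Phi1} (granting the identification of the linear part of $\sigma_1(\xi_j)$ with the matrix of $(d\sigma)_1$, which, being an identity among elements of $D(1)$, can indeed be checked after reduction from the torsion free ring $E(1)$). The gap is in the converse, which is what the equivalence actually requires: given a window with a connection satisfying \eqref{Eq:Win-nabla-Phi} and \eqref{Eq:Win-nabla-Phi1}, you must show that the stratification $\varepsilon$ built from $\nabla$ commutes with $\Phi$ \emph{and} $\Phi_1$ in all PD degrees. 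Your claim that ``the higher-order coefficients are forced by $\sigma$-linearity together with integrability'' is precisely the statement to be proved, not a consequence of anything you have established. For $\Phi$ alone this is the classical fact about $F$-crystals (horizontal Frobenius commutes with the stratification, via the linearization $\sigma^*M\to M$), but for $\Phi_1$ it is genuinely new: $\Phi_1$ is defined only on $M_1$, which carries no connection, and when $D$ has torsion $\Phi_1$ is not recoverable from $\Phi$ by dividing by $p$, so no classical statement applies.

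Moreover, the repair you propose for torsion --- ``functorial reduction to a torsion-free universal case'' as in Lemma \ref{Le:sigma1-D} --- cannot be applied to the identity you need as stated. That reduction proves identities among elements of $D$ or $D(1)$ (such as formulas for $\sigma_1(\xi^{[\alpha]})$), because those elements are functorial images from a torsion free situation. But the commutation $\varepsilon\circ\Phi_1=\Phi_1\circ\varepsilon$ involves the window $M$ itself, which does not arise by base change from any torsion free frame; to use the reduction you would first have to convert the commutation, by an inductive Leibniz computation in all degrees, into universal identities among structure constants of $\sigma$ and $\sigma_1$ --- and that computation is the missing content. The paper circumvents all of this with a different device: the truncated frames $\u D(1)_r$, the injection $D(1)_{r+1}\hookrightarrow D(r)_2$ of Lemma \ref{Le:D1mDm2}, and a second use of the cocycle condition, which factors the degree-$r$ truncation of $\varepsilon$ into a composition of pullbacks of its first-order truncation $\bar\varepsilon$. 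This reduces the whole window-isomorphism property to the first-order statement over $\u D(1)_2$, where the only nontrivial input is that $\sigma_1$ on $K\cong\hat\Omega_D$ equals $(d\sigma)_1$ --- an identity in a ring, legitimately checked by torsion free reduction. Without this mechanism (or a completed version of your coefficient induction), your proof of the direction $\Win(\u D)^\nabla\to\Win(\u D)^{\HPD}$ does not go through.
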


The proof is standard, 
but some care is required because the base change of windows is
not \chang{given by} the tensor product in all components.
Let us first recall the explicit description of the rings $D(m)$.
Let 
\[
E(m)=D_\gamma(A(m)\to A)^\wedge.
\]
Since $E(m)$ is an augmented PD $A$-algebra,
\cite[I Cor.~1.7.2]{Berthelot:CohCristalline}
implies that 
\begin{equation}
\label{Eq:EAE}
E(m)\hatotimes_AE(n)\cong E(n+m)
\end{equation}
with respect to $A\to E(m)$ by the last factor and $A\to E(n)$
by the first factor, and
\begin{equation}
\label{Eq:EAD}
E(m)\hat\otimes_AD\cong D(m)
\end{equation}
with respect to $A\to E(m)$ by any of the factors;
this is also a consequence of 
\cite[IV Cor.~1.3.5]{Berthelot:CohCristalline}.
It follows that
\begin{equation}
\label{Eq:DDD}
D(m)\hatotimes_DD(n)\cong D(m+n).
\end{equation}
Recall that $(x_i)$ in $A$ map to a $p$-basis of $A/p$.
Let $\xi_i=x_i\otimes 1-1\otimes x_i\in A(1)$.

\begin{Lemma}
\label{Le:E(1)-xi}
As an $A$-algebra by \chang{any} of the \chang{two} factors,
$E(1)$ is the \chang{$p$-completion}
of the PD polynomial algebra $A\langle\xi_i\rangle^\wedge$ 
in the variables $\xi_i$.
\end{Lemma}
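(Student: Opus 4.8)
The plan is to exhibit mutually inverse PD homomorphisms of $A$-algebras between $C:=A\langle\xi_i\rangle^\wedge$ and $E(1)$, where in both cases $A$ acts through the first factor $A\to A(1)$ and the PD ideals are $\bar I_C=\ker(C\to A)$ and $\bar I(1)=\ker(E(1)\to A)=\bar I(m)|_{m=1}$. One direction is immediate: since each $\xi_i=x_i\otimes 1-1\otimes x_i$ lies in $J(1)=\ker(A(1)\to A)$, its divided powers $\xi_i^{[n]}$ are defined in $E(1)$, and the universal property of the free PD polynomial algebra gives a PD homomorphism of $A$-algebras $\theta\colon C\to E(1)$ with $\theta(\xi_i)=\xi_i$. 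It remains to construct an inverse.

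For the inverse I would use the universal property of the PD envelope: a PD morphism $\Psi\colon E(1)\to C$ amounts to a ring homomorphism $A(1)=A\hatotimes_{\ZZ_p}A\to C$ carrying $J(1)$ into $\bar I_C$. On the first factor I take the structure map $A\to C$; on the second factor I need a ring homomorphism $g\colon A\to C$ lifting the identity of $A=C/\bar I_C$ and satisfying $g(x_i)=x_i-\xi_i$. This is where the hypothesis that $A/p$ has a $p$-basis enters decisively: it makes $A$ formally smooth over $\ZZ_p$ for the $p$-adic topology (as in the references used for Lemma \ref{Le:F-finite-reg}). Since $\bar I_C$ is a PD ideal, it is nil modulo $p$, and $C$ is $p$-adic, so $C\to A$ is a thickening to which formal smoothness applies; it produces a \emph{unique} such $g$ with the prescribed values on the $p$-basis. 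The pair (first factor, $g$) defines $A(1)\to C$, whose composite with $C\to A$ is the augmentation of $A(1)$, so $J(1)$ lands in $\bar I_C$, and $\Psi$ is obtained after $p$-completion.

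Finally I would verify that $\theta$ and $\Psi$ are mutually inverse by evaluating on generators. On the one hand $\Psi(\theta(\xi_i))=\Psi(\xi_i)=x_i-g(x_i)=\xi_i$, and $\Psi\theta$ is the identity on $A$, so $\Psi\theta=\mathrm{id}_C$ because $C$ is topologically generated as a PD $A$-algebra by the $\xi_i$. On the other hand, writing $\omega_a=a\otimes 1-1\otimes a\in\bar I(1)$, the elements $\omega_a$ together with their divided powers topologically generate $E(1)$ as a PD $A$-algebra, and one computes $\theta(\Psi(\omega_a))=\theta(a-g(a))=\omega_a$; the key point is that $\theta\circ g$ and the second-factor map $A\to E(1)$ are two lifts of $\mathrm{id}_A$ along $E(1)\to A$ that agree on the $p$-basis, hence are equal by the uniqueness half of the same formal smoothness property applied to the thickening $E(1)\to A$ (whose kernel is nil modulo $p$). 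Thus $\theta\Psi=\mathrm{id}_{E(1)}$, and $\theta$ is the desired isomorphism.

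The main obstacle is the construction and control of $g$: one must correctly invoke formal smoothness of $A$ over $\ZZ_p$ relative to the $p$-adic topology for the complete PD thickening $C\to A$, using both the existence of a lift with prescribed values on the $p$-basis and its uniqueness, since everything else reduces to a formal comparison on generators.
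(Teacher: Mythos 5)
Your strategy --- exhibit $\theta:C\to E(1)$ from the universal property of the free PD polynomial algebra, and an inverse $\Psi:E(1)\to C$ from the universal property of the completed PD envelope, the latter resting on a section $g:A\to C$ of $C\to A$ with $g(x_i)=x_i-\xi_i$ --- is a genuinely different route from the paper's. The paper never constructs an inverse: it reduces modulo $p$, shows that the square formed by $B/p\to A(1)/p$ and the two Frobenii is cocartesian, deduces that $B/p$ and $A(1)/p$ have the same homomorphisms into every PD thickening $S\to A/p$ of $\FF_p$-algebras (so the two PD envelopes agree mod $p$), and then passes to $\ZZ_p$ using flatness of $A\left<\xi_i\right>^\wedge$ and $p$-adicity. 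Your route is essentially that of \cite[Cor.~1.3.2]{Berthelot-Messing}, which the paper cites as an alternative, so the plan itself is viable.

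The gap is in the step that carries all the weight: the existence and uniqueness of $g$. You argue that $A$ is formally smooth over $\ZZ_p$ for the $p$-adic topology and that ``formal smoothness applies'' to $C\to A$ because $\bar I_C$ is nil modulo $p$ and $C$ is $p$-adic. This is not correct as stated: formal smoothness produces lifts along surjections with \emph{nilpotent} kernel (equivalently, finitely many square-zero steps), or along ind-nilpotent kernels when the target is complete for the kernel-adic topology. Neither holds here: modulo $p$ the kernel is generated by the infinitely many elements $\xi_i^{[n]}$ and is nil but not nilpotent, and $C$ is not $\bar I_C$-adically complete (the $p$-adic completion of $A\left<\xi_i\right>$ does not contain sums such as $\sum_n\xi_i^{[n]}$). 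Moreover, formal smoothness never asserts uniqueness of lifts, and you invoke uniqueness twice (for $g$ itself, and to identify $\theta\circ g$ with the second inclusion $A\to E(1)$ when proving $\theta\Psi=\id$). The correct tool, and the place where the divided powers enter irreplaceably, is the characteristic-$p$ Frobenius argument: a PD ideal of an $\FF_p$-algebra is killed by Frobenius ($x^p=p!\,x^{[p]}=0$), so the Frobenius of $C/p$ factors through $A/p$; this forces the value of any lift on the subring $\phi(A/p)$, and since $A/p$ is generated over $\phi(A/p)$ by the $p$-basis, one both constructs a lift by sending $x_i$ to $x_i-\xi_i$ and sees that it is the unique one with these values. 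This is exactly the mechanism behind the paper's cocartesian square, and behind the lifting argument in the proof of Lemma \ref{Le:DF-D-eq}. Once the mod-$p$ case is settled this way, you climb the $p$-adic tower: the kernels of $C/p^{n+1}\to C/p^n$ are square-zero, so there genuine formal smoothness of $A/p^{n+1}$ over $\ZZ/p^{n+1}$ gives existence, and freeness of $\hat\Omega_A$ on the $dx_i$ gives uniqueness of the lift with prescribed $p$-basis values; the same dévissage repairs the final uniqueness claim over $E(1)$. With these substitutions your proof is complete.
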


See \cite[Cor.~1.3.2]{Berthelot-Messing}.
We give a direct proof for completeness.

\begin{proof}
\chang{Let $B=A[\xi_i]$ as a polynomial ring and let $E'=D_\gamma(B\to A)^\wedge$,
which is the ring $A\langle\xi_i\rangle^\wedge$ of the lemma. The natural homomorphism $B\to A(1)$ given by $a\mapsto a\otimes 1$ on $A$ induces a PD homomorphism $E'\to E(1)$, which will be shown to be isomorphism mod $p^m$ for all $m$. 
We can temporarily change the Frobenius lift $\sigma$ on $A$ and assume that $\sigma(x_i)=x_i^p$, using \cite[Prop.~1.2.6]{Berthelot-Messing} or \cite[Lemma 1.2.2]{Jong:Crystalline} and its proof. 
The resulting Frobenius lift $\sigma$ on $A(1)$ stabilises $B$. 
Since the $x_i$ form a $p$-basis of $A/p$ and the $x_i,\xi_i$ form a $p$-basis
of $A(1)/p$, the following diagram is cocartesian.
\[
\xymatrix@M+0.2em{
B/p^m \ar[r] \ar[d]_{\sigma^m} & 
A(1)/p^m \ar[d]^{\sigma^m} \\
B/p^m \ar[r] & A(1)/p^m
}
\]
Let $K$ be the kernel of $B\to A$ and let $K_m$ be generated by all $p^ib^{p^{m-i}}$ for $b\in K$ and $0\le i\le m$. Then $\sigma(K_i)\subseteq K_{i+1}$ and hence $\sigma^m(K)\subseteq K_m$.
If $S\to A/p^m$ is a PD thickening of $\ZZ/p^m$-algebras and
$f:B/p^m\to S$ is a homomorphism of thickenings of $A/p^m$, it follows that $f\circ\sigma^m$ annihilates the image of $K$ and thus factors as $B/p^m\to A/p^m\to S$. The same holds for $A(1)$ in place of $B$.
It follows that $f$ extends to a unique homomorphism $A(1)/p^m\to S$, and hence $E'/p^m\to E(1)/p^m$ is bijective as required.}
\end{proof}

For $1\le j\le m$ let $\xi_{ij}=q_{j-1,j}(\xi_i)$ in $A(m)$.
Lemma \ref{Le:E(1)-xi} 
\chang{together with \eqref{Eq:EAE} and \eqref{Eq:EAD}}
give:

\begin{Lemma}
$D(m)$ viewed as a $D$-algebra by any of the
factors is the \chang{$p$-completion} of the free PD polynomial algebra
$D\langle\xi_{ij}\rangle^\wedge$.
\qed
\end{Lemma}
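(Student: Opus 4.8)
The plan is to build $D(m)$ up from the case $m=1$, which is already covered by Lemma~\ref{Le:E(1)-xi}, by means of the multiplicativity \eqref{Eq:DDD}. First I would settle $m=1$: combining \eqref{Eq:EAD}, which gives $D(1)\cong E(1)\hatotimes_A D$ as a $D$-algebra by a chosen factor, with Lemma~\ref{Le:E(1)-xi}, which identifies $E(1)$ with $A\langle\xi_i\rangle^\wedge$, shows that $D(1)$ is the $p$-adic completion of the free PD polynomial algebra $D\langle\xi_i\rangle^\wedge$ in the variables $\xi_i$; here $\xi_i$ is the image of $x_i\otimes1-1\otimes x_i$.

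Next, iterating \eqref{Eq:DDD} produces an isomorphism $D(m)\cong D(1)\hatotimes_D\cdots\hatotimes_D D(1)$ with $m$ tensor factors, in which the $j$-th factor enters $D(m)$ through the cosimplicial structure map $q_{j-1,j}\colon\u D(1)\to\u D(m)$. Under the identification of the $j$-th factor with $D\langle\xi_i\rangle^\wedge$ just obtained, the generator $\xi_i$ is carried to $\xi_{ij}=q_{j-1,j}(\xi_i)$. It then remains to observe that the completed tensor product over $D$ of free PD polynomial algebras is again a free PD polynomial algebra, on the disjoint union of the variable sets; this follows from \cite[I Cor.~1.7.2]{Berthelot:CohCristalline}, exactly as in the derivations of \eqref{Eq:EAE} and \eqref{Eq:EAD}. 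Assembling the $m$ factors yields $D(m)\cong D\langle\xi_{ij}\rangle^\wedge$ with $1\le j\le m$, as asserted.

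The only point that needs care is the bookkeeping of the coordinate maps: one must check that the $D$-algebra structure appearing in \eqref{Eq:DDD} when the $j$-th factor is split off is the one induced by the appropriate factor of $A(m)$, so that the free generators of the $j$-th copy of $D(1)$ really go to $\xi_{ij}$ rather than to some other difference of coordinates. This reduces to unwinding the definitions of the maps $q_{j-1,j}$ together with the concatenation form of \eqref{Eq:EAE}. Since \eqref{Eq:EAD} holds for the structure coming from any factor, the resulting description of $D(m)$ as a free PD polynomial algebra is moreover independent of the choice of factor used to give the $D$-algebra structure, which is what the phrase ``by any of the factors'' records.
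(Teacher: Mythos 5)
Your proof is correct and follows essentially the same route as the paper, which states the lemma with no further argument beyond citing Lemma~\ref{Le:E(1)-xi} and \eqref{Eq:DDD} (itself obtained from \eqref{Eq:EAE} and \eqref{Eq:EAD} via \cite[I Cor.~1.7.2]{Berthelot:CohCristalline}). The steps you spell out --- identifying $D(1)\cong E(1)\hatotimes_AD\cong D\left<\xi_i\right>^\wedge$, iterating \eqref{Eq:DDD}, and tracking the generators through $q_{j-1,j}$ to land on $\xi_{ij}$ --- are exactly the details the paper's compressed citation leaves implicit.
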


We consider the following truncations of the PD envelopes
$E(m)$ and $D(m)$.
Let $J$ be the kernel of $A(1)\to A$,
let $\bar J$ be the kernel of $E(1)\to A$, 
and let $\bar J^{[m]}\subseteq E(1)$
be the $p$-adic closure of the $m$-th PD power of $\bar J$.
For $r\ge 1$ we set
\[
E(1)_r=E(1)/\bar J^{[r]},
\qquad 
D(1)_r=D(1)\hatotimes_{E(1)}E(1)_r,
\]
and
\[
D(m)_r=D(1)_r\hatotimes_D\ldots\hatotimes_DD(1)_r
\]
as a quotient of $D(m)=D(1)\hatotimes_D\ldots\hatotimes_DD(1)$.
Then \eqref{Eq:EAD} implies that 
$D(1)_r$ is isomorphic to $D\hatotimes_EE(1)_r$ 
with respect to the homomorphisms $E\to E(1)_r$ by either of the two factors.
\chang{In particular, we have $D(1)_2\cong D\oplus\hat\Omega_D$, and $D(m)_2$ contains $(\hat\Omega_D)^{\hat\otimes m}$ as an ideal.}

\begin{Lemma}
For $m,r\ge 1$ there is a well-defined frame 
\[
\u D(m)_r=(D(m)_r,\bar I(m)_r,R,\sigma,\sigma_1)
\]
as a quotient of $\u D(m)$.
\end{Lemma}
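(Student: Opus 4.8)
The plan is to exhibit $\u D(m)_r$ as a quotient frame of $\u D(m)$. The defining surjection of rings $\pi\colon D(m)\to D(m)_r$ has a closed kernel $K$, and the assertion is exactly that $\pi$ respects all of the frame data. This reduces to three inclusions: $K\subseteq\bar I(m)$, so that the residue ring is unchanged and $\bar I(m)_r=\bar I(m)/K$; $\sigma(K)\subseteq K$, so that $\sigma$ descends to a Frobenius lift on $D(m)_r$; and $\sigma_1(K)\subseteq K$, so that $\sigma_1$ descends to $\bar I(m)_r$. Granting these, the relation $p\sigma_1=\sigma$, the $\sigma$-linearity of $\sigma_1$, and the reduction of $\sigma$ to the Frobenius modulo $p$ all pass to the quotient, and $D(m)_r$ is $p$-adic by construction; hence $\u D(m)_r$ is a frame and $\pi$ a frame homomorphism.

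First I would make $K$ explicit. By \eqref{Eq:EAD} and \eqref{Eq:DDD}, $D(m)=D\langle\xi_{ij}\rangle^\wedge$ is the completed free PD polynomial algebra in the variables $\xi_{ij}$ ($1\le j\le m$), and $D(m)_r$ is obtained by killing, separately in each block $j$, the $p$-adic closure of the $r$-th divided power $\bar J^{[r]}$ of the PD ideal $\bar J$ generated by $\{\xi_{ij}\}_i$. Thus $K$ is the closed ideal topologically generated by the PD monomials $\prod_i\xi_{ij}^{[n_i]}$ with $\sum_i n_i\ge r$, over all $j$. Each such monomial is the image under the frame homomorphism $q_{j-1,j}\colon\u D(1)\to\u D(m)$ of a PD monomial in $\bar J\subseteq\bar I(1)$, hence lies in $\bar I(m)$; therefore $K\subseteq\bar I(m)$. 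As $\sigma$ and $\sigma_1$ are continuous, the remaining inclusions may be tested on these generators.

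For the Frobenius, $\sigma\colon D(m)\to D(m)$ is a PD morphism preserving the augmentation ideal $\bar J$ of each block, so $\sigma(\xi_{ij}^{[n]})=\sigma(\xi_{ij})^{[n]}$. By \eqref{Eq:sigma1-gen} one has $\sigma(\xi_{ij})=p\sigma_1(\xi_{ij})$ with $\sigma_1(\xi_{ij})=(p-1)!\,\xi_{ij}^{[p]}+\tau(\xi_{ij})\in\bar J$, the correction term $\tau(\xi_{ij})$ lying in $\bar J$ because $\sigma$ preserves $\bar J$ and $A(m)$ is $\ZZ_p$-flat. Writing $\sigma(\xi_{ij})=p\,y$ with $y\in\bar J$ gives $\sigma(\xi_{ij}^{[n]})=p^n y^{[n]}\in p^n\bar J^{[n]}$, so a block monomial of degree $\ge r$ is carried into $p^r\bar J^{[r]}\subseteq K$. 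Hence $\sigma(K)\subseteq K$.

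The inclusion $\sigma_1(K)\subseteq K$ is the one point needing care, since $D(m)_r$ has $p$-torsion and one cannot simply write $\sigma_1=p^{-1}\sigma$. By the $\sigma$-linearity $\sigma_1(d\mu)=\sigma(d)\sigma_1(\mu)$ it suffices to show $\sigma_1(\mu)\in\bar J^{[r]}$ for a block monomial $\mu$ of PD degree $\ge r$. I would deduce this by functoriality from the torsion-free case, as in the proof of Lemma \ref{Le:sigma1-D}: when $D$ is torsion free one has $\sigma_1=p^{-1}\sigma$, and the estimate $\sigma(\bar J^{[r]})\subseteq p^r\bar J^{[r]}$ of the previous paragraph then gives $\sigma_1(\bar J^{[r]})\subseteq p^{r-1}\bar J^{[r]}\subseteq\bar J^{[r]}$; since $\sigma_1$, the elements $\xi_{ij}$, and the PD structure are functorial in $(A\to R,\sigma)$ and PD morphisms carry $\bar J^{[r]}$ into $\bar J^{[r]}$, the inclusion persists for every $D$. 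Alternatively one expands $\sigma_1(\xi_{ij}^{[n]})=(p^{n-1}/n!)\sigma_1(\xi_{ij})^n$ termwise: every summand is a $p$-integral multiple of a PD monomial of degree $\ge n$, the integrality following from $v_p\bigl(p^{\,n-1-k}(pk)!/(k!\,(n-k)!)\bigr)=n-1-v_p((n-k)!)\ge 0$. With all three inclusions established, $\pi$ is a frame homomorphism and $\u D(m)_r$ is the asserted quotient frame; the $\sigma_1$-stability is the crux, and the reduction to the torsion-free situation disposes of it cleanly.
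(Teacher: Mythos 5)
Your proof is correct, and its crux is the same as the paper's: to get $\sigma_1$-stability of the kernel you reduce to a torsion-free model, namely the situation with $A/p$ in place of $R$, whose completed PD envelope is $E(m)=D_\gamma(A(m)\to A)^\wedge$; your appeal to ``functoriality of $\sigma_1$, the $\xi_{ij}$, and the PD structure'' is exactly the frame homomorphism $\u E(m)\to\u D(m)$ that the paper uses, combined with the fact that the kernel of $D(m)\to D(m)_r$ is topologically generated by the image of $N'=\Ker(E(m)\to E(m)_r)$. Where you differ is in how the torsion-free case is settled: the paper argues abstractly that $E(m)_r$ is a topologically free $A$-module, hence $\ZZ_p$-flat, so $N'\cap pE(m)=pN'$ and $\sigma$-stability of $N'$ at once yields $\sigma_1$-stability; you instead prove the sharper estimate $\sigma(\bar J^{[r]})\subseteq p^r\bar J^{[r]}$ on explicit PD-monomial generators, getting $\sigma_1(\bar J^{[r]})\subseteq p^{r-1}\bar J^{[r]}$. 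This costs you the computation with $\tau(\xi_{ij})$, where one small imprecision appears: the flatness you actually need there is that of the quotient $A(m)/J_j\cong A(m-1)$, giving $J_j\cap pA(m)=pJ_j$, not merely $\ZZ_p$-flatness of $A(m)$ itself. Your alternative termwise expansion of $\sigma_1(\xi_{ij}^{[n]})=(p^{n-1}/n!)\,\sigma_1(\xi_{ij})^n$ with the valuation bound $n-1-v_p((n-k)!)\ge 0$ is a genuinely independent route: granted that \eqref{Eq:sigma1-gen} holds for arbitrary $D$ (which the proof of Lemma \ref{Le:sigma1-D} establishes by the same functoriality), it verifies $\sigma_1(K)\subseteq K$ directly in $D(m)$ with no torsion-free detour, at the price of the combinatorics. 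Finally, you check $K\subseteq\bar I(m)$ and $\sigma(K)\subseteq K$ explicitly, which the paper treats as immediate; that is harmless and in fact fills in the unproved assertion ``$\sigma$ preserves $N'$'' in the paper's argument.
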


\begin{proof}
Let $N$ be the kernel of $D(m)\to D(m)_r$. 
We have to show that $\sigma_1(N)\subseteq N$.
For the ring $R'=A/p$ in place of $R$
we get an analogous frame $\u D'(m)=\u E(m)$
and an ideal $N'=\Ker(E(m)\to E(m)_r)$. 
There is a frame homomorphism $\u E(m)\to\u D(m)$,
and $N$ is topologically generated by the image of $N'$.
Hence it suffices to show that $\sigma_1$ preserves $N'$.
But $E(m)_r$ is a topologically free $A$-module, thus $\ZZ_p$-flat, 
hence $N'\cap pE(m)=pN'$. Since $\sigma$ preserves $N'$
it follows that $\sigma_1$ preserves $N'$.
\end{proof}

\begin{Lemma}
\label{Le:D1mDm2}
For $m\ge 1$ 
the homomorphism \chang{$q_{0m}:D(1)\to D(m)$} induces an injective homomorphism
\chang{$\bar q_m:D(1)_{m+1}\to D(m)_2$}.
\end{Lemma}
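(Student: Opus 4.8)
The plan is to make both truncated PD algebras completely explicit as free $D$-modules, compute $j_{0m}$ on the standard divided-power basis, and then read off injectivity from a combinatorial bookkeeping of the surviving monomials. By Lemma~\ref{Le:E(1)-xi} together with the isomorphism \eqref{Eq:EAD}, the ring $D(1)_{m+1}=D(1)\hatotimes_{E(1)}E(1)_{m+1}$ is the $p$-adic PD polynomial algebra $D\langle\xi_i\rangle^\wedge$ modulo the $(m+1)$-st divided power of the PD ideal generated by the $\xi_i$; hence it is a $p$-adically complete free $D$-module with topological basis the divided-power monomials $\xi^{[\u n]}=\prod_i\xi_i^{[n_i]}$ with $|\u n|=\sum_i n_i\le m$. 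Likewise $D(m)_2$ is the quotient of $D\langle\xi_{ij}\rangle^\wedge$ obtained by killing, in each block $j$, the PD-square of the ideal generated by $\{\xi_{ij}\}_i$; its topological $D$-basis consists of the block-squarefree monomials $\prod_{j}\xi_{f(j),j}$, one for each partial function $f$ from $\{1,\dots,m\}$ to the $p$-basis index set ($j$ ranging over the domain of $f$). In particular every basis monomial of $D(m)_2$ has total $\xi$-degree at most $m$.

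First I would check that $j_{0m}$ descends to the truncations and compute it. Writing $x_i^{(k)}$ for $x_i$ in the $k$-th tensor factor, the map $j_{0m}$ on the two outer coordinates sends $\xi_i\mapsto x_i^{(0)}-x_i^{(m)}=\sum_{j=1}^m\xi_{ij}$. Since $j_{0m}$ is a PD homomorphism, $j_{0m}(\xi_i^{[n]})=\bigl(\sum_{j=1}^m\xi_{ij}\bigr)^{[n]}=\sum_{n_1+\cdots+n_m=n}\prod_{j=1}^m\xi_{ij}^{[n_j]}$, a finite sum of monomials all of total degree $n$. Consequently $j_{0m}$ carries the ideal of PD-degree $\ge m+1$ into total degree $\ge m+1$, which vanishes in $D(m)_2$; so the induced map exists and is homogeneous for the total-degree grading.

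Next I would reduce each image modulo the defining relations of $D(m)_2$. In $j_{0m}(\xi_i^{[n]})$ only the terms with every $n_j\in\{0,1\}$ survive, giving $\sum_{|S|=n}\prod_{j\in S}\xi_{ij}$; and for a general basis element $j_{0m}(\xi^{[\u n]})\equiv\sum\prod_i\prod_{j\in S_i}\xi_{ij}$, the sum running over tuples of \emph{pairwise disjoint} sets $S_i\subseteq\{1,\dots,m\}$ with $|S_i|=n_i$ (disjointness because same-block products are killed). The key observation is that each surviving monomial determines its multi-index, as $n_i$ equals the number of blocks $j$ with $f(j)=i$; hence the monomials occurring in $j_{0m}(\xi^{[\u n]})$ for distinct $\u n$ are pairwise disjoint and occur with coefficient $1$.

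Finally I would exhibit an explicit $D$-linear retraction. Fix a well-ordering of the index set and call a partial function $f$ \emph{standard} if, for its profile $\u n$, it fills the first $n_{i_1}$ blocks with the least index $i_1$ in the support, the next $n_{i_2}$ blocks with the next index, and so on; then exactly one monomial in each $j_{0m}(\xi^{[\u n]})$ is standard. Define $r:D(m)_2\to D(1)_{m+1}$ on the monomial basis by $r\bigl(\prod_{j}\xi_{f(j),j}\bigr)=\xi^{[\u n(f)]}$ when $f$ is standard and $0$ otherwise. Then $r$ is $p$-adically continuous and $r\circ j_{0m}=\id$, since every monomial in $j_{0m}(\xi^{[\u n]})$ has profile $\u n$ and precisely one is standard; thus $j_{0m}$ is a split injection, in particular injective. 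The only point requiring care is that $D$ may carry $p$-torsion, so one cannot argue by torsion-freeness nor by inverting the multinomial coefficients counting the monomials of a given profile; the explicit retraction $r$ sidesteps this and works over an arbitrary base $D$.
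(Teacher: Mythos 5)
Your proof is correct and follows essentially the same route as the paper: compute $j(\xi_i)=\xi_{i1}+\dots+\xi_{im}$, expand divided powers via the PD addition formula, and observe that in $D(m)_2$ only the block-squarefree monomials survive, each of which remembers its profile $\underline{n}$ --- the paper packages this same computation as elementary symmetric polynomials $s_r(\xi_{i1},\dots,\xi_{im})$. The only difference is that where the paper ends with ``one verifies'' that the resulting elements are $D$-linearly independent, you make that step explicit (and slightly stronger) by exhibiting a continuous $D$-linear retraction built from a choice of standard monomial per profile, which cleanly handles possible $p$-torsion in $D$ and an infinite $p$-basis.
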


\begin{proof}
\chang{For $r\le m$ let $s_r(T_1,\ldots,T_m)$ be the $r$-th elementary symmetric polynomial, and let $s_r(T_1,\ldots,T_m)=0$ for $r>m$. 
We have 
\[
q_{0m}(\xi_i)=\xi_{i1}+\xi_{i2}+\ldots+\xi_{im},
\]
so the image of $q_{0m}(\xi_i^{[r]})=q_{0m}(\xi_i)^{[r]}$ in $D(m)_2$ 
is equal to $s_r(\xi_{i1},\ldots,\xi_{im})$.
Similarly, 
for a PD monomial $a=\prod\xi_i^{[r_i]}$ 
of total degree $r=\sum r_i$ the image of
$q_{0m}(a)$ in $D(m)_2$ is equal to $\prod s_{r_i}(\xi_{i1},\ldots,\xi_{im})$,
which is zero if $r>m$. 
It follows that $q_{0m}$ induces a homomorphism $\bar q_m$ as indicated.
For $m\ge 2$ there is a commutative diagram where the right vertical arrow omits one of the coordinates.
\[
\xymatrix@M+0.2em{
D(1)_{m+1} \ar[d]_\pi \ar[r]^-{\bar q_m} & D(m)_2 \ar[d] \\
D(1)_{m} \ar[r]^-{\bar q_{m-1}} & D(m-1)_2.
}
\]
The kernel of $\pi$ is the free $D$-module with basis all PD monomials $a$ as above of total degree $r=m$. One verifies that the images of these elements are $D$-linearly independent in $(\hat\Omega_D)^{\hat\otimes m}$. Then $\bar q_m$ is injective by induction.}
\end{proof}

\begin{proof}[Proof of Proposition \ref{Pr:nabla-HPD}]
Let $\u M=(M, M_1,\Phi,\Phi_1)$ be a $\u D$-window.
An HPD stratification $\varepsilon$ on the $D$-module $M$ is equivalent to
an integrable and topologically quasi-nilpotent connection $\nabla$ on $M$,
and $\Phi$ commutes with $\varepsilon$ iff $\Phi$ is horizontal \chang{with respect to}
$\nabla$ as in \eqref{Eq:Win-nabla-Phi}. 
We have to show that $\varepsilon$ is a window isomorphism, 
i.e.\ it commutes with $\Phi$ and $\Phi_1$,
iff $\Phi$ and $\Phi_1$ are horizontal \chang{with respect to} $\nabla$ in the sense that
\eqref{Eq:Win-nabla-Phi} and \eqref{Eq:Win-nabla-Phi1} commute.

Let $\bar p_i:\u D\to\u D(1)_2$ be the reduction of $p_i:D\to D(1)$ for $i=0,1$.
The isomorphism $\varepsilon:p_0^*M\cong p_1^*M$ 
of $D(1)$-modules induces an isomorphism
$\bar\varepsilon:\bar p_0^*M\cong\bar p_1^*M$ of $D(1)_2$-modules.
We claim that $\varepsilon$ is a window isomorphism iff this holds for 
$\bar\varepsilon$. Assume the latter holds.
The intersection of the kernels of $D(1)\to D(1)_{r+1}$ 
for varying $r$ is zero.
Hence it suffices to show that the reduction $\bar\varepsilon_{r+1}$ over $D(1)_{r+1}$
of $\varepsilon$ is a window isomorphism over $\u D(1)_{r+1}$.
Let $q=q_{0r}:D(1)\to D(r)$,
which induces $\bar q:D(1)_{r+1}\to D(r)_2$ by Lemma \ref{Le:D1mDm2}.
Then $q^*\varepsilon$ is the composition of all $q_{i(i+1)}^*\varepsilon$,
and the reduction $\bar q^*\bar\varepsilon_{r+1}$
is the composition of all $q_{ij}^*\bar\varepsilon$.
Since $\bar\varepsilon$ is a window isomorphism and since $\bar q$ is
injective it follows that $\bar\varepsilon_{r+1}$ is a window isomorphism
as required.

The homomorphism $\bar\varepsilon$ 
corresponds to a $\bar p_0$-linear map $\bar\varepsilon':M\to\bar p_1^*M$,
and $\bar\varepsilon$ is an isomorphism of $\u D(1)_2$-windows iff
$\bar\varepsilon'$ is a homomorphism of windows relative to 
$p_0:\u D\to\u D(1)_2$.
Let us make this condition explicit.
Let $K$ be the kernel of $D(1)_2\to D$,
thus $K\cong\hat\Omega_D$.
Then $D(1)_2=D\oplus K$ 
\chang{such that $\bar p_i:D\to D(1)$ is given by $\bar p_1(a)=(a,0)$ and}
$\bar p_0(a)=(a,da)$ with $da=a\otimes1-1\otimes a\in K$.
The $\u D(1)_2$-window $\bar p_1^*\u M$ consists of the modules
\chang
{\[
\bar p_1^*M=M\oplus (M\otimes_{D}K),
\]
\[
(\bar p_1^*M)_1= M_1\oplus (M\otimes_{D}K),
\]}
and the homomorphism $\Phi_1:(\bar p_1^*M)_1\to\bar p_1^*M$
is given by $\Phi_1=\Phi_1\oplus(\Phi\otimes\sigma_1)$,
while $\Phi=\Phi\oplus(\Phi\otimes\sigma)$.
Moreover $\bar\varepsilon'(x)=x+\nabla(x)$ 
for $x\in M$ under the identification $K\cong\hat\Omega_D$.
Under this identification, 
\chang{$\sigma_1:K\to K$ corresponds to
$(d\sigma)_1:\hat\Omega_D\to\hat\Omega_D$.}
Indeed, this is clear when $D(1)_2$ is torsion free,
for example for $E(1)_2$ in place of $D(1)_2$,
and the general case follows.
As a consequence, $\bar\varepsilon$ is a window isomorphism
iff $\bar\varepsilon'$ is a window homomorphism
iff $\Phi$ and $\Phi_1$ are horizontal \chang{with respect to} $\nabla$.
\end{proof}

\subsection*{HPD stratifications and divided Dieudonn\'e crystals}

We keep the assumptions on $(A\to R,\sigma)$. Let $X=\Spec R$.

\begin{Prop}
\label{Pr:DD-HPD}
There is an equivalence $\DDcat(X)\cong\Win(\u D)^{\HPD}$
between divided Dieudonn\'e crystals over $X$ and
windows over $\u D$ with an HPD stratification.
\end{Prop}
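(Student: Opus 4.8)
The plan is to identify a divided Dieudonné crystal over $X$ with its value at the pro-PD-thickening $(\Spec R,\Spf D,\delta)$ of the crystalline $\nameoftop$-topos, together with the descent datum supplied by the crystal structure, and to recognize that descent datum as an HPD stratification. Throughout I use that finite locally free $\OOO_X^{\cris}$-modules and finite locally free $\OOO_{X/\Sigma}$-modules agree (Lemma \ref{Le:u**}), so a divided Dieudonné crystal may be viewed as a finite locally free crystal on $\CRIS(X/\Sigma)$ carrying the extra window data $(\MMM_1,\Phi,\Phi_1)$. The heart of the matter is the classical linearization of crystals by the cosimplicial PD envelope $\u D(\bullet)$, enhanced so as to transport the frame structure.

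First I would build the functor $\DDcat(X)\to\Win(\u D)^{\HPD}$. Given $\u\MMM$, set $M=\MMM_{(\Spec R,\Spf D,\delta)}:=\varprojlim_n\MMM(\Spec R,\Spec D/p^n,\delta)$, which is finite projective over $D$ since $\MMM$ is a finite locally free crystal. The ideal $\III_X^{\cris}$ evaluates to $\bar I=\Ker(D\to R)$, so $\MMM_1$ yields $M_1\subseteq M$ with $\bar IM\subseteq M_1$ and $M/M_1$ finite projective over $R$. The chosen Frobenius lift $\sigma\colon D\to D$ and the crystalline Frobenius on $\OOO_X^{\cris}$ agree modulo $p$, so the crystal structure supplies a canonical comparison between the two transported modules; composing $\Phi$ and $\Phi_1$ with it produces $\sigma$-linear maps $\Phi\colon M\to M$ and $\Phi_1\colon M_1\to M$ over $D$. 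The identity $\Phi_1(ax)=\sigma_1(a)\Phi(x)$ is inherited because the $\sigma_1$ of Lemma \ref{Le:sigma1-O} restricts at $\Spf D$ to the $\sigma_1$ of Lemma \ref{Le:sigma1-D}, both being pinned down by $p\sigma_1=\sigma$ and functoriality. Thus $\u M=(M,M_1,\Phi,\Phi_1)$ is a $\u D$-window. The two projections $p_0,p_1\colon\u D\to\u D(1)$ come from two maps of PD thickenings $\Spf D(1)\to\Spf D$ over $X$, and the crystal structure furnishes an isomorphism $\varepsilon\colon p_0^*\u M\cong p_1^*\u M$ with cocycle condition over $\u D(2)$ via \eqref{Eq:DDD}; since the crystalline $\Phi,\Phi_1$ commute with every transition map, $\varepsilon$ is an isomorphism of windows, i.e.\ an HPD stratification.

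For the quasi-inverse I would descend along the nerve $(\Spec R,\Spf D(\bullet))$. The geometric input is that, because $A$ is $p$-adic and $\ZZ_p$-flat with $A/p$ admitting a $p$-basis, $A$ is formally smooth over $\ZZ_p$, so by the standard lifting argument every object of $\CRIS(X/\Sigma)$ admits, Zariski- hence $\nameoftop$-locally, a map to $\Spf D$ compatible with the structure map to $\Spec R$; thus $(\Spec R,\Spf D)$ covers the final object of $(X/\Sigma)_{\CRIS,\nameoftop}$. Together with the identifications $\u\OOO{}_X^{\cris}|_{\Spf D(m)}=\u D(m)$ this exhibits $\Win(\u\OOO{}_X^{\cris}/X_\nameoftop)=\DDcat(X)$ as the descent category of the cosimplicial frame $\u D(\bullet)$, which by definition is $\Win(\u D)^{\HPD}$. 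Finite local freeness in the $\nameoftop$-topology is preserved because on a $\nameoftop$-cover $\Spec R'\to\Spec R$ with $R'/p$ semiperfect the crystalline site has an initial object $\Spf A_{\cris}(R')$, the stratification becomes the unique canonical one, and the datum reduces to a window over $\u A{}_{\cris}(R')$, i.e.\ to a divided Dieudonné crystal over $\Spec R'$ by Proposition \ref{Pr:dd-win-semiperf}.

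The main obstacle is the effectivity of this descent for the $\nameoftop$-stack of divided Dieudonné crystals (Lemma \ref{Le:DD-stack}): one must check that a $\u D$-window with HPD stratification genuinely glues to a window over $\u\OOO{}_X^{\cris}$ that is finite locally free in the $\nameoftop$-topology, and that the reconstructed Frobenius is the crystalline one independently of the chosen lift $\sigma$. Both follow from the horizontality of $\varepsilon$ together with the $\nameoftop$-local reduction to the semiperfect case of the previous paragraph, where Proposition \ref{Pr:dd-win-semiperf} removes all choices. Combined with the passage from stratifications to connections in Proposition \ref{Pr:nabla-HPD}, this yields the promised description of $\DDcat(X)$ by windows over $\u D$ with a connection.
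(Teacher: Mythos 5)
Your proposal founders on a site confusion that affects both directions: you treat a divided Dieudonn\'e crystal as an object of the crystalline topos whose full structure can be evaluated at the thickening $\Spf D$, but only the underlying module has this property (Lemma \ref{Le:u**}). The window data $(\MMM_1,\Phi,\Phi_1)$ lives in $X_\nameoftop$, where $\OOO_X^{\cris}=u_*\OOO_{X/\Sigma}$; its sections are indexed by $X$-schemes, not by PD thickenings. In particular $\Gamma(X,\MMM)$ is the module of global crystalline sections (horizontal sections), not the value $M$ of the crystal at $\Spf D$, so ``composing $\Phi_1$ with the comparison'' does not yield a map $M_1\to M$: the divided Frobenius $\Phi_1$ has no evaluation at $\Spf D$, and producing one is the content of the proposition, not an input to it. (The linearization of $\Phi$ does transport, via Remark \ref{Rk:phi-sigma}, but $\MMM_1$ is not a crystal and $\Phi_1$ is not a crystal map.) For the same reason the expression $\u\OOO{}_X^{\cris}|_{\Spf D(m)}=\u D(m)$ is not meaningful, and the claim that $\DDcat(X)$ is the descent category of the cosimplicial frame $\u D(\bullet)$ because $\Spf D$ covers the final object of $(X/\Sigma)_{\CRIS,\nameoftop}$ does not follow: covering the final object gives descent for modules/crystals, but there is no descent formalism in the crystalline topos for the window structure, which is exactly why the paper cannot argue this way.

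What bridges the two sides in the paper's proof --- and what your sketch is missing --- are $\sigma$-compatible \emph{frame} homomorphisms, not mere PD morphisms, and these exist only over semiperfect covers. To restrict a $\u D$-window to $\u A{}_{\cris}(R')$ for semiperfect $R'$ one needs a ring map $D\to A_{\cris}(R')$ commuting with $\sigma$ and $\sigma_1$; the paper builds it by lifting $A/p\to R'^\flat$ using the $p$-basis and then invoking the unique Frobenius-equivariant lift $A\to W(R'^\flat)$ of Lemma \ref{Le:BW(S)}, with the HPD stratification curing the dependence on choices (this gives the functor $\Win(\u D)^{\HPD}\to\DDcat(X)$). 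The quasi-inverse is built not by crystalline-topos descent but by ordinary faithfully flat descent along $R\to R^{(1)}=R\otimes_AA^{(1)}$, where $A^{(1)}$ is the completed perfection of $A$: there the frame isomorphism $h^{(1)}:\u A{}_{\cris}(R^{(1)})\cong\u D^{(1)}$ identifies the sheaf-theoretic and presentation-theoretic windows, and the components glue along $D\to D^{(1)}$. Finally, your appeal to uniqueness of $\sigma_1$ (``pinned down by $p\sigma_1=\sigma$ and functoriality'') is not available pointwise: $D$ may have $p$-torsion, so $p\sigma_1=\sigma$ does not determine $\sigma_1$; uniqueness in Lemmas \ref{Le:sigma1-O} and \ref{Le:sigma1-D} holds only for the functorial families, and comparing the two families is precisely what the explicit frame homomorphisms above accomplish.
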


\begin{Remark}
This extends the usual equivalence between 
finite locally free $\OOO_{X/\Sigma}$-modules
and finite projective $D$-modules with an HPD stratification.
More precisely, 
\chang{let $\u\MMM\in\DDcat(X)$ correspond to 
$(\u M,\varepsilon)\in\Win(\u D)^{\HPD}$. The}
underlying $\OOO_X^{\cris}$-module $\MMM$ is equivalent to the
$\OOO_{X/\Sigma}$-module $u^*\MMM$ by Lemma \ref{Le:u**},
and $M$ is the value of $u^*\MMM$ at the \chang{$p$-complete} PD thickening $D\to R$,
equipped with the canonical HPD stratification.
A technical complication 
\chang{arises from using the 
$\nameoftop$-topology instead of the Zariski topology, 
since etale morphisms lift uniquely over PD thickenings,
but $\nameoftop$-morphisms lift only non-uniquely.}
\end{Remark}

\chang
{\begin{Lemma}
\label{Le:BW(S)}
Let $(B,\sigma)$ be a flat $\ZZ_p$-algebra with a Frobenius lift and let $S$ be a ring. Every ring homomorphism $B\to S$ lifts to a natural homomorphism $B\to W(S)$ that commutes with $\sigma$.
\end{Lemma}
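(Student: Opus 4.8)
The plan is to exploit that $W(S)$ is $p$-adically complete and $\ZZ_p$-flat, that its Witt vector Frobenius $F$ lifts the Frobenius $\phi_S$ of $S$ and is an automorphism because $S$ is perfect, and that for perfect $S$ the reduction $W(S)\to W(S)/p$ is identified with the zeroth Witt component $w_0\colon W(S)\to S$; indeed $p=VF$ gives $pW(S)=VW(S)$, so $W(S)/p\cong S$ via $w_0$. Throughout, ``commutes with $\sigma$'' will mean $f\circ\sigma=F\circ f$. Since both $B$ and $W(S)$ are $p$-torsion free, a ring homomorphism $f\colon B\to W(S)$ commutes with $\sigma$ if and only if it respects the operators $\delta=(\sigma-(-)^p)/p$, i.e.\ it is a morphism of $\delta$-rings; this reformulation is convenient but not essential. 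I would establish uniqueness by a direct congruence argument and existence by the universal property of $W(S)$ as the cofree $\delta$-ring on $S$.

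For uniqueness, suppose $f,g\colon B\to W(S)$ both reduce to $\bar f$ and commute with $\sigma$. I show by induction that $f\equiv g\pmod{p^n}$ for all $n$; the case $n=1$ is the hypothesis, and $p$-adic separatedness of $W(S)$ then gives $f=g$. Assuming $f\equiv g\pmod{p^n}$ with $n\ge1$, write $f(b)-g(b)=p^n\theta(b)$ with $\theta(b)\in W(S)$ uniquely determined by $\ZZ_p$-flatness, and let $\bar\theta\colon B\to S$ be its reduction. Expanding $f,g$ as ring homomorphisms shows $\bar\theta$ is an $\bar f$-twisted $\ZZ_p$-derivation, so it factors through $B/p$ and satisfies $\bar\theta(\bar b^p)=0$ in characteristic $p$. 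On the other hand, applying $F$ to $f(b)-g(b)$ and using $f\sigma=Ff$, $g\sigma=Fg$ together with $F(p^n\theta)=p^nF(\theta)$ gives $\theta(\sigma b)=F(\theta(b))$, hence $\bar\theta(\bar b^p)=\bar\theta(\bar b)^p$ since $\sigma$ reduces to the $p$-th power map and $F$ reduces to $\phi_S$. Thus $\bar\theta(\bar b)^p=0$ for every $b$, and since $S$ is perfect, hence reduced, $\bar\theta=0$. Therefore $\theta(b)\in pW(S)$ and $f\equiv g\pmod{p^{n+1}}$.

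For existence I would invoke the standard fact that the $p$-typical Witt vector functor is right adjoint to the forgetful functor from $\delta$-rings to rings, so that $\delta$-ring homomorphisms $B\to W(S)$ correspond bijectively, via the counit $w_0$, to ring homomorphisms $B\to S$. Because $S$ is an $\FF_p$-algebra, a ring homomorphism $B\to S$ is the same as one $B/p\to S$, and because $S$ is perfect the counit $w_0$ coincides with reduction modulo $p$. Combining these identifications with the equivalence between $\sigma$-equivariance and the $\delta$-condition on the $p$-torsion-free ring $B$, the adjunction produces the required $\sigma$-equivariant $f\colon B\to W(S)$ reducing to the prescribed $\bar f$ (and re-proves uniqueness). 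If one prefers to avoid the adjunction, the same map can be built as the inverse limit of compatible homomorphisms $g_n\colon B\to W_n(S)=W(S)/p^n$, constructed by lifting $g_n$ along the square-zero extension $W_{n+1}(S)\to W_n(S)$, whose kernel is isomorphic to $S$.

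The main obstacle is existence: lifting a ring homomorphism along the square-zero extension $W_{n+1}(S)\to W_n(S)$ carries an a priori obstruction, and an arbitrary lift need not commute with $F$. The point is that the Frobenius lift $\sigma$ on $B$ and the automorphism $F$ on $W(S)$ rigidify the problem — this is exactly what the cofree $\delta$-ring property encodes — so that a compatible lift exists and is forced at each stage. Perfectness of $S$ enters twice: it yields $W(S)/p\cong S$ via $w_0$, so that ``reduces to $\bar f$'' matches the counit of the adjunction, and it supplies the reducedness used in the uniqueness step.
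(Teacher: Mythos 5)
Your proof is correct, but it is organized differently from the paper's, so a comparison is worthwhile. The paper constructs the lift as the composite $B\to W(B)\to W(B/p)\to W(S)$, where $B\to W(B)$ is the unique $\sigma$-compatible section of $W(B)\to B$ (Lazard's Cartier homomorphism, cited as [Lazard, VII Prop.~4.12]), and it obtains uniqueness from the functoriality of $B\to W(B/p)$ in $(B,\sigma)$. Your existence argument via the cofree $\delta$-ring property of $W$ is the same underlying fact in adjunction language: Lazard's section is precisely the unit of that adjunction, and the paper's composite is exactly how one realizes the adjunction bijection, so here the two proofs differ only in packaging (and your translation between $\sigma$-equivariance and $\delta$-morphisms, using that $B$ and $W(S)$ are $p$-torsion free, is carried out correctly). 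What is genuinely different is your uniqueness step: the induction modulo $p^n$, in which the difference of two lifts produces an additive map $\bar\theta:B/p\to S$ that is an $\bar f$-twisted derivation (hence kills $p$-th powers) and at the same time satisfies $\bar\theta(\bar b^p)=\bar\theta(\bar b)^p$ by Frobenius equivariance, forcing $\bar\theta=0$ since $S$ is reduced. This argument is self-contained --- it uses only $W(S)/p\cong S$, torsion-freeness, and that $F$ lifts $\phi_S$ --- whereas the paper's uniqueness leans on the uniqueness/functoriality clauses of the cited result; the trade-off is that the paper extracts both existence and uniqueness from a single citation. Your alternative existence sketch (successive lifting along the square-zero extensions $W_{n+1}(S)\to W_n(S)$) is left vague about why the obstruction vanishes and why the lift can be chosen $F$-equivariantly, but this is immaterial since your adjunction argument is complete.
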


\begin{proof}
This follows from the fact that the forgetful functor from $\delta$-rings to rings has a right adjoint given by $R\mapsto W(R)$, following \cite{Joyal:delta}. Explicitly, \cite[VII Prop.~4.12]{Lazard} provides a homomorphism $B\to W(B)$ which
is a section of $W(B)\to B$ and commutes with $\sigma$, and we take 
$B\to W(B)\to W(S)$.
\end{proof}}

\begin{proof}[Proof of Proposition \ref{Pr:DD-HPD}]
One reduces to the case that $R$ is an $\FF_p$-algebra because
on both sides, lifts from $R_0$ to $R$ correspond to lifts of
the Hodge filtration.
First, we construct a functor
\[
F:\Win(\u D)^{\HPD}\to \DDcat(X).
\]

Let $(\u M,\varepsilon)\in\Win(\u D)^{\HPD}$.
Let $R\to R'$ be a ring homomorphism with semiperfect $R'$.
Assume that $S\to R'$ is a surjective ring homomorphism where $S$ is perfect,
and assume that $g_0:A/p\to S$ lifts $A/p\to R\to R'$.
Such pairs $(S,g_0)$ exist, 
for example $S=R'^\flat=\varprojlim(R',\phi)$ 
allows a lift $g_0:A/p\to S$ using that $A/p$ has a $p$-basis.
If $(S,g_0)$ is given, $g_0$ lifts to a \chang{natural} homomorphism $g_1:A\to W(S)$ 
that commutes with $\sigma$ by Lemma \ref{Le:BW(S)},
and $g_1$ induces a frame homomorphism
\[
g:\u D\to\u A{}_{\cris}(R')
\]
by Lemmas \chang{\ref{Le:delta-pd-sigma1}} and \ref{Le:D-Acris}.
We define an $\u A{}_{\cris}(R')$-window $\u M{}_{R'}$ 
by $\u M{}_{R'}=g^*\u M$, and verify that this is 
independent of $(S,g_0)$ by the usual argument:
Let $(S',g_0')$ be another choice of $(S,g_0)$,
and $g':\u D\to\u A{}_{\cris}(R')$ 
the corresponding frame homomorphism.
Then 
\[
h_0=g_0\otimes g_0':A(1)/p\to S\otimes S_1
\]
gives a homomorphism of frames
\[
h:\u D(1)\to\u A{}_{\cris}(R')
\]
with $h\circ p_0=g$ and $h\circ p_1=g'$. 
The isomorphism $\varepsilon:p_0^*\u M\cong p_1^*\u M$ induces
an isomorphism $g^*\u M\cong g'^*\u M$,
which proves independence of $(S,g_0)$.

The construction of $\u M{}_{R'}$ is functorial in $R'$.
By Proposition \ref{Pr:dd-win-semiperf}, $\u M{}_{R'}$ 
corresponds to a divided Dieudonn\'e crystal $\u\MMM{}_{X'}$ 
over $X'=\Spec R'$.
Since affine semiperfect $X$-schemes 
form a \chang{basis} of the $\nameoftop$-topology of $X$, 
the system $(\u\MMM{}_{X'})_{X'}$
descends to a unique divided Dieudonn\'e crystal $\u\MMM$ over $X$,
and the functor $F$ is defined by
\[
F(\u M,\varepsilon)=\u\MMM.
\]

Next we will define a functor
\[
G:\DDcat(X)\to\Win(\u D)^{\HPD}.
\]
We need a number of rings in order to use descent.
Let $A^{(1)}$ be the \chang{$p$-completion} of $\varinjlim(A,\sigma)$.
Then $A^{(1)}/p$ is the perfection of $A/p$, and $A^{(1)}=W(A^{(1)}/p)$.
Let $R^{(1)}=R\otimes_AA^{(1)}$.
Since $A$ has a $p$-basis, the morphisms
$A/p^r\to A^{(1)}/p^r$ and $R\to R^{(1)}$ are
faithfully flat.
For $n\ge 0$ let
\[
A^{(n)}=A^{(1)}\hatotimes_A\ldots\hatotimes_AA^{(1)}
\]
with $n$ factors,
$R^{(n)}=R\otimes_AA^{(n)}$, and 
\[
D^{(n)}=D_\gamma(A^{(n)}\to R^{(n)})^\wedge.
\]
Then $D^{(n)}=D\hatotimes_AA^{(n)}$
since $A/p^r\to A^{(n)}/p^r$ is flat,
hence
\[
D^{(n)}=D^{(1)}\hatotimes_D\ldots\hatotimes_DD^{(1)}
\]
with $n$ factors, and similarly
\[
R^{(n)}=R^{(1)}\otimes_R\ldots\otimes_RR^{(1)}
\]
with $n$ factors.
As a variant, for $n\ge 1$ we also consider the rings
\[
\tilde A^{(n)}=A^{(1)}\hatotimes_{\ZZ_p}\ldots\hatotimes_{\ZZ_p}A^{(1)}
\]
with $n$ factors.
The ring  $\tilde A^{(n)}/p$ is perfect, hence
\[
A_{\cris}(R^{(n)})=D_\gamma(\tilde A^{(n)}\to R^{(n)})^\wedge
\]
by Lemma \ref{Le:D-Acris}.
The rings $\tilde A^{(n)}$ and $A^{(n)}$ carry a natural Frobenius lift
$\sigma$ induced by $\sigma:A\to A$.
The projection $\tilde A^{(n)}\to A^{(n)}$ commutes with $\sigma$
and thus extends to a frame homomorphism
\[
h^{(n)}:\u A{}_{\cris}(R^{(n)})\to\u D^{(n)}
\]
by Lemma \chang{\ref{Le:delta-pd-sigma1}}.
Thus we have the following commutative diagram of frames,
where $h^{(1)}$ is an isomorphism since $\tilde A^{(1)}=A^{(1)}$.
\[
\xymatrix@M+0.2em{
&
\u A{}_{\cris}(R^{(1)}) \ar@<0.5ex>[r] \ar@<-0.5ex>[r] 
\ar[d]_\cong^{h^{(1)}} &
\u A{}_{\cris}(R^{(2)}) \ar@<1ex>[r] \ar[r] \ar@<-1ex>[r] 
\ar[d]^{h^{(2)}} & 
\u A{}_{\cris}(R^{(3)}) 
\ar[d]^{h^{(3)}} \\
\u D = \u D^{(0)}  \ar[r] &
\u D^{(1)} \ar@<0.5ex>[r] \ar@<-0.5ex>[r] &
\u D^{(2)} \ar@<1ex>[r] \ar[r] \ar@<-1ex>[r] & 
\u D^{(3)} 
}
\]
Each homomorphism $\alpha:\u D^{(n)}\to\u D^{(n+1)}$ 
that appears in the lower row of this diagram induces an isomorphism
\[
\u D^{(n)}\otimes_{D^{(n)}}D^{(n+1)}\xrightarrow\sim\u D^{(n+1)}
\]
where $\otimes$ is taken component-wise, and consequently
the base change of windows \chang{along} $\alpha$ is given by
$\alpha^*\u M=\u M\otimes_{D^{(n)}}D^{(n+1)}$.

Now let $\u\MMM\in\DDcat(X)$.
For $n\ge 1$, the base change of $\u\MMM$ to $\Spec R^{(n)}$ 
corresponds to an $\u A{}_{\cris}(R^{(n)})$-window $\u M{}_{R^{(n)}}$
by Proposition \ref{Pr:dd-win-semiperf}, 
which gives a $\u D^{(n)}$-window $\u M^{(n)}=(h^{(n)})^*\u M{}_{R^{(n)}}$. 
These $\u M^{(n)}$ for $1\le n\le 3$ define a descent datum on $\u M^{(1)}$, 
which thus descends to a $\u D$-window $\u M$ 
by faithfully flat descent applied to the components of $\u M^{(1)}$.

For the same $\u\MMM$,
if we take $A(m)\to R$ with $m\ge 0$ instead of $A\to R$,
the same construction gives a $\u D(m)$-window $\u M(m)$.
This will involve obvious frames $\u D(m)^{(n)}$,
which we do not make explicit.
For each homomorphism $\u D(m)\to\u D(m')$ given by the cosimplicial
structure, $\u M(m')$ is the base change of $\u M(m)$ in a compatible way.
This gives a HPD stratification $\varepsilon$ on $\u M=\u M(0)$,
and we can define the functor $G$ by
\[
G(\u\MMM)=(\u M,\varepsilon).
\]

Assume that $F(\u M,\varepsilon)=\u\MMM$ and $G(\u\MMM)=(\u N,\varepsilon)$.
Then 
\chang{$\u M_{R^{(n)}}$} 
used in the construction of $G(\u\MMM)$
\chang{is} 
the base change of $\u M$ \chang{along} the frame homomorphisms 
$\u D\to\u A{}_{\cris}(R^{(n)})$ induced by the homomorphisms
$A\to \tilde A^{(n)}$ defined by the choice of one component;
this is independent of the choice 
using the HPD stratification $\varepsilon$ as explained above.
It follows that $\u N\cong\u M$, and one verifies that
this isomorphism preserves the stratifications, 
thus $G\circ F\cong\id$.
It remains to show that $G$ is fully faithful.
Assume that $G(\u\MMM)=(\u M,\varepsilon)$ and $G(\u\NNN)=(\u N,\varepsilon)$.
We have a commutative diagram with exact rows:
\[
\xymatrix@M+0.2em{
0 \ar[r] &
\Hom(\u\MMM,\u\NNN) \ar[r] \ar[d]_G &
\Hom(\u M{}_{R^{(1)}},\u N{}_{R^{(1)}}) \ar[r] \ar[d]^\cong &
\Hom(\u M{}_{R^{(2)}},\u N{}_{R^{(2)}}) \ar[d] \\
0 \ar[r] &
\Hom(\u M,\u N) \ar[r] &
\Hom(\u M^{(1)},\u N^{(1)}) \ar[r] & 
\Hom(\u M^{(2)},\u N^{(2)})
}
\]
Clearly the arrow labelled $G$ is injective.
We have to show that an $f\in\Hom(\u M,\u N)$ 
which commutes with the HPD stratifications 
maps to zero in $\Hom(\u M{}_{R^{(2)}},\u N{}_{R^{(2)}})$.
This condition depends only on the module homomorphism $f:M\to N$.
Since the proposition holds for finite locally free modules
in place of windows, the assertion follows.
\end{proof}

\begin{Cor}
\label{Co:DD-Win}
Let $R$ be a ring in which $p$ is nilpotent.
For a presentation $A\to R$ with a Frobenius lift $\sigma:A\to A$
and the associated frame $\u D$ there is an equivalence
$\DDcat(\Spec R)\cong\Win(\u D)^\nabla$
between divided Dieudonn\'e crystals over $X$
and windows over $\u D$ with a connection.
\end{Cor}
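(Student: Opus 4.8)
The plan is to obtain the equivalence by composing the two equivalences established immediately above; once the hypotheses are seen to match, the proof is essentially a one-line bookkeeping step.

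First I would check that the standing assumptions of the preceding propositions are in force. By definition (Section~\ref{Se:torsion-PD}) a presentation $A\to R$ means that $A$ is $p$-adic and $\ZZ_p$-flat with $A/p$ having a $p$-basis; together with the Frobenius lift $\sigma$ this is precisely the data $(A\to R,\sigma)$ with $A/p$ having a $p$-basis under which both Proposition~\ref{Pr:nabla-HPD} and Proposition~\ref{Pr:DD-HPD} were proved, and $\u D$ is the associated frame furnished by Lemma~\ref{Le:sigma1-D}.

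Next I would invoke the two propositions directly. Proposition~\ref{Pr:DD-HPD} gives an equivalence $\DDcat(\Spec R)\cong\Win(\u D)^{\HPD}$ between divided Dieudonn\'e crystals over $X=\Spec R$ and $\u D$-windows equipped with an HPD stratification, while Proposition~\ref{Pr:nabla-HPD} gives an equivalence $\Win(\u D)^{\HPD}\cong\Win(\u D)^\nabla$ identifying HPD stratifications with connections. Composing these two functors yields the desired equivalence $\DDcat(\Spec R)\cong\Win(\u D)^\nabla$.

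The only point requiring any attention---and the closest thing to an obstacle---is that the intermediate category $\Win(\u D)^{\HPD}$ occurring in the two propositions is literally the same category: both are built from the cosimplicial frame $\u D(*)$ attached to $(A\to R,\sigma)$ together with its structure maps $p_0,p_1$ and $q_{ij}$, so there is no hidden mismatch of conventions and the composition is legitimate. I expect no further difficulty.
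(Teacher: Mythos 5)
Your proposal is correct and matches the paper's proof exactly: the paper also obtains the equivalence by composing Proposition~\ref{Pr:DD-HPD} with Proposition~\ref{Pr:nabla-HPD}, and your hypothesis-checking (that a presentation with Frobenius lift is precisely the setting of both propositions, with the same intermediate category $\Win(\u D)^{\HPD}$) is the right bookkeeping.
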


\begin{proof}
Propositions \ref{Pr:nabla-HPD} and \ref{Pr:DD-HPD}.
\end{proof}

Using Proposition \ref{Pr:DD-functor}, 
Theorem \ref{Th:Ffin-Fnil} and Corollary \ref{Co:Ffin-Fnil} it follows:

\begin{Cor}
\label{Co:BT-Win}
There is a functor $\BT(\Spec R)\to\Win(\u D)^\nabla$,
which is an equivalence if $R/p$ is $F$-finite and $F$-nilpotent
and if $p\ge 3$ or $pR=0$. \qed
\end{Cor}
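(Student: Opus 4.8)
The plan is to obtain the functor as a composition and then to read off the equivalence directly from the results just established, so that essentially no new argument is needed. For the functor itself, I would start from the divided Dieudonn\'e functor $\DDfun_{\Spec R}:\BT(\Spec R)\to\DDcat(\Spec R)$ furnished by Proposition \ref{Pr:DD-functor}, which applies since $p$ is nilpotent on $R$. Corollary \ref{Co:DD-Win} then supplies an equivalence $\DDcat(\Spec R)\cong\Win(\u D)^\nabla$ attached to the chosen presentation $A\to R$ and Frobenius lift $\sigma$. Composing the two produces the asserted functor $\BT(\Spec R)\to\Win(\u D)^\nabla$.

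To prove that this functor is an equivalence under the stated hypotheses, it suffices to show that $\DDfun_{\Spec R}$ is an equivalence, because the second arrow is always an equivalence by Corollary \ref{Co:DD-Win}. Here I would split into the two cases permitted by the hypothesis. If $pR=0$, then $R$ is an $\FF_p$-algebra with $R/p=R$, so the assumption is exactly that $R$ is $F$-finite and $F$-nilpotent, and Theorem \ref{Th:Ffin-Fnil} gives that $\DDfun_{\Spec R}$ is an equivalence. If instead $p\ge 3$, then the reduction $X_0=\Spec R\times\Spec\FF_p=\Spec(R/p)$ is $F$-finite and $F$-nilpotent by hypothesis, so Corollary \ref{Co:Ffin-Fnil} yields the same conclusion. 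In both cases the composition with the equivalence of Corollary \ref{Co:DD-Win} is an equivalence, as required.

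I do not expect a genuine obstacle: all of the substantive content has already been packaged into Proposition \ref{Pr:DD-functor}, Corollary \ref{Co:DD-Win}, Theorem \ref{Th:Ffin-Fnil}, and Corollary \ref{Co:Ffin-Fnil}, and the present statement is merely their formal combination. The only point deserving a moment's care is the bookkeeping of the finiteness conditions, which are phrased here in terms of $R/p$: one must observe that $\Spec(R/p)$ is precisely the $\FF_p$-scheme $X_0$ to which Corollary \ref{Co:Ffin-Fnil} refers, while in the case $pR=0$ the reduction $R/p$ coincides with $R$, so that Theorem \ref{Th:Ffin-Fnil} applies verbatim. Beyond this matching of hypotheses there is nothing to compute.
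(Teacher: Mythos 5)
Your proposal is correct and follows exactly the route the paper takes: the functor is the composite of $\DDfun_{\Spec R}$ (Proposition \ref{Pr:DD-functor}) with the equivalence of Corollary \ref{Co:DD-Win}, and the equivalence statement follows from Theorem \ref{Th:Ffin-Fnil} when $pR=0$ and from Corollary \ref{Co:Ffin-Fnil} when $p\ge 3$. The paper records precisely this combination (its proof is the one-line attribution preceding the corollary), so there is nothing to add.
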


\section{Divided Dieudonn\'e crystals and displays}
\label{Se:displays}

Finally we mention a link between divided Dieudonn\'e crystals and
the displays of \cite{Zink:Display} (called $3n$-displays in loc.cit.). Let $X$ be a scheme on which $p$ is locally nilpotent. 
As earlier we consider $X_\nameoftop$, 
the category of $\nameoftop$-sheaves on the category of $X$-schemes. 
There is a frame 
\[
\u W(\OOO_X)=(W(\OOO_X),I(\OOO_X),\OOO_X,\sigma,\sigma_1)
\]
in $X_\nameoftop$ whose value in $\Spec R\to X$ is the Witt frame $\u W(R)$
of Example \ref{Ex:Witt-frame}. A window over $\u W(\OOO_X)$ will be called a display over $X$, and $\Disp(X)$ denotes the category of displays over $X$.
A display over $X=\Spec R$ is a display over $R$ in the usual sense, i.e.\ a window over $\u W(R)$, by faithfully flat descent of displays; see \cite[Theorem 37]{Zink:Display}. 
By \cite{Lau:Smoothness} there is a contravariant functor 
\[
\Phi_X:\BT(X)\to\Disp(X),
\]
which restricts to an equivalence between formal (resp.\ unipotent) $p$-divisible groups and $F$-nilpotent (resp.\ $V$-nilpotent) displays.
In fact, the functor considered in loc.cit.\ is covariant, 
but we get a contravariant functor by taking the dual on either side.

\begin{Lemma}
There is a natural frame homomorphism $\pi:\u\OOO_X^{\cris}\to\u W(\OOO_X)$
over the identity of $\OOO_X$.
\end{Lemma}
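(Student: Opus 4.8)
The plan is to build $\pi$ on the basis of the $\nameoftop$-topology consisting of affine $X$-schemes $U=\Spec R$ with $R/p$ semiperfect, where $\OOO_X^{\cris}(U)=A_{\cris}(R)$ and $W(\OOO_X)(U)=W(R)$, and then to glue. On such $U$ recall from Lemma \ref{Le:D-Acris} that $A_{\cris}(R)=D_\gamma(A_{\inf}(R)\to R)^\wedge$ with $A_{\inf}(R)=W(R^\flat)$. The ideal $I(R)=V(W(R))$ carries its canonical divided powers, so that $W(R)\to R$ is a $p$-adic PD thickening of $R$. The projection $f:R^\flat\to R$ induces a ring homomorphism $W(f):A_{\inf}(R)=W(R^\flat)\to W(R)$, which lifts the structure map $A_{\inf}(R)\to R$ along $W(R)\to R$; by the universal property of the completed PD envelope it extends uniquely to a PD homomorphism $\pi_R:A_{\cris}(R)\to W(R)$ over $R$. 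Since the whole construction is functorial in $(R,f)$, the maps $\pi_R$ are compatible with $\nameoftop$-morphisms and glue to a homomorphism of sheaves of rings $\pi:\OOO_X^{\cris}\to W(\OOO_X)$.

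Next I would check the frame-homomorphism conditions of Remark \ref{Rk:frame-funct} in the sheaf form of Definition \ref{Def:frame-topos}. The map $\pi$ is over the identity of $\OOO_X$ by construction, and consequently carries $\III_X^{\cris}=\Ker(\OOO_X^{\cris}\to\OOO_X)$ into $I(\OOO_X)=\Ker(W(\OOO_X)\to\OOO_X)$. For compatibility with $\sigma$ it suffices to argue locally: both $\pi_R\circ\sigma$ and $\sigma\circ\pi_R$ are PD homomorphisms $A_{\cris}(R)\to W(R)$ lifting the Frobenius of $R$ (the Witt Frobenius on $W(R)$ being a PD endomorphism inducing $\phi$), so they agree by the uniqueness in the universal property of $A_{\cris}(R)$.

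The main work, and the only real obstacle, is compatibility with $\sigma_1$. Here I would first reduce to the case $pR=0$, the frame structures on both sides being restricted from $X_0$ as in Lemma \ref{Le:sigma1-O}; on $\u W(\OOO_X)$ one then has $\sigma_1=V^{-1}$ by Example \ref{Ex:frame-W}. The difficulty is that $W(R)$ has $p$-torsion, so the relation $p\sigma_1=\sigma$ cannot simply be divided to deduce $\pi\circ\sigma_1=V^{-1}\circ\pi$; instead I would verify the identity on a generating set. Both $\pi\circ\sigma_1$ and $V^{-1}\circ\pi$ are $\sigma$-semilinear over $\pi$ and continuous, so it suffices to compare them on the topological generators $p$ and $[a]^{[n]}$ ($a\in J$, $n\ge 1$) of $I_{\cris}(R)$ used in the proof of Proposition \ref{Pr:kappa-equiv}. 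Since $W(f)([a])=[f(a)]=0$ because $a\in J=\Ker f$, and $\pi$ is a PD homomorphism, one gets $\pi([a]^{[n]})=\pi([a])^{[n]}=0$; together with $\sigma_1([a]^{[n]})=c_n[a]^{[np]}$ from \eqref{Eq:sigma1-an} this makes both $\pi(\sigma_1([a]^{[n]}))$ and $V^{-1}(\pi([a]^{[n]}))$ vanish. On the generator $p$ I would use the identity $p=V(1)$ valid in $W(R)$ for every $\FF_p$-algebra $R$, which gives $V^{-1}(\pi(p))=V^{-1}(p)=1=\pi(1)=\pi(\sigma_1(p))$ (recall $\sigma_1(p)=1$ in the characteristic-$p$ case by Lemma \ref{Le:sigma1-O}). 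Extending by semilinearity and $p$-adic continuity yields $\pi\circ\sigma_1=\sigma_1\circ\pi$ on all of $\III_X^{\cris}$, completing the verification that $\pi$ is a frame homomorphism.

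I expect the $\sigma_1$-compatibility to be the delicate point precisely because of the $p$-torsion in $W(R)$: the argument must proceed through the explicit generators rather than by dividing $p\sigma_1=\sigma$, and one should take care that $\pi$ is constructed so that its restriction to $A_{\inf}(R)=W(R^\flat)$ is literally $W(f)$, which is what makes $\pi([a])=0$ transparent.
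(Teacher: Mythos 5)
Your proof is correct and is essentially the paper's own argument written out in more detail: the paper likewise defines $\pi_R$ as the unique homomorphism of $p$-adic PD thickenings $A_{\cris}(R)\to W(R)$ for semiperfect $R$ on a basis of the $\nameoftop$-topology, and verifies the frame conditions using exactly your two key facts, namely that the elements $[a]^{[n]}$ for $a\in J=\Ker(R^\flat\to R)$ lie in the kernel of $\pi_R$ and the formula $\sigma_1([a]^{[n]})=c_n[a]^{[np]}$ of \eqref{Eq:sigma1-an}. The only blemish is your side remark that both frame structures are ``restricted from $X_0$'' (this is false for the Witt side, since $W(R)\ne W(R/p)$ when $pR\ne 0$), but as the paper's proof performs the same reduction to semiperfect rings without further justification, this does not separate your argument from the published one.
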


\begin{proof}
It suffices to define for each $\Spec R\to X$ with semiperfect $R$ a frame homomorphism $\pi:\u A{}_{\cris}(R)\to\u W(R)$. The ring homomorphism $\pi$ is the unique homomorphism $A_{\cris}(R)\to W(R)$ of \chang{$p$-complete} PD thickenings of $R$. One verifies that $\pi$ is a frame homomorphism, using that
for $a\in J=\Ker(R^{\flat}\to R)$, the elements $[a]^{[n]}$ lie in 
the kernel of $A_{\cris}(R)\to W(R)$, and that 
$\sigma_1([a]^{[n]})=\frac{(pn!)}{n!\cdot p}[a]^{[pn]}$.
\end{proof}

We get a sequence of functors 
\[
\BT(R)\xrightarrow{\DDfun_X}\DDcat(X)\xrightarrow{\pi^*}\Disp(X).
\]
The composition coincides with the functor $\Phi_X$ by the uniqueness properties of $\Phi_X$ as in \cite[Proposition 2.1]{Lau:Smoothness}.

A window $\u M$ over a frame $\u A$ (or over a frame $\u\AAA$ in a topos)
is called $F$-nilpotent if the endomorphism $\Phi:M\to M$ 
is nilpotent on $M/pM$ (or 
\chang{nilpotent locally}).
We denote by $\Win(\u A)_{\nil}\subseteq\Win(A)$ 
the full subcategory of all $F$-nilpotent windows.

\begin{Prop}
\label{Pr:deform-win-nil}
A frame homomorphism $\alpha:\u A'\to\u A$ such that $A'\to A$ is surjective and $R'\to R$ ist bijective induces an equivalence of $F$-nilpotent windows.
\end{Prop}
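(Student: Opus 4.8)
The plan is to follow the proof of Proposition \ref{Pr:deform-win} almost verbatim, replacing the equivalence for all windows by the corresponding equivalence for $F$-nilpotent windows. Write $N=\Ker(A'\to A)$. Since $\alpha$ induces a bijection $R'\to R$, the ideal $I'$ is the full preimage of $I$ under $A'\to A$, so $N\subseteq I'$ and $\sigma_1$ is defined on $N$. Because $\alpha$ commutes with $\sigma$ and $\sigma_1$, applying $\alpha$ to $\sigma(x)$ and $\sigma_1(x)$ for $x\in N$ shows $\sigma(N)\subseteq N$ and $\sigma_1(N)\subseteq N$; and the relation $p\sigma_1=\sigma$ on $I'$ gives $\sigma(x)=p\sigma_1(x)\in pN$ for $x\in N$.

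First I would check that $\alpha^*$ preserves and reflects $F$-nilpotence, so that it restricts to a functor $\Win(\u A')_{\nil}\to\Win(\u A)_{\nil}$ and it suffices to prove the latter is an equivalence. The kernel $\mathfrak n$ of $A'/p\to A/p$ is the image of $N$, hence is contained in the image of $I'$, on which every element has vanishing $p$-th power by Remark \ref{Rk:Psi}; thus the Frobenius of $A'/p$ annihilates $\mathfrak n$. Since $F$-nilpotence is the nilpotence of the $\phi$-semilinear map $\Phi$ on $M/pM$, and $\Phi^k(M/pM)\subseteq\mathfrak n\cdot(M/pM)$ forces $\Phi^{k+1}(M/pM)=0$ once $\phi(\mathfrak n)=0$, nilpotence of $\Phi$ over $A'$ is equivalent to nilpotence of its reduction over $A/p$; this gives both preservation and reflection.

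Next, as in Proposition \ref{Pr:deform-win}, the ideal $N$ is $p$-adic, and for $n\ge 0$ the quadruple $\u B_n=(A'/p^nN,\,I'/p^nN,\,R',\sigma,\sigma_1)$ is a frame quotient of $\u A'$ (here $p^nN$ is stable under $\sigma$ and $\sigma_1$), with $\u B_0=\u A$ and $\u A'=\varprojlim_n\u B_n$ taken componentwise. The key computation is that $\sigma$ is nilpotent on the kernel $N/p^nN$ of $\u B_n\to\u A$: iterating $\sigma(x)=p\sigma_1(x)$ together with $\sigma_1(N)\subseteq N$ yields $\sigma^k(x)=p^k\sigma_1^k(x)$ for $x\in N$, so $\sigma^n=0$ on $N/p^nN$. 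Crucially this requires no nilpotence hypothesis on $\sigma_1$, the trade-off being that one must restrict to $F$-nilpotent windows. I would then invoke the $F$-nilpotent counterpart of \cite[Thm.~3.2]{Lau:Frames} — a frame homomorphism with surjective underlying ring map, bijective map on $R$, and $\sigma$ nilpotent on its kernel induces an equivalence of $F$-nilpotent windows — to conclude that each projection $\u B_n\to\u A$ induces an equivalence $\Win(\u B_n)_{\nil}\xrightarrow{\sim}\Win(\u A)_{\nil}$, compatibly in $n$.

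Finally I would pass to the limit. By \cite[Le.~2.12]{Lau:Frames} the decomposition $\u A'=\varprojlim_n\u B_n$ gives $\Win(\u A')\cong\varprojlim_n\Win(\u B_n)$, and since $F$-nilpotence is detected on the common reduction modulo $p$ (each $B_n/p\to A/p$ has kernel killed by Frobenius) this restricts to $\Win(\u A')_{\nil}\cong\varprojlim_n\Win(\u B_n)_{\nil}$; as every transition functor in this tower is an equivalence onto $\Win(\u A)_{\nil}$, the limit is $\Win(\u A)_{\nil}$ and the resulting composite is $\alpha^*$. The hard part will be the single-step $F$-nilpotent deformation theorem used above: unlike the all-windows case it cannot be reduced to nilpotence of $\sigma_1$, and its proof is where the $F$-nilpotence of the windows is genuinely needed, for instance by reducing, via the functor to displays, to the Grothendieck--Messing/Zink deformation theory for nilpotent displays over a square-zero (indeed Frobenius-trivial) thickening.
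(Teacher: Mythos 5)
Your proposal is correct and is essentially the paper's own proof: the paper uses the same tower $\u B{}_n=(A'/p^nN,\,I'/p^nN,\,R',\sigma,\sigma_1)$ from Proposition \ref{Pr:deform-win}, obtains the single-step equivalence $\Win(\u B{}_n)_{\nil}\simeq\Win(\u A)_{\nil}$ from \cite[Theorem~10.3]{Lau:Frames} applied to the sequence of ideals $\bar N\supseteq p\bar N\supseteq\cdots\supseteq p^n\bar N=0$ (exactly the structure your computation $\sigma^k=p^k\sigma_1^k$ on $N$ exhibits, since it gives $\sigma_1(p^i\bar N)\subseteq p^i\bar N$ and $\sigma(p^i\bar N)\subseteq p^{i+1}\bar N$), and then passes to the limit $\u A'=\varprojlim_n\u B{}_n$ via \cite[Lemma~2.11]{Lau:Frames}. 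In particular the ``hard part'' you flag, the $F$-nilpotent counterpart of \cite[Thm.~3.2]{Lau:Frames}, does not need to be reproved via displays: it exists in the literature as \cite[Theorem~10.3]{Lau:Frames}, formulated with a kernel filtration shifted by $\sigma$ (rather than mere nilpotence of $\sigma$) as its hypothesis, which your application supplies.
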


\begin{proof}
We define $\u B{}_n$ as in the proof of Proposition \ref{Pr:deform-win}.
The frame homomorphism $\u B{}_n\to\u A$ with kernel $\bar N$ induces an equivalence of $F$-nilpotent windows by \cite[Theorem 10.3]{Lau:Frames}, applied to the sequence of ideals $\bar N\supseteq p\bar N\supseteq\ldots\supseteq p^n\bar N=0$, and the assertion follows by
\cite[Lemma 2.11]{Lau:Frames}.
\end{proof}

The above functors restrict to functors
\chang{\[
\BT(R)_{\inf}\xrightarrow{\DDfun_X}\DDcat(X)_{\nil}\xrightarrow{\pi^*}\Disp(X)_{\nil}
\]}
between the categories of formal $p$-divisible groups and $F$-nilpotent
divided Dieudonn\'e crystals or displays. Here the composite functor
is an equivalence by \cite[Theorem 5.1]{Lau:Smoothness}.

\begin{Prop}
The functor $\DDcat(X)_{\nil}\xrightarrow{\pi^*}\Disp(X)_{\nil}$
is an equivalence.
\end{Prop}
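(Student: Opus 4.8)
The plan is to prove the statement by $\nameoftop$-descent, reducing to the affine case with semiperfect reduction, and then to apply the deformation result Proposition \ref{Pr:deform-win-nil} to the frame homomorphism $\pi:\u\OOO_X^{\cris}\to\u W(\OOO_X)$. The point is that on a basis of the $\nameoftop$-topology the abstract frame machinery of \cite{Lau:Frames} becomes directly available, and the defining properties of $\pi$ (surjective on the big ring, identity on the quotient) are exactly the hypotheses of Proposition \ref{Pr:deform-win-nil}.

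First I would observe that both $\DDcat(-)_{\nil}$ and $\Disp(-)_{\nil}$ are $\nameoftop$-stacks: divided Dieudonn\'e crystals form a stack by Lemma \ref{Le:DD-stack}, displays are by definition windows over $\u W(\OOO_X)$ in the topos $X_\nameoftop$ and hence form a stack, and $F$-nilpotence is a $\nameoftop$-local condition on $\Phi$, so it cuts out full substacks. Since $\pi$ is functorial in $X$, the functor $\pi^*$ is a morphism of these stacks (landing in $\Disp(X)_{\nil}$ because $\Phi$ is transported under base change), and being an equivalence may be checked $\nameoftop$-locally. As affine $X$-schemes $\Spec R$ with $R/p$ semiperfect form a basis of the $\nameoftop$-topology, I reduce to $X=\Spec R$ with $R/p$ semiperfect.

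In this case Proposition \ref{Pr:dd-win-semiperf} identifies $\DDcat(\Spec R)$ with $\Win(\u A{}_{\cris}(R))$, while $\Disp(\Spec R)\cong\Win(\u W(R))$ by faithfully flat descent of displays (cf.\ \cite{Zink:Display}); under these identifications, via Lemma \ref{Le:win-win}, the functor $\pi^*$ becomes base change along the global-sections frame homomorphism $\pi:\u A{}_{\cris}(R)\to\u W(R)$, and $F$-nilpotence corresponds on both sides. Thus it remains to check that $\pi$ satisfies the hypotheses of Proposition \ref{Pr:deform-win-nil}. The induced map on the quotient rings is the identity of $R$, hence bijective. For surjectivity of $A_{\cris}(R)\to W(R)$, I would argue that the composite $W(R^\flat)\to A_{\cris}(R)\xrightarrow{\pi} W(R)$ coincides with the canonical homomorphism $W(R^\flat)\to W(R)$ induced by the surjection $R^\flat\to R$: both are continuous ring maps lifting $R^\flat\to R$ and commuting with the Witt Frobenius, and they agree on Teichm\"uller representatives. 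Since $R^\flat\to R$ is surjective, $W(R^\flat)\to W(R)$ is surjective, hence so is $\pi$, and Proposition \ref{Pr:deform-win-nil} yields that $\pi^*$ is an equivalence of $F$-nilpotent windows, completing the descent step.

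The main obstacle I anticipate is precisely the surjectivity of $\pi$, i.e.\ the identification of $W(R^\flat)\to A_{\cris}(R)\xrightarrow{\pi} W(R)$ with the functorial Witt map $W(R^\flat)\to W(R)$; everything else is formal stack-theoretic descent together with a direct application of Proposition \ref{Pr:deform-win-nil}. The cleanest route to this identification is to check that $\pi$ sends the Teichm\"uller lift $[a]\in A_{\cris}(R)$ to $[\bar a]\in W(R)$ for $a\in R^\flat$ with image $\bar a\in R$, using that $[a]=[a^{1/p^n}]^{p^n}$ for all $n$ and that $\pi$ is a continuous ring homomorphism; the lifts $[\bar a]$ with $a\in R^\flat$ then topologically generate $W(R)$ because $R^\flat\to R$ is onto. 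This is the analogue for $W$ of the Teichm\"uller computation already used in the frame-homomorphism lemma preceding the statement, and I would expect it to require only a short continuity and multiplicativity argument.
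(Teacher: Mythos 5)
Your proposal is correct and takes essentially the same route as the paper: reduce by $\nameoftop$-descent to the affine case with semiperfect reduction, identify $\DDcat$ and $\Disp$ there with $\Win(\u A{}_{\cris}(R))$ and $\Win(\u W(R))$, and apply Proposition \ref{Pr:deform-win-nil} to the frame homomorphism $\pi$. The only difference is that you spell out the surjectivity hypothesis of that proposition --- identifying the composite $W(R^\flat)\to A_{\cris}(R)\xrightarrow{\pi}W(R)$ with the functorial Witt vector map via Teichm\"uller representatives and $p$-adic continuity --- which the paper leaves implicit; your verification of this point is sound.
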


\begin{proof}
By $\nameoftop$-descent it suffices to show that for \chang{every} semiperfect ring $R$ the frame homomorphism $\pi:A_{\cris}(R)\to W(R)$ induces an equivalence of $F$-nilpotent windows. This follows from Proposition \ref{Pr:deform-win-nil}.
\end{proof}

As a consequence, 
\chang{$\DDfun_X:\BT(R)_{\inf}\to\DDcat(X)_{\nil}$} 
is an equivalence as well. This is more general than the restriction of Theorem \ref{Th:Ffin-Fnil} to formal $p$-divisible groups because there are no finiteness conditions on $X$.

\end{document}